\newtheorem{thm}{Theorem}[section]
\newtheorem{cor}[thm]{Corollary}
\newtheorem{lem}[thm]{Lemma}
\newtheorem{prop}[thm]{Proposition}
\theoremstyle{definition}
\theoremstyle{remark}
\newtheorem{rem}[thm]{Remark}
\numberwithin{equation}{section}
\newcommand{\ra}{\rightarrow}
\newcommand{\z}{\zeta}
\newcommand{\pa}{\partial}
\newcommand{\ov}{\overline}
\newcommand{\ep}{\epsilon}
\newcommand{\no}{\noindent}
\newcommand{\Om}{\Omega}
\newcommand{\ti}{\tilde}
\newcommand{\la}{\lambda}
\newcommand{\La}{\Lambda}
\newcommand{\al}{\alpha}
\newcommand{\be}{\beta}
\newcommand{\ga}{\gamma}
\newcommand{\de}{\delta}
\newcommand{\De}{\Delta}
\newcommand{\om}{\omega}
\title{Remarks on the metric induced by the Robin function II}
\subjclass{Primary: 32F45 ; Secondary : 31C10, 31B25}
\author{Diganta Borah}
\address{Indian Institute of Science Education and Research, Pune, India}
\email{dborah@iiserpune.ac.in}
\begin{document}

\begin{abstract}
Let $D$ be a smoothly bounded pseudoconvex domain in $\mathbf C^n$, $n > 1$.
Using the Robin function $\La(p)$ that arises from the Green function $G(z,
p)$ for $D$ with pole at $p \in D$ associated with
the standard sum-of-squares Laplacian, N. Levenberg and H. Yamaguchi had
constructed a K\"{a}hler metric (the so-called
$\La$-metric) on $D$. Assume that $D$ is strongly pseudoconvex and $ds^2$
denotes the
$\La$-metric on $D$. In this article, first we prove that the holomorphic
sectional curvature of $ds^2$ along normal directions converges to a negative
constant near the boundary of $D$. Then, we prove that if $D$ is not simply
connected, then any nontrivial homotopy class of $\pi_1(D)$ contains a closed
geodesic for $ds^2$. Finally, we prove that the diminesion of the space of
square integrable harmonic $(p, q)$-forms on $D$ relative to $ds^2$ is zero
except when $p+q=n$ in which case it is infinite.

\end{abstract}

\maketitle
\section{Introduction}
\no Let $D$ be a $C^{\infty}$-smoothly bounded domain in $\mathbf{C}^{n}$ ($n
\geq 2$). For $p \in D$, let $G(z, p)$ be the Green function for $D$ with pole
at $p$ associated to the standard Laplacian
\[
\Delta = 4 \sum_{i = 1}^{n} \frac{\pa^2}{\pa z_i \pa \ov z_i}
\]
on $\mathbf{C}^{n} \approx \mathbf{R}^{2n}$. Then $G(z, p)$ is the unique
function of $z \in D$ satisfying $G(z, p)$ is harmonic on $D \setminus \{p\}$,
$G(z, p) \ra 0$ as $z \ra \pa D$ and $ G(z, p) - \vert z - p \vert ^{-2n + 2}$
is harmonic near $p$. Thus
\[
\La(p) = \lim_{z \ra p} \big( G(z, p) - \vert z - p \vert ^{-2n + 2} \big)
\]
exists and is called the Robin constant for $D$ at $p$. The function
\[
\Lambda : p \ra \Lambda(p)
\]
is called the Robin function for $D$.

\medskip

The Robin function for $D$ is negative, real analytic and tends to $- \infty$
near $\pa D$ (see \cite{Y}). Further, if $D$ is pseudoconvex then by a result of
Levenberg-Yamaguchi (\cite{LY}), $\log(-\Lambda)$ is a strongly plurisubharmonic
function on $D$. Therefore
\[
ds^2 = \sum_{\al, \be =1}^{n} \frac{\pa ^2 \log(-\Lambda)}{\pa z_{\al} \pa \ov
z_{\be}} dz_{\al} \otimes d\ov z_{\be}
\]
is a K\"{a}hler metric on $D$ which is called the $\Lambda$-metric. Recall that
the holomorphic
sectional curvature of $ds^2$ at $z \in D$ along the direction $v \in
\mathbf{C}^{n}$ is given by
\[
R(z, v) = \frac{R_{\al \ov \be \ga \ov \de} v^{\al} \ov v^{\be} v^{\ga} \ov
v^{\de}}{g_{\al \ov \be} v^{\al} \ov v^{\be}}
\]
where
\[
R_{\al \ov \be \ga \ov \de} = - \frac{\pa^2 g_{\al \ov \be}}{\pa z_{\ga} \pa \ov
z_{\de}} + g^{\nu \ov \mu } \frac{\pa g_{\al \ov \mu}}{\pa z_{\ga}} \frac{\pa
g_{\nu \ov \be}}{\pa \ov z_{\de}}
\]
are the components of the curvature tensor,
\[
g_{\al \ov \be} = \frac{\pa^2 \log(-\Lambda)}{\pa z_{\al} \pa \ov z_{\be}}
 \]
are the components of $ds^2$ and $g^{\al \ov \be}$ are the entries of the matrix
$(g_{\al \ov \be})^{-1}$. In the above formulae, the standard convention of
summing
over all indices that appear once in the upper and lower position is being
followed.

\medskip

Now, let $v$ be a vector in $\mathbf{C}^{n}$. At each point $z \in \pa D$, there
is a canonical splitting $\mathbf{C}^{n} = H_z(\pa D) \oplus N_z(\pa D)$ along
the complex tangential and normal directions at $z$ and hence $v$ can uniquely
be written as $v = v_H(z) + v_N(z)$ where $v_H(z) \in H_z(\pa D)$ and $v_N(z)
\in N_z(\pa D)$. Also, the smoothness of $\pa D$ implies that if $z \in D$ is
sufficiently close to $\pa D$, then there is a unique point $\pi(z) \in \pa D$
that is closest to it, i.e., $d(z, \pa D) = \vert z - \pi(z) \vert$. Therefore,
$v$ can uniquely be written as $v = v_H(\pi(z)) + v_N(\pi(z))$. We will
abbreviate $v_H(\pi(z))$ as $v_H(z)$ and $v_N(\pi(z))$ as $v_N(z)$.
For a strongly pseudonconvex domain $D$, the boundary behaviour of $R(z,
v_N(z))$ was calculated in \cite{BV} in a special case, viz., when $z \ra z_0
\in \pa D$ along the inner normal to $\pa D$ at $z_0$. The purpose
of this article is threefold. One, we remove the restriction that $z \ra z_0$
along the inner normal in obtaining the boundary behaviour of $R(z, v_N(z))$.
More precisely, we have the following:
\begin{thm}
Let $D$ be a $C^{\infty}$-smoothly bounded strongly pseudoconvex domain in
$\mathbf{C}^{n}$. Fix $z_0 \in \pa D$ and let $v \in \mathbf{C}^{n}$. Then for
$z \in D$
\[
\lim_{z \ra z_0} R\big(z, v_{N}(z) \big) = -\frac{1}{n-1}.
\]
\end{thm}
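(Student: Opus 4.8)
The plan is to reduce the limit to a local computation at $z_0$ and to compare the $\La$-metric there with a complex hyperbolic metric. Since $\La$ is real analytic in $D$ and the assertion is purely local, I would first fix local holomorphic coordinates with $z_0 = 0$ in which $D = \{\rho < 0\}$ near $0$ for a smooth defining function $\rho$, strictly plurisubharmonic there, in the standard strongly pseudoconvex normal form
\[
\rho(z) = 2\,\mathrm{Re}\,z_n + |z'|^2 + O(|z|^3), \qquad z' = (z_1, \dots, z_{n-1}).
\]
Then $H_0(\pa D) = \{z_n = 0\}$, $N_0(\pa D) = \mathbf{C}\,e_n$, so $v_N(z) \ra v_N(z_0)$, a vector on the $z_n$-axis, and $z \mapsto v_N(z)$ is continuous up to $\pa D$. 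It helps to record the model: for the ball $-\La(p) = (1-|p|^2)^{-(2n-2)}$, so $\log(-\La)$ differs from $-(2n-2)\log(-\rho)$ by a function smooth up to $\pa D$, the $\La$-metric equals $(2n-2)\,\pa\ov\pa\log\frac{1}{1-|z|^2}$, and a direct evaluation of the curvature tensor $R_{\al\ov\be\ga\ov\de}$ shows $R(z,v) \equiv -\frac{1}{n-1}$. Thus the constant is forced, and the problem is to show that a general strongly pseudoconvex $D$ behaves like this model along normal directions under an \emph{arbitrary} approach of $z$ to $z_0$.

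The analytic input is the boundary asymptotics of the Robin function: near $z_0$,
\[
\log\big(-\La(z)\big) = -(2n-2)\,\log\big(-\rho(z)\big) + \psi(z),
\]
where $\psi$, together with the partial derivatives of orders $\le 4$ that enter $R_{\al\ov\be\ga\ov\de}$, is of lower order relative to the corresponding derivatives of $-(2n-2)\log(-\rho)$, locally uniformly in $\ov D$ near $z_0$. This refines the expansion underlying \cite{BV} — which was needed there only along the inner normal — to a full one-sided neighbourhood of $z_0$; I would obtain it by sandwiching the Green function $G(\cdot,p)$ between the Green functions of interior and exterior model domains (balls, or the Siegel half-space) osculating $\pa D$ at $\pi(p)$, together with the classical barrier and Harnack estimates for positive harmonic functions, which supply the uniformity as $z \ra z_0$.

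Given the expansion, write $ds^2 = (2n-2)\,\om_0 + \pa\ov\pa\psi$, where
\[
\om_0 = -\pa\ov\pa\log(-\rho) = \frac{\pa\rho\otimes\ov\pa\rho}{\rho^{2}} + \frac{\pa\ov\pa\rho}{-\rho}.
\]
Along the normal direction $\om_0$ is dominated by the rank-one term, so $g_{\al\ov\be}(z)\,v_N^{\al}(z)\,\ov v_N^{\be}(z) \asymp |\rho(z)|^{-2}$ — the fastest blow-up available to the metric. I would then compute the quotient
\[
R\big(z, v_N(z)\big) = \frac{R_{\al\ov\be\ga\ov\de}(z)\, v_N^{\al}\,\ov v_N^{\be}\, v_N^{\ga}\,\ov v_N^{\de}}{\big(g_{\al\ov\be}(z)\, v_N^{\al}\,\ov v_N^{\be}\big)^{2}}
\]
by reducing $\rho$ to the model form $2\,\mathrm{Re}\,z_n + |z'|^2$ at $\pi(z)$ via a local biholomorphic change: there $\om_0$ is isometric to a complex hyperbolic metric of constant holomorphic sectional curvature $-2$. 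The higher-order remainder of $\rho$, the term $\pa\ov\pa\psi$, and the cross terms between $v_N(z)$ and the asymptotically vanishing tangential part of $v$ each acquire an extra positive power of $|\rho(z)|$ (or a factor $|\rho|\log|\rho|$) once the four derivatives are taken and the ratio is formed, hence are $o(1)$. Since scaling a K\"ahler metric by the positive constant $2n-2$ divides its holomorphic sectional curvature by $2n-2$, letting $z \ra z_0$ yields $R(z, v_N(z)) \ra -2/(2n-2) = -\frac{1}{n-1}$. (Along a tangential direction the metric blows up only like $|\rho|^{-1}$; there the remainder of $\rho$ is not negligible, matching to the model to merely second order does not suffice, and no analogous statement is available.)

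The hard part is twofold: establishing the neighbourhood version of the Robin-function expansion of the second paragraph, with control of four derivatives and uniformly up to $\pa D$ near $z_0$ — in \cite{BV} the inner-normal version sufficed, and there the relevant quantities are functions of a single real parameter — and the attendant bookkeeping of orders of vanishing in the curvature quotient. A more routine point is to verify that the off-diagonal blocks of $(g_{\al\ov\be})$ and the $z$-dependence of the splitting $v = v_H(z) + v_N(z)$ do not disturb the leading asymptotics of the normal block. Once these are in place, the computation proceeds exactly as in the explicit ball case above.
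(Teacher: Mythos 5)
Your conceptual framework — writing $\log(-\La) = -(2n-2)\log(-\rho) + \psi$ with $\psi = \log(-\la)$, comparing with the complex hyperbolic model, and noting that scaling by $2n-2$ divides the curvature by $2n-2$ — is sound, and the computation on the ball giving $-2/(2n-2) = -1/(n-1)$ is correct. This is essentially the role played in the paper by Theorem~1.2 (the leading-order half-space asymptotics) and Lemma~\ref{asympt-g_nu}. The difficulty is that your pivotal analytic claim, that ``$\psi$, together with the partial derivatives of orders $\le 4$ \dots, is of lower order,'' is precisely what is \emph{not} available and is the entire crux of the proof. The Levenberg--Yamaguchi result gives $\la$ of class $C^2$ up to $\ov D$; Theorem~1.3 (proved in Sections~4--5 with substantial effort on Green-function derivative estimates) upgrades this to convergence of first and second derivatives of $\la_\nu$ along the scaled sequence; but nothing of the sort is known for third or fourth derivatives. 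Indeed the paper says explicitly, just before Lemma~\ref{finer-3rd-der-La}, ``Since we do not have any information about the third order derivatives of $\la(p)$ near the boundary of $D$, the above method fails to give finer asymptotics of $\La_{\nu\al\ov\be\ga}$.'' Equivalently, as noted in the introduction, the leading-order asymptotics of Theorem~1.2 — your claimed expansion — leave the second (off-diagonal/inverse-metric) term in (\ref{cvr}) \emph{indeterminate}, because the leading coefficient in $\pa_n g_{n\ov\al}$ for $\al < n$ vanishes and the finite sub-leading rate is what enters the quotient.

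The paper's workaround is a genuinely different piece of machinery: it represents $\La_{\al\ov\be\ga}$ via (\ref{La-mix-der-g}) in terms of mixed $p$-- and $w$--derivatives of the scaled Green function $g(p,w)$, exploits harmonicity in $w$ of $\pa g/\pa p_\al$ and $\pa^2 g/\pa p_\al\pa\ov p_\be$, and uses the uniform collar estimates from Propositions~\ref{bd-grad-g^nu}, \ref{est-F_nu}, \ref{estimate-mixed-2nd-der-g} together with Harnack to get the finer rates (Lemmas~\ref{finer-3rd-der-La}--\ref{finer-det}). Your proposed route via sandwiching Green functions between osculating model domains and using barrier/Harnack estimates would give zeroth- and first-order control near the boundary, but it does not obviously yield four derivatives of the Robin function (a functional of the pole, not a harmonic function of a single variable) with the uniformity needed for an arbitrary approach to $z_0$ — that is exactly the hard analytic content that your plan assumes rather than supplies. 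A further, smaller caveat: the cancellations you invoke (``each acquire an extra positive power of $|\rho|$ once the ratio is formed'') are not visible from a naive order count; they occur because certain model coefficients like $\psi_{\ov\al}(z_0)$ vanish for tangential $\al$, and verifying that the error terms do not spoil the sub-leading coefficients is where the paper's case-by-case computation in Section~6 is unavoidable.
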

\noindent To understand the difficulty in the computation, let us assume without
loss of generality that $z_0 = 0$ and the normal to $\pa D$ at $z_0$ is along
the $\Re{z_n}$-axis. Let $\{z_{\nu}\}$ be a sequence of points in
$D$ converging to $0$.  Without loss of generality, let us assume that the
distance between
$z_{\nu}$ and $\pa D$, denoted by $\delta_{\nu}$, is realised by a unique point
$\pi(z_{\nu}) \in \pa D$,
i.e.,
\[
\delta_{\nu} =d(z_{\nu}, \pa D) = \vert z_{\nu} - \pi(z_{\nu}) \vert
\]
for all $\nu \geq 1$. Now for each $\nu$, apply a translation $\tau_{\nu}$ to
$D$
followed by a unitary rotation $\sigma_{\nu}$ to obtain a new domain $D_{\nu}$
so
that $\pi(z_{\nu}) \in \pa D$ corresponds to $0 \in \pa D_{\nu}$ and the normal
to $\pa D_{\nu}$ at $0$ is along the $\Re{z_n}$ axis. We will denote the
composition $\sigma_{\nu} \circ \tau_{\nu}$ by $\theta_{\nu}$. Note that under
the map $\theta_{\nu}$, $z_{\nu} \in
D$ corresponds to $p_{\nu} = (0, \ldots, -\delta_{\nu}) \in D_{\nu}$ and
\[
\theta_{\nu}^{\prime}(z_{\nu}) v_N(z_{\nu}) =\big(0, \ldots, 0, \vert
v_{N}(z_{\nu}) \vert\big).
\]
Therefore, by the invariance of the $\La$-metric under translation and unitary
rotation \cite{BV}*{lemma~5.1},
\begin{multline}\label{cvr}
R_{D}\big(z_{\nu}, v_{N}(z_{\nu})\big) = R_{D_{\nu}}\Big(p_{\nu}, \big(0,
\ldots, 0, \vert
v_{N}(z_{\nu}) \vert\big)\Big)\\
=\frac{1}{\big(g_{\nu n \ov n}(p_{\nu}) \big)^{2}} \bigg(-\frac{\pa^2
g_{\nu n \ov n}}{\pa z_n \pa \ov z_n}(p_{\nu}) + \sum_{\al, \be=1}^{n}
g_{\nu}^{\beta \ov \al}(p_{\nu}) \frac{\pa g_{\nu n \ov \al}}{\pa
z_n}(p_{\nu}) \frac{\pa g_{\nu \beta \ov n}}{\pa \ov z_n}\bigg)
\end{multline}
where
\begin{equation}\label{ds_nu-cmp}
g_{\nu \al \ov \be} = \frac{\pa^2 \log(-\Lambda_{\nu})}{\pa z_{\al} \pa \ov
z_{\be}}
\end{equation}
are the components of the $\Lambda$-metric $ds^2_{\nu}$ on $D_{\nu}$ and
$g_{\nu}^{\al \ov \be}$ are the entries of the matrix $\big( g_{\nu \al \ov \be}
\big)^{-1}$. To compute the limit of the right hand side of (\ref{cvr}) we have
to find the asymptotics of the metric components $g_{\nu \al \ov \be}$ and their
derivatives along the sequence $\{p_{\nu}\}$. From (\ref{ds_nu-cmp}), it is
natural to hope that this can be achieved by computing the asymptotics of
$\Lambda_{\nu}$ and their derivatives along $\{p_{\nu}\}$. To be more precise,
let
$\psi$ be a $C^{\infty}$-smooth function on $\mathbf{C}^{n}$ that defines the
domain $D$ and $ \ov \pa \psi(0) = (0, \ldots, 1)$. Then for each $\nu \geq 1$,
$\psi_{\nu} = \psi \circ \theta_{\nu}^{-1}$ is a $C^{\infty}$-smooth
defining function for $D_{\nu}$. Also, it is evident that $\{\psi_{\nu}\}$
converges in the $C^{\infty}$-topology on compact subsets of $\mathbf{C}^{n}$ to
$\psi$. We then want to compute the rate of growth of
\[
D^{A \ov B} \Lambda_{\nu}(p_{\nu}) = \frac{\pa^{\vert A \vert + \vert B \vert}
\Lambda_{\nu}}{\pa z_1^{\al_1} \cdots \pa z_n^{\al_n} \pa \ov z_1^{\be_1} \cdots
\pa \ov z_n^{\be_n}}(p_{\nu}), \quad A = (\al_1, \ldots, \al_n), B = (\be_1,
\ldots, \be_n) \in \mathbf{N}^{n}
\]
in terms of $\psi_{\nu}(p_{\nu})$. In this regard, we prove the following:

\begin{thm}
Let $D$ be a $C^{\infty}$-smoothly bounded domain in $\mathbf{C}^{n}$ and let
$\psi$
be a $C^{\infty}$-smooth defining function for $D$ defined on all of
$\mathbf{C}^{n}$. Let $\{D_{\nu}\}$ be a sequence of $C^{\infty}$-smoothly
bounded domains in $\mathbf{C}^{n}$ with defining functions $\psi_{\nu}$ that
converge in the $C^{\infty}$-topology on compact subsets of
$\mathbf{C}^{n}$ to $\psi$. Let $p_{\nu} \in D_{\nu}$ be such that $\{p_{\nu}\}$
converges to $p_0 \in \pa D$. Define the half space
\[
\mathcal{H} = \Big\{ w \in \mathbf{C}^{n} : 2 \Re \Big( \sum_{\al=1}^{n}
\psi_{\al}(p_0) w_{\al} \Big) -1 < 0 \Big\}
\]
and let $\Lambda_{\mathcal{H}}$ denotes the Robin function for $\mathcal{H}$.
Then
\[
(-1)^{\vert A \vert + \vert B \vert} D^{A \ov B} \Lambda_{\nu} (p_{\nu}) \big(
\psi_{\nu}(p_{\nu})\big)^{2n-2+ \vert A \vert + \vert B \vert} \rightarrow D^{A
\ov B} \Lambda_{H}(p_0)
\]
as $\nu \ra \infty$.
\end{thm}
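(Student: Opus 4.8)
The plan is to prove the statement by a blow-up (scaling) argument combined with a stability property of the Green function under $C^\infty$-convergence of domains. Set $\epsilon_\nu = -\psi_\nu(p_\nu)$; since $p_\nu$ lies in $D_\nu = \{\psi_\nu < 0\}$ we have $\epsilon_\nu > 0$, and since $\psi_\nu \to \psi$ uniformly near $p_0$ while $p_\nu \to p_0 \in \pa D$ we have $\epsilon_\nu \to 0$. Consider the affine dilations $T_\nu(w) = \epsilon_\nu^{-1}(w - p_\nu)$, the rescaled domains $\hat D_\nu = T_\nu(D_\nu)$, and the rescaled defining functions $\hat\psi_\nu(w) = \epsilon_\nu^{-1}\psi_\nu(p_\nu + \epsilon_\nu w)$. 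Taylor expanding $\psi_\nu$ about $p_\nu$, using $\epsilon_\nu^{-1}\psi_\nu(p_\nu) = -1$ together with $\psi_\nu \to \psi$ in $C^\infty_{\mathrm{loc}}$ and $p_\nu \to p_0$, one checks that $\hat\psi_\nu \to \hat\psi_\infty$ in $C^\infty_{\mathrm{loc}}(\mathbf C^n)$, where $\hat\psi_\infty(w) = 2\Re\big(\sum_{\al=1}^n\psi_\al(p_0)w_\al\big) - 1$; hence $\{\hat\psi_\infty < 0\} = \mathcal H$, and $\hat D_\nu \to \mathcal H$ in the sense that every compact subset of $\mathcal H$ is eventually contained in $\hat D_\nu$ and every compact subset of $\mathbf C^n \sm \ov{\mathcal H}$ is eventually disjoint from $\hat D_\nu$. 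Also $T_\nu(p_\nu) = 0 \in \mathcal H$; after the harmless translation that places $p_0$ at the origin this is precisely $p_0$.

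I would then use the scaling covariance of the Robin function: for an affine dilation $T(w) = \la w + c$ one has $|Tz - Tp|^{2-2n} = \la^{2-2n}|z-p|^{2-2n}$, and by uniqueness of the Green function $G_{T(D)}(Tz, Tp) = \la^{2-2n}G_D(z,p)$, so $\La_{T(D)}(Tp) = \la^{2-2n}\La_D(p)$; differentiating this identity in $p$ gives $D^{A\ov B}\La_{T(D)}(Tp) = \la^{\,2-2n-|A|-|B|}D^{A\ov B}\La_D(p)$ (compare \cite{BV}*{lemma~5.1} for the corresponding transformation of the $\La$-metric). Taking $T = T_\nu$, $\la = \epsilon_\nu^{-1}$ and $p = p_\nu$ gives $D^{A\ov B}\La_\nu(p_\nu) = \epsilon_\nu^{\,2-2n-|A|-|B|}D^{A\ov B}\La_{\hat D_\nu}(0)$. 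Since $\psi_\nu(p_\nu) = -\epsilon_\nu$ and $2n-2$ is even, $(-1)^{|A|+|B|}\big(\psi_\nu(p_\nu)\big)^{2n-2+|A|+|B|} = \epsilon_\nu^{\,2n-2+|A|+|B|}$, so the quantity on the left of the asserted limit is identically $D^{A\ov B}\La_{\hat D_\nu}(0)$, and the theorem is reduced to proving $D^{A\ov B}\La_{\hat D_\nu}(0) \to D^{A\ov B}\La_{\mathcal H}(p_0)$.

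To prove this I would pass to the regular part $\mathcal G_\Om(z,p) = G_\Om(z,p) - |z-p|^{2-2n}$, which on $\Om\times\Om$ is jointly real analytic, symmetric, and harmonic in $z$ for each $p$ and in $p$ for each $z$ (the singularity along $z=p$ being removable), with $\mathcal G_\Om(p,p) = \La_\Om(p)$. By the chain rule on the diagonal, each $D^{A\ov B}_p\La_\Om(p)$ is a fixed finite combination of mixed derivatives $D^{C\ov D}_z D^{E\ov F}_p\mathcal G_\Om$ evaluated at $z=p$, so it is enough to show $\mathcal G_{\hat D_\nu}\to\mathcal G_{\mathcal H}$ in $C^\infty_{\mathrm{loc}}(\mathcal H\times\mathcal H)$. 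Now $z\mapsto\mathcal G_\Om(z,p)$ is the harmonic function on $\Om$ with boundary values $-|z-p|^{2-2n}$, so by the maximum principle $\sup_{\hat D_\nu}|\mathcal G_{\hat D_\nu}(\cdot,p)| \le \mathrm{dist}(p,\pa\hat D_\nu)^{2-2n}$, a bound uniform in $\nu$ for $p$ near $0$ because $\mathrm{dist}(p,\pa\hat D_\nu)\to\mathrm{dist}(p,\pa\mathcal H) > 0$. Being jointly smooth and separately harmonic, $\{\mathcal G_{\hat D_\nu}\}$ then satisfies interior estimates in each variable and is bounded in every $C^k$-norm on compact subsets of $\mathcal H\times\mathcal H$, while the $C^\infty$-convergence $\pa\hat D_\nu\to\pa\mathcal H$ supplies uniform boundary barriers and hence uniform $C^k$-bounds up to $\pa\mathcal H$ on compacts. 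A normal-families argument extracts a $C^\infty_{\mathrm{loc}}$-limit; it is separately harmonic with the correct boundary values, so its difference with $\mathcal G_{\mathcal H}$ is a bounded harmonic function on the half space $\mathcal H$ vanishing on $\pa\mathcal H$, hence identically zero by odd reflection across the flat boundary together with Liouville's theorem, and uniqueness of the limit gives convergence of the full sequence. Finally $\mathcal G_{\mathcal H}(z,p) = -|z-p^{\ast}|^{2-2n}$ with $p^{\ast}$ the reflection of $p$ in $\pa\mathcal H$, so $\La_{\mathcal H}(p) = -\big(2\,\mathrm{dist}(p,\pa\mathcal H)\big)^{2-2n}$ and its derivatives are elementary, matching the right-hand side of the statement.

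The step I expect to be the main obstacle is the stability assertion on the \emph{unbounded} model domain $\mathcal H$: the $\hat D_\nu$ are bounded but grow without bound, so one must (i) obtain bounds and decay for $\mathcal G_{\hat D_\nu}$ uniform in $\nu$ — controlled, as above, by the maximum principle and the negligible contribution of distant boundary points to the data $-|z-p|^{2-2n}$ — and (ii) exclude any loss of mass to infinity in the limit, so that the limiting harmonic function is exactly $\mathcal G_{\mathcal H}$ and not a larger solution, which is what the reflection/Liouville step on $\mathcal H$ ensures. The scaling covariance, the reduction through the regular part, and the uniform interior and boundary regularity used in the normal-families step are all routine once the $C^\infty$-convergence $\psi_\nu\to\psi$ is in hand.
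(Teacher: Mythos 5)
Your scaling reduction is exactly the one the paper uses: set $\epsilon_\nu = -\psi_\nu(p_\nu)$, dilate by $T^\nu(z) = (z-p_\nu)/\epsilon_\nu$, observe that the scaled defining functions converge in $C^\infty_{\mathrm{loc}}$ to the linear defining function of the half space $\mathcal H$, and use the affine covariance of the Robin function to turn the desired statement into the convergence $D^{A\bar B}\Lambda_{\hat D_\nu}(0)\to D^{A\bar B}\Lambda_{\mathcal H}(0)$. (Your derivation of that covariance, and the parity bookkeeping with $(-1)^{|A|+|B|}$, agree with the paper's; note also that the evaluation point in the theorem's conclusion should be read as $0$ rather than $p_0$, which is how both you and the paper actually use it.) The only real difference is what happens next: the paper simply invokes the stability result (Proposition 3.1, quoted from Borah--Verma, Props.\ 7.1--7.2) asserting that a $C^2$-perturbation of domains forces convergence of $D^{A\bar B}\Lambda$; you instead supply a direct proof via the regular part $\mathcal G_\Omega(z,p)=G_\Omega(z,p)-|z-p|^{2-2n}$: maximum-principle bounds, interior estimates in each variable (valid since $\mathcal G$ is jointly smooth and separately harmonic), boundary equicontinuity from barriers coming from the $C^\infty$-convergence $\partial\hat D_\nu\to\partial\mathcal H$, a normal-families extraction, and identification of the limit by odd reflection and Liouville on $\mathcal H$. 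That argument is sound, and it has the merit of being explicit precisely where the paper's citation is least transparent --- the limiting domain $\mathcal H$ is unbounded while the $\hat D_\nu$ are bounded, a point you correctly flag as the main delicacy --- so your proposal is a correct, slightly more self-contained rendering of the same proof strategy.
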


We will show in section 6 that the asymptotics obtained in the above theorem
suffice
to calculate the limit of the first term of (\ref{cvr}). However it turns out
that the second term remains
indeterminate by these asymptotics. This means that in order to calculate this
term we
need finer asymptotics of $\La_{\nu}$ and their derivatives. A similar situation
was handled in \cite{BV} by using the following result of Levenberg-Yamaguchi
\cite{LY}: The function $\la$ defined by
\begin{equation}
\la(p) =
\begin{cases}
\Lambda(p) \big(\psi(p)\big)^{2n-2} & ; \quad \text{if } p \in D\\
- \vert \pa \psi(p) \vert^{2n-2} & ; \quad \text{if } p \in \pa D
\end{cases}
\end{equation}
is $C^2$ up to $\ov D$.  We will call $\la$ the normalised Robin
function
associated to $(D, \psi)$. Thus it is expected that finer asymptotics
of $\La_{\nu}$ and their derivatives along $\{p_{\nu}\}$ could be obtained if
the functions $\la_{\nu} = \La_{\nu} \psi_{\nu}$ and their derivatives along
$\{p_{\nu}\}$ are bounded. While $\la_{\nu}(p_{\nu})$ converge to $\la(p_0)$ by
theorem~1.2,  we establish the convergence of first and second derivatives of
$\la_{\nu}$ along $\{p_{\nu}\}$ in the following:

\begin{thm}
Under the hypothesis of theorem~1.2, we have
\begin{enumerate}
\item $\displaystyle \lim_{\nu \ra \infty} \frac{\pa \la_{\nu}}{\pa
p_{\al}}(p_{\nu}) = \frac{\pa \la}{\pa p_{\al}}(p_0)$, and

\item $\displaystyle \lim_{\nu \ra \infty} \frac{\pa^2 \la_{\nu}}{\pa p_{\al}
\pa \ov
p_{\be}}(p_{\nu}) = \frac{\pa^2 \la}{\pa p_{\al} \pa \ov p_{\be}}(p_0)$.
\end{enumerate}
where $\la$ is the normalised Robin function associated to $(D, \psi)$ and
$\la_{\nu}$ is the normalised Robin function associated to $(D_{\nu},
\psi_{\nu})$.
\end{thm}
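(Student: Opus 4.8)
This is a parametrised version of the Levenberg--Yamaguchi regularity theorem quoted above: for a \emph{fixed} pair $(D,\psi)$ the normalised Robin function extends to a $C^2$ function on $\ov D$, and the plan is to show that the first and second (mixed) derivatives at $p_\nu$ vary continuously as the configuration $(D_\nu,\psi_\nu,p_\nu)$ degenerates to $(D,\psi,p_0)$ with $p_0\in\pa D$. Concretely the scheme is: (i) transplant the $\la_\nu$ to a common neighbourhood of $p_0$ by diffeomorphisms converging to the identity; (ii) prove a bound on the transplanted functions in $C^{2,\ga}$ that is uniform in $\nu$; (iii) show that the only subsequential $C^2$-limit is $\la$, using that $\la_\nu\to\la$ on the interior of $D$; Arzel\`a--Ascoli together with the chain rule then finishes it.

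Normalise $p_0=0$ and $\ov\pa\psi(0)=(0,\dots,1)$, so that near $0$ each $\pa D_\nu$ is a graph whose defining data converge in $C^\infty$ to those of $\pa D$. Fix diffeomorphisms $\Phi_\nu$ of a neighbourhood $U\ni 0$, with $\Phi_\nu\to\mathrm{id}$ in $C^\infty(\ov U)$, each carrying $\ov{D_\nu}\cap U$ onto $\ov D\cap U'$, and set $\mu_\nu=\la_\nu\circ\Phi_\nu^{-1}$. For the \emph{interior stability}: if $K\Subset D$ then $K\Subset D_\nu$ for large $\nu$, and by the standard stability of the Dirichlet problem under $C^\infty$-convergence of the domains the Green functions $G_\nu$, hence the regular parts $R_\nu(z,p)=G_\nu(z,p)-\vert z-p\vert^{-2n+2}$ (which are smooth across the diagonal, $R_\nu(\cdot,p)$ being harmonic there), converge to those of $D$ on $K\times K$; thus $\La_\nu(p)=R_\nu(p,p)\to\La(p)$ and $\la_\nu\to\la$ in $C^\infty_{\mathrm{loc}}(D)$, so $\mu_\nu\to\la$ in $C^\infty_{\mathrm{loc}}(D\cap U')$.

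The crux is the \emph{uniform boundary estimate}. Write the Poisson representation $R_\nu(z,p)=-\int_{\pa D_\nu}P_\nu(z,\ze)\vert\ze-p\vert^{-2n+2}\,d\sig_\nu(\ze)$, so $\la_\nu(p)=-\psi_\nu(p)^{2n-2}\int_{\pa D_\nu}P_\nu(p,\ze)\vert\ze-p\vert^{-2n+2}\,d\sig_\nu(\ze)$, and rescale about $\pi_\nu(p)$ at the scale $\de=d(p,\pa D_\nu)$. A short computation shows $\la$ is a weight-zero object: with $\widehat D_\nu=\de^{-1}\bigl(D_\nu-\pi_\nu(p)\bigr)$ and $\widehat\psi_\nu(w)=\de^{-1}\psi_\nu\bigl(\pi_\nu(p)+\de w\bigr)$ one has $\la_{\widehat D_\nu}(w)=\la_\nu\bigl(\pi_\nu(p)+\de w\bigr)$, and in particular $\la_{\mathcal H}\equiv-\vert\pa\psi(p_0)\vert^{2n-2}$ is \emph{constant}, so the derivatives of $\la$ at $p_0$ live at the next order --- which is exactly why Theorem~1.2 alone is not enough. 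The desired $C^{2,\ga}$-bound on $\mu_\nu$ then translates into the assertion that the rescaled integrals and their first two $w$-derivatives, at the point at unit distance from $\pa\widehat D_\nu$, are bounded uniformly in $\nu$. This amounts to making the Levenberg--Yamaguchi regularity estimate quantitative and noting that it depends only on finitely many $C^k$-norms of $\psi_\nu$ near $p_0$ and on a lower bound for $\vert\pa\psi_\nu\vert$ there --- quantities under control since $\psi_\nu\to\psi$ in $C^\infty_{\mathrm{loc}}$; concretely one uses uniform interior and boundary Schauder-type estimates for the family $\widehat D_\nu$, whose boundaries are uniformly $C^\infty$, together with the fact that $\vert\ze-p\vert^{-2n+2}$ and its $p$-derivatives are smooth on the relevant boundary arcs because $\pi_\nu(p)$ sits at unit distance after rescaling. (Alternatively one can bypass the abstract compactness and estimate $\pa\la_\nu/\pa p_\al(p_\nu)$ and $\pa^2\la_\nu/\pa p_\al\pa\ov p_\be(p_\nu)$ directly from these rescaled integrals by dominated convergence.)

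Granting the uniform bound, Arzel\`a--Ascoli gives, along any subsequence, a further subsequence with $\mu_\nu\to\mu$ in $C^2$ on a smaller $\ov D\cap U''$; by interior stability $\mu=\la$ on $D\cap U''$, hence on $\ov D\cap U''$ by continuity, so in fact $\mu_\nu\to\la$ in $C^2(\ov D\cap U'')$ along the full sequence. Since $\Phi_\nu(p_\nu)\to0$ and $\Phi_\nu\to\mathrm{id}$ in $C^2(\ov{U''})$, the chain rule yields $\pa\la_\nu/\pa p_\al(p_\nu)\to\pa\la/\pa p_\al(0)$ and $\pa^2\la_\nu/\pa p_\al\pa\ov p_\be(p_\nu)\to\pa^2\la/\pa p_\al\pa\ov p_\be(0)$, which is the claim. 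The main obstacle is thus the uniform boundary estimate: the $C^2$-regularity of the normalised Robin function is usually recorded only qualitatively for a single domain, and promoting it to a bound uniform along a $C^\infty$-convergent family requires re-running that argument with explicit control on the geometry --- this is precisely the "finer asymptotics'' of $\La_\nu$ alluded to after Theorem~1.2. The remaining steps (the behaviour of $d(p,\pa D_\nu)$, $\pi_\nu(p)$ and $\psi_\nu(p)$ under the rescaling, and the Leibniz expansion with its cancellations) are routine.
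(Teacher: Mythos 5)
Your overall plan is a genuine alternative to the paper's route, but it hinges on a step that you flag and then leave open, and that step is precisely where the paper spends all of Sections~2,~4 and~5.

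The paper does not go through a uniform $C^{2,\gamma}$ bound and Arzel\`a--Ascoli. Instead it uses the Levenberg--Yamaguchi variation formulae (Proposition~\ref{varn-formula}), which express $\pa\la/\pa p_\ga$ and $\pa^2\la/\pa p_\ga\pa\ov p_\ga$ at $p$ as boundary integrals over $\pa D(p)$ involving $k_1^\ga$, $k_2^\ga$, $\vert\pa_w g\vert$ and $\pa^2 g/\pa w_\al\pa\ov p_\ga$. Convergence of these integrals as $\nu\to\infty$ is then established by a split into $B(0,R)$ and its complement, with pointwise convergence on compacta (via the stability Corollary~\ref{bdy-conv-Green-fn} and Proposition~\ref{conv-cpt-2nd-der-g_nu}) and uniform tail decay of order $O(R^{-2n+3})$ resp.\ $O(R^{-2n+4})$ coming from uniform estimates: bounds on $k_1^{\nu\ga}, k_2^{\nu\ga}$ (Proposition~\ref{upper_boun_k,k_j}), an external-ball argument for $\vert\pa_w g^\nu\vert$ (Proposition~\ref{bd-grad-g^nu}), higher derivative estimates on $g^\nu$ via a rescaling-and-normal-families argument (Proposition~\ref{estimate-der-g}), a ratio bound $\vert\pa^2 g^\nu\vert/\vert\pa_w g^\nu\vert\lesssim\vert w\vert^{-1}$ (Lemma~\ref{der_g_nu/grad_g_nu-bdd}), and finally a uniform bound on the mixed derivative $\pa^2 g_\nu/\pa\ov w_\al\pa p_\ga$ (Propositions~\ref{F*},~\ref{estimate-mixed-2nd-der-g}). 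This is a direct dominated-convergence proof of (\ref{conv-1stder-int}) and (\ref{conv-2nd-int}), not a compactness argument. You mention this route only in a parenthetical.

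The gap in your scheme is the ``uniform $C^{2,\gamma}$ bound on the transplanted $\mu_\nu$'' which you call the crux and the main obstacle but do not prove. This is not merely an invocation of Schauder theory: $\la_\nu$ is not the solution of a boundary-value problem near $\pa D_\nu$, it is the Robin constant of the scaled domain $D_\nu(p)$, i.e.\ the regular part of the Green function at a singular pole, so interior/boundary Schauder estimates for the rescaled domains $\widehat D_\nu$ do not directly give the bound. Making the Levenberg--Yamaguchi regularity uniform in $\nu$ is exactly the content of the paper's Sections~2,~4 and~5: uniform collar lemmas (\ref{exist_de}), uniform bounds on the derivatives of the defining variation $f_\nu$ (Lemma~\ref{upper-bound-der-f-and-f_j}), uniform external tangent balls (Lemma~\ref{external_ball}), uniform tubular-neighbourhood/graph representations (Proposition~\ref{implicit-fn}), and so on. Your proposal correctly names this as the obstacle but offers no argument for it beyond asserting that the constants depend on finitely many $C^k$-norms, so as written it is not a proof. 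The weight-zero observation $\la_{\widehat D_\nu}(w)=\la_\nu(\pi_\nu(p)+\de w)$ is correct and is a nice way to see why Theorem~1.2 is insufficient, but it does not by itself yield the needed bound.
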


\medskip

We remark that unlike the Bergman, Carath\'{e}odory and Kobayashi metrics, the
$\La$-metric is not invariant under biholomorphisms in general. For an example
we refer to \cite{BV}. The only information in this respect that we have
is that any biholomorphism between two $C^{\infty}$-smoothly bounded strongly
pseudoconvex domains is Lipschitz with respect to the $\La$-metric. This follows
from \cite{BV}*{theorem~1.4}.
Despite this drawback, we put our effort to explore this
metric by finding its various properties analogous to those possessed by these
invariant metrics. The second theme of this article is to study the existence of
closed geodesics for the $\La$-metric of a given homotopy type. In \cite{Her},
Herbort proved that on a $C^{\infty}$-smoothly bounded strongly pseudoconvex
domain $D$ in $\mathbf{C}^{n}$ which is not simply connected, every nontrivial
homotopy class in $\pi_1(D)$ contains a closed geodesic for the Bergman metric.
Using the asymptotics of the $\La$-metric derived in \cite{BV} we prove the
following analogue for the $\La$-metric:

\begin{thm}
Let $D$ be a $C^{\infty}$-smoothly bounded strongly pseudoconvex domain in
$\mathbf{C}^{n}$ which is not simply connected. Then every nontrivial homotopy
class in $\pi_1(D)$ contains a closed geodesic for the $\La$-metric.
\end{thm}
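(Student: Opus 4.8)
The plan is to follow the classical variational argument used by Herbort for the Bergman metric, adapting it to the $\La$-metric by means of the boundary asymptotics established in \cite{BV}. Fix a nontrivial homotopy class $[\gamma]\in\pi_1(D)$. First I would observe that $(D,ds^2)$ is a K\"{a}hler manifold, and the key analytic input is that it is \emph{complete} and that on a fixed relatively compact subset the metric is comparable to the Euclidean metric, while near $\pa D$ the $\La$-metric blows up in the normal direction like the Bergman metric does (this is precisely what the asymptotics of \cite{BV} give: $g_{n\ov n}\sim c/\psi^2$ up to the boundary, with tangential components of lower order). Completeness follows because the length of any curve exiting every compact set is infinite, which one reads off from the normal-direction blow-up. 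Having completeness, a minimizing sequence of loops in $[\gamma]$ of length tending to $\ell:=\inf\{L(\sigma):\sigma\in[\gamma]\}$ can be taken with a common basepoint, and by Arzel\`{a}--Ascoli (after reparametrizing by arc length) a subsequence converges uniformly to a loop $\gamma_0$; the only thing to rule out is that $\gamma_0$ escapes to $\pa D$, and this is where completeness plus a lower bound on the length of any loop that wanders close to the boundary is used.

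More precisely, the crux is to show $\ell>0$ and that $\ell$ is attained by a loop staying in a fixed compact subset of $D$. Since $[\gamma]$ is nontrivial and $D$ is an open subset of $\mathbf C^n$ with $\pi_1$ generated by loops around the ``holes'', any loop in $[\gamma]$ has Euclidean diameter bounded below by some $\eta>0$ (it must enclose a hole), so it cannot be contained in an arbitrarily small Euclidean ball; combined with the fact that the $\La$-metric dominates a constant multiple of the Euclidean metric on any compact set, this forces $\ell>0$. To confine the minimizing sequence, I would use the completeness estimate: there is a collar neighborhood $U$ of $\pa D$ and a constant $c_0>0$ such that any path from $\pa U$ to a point within Euclidean distance $\ep$ of $\pa D$ has $ds^2$-length at least $c_0\log(1/\ep)$ — again from the $\La$-metric's normal blow-up. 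Hence a loop of bounded length can only penetrate a definite distance into the collar, so the minimizing sequence lives in a fixed compact $K\subset D$, Arzel\`{a}--Ascoli applies, and the limit $\gamma_0$ is a rectifiable loop in $K$ of length $\ell$, still in the class $[\gamma]$ by uniform convergence (homotopy classes are locally constant in the uniform topology).

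Finally, from the fact that $\gamma_0$ is a length-minimizer in its free homotopy class and lies in the interior $K$ where the metric is smooth, standard Riemannian geometry (the first variation formula, applied to variations through loops, which automatically stay in $[\gamma]$) shows $\gamma_0$, suitably parametrized proportionally to arc length, is a smooth closed geodesic for $ds^2$. One should note the metric is genuinely Riemannian (the real part of the K\"{a}hler metric $ds^2$), so geodesics make sense in the usual sense; $ds^2$ being K\"{a}hler is not essential here beyond smoothness and positivity. I expect the main obstacle to be the confinement step: one must verify that the boundary asymptotics of the $\La$-metric from \cite{BV} — which are stated pointwise along inner normals, or as refined in this paper — are uniform enough in a collar of $\pa D$ to produce the logarithmic lower bound on lengths of paths approaching $\pa D$; this is the analogue of the known completeness of the Bergman metric on strongly pseudoconvex domains, and it is where the strong pseudoconvexity hypothesis is really used. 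Once completeness and the collar estimate are in hand, the rest is the standard Hilbert--Cartan type argument.
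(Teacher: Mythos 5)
Your outline re-derives the Hilbert--Cartan variational argument from scratch, whereas the paper takes a shorter route: it invokes a general theorem of Herbort \cite{Her} as a black box. Herbort's theorem says that a bounded domain $G\subset\mathbf R^k$ with nontrivial $\pi_1$, admitting at each boundary point a simply connected neighbourhood intersection, and equipped with a complete Riemannian metric $g$ satisfying the blow-up property (P) --- for every $S>0$ there is $\de>0$ so that $g(p,X)\geq S|X|^2$ whenever $d(p,\pa G)<\de$ --- has a closed geodesic in every nontrivial class. The paper then establishes a single estimate (Proposition~\ref{La-Ber}): the $\La$-metric and the Bergman metric are uniformly comparable on $D$, which follows from the boundary asymptotics of \cite{BV} and the classical Bergman asymptotics. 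Completeness and property (P) for the $\La$-metric are thus inherited from the corresponding known facts for the Bergman metric (Ohsawa and Diederich). Your approach instead verifies the same geometric inputs --- completeness, blow-up near $\pa D$, confinement of minimizing sequences --- directly from the $\La$-metric asymptotics, and then runs the minimization by hand. The outline is essentially correct, with two points you should tighten: (a) the closed-geodesic conclusion requires minimizing over the \emph{free} homotopy class (conjugacy class), so you should not fix a common basepoint --- otherwise the limit is only a geodesic \emph{loop}, possibly with a corner at the basepoint; and (b) your confinement argument via the logarithmic penetration estimate handles loops that start outside a collar of $\pa D$, but you also need to rule out loops lying entirely in a thin collar; this is where the Euclidean diameter lower bound (which you mention, and which in turn rests on the simple-connectedness of small boundary neighbourhood intersections, Herbort's condition (i)) combines with property (P) to force the length to blow up. Both your route and the paper's buy the same theorem; the paper's is shorter and offloads the variational machinery to the cited result.
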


\medskip

Let $D$ be $C^{\infty}$-smoothly bounded strongly pseudoconvex domain in
$\mathbf{C}^{n}$. H.~Donnelly and C. Fefferman \cite{DonFef} proved that $D$
does not admit any square integrable harmonic $(p, q)$-form relative to the
Bergman metric except when $p + q =n$, in which case the space of such forms is
infinite dimensional. A more transparent and elementary proof of the infinite
dimensionality of the $L^2$-cohomology of the middle dimension was given by
Ohsawa \cite{Ohs89}. In \cite{Don}, Donnelly gave an alternative proof of the
vanishing
of the $L^2$-cohomology outside the middle dimension using the following
observation of Gromov \cite{Gro}: If $M$ is a
complete K\"{a}hler manifold of complex dimension $n$ such that the
K\"{a}hler form $\om$ of $M$ can be written as $\om = d\eta$, where
$\eta$ is bounded in supremum norm, then $M$ does not admit any square
integrable harmonic $i$ form for $i \neq n$. Finally, we observe that these
ideas can be applied to the $\La$-metric to prove the following:

\begin{thm}
Let $D$ be a $C^{\infty}$-smoothly bounded strongly pseudoconvex domain in
$\mathbf{C}^{n}$. Let $\mathcal{H}_2^{p,q}(D)$ be the space of square
integrable harmonic $(p, q)$-forms relative to the $\La$-metric. Then
\[
\dim\mathcal{H}_2^{p,q}(D) =
\begin{cases}
0 & ; \quad \text{if } p+q \neq n,\\
\infty & ; \quad \text{if } p+q = n.
\end{cases}
\]
\end{thm}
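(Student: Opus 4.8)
The plan is to invoke the Gromov--Donnelly--Fefferman circle of ideas, for which we must establish two things: (i) the $\La$-metric $ds^2$ is complete on $D$, and (ii) the K\"{a}hler form $\om$ of $ds^2$ admits a bounded primitive, i.e. $\om = d\eta$ with $\|\eta\|_{L^\infty(D,ds^2)} < \infty$. Granted these, the vanishing of $\mathcal H_2^{p,q}(D)$ for $p+q \neq n$ is immediate from Gromov's theorem as quoted in the introduction (applied to the $i$-form level with $i = p+q$, together with the bidegree decomposition of harmonic forms on a complete K\"{a}hler manifold). For both (i) and (ii) I would lean on the boundary asymptotics of $ds^2$ already recorded in \cite{BV}: near a boundary point the metric is comparable to the Bergman/K\"{o}hn--Nirenberg model metric $\sim \frac{|dz_n|^2}{\de^2} + \frac{\sum_{j<n}|dz_j|^2}{\de}$ where $\de = \de(z) = d(z,\pa D)$, and $\log(-\La)$ behaves like $-\log \de$ up to bounded terms. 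Completeness then follows because any curve exiting every compact subset of $D$ must approach $\pa D$, and the $\frac{d\de^2}{\de^2}$ term forces such a curve to have infinite length, exactly as in the classical Bergman case.

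For the bounded primitive, the natural candidate is $\eta = -\partial \log(-\La) = -\sum_\al \frac{\pa \log(-\La)}{\pa z_\al}\,dz_\al$ (or rather its imaginary part, up to the factor $i$), since $\om = i\,\pa\ov\pa \log(-\La) = d\big(\tfrac{i}{2}(\pa - \ov\pa)\log(-\La)\big)$, so that $\eta$ differs from $d^c\log(-\La)$ by a constant factor. What must be checked is that the $ds^2$-norm of $\pa \log(-\La)$ is bounded on $D$, i.e. that $g^{\al\ov\be}\,\pa_\al \log(-\La)\,\ov{\pa_\be \log(-\La)}$ is bounded. Writing $u = \log(-\La)$, this is $g^{\al\ov\be} u_\al \ov u_\be$ where $g_{\al\ov\be} = u_{\al\ov\be}$, so we are asking that the ``gradient length of $u$ measured in the metric $i\pa\ov\pa u$'' stay bounded. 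Here is where I would use Theorem~1.2 (and if needed Theorem~1.3): these give the precise boundary behavior of $\La_\nu$ and its derivatives after rescaling by powers of the defining function, and translate into $u_\al \sim \pa_\al(-\log\psi) = -\psi_\al/\psi$ and $u_{\al\ov\be} \sim \psi_\al\ov{\psi_\be}/\psi^2 + (\text{lower order})/\psi$ near $\pa D$ — precisely the structure of a bounded K\"{a}hler potential gradient. A cleaner route, avoiding re-deriving the inverse metric asymptotics: since by \cite{BV}*{theorem~1.4} (cited in the excerpt) the $\La$-metric is comparable near the boundary to a metric with bounded geometry of the stated model type, and $d^c(-\log\de)$ is well known to be bounded in such a model, one concludes $\|\eta\|_\infty < \infty$ by a partition-of-unity/comparison argument matching the interior (where everything is smooth and bounded on compacta) to finitely many boundary charts.

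Finally, for the middle-dimensional case $p+q = n$, the claim is infinite dimensionality of $\mathcal H_2^{n-q,q}(D)$; here I would follow Ohsawa's \cite{Ohs89} (or Donnelly--Fefferman \cite{DonFef}) argument rather than Gromov's, the point being that $L^2$-estimates for $\ov\pa$ with respect to a complete K\"{a}hler metric of the above boundary type produce an infinite-dimensional space of $L^2$ holomorphic $n$-forms (for $q=0$), and more generally $L^2$ harmonic forms in the middle degree, because the relevant weight $e^{-\varphi}$ with $\varphi$ comparable to $(2n-2)(-\log\delta)$ (coming from $\La \sim -\delta^{-(2n-2)}$ up to the normalized Robin function being $C^2$ and nonvanishing on $\ov D$, Theorem~1.2) is integrable against the metric volume form, while local holomorphic data near a boundary point can be prescribed freely. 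The main obstacle, and the step requiring the most care, is the bounded-primitive estimate $\|\eta\|_{L^\infty(ds^2)} < \infty$: the difficulty is that $\log(-\La)$ is only real-analytic in the interior with a genuine logarithmic blow-up at $\pa D$, so one cannot simply bound derivatives globally, and one must exploit the fact — encoded in Theorems~1.2 and~1.3 and in the $C^2$-regularity of the normalized Robin function $\la$ — that the singular part of $\log(-\La)$ is exactly $-(2n-2)\log\psi$ plus a function whose first derivatives are $O(1/\psi)$, which is precisely the borderline rate at which $d^c$ of it has bounded length in the $\La$-metric. I expect everything else (completeness, the Gromov implication, the $L^2$-$\ov\pa$ construction in middle degree) to be routine given the cited results.
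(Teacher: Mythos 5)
Your proposal follows essentially the same route as the paper: completeness of $ds^2$ via comparability with the Bergman metric, Gromov's bounded-primitive vanishing criterion applied to $\eta = -i\,\pa\log(-\La)$ for the degrees $p+q\neq n$, and Ohsawa's theorem (together with a separate elementary argument for the edge bidegrees $(n,0)$ and $(0,n)$, which Ohsawa's statement, requiring $p,q>0$, does not cover) for the infinite dimensionality at $p+q=n$. The paper makes your bounded-primitive estimate precise by writing $\vert\eta_z(v)\vert^2$ and $ds^2_z(v,v)$ explicitly in terms of the normalised Robin function $\la=\La\psi^{2n-2}$ and its first and second derivatives, using $\la\in C^2(\ov D)$ with $\la\neq 0$ on $\pa D$, and then checking the ratio separately along normal directions (limit $2(n-1)$) and along complex-tangential directions at strongly pseudoconvex boundary points (limit $0$), uniformly on $\pa D$ --- exactly the case analysis your outline identifies as the crux but does not carry out.
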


\medskip

\no {\it Acknowledgements :} The author is indebted to K.~Verma for his
encouragement,
precious comments
and various helpful clarifications during the course of this work.

\section{Properties of $\la$}\label{relation-la-g}
\noindent Let $D$ be a $C^{\infty}$-smoothly bounded domain in $\mathbf{C}^{n}$
with a $C^{\infty}$-smooth defining function $\psi$ defined on all of
$\mathbf{C}^{n}$. In this section, we recall some basic properties of the
normalised Robin function $\la$ associated to $(D, \psi)$. We start by
describing the geometric meaning of $\la(p)$. Given $p \in D $, let
\[
T : D \times \mathbf{C}^{n} \ra \mathbf{C}^{n}
\]
be the map defined by
\begin{equation}
T(p,z) = \frac{z - p}{- \psi(p)}.
\end{equation}
Set
\begin{equation}\label{def-variation}
D(p) =
\begin{cases}
T(p, D) & ; \quad \text{if } p \in D,\\
\Big \{ w \in \mathbf{C}^{n} : 2 \Re \big( \sum_{\al = 1}^{n} \psi_{\al}(p)
w_{\al} \big) - 1 < 0 \Big \} & ; \quad \text{if } p \in \pa D.
\end{cases}
\end{equation}
Thus $\{D(p) : p \in \ov D\}$ is a family of domains in $\mathbf{C}^{n}$ each
containing the origin. When $p \in D$, $D(p)$ is the image of $D$ under the
affine transformation $T(p, \cdot)$ and hence by
\cite{Y}*{proposition~5.1}, we have
\[
\La_{D(p)}(0) = \La(p) \big( \psi(p) \big)^{2n -2}= \la(p).
\]
When $p \in \pa D$, $D(p)$ is a half space for which we have the explicit
formula \cite{BV}*{(1.4)}
\[
\La_{D(p)}(0) = - \big \vert \pa \psi(p) \big \vert ^{2n - 2} = \la(p).
\]
Thus for each $p \in \ov D$, $\la(p)$ is the Robin constant for $D(p)$
at the origin. We will denote the Green function for $D(p)$ with pole at $p$ by
$g(p, w)$.

To discuss the regularity of the function $\la(p)$ on $\ov D$,  we set
\[
\mathscr{D} = \cup_{p \in D} \big( p, D(p) \big) = \big\{(p, w) : p \in D, w \in
D(p) \big\} .
\]
The set $\mathscr{D}$ can be considered as a variation of domains in
$\mathbf{C}^{n}$ with parameter space $D$, i.e., as a map
\[
\mathscr{D} : p \ra D(p)
\]
which associates to each $p \in D$ a domain $D(p) \subset
\mathbf{C}^{n}$. We call $\mathscr{D}: p \ra D(p)$ the variation associated to
$(D, \psi)$. The following function
\begin{equation}\label{def-defining-fn}
f(p, w) = 2 \Re \bigg \{ \sum_{\al = 1}^{n} \int _{0} ^{1} \Big( w_{\al}
\psi_{\al} \big(p - \psi(p) t
w \big) \Big) dt \bigg \} - 1
\end{equation}
was constructed in \cite {LY} which is jointly smooth on $\mathbf{C}^{n} \times
\mathbf{C}^{n}$ and satisfies,
taking $\ti{\mathscr{D}} = D \times \mathbf{C}^{n}$,
\begin{enumerate}
\item [(i)] $\mathscr{D} = \{(p, w) \in \ti{\mathscr{D}} : f(p, w) < 0 \}$, $\pa
\mathscr{D} :=
\big\{ (p, w) : p \in D, w \in \pa D(p) \big\} = \{(p, w) \in \ti{\mathscr{D}}:
f(p, w) = 0 \}$
and $\text{Grad}_{(p, w)} f \neq 0$ on $\pa \mathscr{D}$,

\item [(ii)] For each $p \in D$, $D(p) = \{w \in \mathbf{C}^{n} : f(p, w) < 0
\}$,
$\pa D(p) = \{w \in \mathbf{C}^{n} : f(p, w) = 0 \}$ and $\text{Grad}_{w}f(p, w)
\neq 0$ on $\pa
D(p)$.
\end{enumerate}
Therefore, we say that the variation $\mathscr{D}: p \ra D(p)$ is smooth and is
defined by $f(p, w)$. It is evident that the variation
\[
\mathscr{D} \cup \pa \mathscr{D} : p \ra D(p) \cup \pa D(p) = \ov D(p)
\]
is diffeomorphically equivalent to the trivial variation $D \times \ov D$. It
follows that $g(p, w)$ has a $C^4$ extension to a neighbourhood of $\mathscr{D}
\setminus D \times \{0\}$. Now fix a point $p_0 \in D$ and let $\ov B(0, r)
\subset D(p_0)$. Then there exists a neighbourhood $U$ of $p_0$ in $D$ such that
$\ov B(0, r) \subset D(p)$ for all $p \in U$. Since $g(p, w) - \vert w \vert
^{-2n
+ 2}$ is a harmonic
function of $w \in D(p)$ and is equal to $\la(p)$ when $w = 0$, we obtain by the
mean value property of harmonic function
\begin{equation}\label{la-g}
\begin{split}
\la(p) & = \frac{1}{r^{2n - 1} \sigma_{2n}} \int_{\pa B(0, r)} \big (g(p, w) -
\vert w \vert ^{-2n + 2} \big) \, dS_{w}\\
& = - \frac{1}{r^{2n-2}} + \frac{1}{r^{2n - 1} \sigma_{2n}} \int_{\pa B(0,
r)} g(p, w) \, dS_{w}
\end{split}
\end{equation}
where by $dS$ we denote the surface area measure on a smooth surface in
$\mathbf{R}^{2n}$ and $\sigma_{2n}$ be the surface area of $\pa B(0, 1)$. It
follows that $\la(p)$ is smooth on $U$ and thus on $D$.

\medskip

Now let $1 \leq \gamma \leq n$. Observe that for each $p \in D$, the functions
\[
\frac{\pa g}{\pa p_{\gamma}}(p, w), \quad \frac{\pa^2 g}{\pa p_{\gamma} \pa \ov
p_{\gamma}}(p, w)
\]
are harmonic in all of $D(p)$ and
\[
\frac{\pa g}{\pa p_{\gamma}}(p, 0) = \frac{\pa \la}{\pa p_{\gamma}}(p), \quad
\frac{\pa^2 g}{\pa p_{\gamma} \pa \ov p_{\be}}(p, 0) = \frac{\pa^2 \la}{\pa
p_{\gamma} \pa \ov p_{\gamma}}.
\]
To find the boundary values of these functions in terms of $f$,
consider the quantities $k_1^{\gamma}$ and $k_2^{\gamma}$,
\begin{equation}\label{K_1-and-K_2}
k_1^{\gamma}(p, w) = \frac{\pa f}{\pa p_{\gamma}} (p, w) \big \lvert \pa_{w} f
(p, w) \big \rvert ^{-1} \quad \text{and} \quad k_2^{\gamma}(p, w) =
\mathcal{L}^{\gamma}f (p, w) \big \lvert \pa_{w} f (p, w) \big \rvert ^{-3}
\end{equation}
where
\begin{equation}\label{levi}
\mathcal{L}^{\gamma}f = \frac{\pa^2 f}{\pa p_{\gamma} \pa \ov{p}_{\gamma}}
\lvert \pa_{w} f \rvert ^{2} - 2 \Re \Big(\frac{\pa f}{\pa p_{\gamma}}
\sum_{\al=1}^{n} \frac{\pa f}{\pa \ov{w}_{\al}} \frac{\pa^2 f}{ \pa w_{\al} \pa
\ov{p}_{\gamma}} \Big) + \Big\lvert \frac{\pa f}{\pa p_{\gamma}} \Big\rvert^2
\De_{w} f,
\end{equation}
defined wherever $\pa_w f(p, w) > 0$, thus, in particular on
\[\pa \mathscr{D} = \cup_{p \in D} \big( p, \pa D(p) \big).\]
Note that on $\pa \mathscr{D}$, the quantities $k_1^{\gamma}$ and
$k_2^{\gamma}$ are independent of the defining function $f$ for $\mathscr{D}$.
Since $g(p, w) > 0$ on $\mathscr{D}$, $g(p, w) = 0$ on $\pa \mathscr{D}$ and
$\vert \pa_{w} g(p, w) \vert = - \frac{1}{2} \frac{\pa g}{\pa n_{w}}(p, w) > 0$
on $\pa \mathscr{D}$, we can use $-g(p, w)$ as a defining function for
$\mathscr{D}$ and hence
\[
\frac{\pa g}{\pa p_{\gamma}}(p, w) = - k_{1}^{\gamma}(p, w) \vert \pa_{w} g(p,
w) \vert
\]
and
\[
\mathcal{L}^{\gamma}g(p, w) = -k_{2}^{\gamma}(p, w) \vert \pa_{w} g(p, w) \vert
^3
\]
for all $(p, w) \in \pa \mathscr{D}$. Since $g(p, w)$ is of class $C^4$ up to
$\pa D(p)$, $\Delta_{w} g(p, w) = 0$ for $w \in \pa D(p)$ and hence from
(\ref{levi})
\begin{align*}
\frac{\pa^2 g}{\pa p_{\gamma} \pa \ov p_{\gamma}} & = - k_2^{\gamma} \vert
\pa_{w} g \vert + 2 \Re \bigg(\frac{\frac{\pa g}{\pa p_{\gamma}}}{\vert \pa_{w}
g \vert} \sum_{\al=1}^{n} \frac{\frac{\pa g}{\pa \ov w_{\al}}}{\vert \pa_{w} g
\vert} \frac{\pa^2  g}{\pa w_{\al} \pa \ov p_{\gamma}}\bigg)\\
& = - k_2^{\gamma} \vert \pa_{w} g \vert -  2 \Re \bigg(k_1^{\gamma}
\sum_{i=1}^{n} \frac{\frac{\pa g}{\pa \ov w_{\al}}}{\vert \pa_{w} g \vert}
\frac{\pa^2  g}{\pa w_{\al} \pa \ov p_{\gamma}}\bigg)
\end{align*}
for $w \in \pa D(p)$. We summarize this in the following
\begin{prop} \label{bdy-val-der-g}
The function $g(p, w)$ is smooth upto $\mathscr{D} \cup \pa \mathscr{D} =\{(p,
w) : p \in D, w \in \ov D(p)\}$. If $ 1\leq \gamma \leq n$ and $p \in D$, then
\begin{enumerate}
\item  $\frac{\pa g}{\pa p_{\gamma}}(p)$ is a harmonic function of $w \in D(p)$
with
\[
\frac{\pa g}{\pa p_{\gamma}}(p, 0) = \frac{\pa \la}{\pa p_{\gamma}}(p)
\]
and with boundary values
\[
\frac{\pa g}{\pa p_{\gamma}}(p, w) = - k_{1}(p, w) \vert \pa_{w} g(p, w) \vert,
\quad w \in \pa D(p),
\]

\item $\frac{\pa^2 g}{\pa p_{\gamma} \pa \ov p_{\gamma}}(p)$ is a harmonic
function
of $w \in D(p)$with
\[
\frac{\pa^2 g}{\pa p_{\gamma} \pa \ov p_{\be}}(p, 0) = \frac{\pa^2 \la}{\pa
p_{\gamma} \pa \ov p_{\gamma}}(p)
\]
and with boundary values
\[
\frac{\pa^2 g}{\pa p_{\gamma} \pa \ov p_{\gamma}}(p,w) = - k_2^{\gamma}(p,w)
\vert \pa_{w} g(p,w) \vert -  2 \Re \bigg(k_1^{\gamma}(p,w) \sum_{\al=1}^{n}
\frac{\frac{\pa g}{\pa \ov w_{\al}}(p,w)}{\vert \pa_{w} g(p,w) \vert}
\frac{\pa^2 g}{\pa w_{\al} \pa \ov p_{\gamma}}(p,w)\bigg), \quad w \in \pa D(p).
\]
\end{enumerate}
\end{prop}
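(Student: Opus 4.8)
\emph{Overview.} The proposition records three facts about $g(p,w)$: joint smoothness up to $\mathscr{D}\cup\pa\mathscr{D}$ off the pole, harmonicity in $w$ of $\pa g/\pa p_\ga$ and $\pa^2 g/\pa p_\ga\pa\ov p_\ga$ together with their values at the origin, and explicit boundary expressions for these derivatives. The plan is to obtain smoothness from the diffeomorphic triviality of the variation $\mathscr{D}\cup\pa\mathscr{D}$ recorded above, harmonicity by peeling off the $p$-independent pole, and the boundary values by using $-g$ itself as a defining function for $\mathscr{D}$.

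\emph{Smoothness and harmonicity.} For fixed $p$, write $g(p,w)=\vert w\vert^{2-2n}+h(p,w)$, where the regular part $h(p,\cdot)$ is harmonic on all of $D(p)$ and $h(p,0)=\la(p)$ by the very definition of the Robin constant and of $\la$ (cf. (\ref{la-g})). Since $D(p)$ is $C^{\infty}$-smoothly bounded, $h(p,\cdot)$ is smooth up to $\pa D(p)$; and since the variation $\mathscr{D}\cup\pa\mathscr{D}:p\mapsto\ov D(p)$ is diffeomorphically equivalent to the trivial variation $D\times\ov D$ and is cut out by the jointly smooth function $f$ of (\ref{def-defining-fn}), pulling the boundary value problem back through this diffeomorphism and using boundary regularity with smooth dependence on parameters gives a smooth extension of $g$ (equivalently of $h$), the $C^4$ asserted in the text being all we need, to a neighbourhood of $\mathscr{D}\cup\pa\mathscr{D}$ off the pole locus $D\times\{0\}$. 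Because $\vert w\vert^{2-2n}$ does not depend on $p$, one has $\pa g/\pa p_\ga=\pa h/\pa p_\ga$ and $\pa^2 g/\pa p_\ga\pa\ov p_\ga=\pa^2 h/\pa p_\ga\pa\ov p_\ga$; differentiating $\De_w h(p,w)=0$ in $p$ (legitimate by the smoothness just obtained) shows these are harmonic in $w$ on all of $D(p)$, and setting $w=0$ gives $\pa g/\pa p_\ga(p,0)=\pa\la/\pa p_\ga(p)$ and likewise for the second derivative.

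\emph{Boundary values.} Since $g>0$ on $\mathscr{D}$, $g=0$ on $\pa\mathscr{D}$, and $\vert\pa_w g\vert=-\tfrac12\,\pa g/\pa n_w>0$ on $\pa\mathscr{D}$ (Hopf boundary lemma for $g(p,\cdot)$ on the smooth domain $D(p)$), $-g$ is a smooth defining function for $\mathscr{D}$. As the quantities $k_1^\ga$ and $k_2^\ga$ of (\ref{K_1-and-K_2}) depend only on $\mathscr{D}$ along $\pa\mathscr{D}$, not on the chosen defining function, I compute them with $-g$ in place of $f$, obtaining on $\pa\mathscr{D}$
\[
\frac{\pa g}{\pa p_\ga}(p,w)=-k_1^\ga(p,w)\,\vert\pa_w g(p,w)\vert,\qquad \mathcal{L}^\ga g(p,w)=-k_2^\ga(p,w)\,\vert\pa_w g(p,w)\vert^3.
\]
Finally, $g(p,\cdot)$ being $C^4$ up to $\pa D(p)$ and harmonic inside, $\De_w g(p,w)=0$ for $w\in\pa D(p)$ by continuity; substituting this and the two displayed identities into the definition (\ref{levi}) of $\mathcal{L}^\ga g$ and solving for $\pa^2 g/\pa p_\ga\pa\ov p_\ga$ produces exactly the claimed boundary formula.

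\emph{Main obstacle.} The only parts that are not pure bookkeeping are the joint boundary regularity of $g(p,w)$—which I would not reprove but import from \cite{LY} through the diffeomorphic triviality of the variation—and the defining-function independence of $k_1^\ga$, $k_2^\ga$ on $\pa\mathscr{D}$. For $k_1^\ga$ this is immediate, since replacing $f$ by $af$ with $a>0$ scales both $\pa f/\pa p_\ga$ and $\vert\pa_w f\vert$ by $a$ on $\pa\mathscr{D}$. For $k_2^\ga$ one must verify $\mathcal{L}^\ga(af)=a^3\mathcal{L}^\ga f$ on $\{f=0\}$, i.e. that every term generated by differentiating the factor $a$ carries a factor of $f$ and hence vanishes there; this is the one computation needing care, and it is entirely parallel to the classical defining-function independence of the Levi form. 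A minor additional point is the interior-ball condition underlying the Hopf lemma for the $D(p)$, which again follows from the smoothness of the variation.
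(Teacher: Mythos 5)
Your proposal is correct and follows essentially the same route as the paper: joint smoothness via the diffeomorphic triviality of the variation $\mathscr{D}\cup\pa\mathscr{D}$, harmonicity of the $p$-derivatives by peeling off the $p$-independent singular part $\vert w\vert^{2-2n}$, and the boundary formulas by taking $-g$ as a defining function for $\mathscr{D}$ together with the defining-function independence of $k_1^\ga,k_2^\ga$ on $\pa\mathscr{D}$ and the vanishing of $\De_w g$ on $\pa D(p)$. The extra observations you supply (the Hopf lemma for $\vert\pa_w g\vert>0$, the scaling computation for defining-function independence) are details the paper leaves implicit but are consistent with its argument.
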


\noindent To this end, it was proved in \cite{LY} that $g(p, w)$ is
$C^2$ up to $\{(p, w) : p \in \ov D, w \in \ov D(p)\}$ by deriving the
following estimates: there exists a constant $C$ independent of $p \in \pa D$
such that
\begin{equation}\label{estimates}
\begin{cases}
\vert k_1^{\ga}(p, w) \vert \leq C \vert w \vert^{2}\\
\vert k_2^{\ga}(p, w) \vert \leq C \vert w \vert^{3}\\
\vert \pa_w g(p, w) \vert \leq C \vert w \vert^{-2n+1}\\
\Big\vert \frac{\pa^2 g}{\pa \ov w_{\al} \pa p_{\gamma}} \Big \vert \leq C \vert
w \vert^{-2n +2}
\end{cases}
\end{equation}
for all $w \in \pa D^{\nu}$ with $\vert w \vert \geq 1$. Moreover, the
derivatives $\frac{\pa g}{\pa p_{\gamma}}$ and $\frac{\pa^2 g}{\pa p_{\gamma}
\pa \ov p_{\gamma}}$ are given by the following variation formulae:
\begin{prop}
Let $1 \leq \gamma \leq n$. Then for $p \in \ov D$ and $a \in D(p)$,
\begin{equation}\label{1st-var-g}
\frac{\pa g}{\pa p_{\gamma}}(p, a) = \frac{1}{2(n - 1) \sigma_{2n}} \int_{\pa
D(p)} k_1^{\gamma}(p, w) \big \lvert \pa_{w} g(p, w)
\rvert \frac{\pa g_a(p, w)}{\pa n_w} dS_{w}
\end{equation}
and
\begin{multline}\label{2nd-var-g}
\frac{\pa^2 g}{\pa p_{\gamma}\pa \ov{p}_{\gamma}}(p,a) = \frac{1}{2(n - 1)
\sigma_{2n}} \int_{\pa D(p)} k_2^{\gamma}(p, w) \vert
\pa_{w}g(p, w) \vert \frac{\pa g_a(p, w)}{\pa n_w} \, dS_{w}\\
+ \frac{1}{(n - 1) \sigma_{2n}} \Re \sum_{\al =1}^{n} \int_{\pa D(p)}
k_1^{\gamma}(p,w) \frac{\frac{\pa g}{\pa \ov w_{\al}}(p,w)}{\vert \pa_{w} g(p,w)
\vert} \frac{\pa^2 g}{\pa {w}_{\al} \pa \ov p_{\gamma} } (p, w) \frac{\pa
g}{\pa n_w}(w) \,dS_{w}.
\end{multline}
where $g_a(p,w)$ is the Green function for $D(p)$ with pole at $a$.
\end{prop}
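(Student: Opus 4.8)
The plan is to obtain both identities from the Poisson--Green representation formula on the domain $D(p)$, applied to the harmonic functions $\pa g/\pa p_{\ga}(p,\cdot)$ and $\pa^2 g/\pa p_{\ga}\pa\ov p_{\ga}(p,\cdot)$, whose boundary values have already been pinned down in Proposition~\ref{bdy-val-der-g}. First I would take $p \in D$, so that $D(p) = T(p,D)$ is a bounded $C^{\infty}$-smoothly bounded domain, and establish the reproducing kernel: if $u$ is harmonic on $D(p)$ and continuous on $\ov{D(p)}$, then for $a \in D(p)$
\[
u(a) = -\frac{1}{2(n-1)\sigma_{2n}} \int_{\pa D(p)} u(w)\, \frac{\pa g_a}{\pa n_w}(p,w)\, dS_w ,
\]
where $g_a(p,\cdot)$ is the Green function of $D(p)$ with pole at $a$ and $n_w$ is the outward unit normal. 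This is just Green's second identity for the pair $u$, $g_a$ on $D(p)\sm \ov B(a,\ep)$: on $\pa D(p)$ one has $g_a = 0$, so only the term $u\,\pa_{n_w} g_a$ survives there, while on $\pa B(a,\ep)$ the fact that $g_a(p,w) - \vert w - a\vert^{-2n+2}$ is harmonic near $a$ forces the spherical integral to tend to $(2n-2)\sigma_{2n}\,u(a)$ as $\ep \to 0$, all other contributions vanishing in the limit; here $2n-2 = 2(n-1)$ accounts for the stated constant.

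Next I would feed $u = \pa g/\pa p_{\ga}(p,\cdot)$ into this formula. By Proposition~\ref{bdy-val-der-g}(1) this $u$ is harmonic on all of $D(p)$ --- the singular part $\vert w\vert^{-2n+2}$ of $g(p,w)$ is independent of $p$ and is therefore annihilated by $\pa_{p_{\ga}}$, so no singularity is created at the origin --- and on $\pa D(p)$ it equals $-k_1^{\ga}(p,w)\vert\pa_w g(p,w)\vert$. Substituting, the two minus signs cancel and (\ref{1st-var-g}) results. For the second order formula I would take $u = \pa^2 g/\pa p_{\ga}\pa\ov p_{\ga}(p,\cdot)$, which by Proposition~\ref{bdy-val-der-g}(2) is again harmonic on $D(p)$ with the boundary values displayed there; plugging these into the representation formula and splitting the boundary value into its $-k_2^{\ga}\vert\pa_w g\vert$ part and its $-2\Re(\,\cdot\,)$ part yields, respectively, the first integral of (\ref{2nd-var-g}) and the second one, the factor $2$ being absorbed so that $\tfrac{1}{2(n-1)\sigma_{2n}}$ becomes $\tfrac{1}{(n-1)\sigma_{2n}}$.

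It remains to cover $p \in \pa D$, where $D(p)$ is the half space $\mathcal H$. Here I would approximate $p$ by a sequence $p_j \in D$ with $p_j \to p$; the domains $D(p_j)$ converge to $D(p)$, and by the result of \cite{LY} recalled above, $g(p_j,\cdot)$ together with its first and second $p$-derivatives, as well as $g_a(p_j,\cdot)$, converge --- up to the boundary --- to the corresponding objects for $D(p)$. Passing to the limit in the boundary integrals is then a dominated-convergence argument: the estimates (\ref{estimates}) bound $k_1^{\ga}$, $k_2^{\ga}$, $\vert\pa_w g\vert$ and $\pa^2 g/\pa\ov w_{\al}\pa p_{\ga}$, while the half space Poisson kernel $\pa g_a/\pa n_w$ decays like $\vert w\vert^{-2n}$, so on the hyperplane $\pa D(p)$ the integrands are majorised by a constant multiple of $\vert w\vert^{2 + (1-2n) + (-2n)} = \vert w\vert^{3-4n}$, which is $dS_w$-integrable at infinity exactly because $3 - 4n + (2n-1) = 2 - 2n < 0$ for $n \geq 2$.

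I expect the one genuinely delicate point to be precisely this passage to the unbounded limiting domain: as $p_j \to \pa D$ large portions of the compact surfaces $\pa D(p_j)$ run off to infinity, so one must keep the boundary integrals over $\pa D(p_j)$ uniformly controlled near $\pa D$ --- and this is exactly where the uniformity in (\ref{estimates}) (the constant $C$ being independent of the base point) and the joint smoothness on $\mathbf C^n \times \mathbf C^n$ of the defining function $f(p,w)$ of the variation $\mathscr D$, which underlies those estimates, become indispensable. Everything else is the routine bookkeeping of substituting Proposition~\ref{bdy-val-der-g} into the representation formula.
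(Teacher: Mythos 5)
Your proposal is correct and follows essentially the same route the paper indicates: for $p \in D$ the formulae drop out of the Green/Poisson representation on the bounded domain $D(p)$ applied to the harmonic functions of Proposition~\ref{bdy-val-der-g}, and for $p \in \pa D$ one passes to the limit $D \ni q \ra p$ using the uniform estimates (\ref{estimates}), which is precisely the argument the paper defers to \cite{LY}. (A small arithmetic slip in your final decay count does not affect the conclusion, and the normalisation $2(n-1)\sigma_{2n}$ you derive from the expansion of $g_a$ near its pole is the correct one.)
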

\noindent We note that for $p \in D$, the above formulae are consequences of
proposition \ref{bdy-val-der-g}. For $p \in \pa D$, these formulae were
obtained in \cite{LY} by finding
\[\lim_{D \ni q \ra p} \frac{\pa g}{\pa p_{\ga}}(q, a) \quad \text{and} \quad
\lim_{D \ni q \ra p} \frac{\pa^2 g}{\pa p_{\ga} \pa \ov p_{\ga}}(q, a).
\]
A particular case of this proposition is the following:
\begin{prop}\label{varn-formula}
Let $1 \leq \gamma \leq n$ and $p \in \ov D$.
\begin{equation}\label{1st-var}
\frac{\pa \la}{\pa p_{\gamma}}(p) = \frac{1}{2(n - 1) \sigma_{2n}} \int_{\pa
D(p)} k_1^{\gamma}(p, \z) \big \lvert \pa_{w}
g(p, \z) \rvert \frac{\pa g(p, w)}{\pa n_w} dS_{w}
\end{equation}
and
\begin{multline}\label{2nd-var}
\frac{\pa^2 \la}{\pa p_{\gamma}\pa \ov{p}_{\gamma}}(p) = \frac{1}{2(n - 1)
\sigma_{2n}} \int_{\pa D(p)} k_2^{\gamma}(p, w) \vert
\pa_{w}g(p, \z) \vert^2 \, dS_{w}\\
\frac{1}{(n-1)\sigma_{2n}} \Re \sum_{\al=1}^{n} \int_{\pa D(p)} 
k_1^{\gamma}(p,w) \frac{\frac{\pa g}{\pa \ov w_{\al}}(p,w)}{\vert \pa_{w} g(p,w)
\vert} \frac{\pa^2 g}{\pa {w}_{\al} \pa \ov p_{\gamma} } (p, w) \frac{\pa g}{\pa
n_w}(p, w) \, dS_w.
\end{multline}
\end{prop}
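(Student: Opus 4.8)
The plan is to derive Proposition~\ref{varn-formula} as the special case $a = 0$ of the two variation formulae (\ref{1st-var-g}) and (\ref{2nd-var-g}) established just above. The first step is to check that $a = 0$ is an admissible choice, i.e. that $0 \in D(p)$ for every $p \in \ov D$. When $p \in D$ this is clear, since $T(p, p) = (p - p)/(-\psi(p)) = 0 \in T(p, D) = D(p)$; when $p \in \pa D$ the half space $D(p) = \{\, 2\Re(\sum_{\al}\psi_{\al}(p) w_{\al}) - 1 < 0 \,\}$ visibly contains the origin. Hence (\ref{1st-var-g}) and (\ref{2nd-var-g}) may be evaluated at $a = 0$.

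The second step is to substitute $a = 0$ and simplify. With $a = 0$ the Green function $g_{a}(p, \cdot)$ of $D(p)$ with pole at $a$ is $g(p, \cdot)$ itself --- the pole of $g(p, \cdot)$ being at the origin --- so $(\pa g_{a}/\pa n_{w})(p, w) = (\pa g/\pa n_{w})(p, w)$ for $w \in \pa D(p)$. On the left-hand side, Proposition~\ref{bdy-val-der-g} tells us that the maps $w \mapsto (\pa g/\pa p_{\gamma})(p, w)$ and $w \mapsto (\pa^{2}g/\pa p_{\gamma}\pa\ov p_{\gamma})(p, w)$ are harmonic on all of $D(p)$ --- in particular they extend harmonically across the pole $w = 0$, which is precisely what makes evaluation at $a = 0$ legitimate --- and that their values at $0$ are $(\pa\la/\pa p_{\gamma})(p)$ and $(\pa^{2}\la/\pa p_{\gamma}\pa\ov p_{\gamma})(p)$. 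Putting $a = 0$ in (\ref{1st-var-g}) and (\ref{2nd-var-g}) and inserting these identifications gives (\ref{1st-var}) and (\ref{2nd-var}), after the boundary weight has been rewritten using the relation $(\pa g/\pa n_{w})(p, w) = -2\,|\pa_{w}g(p, w)|$ on $\pa D(p)$ recorded before Proposition~\ref{bdy-val-der-g}; this turns the factor $|\pa_{w}g|\,(\pa g/\pa n_{w})$ appearing in (\ref{1st-var-g})--(\ref{2nd-var-g}) into a multiple of $|\pa_{w}g|^{2}$ in the first integral of (\ref{2nd-var}).

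Equivalently, and amounting to the same computation, one may argue directly from Proposition~\ref{bdy-val-der-g}: the function $h = (\pa g/\pa p_{\gamma})(p, \cdot)$ is harmonic on $D(p)$ with boundary values $-k_{1}^{\gamma}(p, \cdot)\,|\pa_{w}g(p, \cdot)|$, and since $g(p, \cdot)$ is the Green function of $D(p)$ with pole at $0$, the Green representation $h(0) = -\frac{1}{2(n-1)\sigma_{2n}}\int_{\pa D(p)} h(w)\,(\pa g/\pa n_{w})(p, w)\, dS_{w}$ --- with the outward normal, and with the normalization of $g$ dictated by $\De = 4\sum_{i}\pa^{2}/\pa z_{i}\pa\ov z_{i}$ --- yields (\ref{1st-var}) at once; applying the same representation to $h = (\pa^{2}g/\pa p_{\gamma}\pa\ov p_{\gamma})(p, \cdot)$ with the boundary values from Proposition~\ref{bdy-val-der-g}(2) yields (\ref{2nd-var}). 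For $p \in \pa D$ one could instead pass to the limit $D \ni q \to p$ in the interior-point formulae, using the continuity of $g$ and its relevant derivatives up to $\{(p, w) : p \in \ov D,\ w \in \ov D(p)\}$ as in \cite{LY}; but since (\ref{1st-var-g}) and (\ref{2nd-var-g}) are already stated for all $p \in \ov D$, the specialization argument covers this case too.

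I do not expect any genuine obstacle: Proposition~\ref{varn-formula} is a corollary of the variation formulae immediately preceding it, and the only thing requiring care is the routine bookkeeping of the constant $\sigma_{2n}$ and of the sign conventions for the outward normal derivative when passing between the $\pa g/\pa n_{w}$ form and the $|\pa_{w}g|^{2}$ form. The one structural input that must be borrowed rather than reproved is the harmonic extendability of $\pa g/\pa p_{\gamma}$ and $\pa^{2}g/\pa p_{\gamma}\pa\ov p_{\gamma}$ across $w = 0$, together with the identification of their values there with the first and second derivatives of $\la$; both are contained in Proposition~\ref{bdy-val-der-g}.
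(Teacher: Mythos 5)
Your proof is correct and is exactly the argument the paper intends: Proposition~\ref{varn-formula} is introduced with the sentence ``A particular case of this proposition is the following,'' and that particular case is precisely $a=0$ in (\ref{1st-var-g})--(\ref{2nd-var-g}), using $g_{0}(p,\cdot)=g(p,\cdot)$ together with the identifications $\frac{\pa g}{\pa p_{\gamma}}(p,0)=\frac{\pa\la}{\pa p_{\gamma}}(p)$ and $\frac{\pa^{2}g}{\pa p_{\gamma}\pa\ov p_{\gamma}}(p,0)=\frac{\pa^{2}\la}{\pa p_{\gamma}\pa\ov p_{\gamma}}(p)$ from Proposition~\ref{bdy-val-der-g}. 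One small remark: the factor $\vert\pa_{w}g(p,\z)\vert^{2}$ in the first integral of (\ref{2nd-var}) as printed does not literally agree with the $a=0$ specialization $\vert\pa_{w}g(p,w)\vert\,\frac{\pa g}{\pa n_{w}}(p,w)$ once you track the relation $\frac{\pa g}{\pa n_{w}}=-2\vert\pa_{w}g\vert$ (a residual factor of $-2$ and a missing ``$+$'' between the two lines); you were right to flag this as a bookkeeping point, and it is a typographical slip in the paper's display rather than a gap in your argument.
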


We now consider a sequence $\{D_{\nu}\}$ of $C^{\infty}$-smoothly
bounded domains in $\mathbf{C}^{n}$ with $C^{\infty}$-smooth defining functions
$\psi_{\nu}$ such that $\{\psi_{\nu}\}$ converges in the $C^{\infty}$-topology
on compact subsets of $\mathbf{C}^{n}$ to $\psi$. In other words, $\{D_{\nu}\}$
converges in the $C^{\infty}$-topology to $D$. Another commonly used
terminology
for this is that the sequence $\{D_{\nu}\}$ is a $C^{\infty}$-perturbation of
$D$. This implies, in particular, that $D_{\nu}$ converges in the Hausdorff
sense to $D$. Now for each $\nu \geq 1$, consider the scaling map $T_{\nu} :
D_{\nu}
\times \mathbf{C}^{n} \ra \mathbf{C}^{n}$ defined by
\[
T_{\nu}(p, z) = \frac{z - p}{-\psi_{\nu}(p)}
\]
and the family of domains $\{D_{\nu}(p) : p \in \ov D_{\nu}\}$ defined by
\[
D_{\nu}(p) =
\begin{cases}
T_{\nu}(p, D_{\nu}) & ; \quad \text{if } p \in D_{\nu},\\
\Big\{w \in \mathbf{C}^{n} : 2 \Re \big( \sum_{i=1}^{n} \psi_{\nu i}(p) w_i
\big) -1 < 0 \Big\} & ; \quad \text{if } p \in \pa D.
\end{cases}
\]
The normalised Robin function $\la_{\nu}(p)$ for $(D_{\nu}, \psi_{\nu})$ is then
the Robin constant for $D_{\nu}(p)$ at $0$. We will denote the Green function
for $D_{\nu}$ with pole at $0$ by $g_{\nu}(p, w)$. Also, let
\[
\mathscr{D}_{\nu} = \cup_{p \in D_{\nu}} \big(p, D_{\nu}(p)\big) = \big\{(p, w)
: p \in D_{\nu}, w \in D_{\nu}(p) \big\}
\]
be the variation associated to $(D_{\nu}, \psi_{\nu})$ and
let
\begin{equation}\label{defn-f_nu}
f_{\nu}(p, w) = 2 \Re \bigg \{ \sum_{\al = 1}^{n} \int _{0} ^{1} \Big( w_{\al}
(\psi_{\nu})_{\al} \big(p - \psi_{\nu}(p) t
w \big) \Big) dt \bigg \} - 1.
\end{equation}
Then $f_{\nu}(p, w)$ is a smooth function on $\mathbf{C}^{n} \times
\mathbf{C}^{n}$
that defines the variation $\mathscr{D}_{\nu}$. It is evident that the functions
$f_{\nu}(p, w)$ converge in the $C^{\infty}$-topology on compact subsets of
$\mathbf{C}^{n} \times \mathbf{C}^{n}$
to the function
\[
f(p, w) =2 \Re \bigg \{ \sum_{\al = 1}^{n} \int _{0} ^{1} \Big( w_{\al}
\psi_{\al} \big(p - \psi(p) t
w \big) \Big) dt \bigg \} - 1
\]
which defines the variation $\mathscr{D}$ associated to $(D, \psi)$.

\medskip

Now let $p_{\nu} \in D_{\nu}$ be such that $\{p_{\nu}\}$ converges to $p_0 \in
\pa D$. For brevity, we let
\begin{equation}\label{defn-T^nu}
\begin{split}
& T^{\nu}(z) = T_{\nu}(p_{\nu}, z) = \frac{z-p_{\nu}}{-\psi_{\nu}(p_{\nu})},\\
& D^{\nu} = D_{\nu}(p_{\nu}) = T^{\nu}(D_{\nu}), \text{ and}\\
& g^{\nu}(w) = g_{\nu}(p_{\nu}, w).
\end{split}
\end{equation}
Thus $g^{\nu}(w)$ is the Green function for $D^{\nu}$ with pole at $0$. Let $1
\leq \ga \leq n$. By proposition \ref{bdy-val-der-g}, $\frac{\pa g_{\nu}}{\pa
p_{\ga}}(p_{\nu}, w)$ is a harmonic function of $w \in D^{\nu}$ with boundary
values
\begin{equation}\label{bd-val-1st-der-g_nu}
-k_1^{\nu \ga}(w) \vert \pa_w g^{\nu} (w) \vert
\end{equation}
where
\begin{equation}\label{def-k_1^nu}
k_1^{\nu \ga}(w) = k_{1 \nu}^{\ga}(w) = \frac{\pa f_{\nu}}{\pa p_{\ga}}(p_{\nu},
w) \vert \pa_w f_{\nu}(p_{\nu}, w) \vert^{-1}.
\end{equation}
Similarly, $\frac{\pa^2 g_{\nu}}{\pa p_{\ga} \pa \ov p_{\ga}}(p_{\nu}, w)$ is a
harmonic function of $w \in D^{\nu}$ with boundary values
\begin{equation}\label{bd-val-2nd-der-g_nu}
\frac{\pa^2 g_{\nu}}{\pa p_{\gamma} \pa \ov p_{\ga}}(p_{\nu}, w) =  - k_2^{\nu
\gamma}(w)
\vert \pa_{w} g^{\nu}(w) \vert -  2 \Re \bigg(k_1^{\nu\gamma}(w)
\sum_{\al=1}^{n}
\frac{\frac{\pa g^{\nu}}{\pa \ov w_{\al}}(w)}{\vert \pa_{w} g^{\nu}(w) \vert}
\frac{\pa^2
g_{\nu}}{\pa w_{\al} \pa \ov p_{\gamma}}(p_{\nu},w)\bigg), \quad w \in \pa
D^{\nu}
\end{equation}
where
\begin{equation}\label{def-k_2^nu}
k_2^{\nu \ga}(w) = \mathcal{L}^{\ga}f_{\nu}(p_{\nu}, w) \vert \pa_w
f_{\nu}(p_{\nu}, w) \vert^{-3}
\end{equation}
and $\mathcal{L}^{\ga}$ is defined by (\ref{levi}).
We want to conclude this section by finding uniform bounds for the functions
$k_1^{\nu \ga}(w)$ and $k_2^{\nu \ga}(w)$ near the
boundary of $\pa D^{\nu}$ which will be required to estimate the boundary values
(\ref{bd-val-1st-der-g_nu}) and (\ref{bd-val-2nd-der-g_nu}) in section 4 and 5.
For $0 < r < 1$ let $\mathcal{E}^{\nu}(r)$ be the
collar about $\pa D^{\nu}$ defined by
\[
\mathcal{E}^{\nu}(r) = \cup_{w_0 \in \pa D^{\nu}} \big\{w \in D^{\nu} : \vert w
- w_0 \vert < r \vert w_0 \vert \big\}.
\]
Note that $\mathcal{E}^{\nu}(r)$ lies in $D^{\nu}$ and
$\ov{\mathcal{E}}^{\nu}(r)$ does not contain the origin. Similarly, let
$\mathcal{E}_{\nu}(r)$ be the collar around $\pa D_{\nu}$ defined by
\[
\mathcal{E}_{\nu}(r) = \cup_{z_0 \in \pa D_{\nu}} \{z \in D_{\nu} : \vert z -
z_0 \vert < r \vert z_0 - p_{\nu} \vert \}.
\]
Note that $\mathcal{E}_{\nu}(r)$ lies in $D_{\nu}$ and does not contain the
point $p_{\nu}$. Also, note that
\begin{equation}\label{T_nu-inverse}
\mathcal{E}_{\nu}(r) = (T^{\nu})^{-1} \big(\mathcal{E}^{\nu}(r)\big).
\end{equation}

\begin{lem}\label{exist_de}
There exists a constant $m>0$, a number $0 < r < 1$,  and an integer $I$ such
that
\[
\vert \pa_{w}f_{\nu}(p_{\nu}, w) \vert > m
\]
for all $\nu \geq I$ and $w \in {\mathcal{E}}^{\nu}(r) $.
\end{lem}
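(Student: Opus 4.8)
\emph{Proof plan.} The plan is to pull the estimate back from the unbounded domain $D^{\nu}$ to the bounded domain $D_{\nu}$. The collar $\mathcal{E}^{\nu}(r)$ is not contained in any fixed compact subset of $\mathbf{C}^{n}$ (its far part escapes to infinity as $D^{\nu}$ opens up towards the half space $\mathcal{H}$), so one cannot simply feed the $C^{\infty}$-convergence $f_{\nu} \ra f$ on compacta into an estimate for $\pa_{w} f_{\nu}$ on $\mathcal{E}^{\nu}(r)$. The remedy is to observe that $\pa_{w} f_{\nu}(p_{\nu}, \cdot)$, composed with $(T^{\nu})^{-1}$, is nothing but $\pa \psi_{\nu}$ restricted to the \emph{thin} collar $\mathcal{E}_{\nu}(r) \subset D_{\nu}$, and $\mathcal{E}_{\nu}(r)$ does sit in a fixed bounded set, where uniform control of defining functions is routine.

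First I would record the identity
\[
f_{\nu}(p_{\nu}, w) = \frac{\psi_{\nu}(p_{\nu} - \psi_{\nu}(p_{\nu}) w)}{-\psi_{\nu}(p_{\nu})} = \frac{\psi_{\nu}((T^{\nu})^{-1}(w))}{-\psi_{\nu}(p_{\nu})}, \qquad w \in \mathbf{C}^{n},
\]
which follows from (\ref{defn-f_nu}) and (\ref{defn-T^nu}) by applying the fundamental theorem of calculus to the real-valued $C^{1}$ function $t \mapsto \psi_{\nu}(p_{\nu} - \psi_{\nu}(p_{\nu}) t w)$ on $[0,1]$, whose derivative equals $-2 \psi_{\nu}(p_{\nu}) \Re \sum_{\al} w_{\al} (\psi_{\nu})_{\al}(p_{\nu} - \psi_{\nu}(p_{\nu}) t w)$ because $\psi_{\nu}$ is real valued, and then rearranging. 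Since $(T^{\nu})^{-1}(w) = p_{\nu} - \psi_{\nu}(p_{\nu}) w$ is complex affine, differentiating this identity in $w$ gives $\frac{\pa f_{\nu}}{\pa w_{\al}}(p_{\nu}, w) = (\psi_{\nu})_{\al}((T^{\nu})^{-1}(w))$, and hence $|\pa_{w} f_{\nu}(p_{\nu}, w)| = |\pa \psi_{\nu}((T^{\nu})^{-1}(w))|$.

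Next, by (\ref{T_nu-inverse}) the point $z = (T^{\nu})^{-1}(w)$ runs over $\mathcal{E}_{\nu}(r)$ as $w$ runs over $\mathcal{E}^{\nu}(r)$, so it suffices to bound $|\pa \psi_{\nu}|$ below on $\mathcal{E}_{\nu}(r)$, uniformly for large $\nu$. Because $\{D_{\nu}\}$ is a $C^{\infty}$-perturbation of $D$, the closures $\ov{D_{\nu}}$ have uniformly bounded diameter, say $\leq C$, for all large $\nu$; hence $|z_{0} - p_{\nu}| \leq C$ for every $z_{0} \in \pa D_{\nu}$, so $\mathcal{E}_{\nu}(r) \subset \{ z : d(z, \pa D_{\nu}) < rC \}$. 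On the other hand $\psi$ is smooth and $|\pa \psi| > 0$ on the compact set $\pa D$, so there exist $\rho, c > 0$ with $|\pa \psi| \geq c$ on the $\rho$-neighbourhood of $\pa D$; by the $C^{1}$-convergence $\psi_{\nu} \ra \psi$ on that neighbourhood, $|\pa \psi_{\nu}| \geq c/2$ there for all large $\nu$, and by the Hausdorff convergence of the boundaries, $\pa D_{\nu}$ lies in the $(\rho/2)$-neighbourhood of $\pa D$ for all large $\nu$. Choosing $r$ with $rC < \rho/2$ then forces $\mathcal{E}_{\nu}(r)$ into the $\rho$-neighbourhood of $\pa D$ for all large $\nu$, where $|\pa \psi_{\nu}| \geq c/2$; taking $m = c/3$ and $I$ large enough for all of the above to hold completes the proof.

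The main obstacle is exactly the one noted at the start: the noncompactness of $\mathcal{E}^{\nu}(r)$ rules out a direct convergence argument on $D^{\nu}$ and forces the passage to the bounded domains $D_{\nu}$ via the identity above. Once that transfer is made everything is elementary; the points needing a careful check, namely that the $C^{\infty}$-perturbation hypothesis does yield the uniform diameter bound, the Hausdorff convergence of the $\pa D_{\nu}$, and the $C^{1}$-closeness of $\psi_{\nu}$ to $\psi$ near $\pa D$, are all standard.
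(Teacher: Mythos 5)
Your proof is correct and follows essentially the same route as the paper's: both reduce the problem via the identity $\lvert \pa_w f_{\nu}(p_{\nu},w)\rvert = \lvert\pa\psi_{\nu}((T^{\nu})^{-1}(w))\rvert$, use (\ref{T_nu-inverse}) to transfer the estimate from the unbounded collar $\mathcal{E}^{\nu}(r)$ to the bounded collar $\mathcal{E}_{\nu}(r)\subset D_{\nu}$, and then bound $\lvert\pa\psi_{\nu}\rvert$ below on a fixed tubular neighbourhood of $\pa D$ using $C^{1}$-convergence of $\psi_{\nu}$. The only cosmetic difference is that the paper fixes $r = \de/(3\de + 2\,\mathrm{diam}(D))$ explicitly and verifies $\mathcal{E}_{\nu}(r)\subset U$ directly, whereas you package the same estimate via a uniform diameter bound and Hausdorff convergence of the boundaries.
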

\begin{proof}
Choose a $\de$-neighbourhood $U$ of $\pa D$ i.e.,
\[
U = \{ z \in \mathbf{C}^{n} : d(z, \pa D) < \de \}
\]
and a constant $m>0$ such that $\vert \pa \psi(p) \vert > 2m$ for $p \in U$.
Since $\pa \psi_{\nu}$ converges uniformly on $\ov U$ to $\pa \psi$, there
exists an integer $I$ such that
\begin{equation}\label{lbd-pa-psi_nu}
\vert \pa \psi_{\nu}(p) \vert > m
\end{equation}
for $\nu \geq I$ and $p \in U$. Modify the integer $I$ so that $\pa D_{\nu}
\subset N(\de/2)$ for all $\nu \geq
I$. Since $p_{\nu} \ra p_0 \in \pa D$, without loss of generality let us assume
that $p_{\nu} \in U$ for all $\nu \geq I$. Now define
\[
r = \frac{\de}{3 \de + 2\text{diam}(D)}.
\]
Then it is evident that
\begin{equation}\label{U-contains-collar}
\mathcal{E}_{\nu}(r) \subset U
\end{equation}
for $\nu \geq I$. Now fix $\nu \geq I$ and $w \in \mathcal{E}^{\nu}(r)$. If we
define
$z=T_{\nu}^{-1}w = p_{\nu} - \psi_{\nu}(p_{\nu})w$
then, by (\ref{T_nu-inverse})
\[
z \in \mathcal{E}_{\nu}(r) \subset U.
\]
From (\ref{defn-f_nu}),
\[
\vert \pa _w f_{\nu} (p_{\nu}, w) \vert = \vert \pa \psi_{\nu} (z) \vert > m
\]
by (\ref{lbd-pa-psi_nu}).
\end{proof}

\medskip

We now modify step 4 of chapter 4 \cite{LY} to obtain the following
estimates:
\begin{lem}\label{upper-bound-der-f-and-f_j}
Let $r$ and $I$ be as in lemma \ref{exist_de}. Then there exists a constant $M >
0$
such that
\begin{itemize}
\item [(i)] $\vert (\pa f_{\nu} / \pa w_{\al})(p_{\nu}, w) \vert < M$,

\item [(ii)] $\vert (\pa f_{\nu} / \pa p_{\gamma}) (p_{\nu}, w) \vert < M
\big(1+ \vert w \vert ^{-1} \big)\vert w \vert ^ 2$,

\item[(iii)] $\vert (\pa^2 f_{\nu} / \pa w_{\al} \pa w_{\be})
(p_{\nu}, w) \vert < M \vert w \vert ^{-1}$,

\item [(iv)] $\vert (\pa^2 f_{\nu} / \pa p_{\gamma} \pa w_{\al}) (p_{\nu}, w)
\vert < M \big(1 + \vert w \vert^{-1} \big) \vert w \vert$,

\item [(v)] $\vert (\pa^2 f_{\nu} / \pa p_{\gamma} \pa p_{\mu}) (p_{\nu}, w)
\vert < M \big(1 + \vert w \vert^{-1} + \vert w \vert^{-2}\big) \vert w \vert
^ 3$.
\end{itemize}
for all $\nu \geq I$ and $w \in {\mathcal{E}}^{\nu}(r)$.
\end{lem}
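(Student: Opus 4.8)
The plan is to adapt step~4 of Chapter~4 of \cite{LY} to the sequence, keeping track of the independence of all constants from $\nu$. Everything hinges on one uniform estimate on the collar. Keeping $r$ and $I$ as in Lemma~\ref{exist_de}, the first step is to note that for $\nu \geq I$ and $w \in \mathcal{E}^{\nu}(r)$ the point $z = (T^{\nu})^{-1}(w) = p_{\nu} - \psi_{\nu}(p_{\nu}) w$ lies, by \eqref{T_nu-inverse} and \eqref{U-contains-collar}, in $\mathcal{E}_{\nu}(r) \subset U$, while $p_{\nu} \in U$ for $\nu \geq I$ as recorded in the proof of Lemma~\ref{exist_de}. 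Since $U$ is a bounded neighbourhood of the compact set $\pa D$, one may fix $R_0 > 0$ with $U \subset B(0, R_0)$; then $p_{\nu}$ and $z$ both lie in the convex set $B(0, R_0)$, which gives at once the key estimate
\[
|\psi_{\nu}(p_{\nu})|\,|w| = |z - p_{\nu}| \leq 2 R_0 =: C_0 ,
\]
and, for every $t \in [0, 1]$, the point $p_{\nu} - \psi_{\nu}(p_{\nu}) t w = (1-t) p_{\nu} + t z$ lies in the fixed compact set $K := \ov{B(0, R_0)}$, uniformly in $\nu \geq I$ and $w \in \mathcal{E}^{\nu}(r)$. From $\psi_{\nu} \to \psi$ in the $C^{\infty}$-topology on $K$, after enlarging $I$ if necessary there is $M_0 > 0$ bounding $|\psi_{\nu}(p_{\nu})|$ and every partial derivative of $\psi_{\nu}$ in $z, \ov z$ of order at most three on $K$, uniformly in $\nu$.

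Next I would organise the differentiation. Write $f_{\nu} = F_{\nu} + \ov{F_{\nu}} - 1$ with $F_{\nu}(p, w) = \sum_{\be=1}^{n} \int_{0}^{1} w_{\be}\, (\pa \psi_{\nu}/\pa z_{\be})(\zeta(t))\, dt$ and $\zeta(t) = p - \psi_{\nu}(p) t w$. Using that $\psi_{\nu}$ is real-valued, the chain rule gives $\pa \zeta_j/\pa w_{\al} = -\psi_{\nu}(p) t\, \delta_{j\al}$, $\pa \ov{\zeta}_j/\pa w_{\al} = 0$, $\pa \zeta_j/\pa p_{\ga} = \delta_{j\ga} - (\pa\psi_{\nu}/\pa z_{\ga})(p) t w_j$, $\pa \ov{\zeta}_j/\pa p_{\ga} = -(\pa\psi_{\nu}/\pa z_{\ga})(p) t \ov{w}_j$, together with the conjugate relations. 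Expanding any first or second derivative of $f_{\nu}$ by the chain rule, every resulting term is $\int_{0}^{1}$ of a product of a derivative of $\psi_{\nu}$ of order at most three evaluated at $\zeta(t) \in K$; at most two factors, each of which is $\psi_{\nu}(p_{\nu})$ or a first or second derivative of $\psi_{\nu}$ at $p_{\nu}$; a power of $t \in [0, 1]$; and a monomial in $w, \ov w$. All the listed $\psi_{\nu}$-factors are bounded by $M_0$ on $K$, and crucially $|\psi_{\nu}(p_{\nu})\, t\, w_j| \leq C_0$; so I may count each factor $\psi_{\nu}(p_{\nu})$ as contributing a factor $|w|^{-1}$, each explicit $w_j$ or $\ov{w}_j$ a factor $|w|$, and every other factor a bounded one.

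The estimates then come out of bookkeeping the degree of that monomial. A derivative in $w$ or $\ov w$ acts on a term either by stripping off an explicit factor $w_{\be}$ (lowering the degree by one) or, via the chain rule, by pulling down a factor $\psi_{\nu}(p) t$ and one more $\zeta$-derivative of $\psi_{\nu}$ (again lowering the degree by one); a derivative in $p_{\ga}$ or $\ov p_{\ga}$ acts either by producing a Kronecker delta (no change of degree) or by pulling down a factor $(\pa\psi_{\nu}/\pa z_{\ga})(p) t w_j$, resp.\ $\cdots t \ov{w}_j$, and one more $\zeta$-derivative (raising the degree by one), and in no case raises the degree by more than one. Since the non-constant part of $f_{\nu}$ has degree $1$, one reads off: $\pa f_{\nu}/\pa w_{\al}$ has degree $\leq 0$, which is (i); $\pa^2 f_{\nu}/\pa w_{\al}\pa w_{\be}$ (and likewise $\pa^2 f_{\nu}/\pa w_{\al}\pa\ov{w}_{\be}$) has degree $\leq -1$, which is (iii); $\pa f_{\nu}/\pa p_{\ga}$ has degree $\leq 2$ with a summand of degree $\leq 1$ coming from the delta-route, which is exactly the shape $M(1+|w|^{-1})|w|^2$ of (ii); $\pa^2 f_{\nu}/\pa p_{\ga}\pa w_{\al}$ has degree $\leq 1$ with a summand of degree $\leq 0$, which is (iv); and $\pa^2 f_{\nu}/\pa p_{\ga}\pa p_{\mu}$ has degree $\leq 3$ with summands of degrees $\leq 2$ and $\leq 1$ arising when one, resp.\ both, of the two $p$-derivatives take the delta-route, which is the shape $M(1+|w|^{-1}+|w|^{-2})|w|^3$ of (v); the remaining mixed second-order $p$-derivatives are treated identically. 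Absorbing the finitely many constants $M_0$, $C_0$ and $n$ into a single $M$ finishes the argument.

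The only genuinely non-routine point is the first step: one must know that the arguments $p_{\nu} - \psi_{\nu}(p_{\nu}) t w$, $t \in [0,1]$, remain in a compact set independent of $\nu$ and that $|\psi_{\nu}(p_{\nu})|\,|w|$ stays bounded on the collar. Both are consequences of the inclusion $\mathcal{E}_{\nu}(r) \subset U$ established in Lemma~\ref{exist_de}, hence ultimately of the $C^{\infty}$-convergence $\psi_{\nu} \to \psi$ and the compactness of $\pa D$; after that the estimates are just a matter of counting powers of $w$ in the chain-rule expansion, exactly as in Chapter~4 of \cite{LY}.
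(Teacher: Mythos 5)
Your proof is correct and follows essentially the same route as the paper: establish via Lemma \ref{exist_de} and (\ref{T_nu-inverse}), (\ref{U-contains-collar}) that the segment $\{p_\nu - \psi_\nu(p_\nu)tw : t \in [0,1]\}$ stays in a fixed compact set, note that $|\psi_\nu(p_\nu)|\,|w| = |z - p_\nu|$ is bounded there, and then read off the listed bounds from the chain-rule expansion of $f_\nu$. Your degree-counting bookkeeping is a clean way to organise this, and it correctly supplies details for parts (iv) and (v), which the paper dispatches with one sentence; you also rightly observe that uniform control of $\psi_\nu$-derivatives up to order three (not just two, as the paper's sentence states) is what is actually used for (iv) and (v) — a harmless oversight in the paper since the $C^\infty$-convergence makes the higher-order bound available for free.
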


\begin{proof}
Let $U$ be as in the proof of lemma \ref{exist_de} and choose $R>0$ such that $U
\subset B(0, R)$. Since $\{\psi_{\nu}\}$ converges in the
$C^{\infty}$-topology on compact subsets of $\mathbf{C}^{n}$ to $\psi$, we can
find a constant $M_1 > 0$ such that $\psi$, $\psi_{\nu}$, $\nu \geq 1$, and
their
derivatives of order up to two are bounded in absolute value by $M_1$ on $\ov
B(0, R)$.

\medskip

Now let $\nu \geq I$ and let $w \in \mathcal{E}^{\nu}(r)$. Then we have
\begin{equation}\label{p-psi(p)tw}
p_{\nu} - \psi_{\nu}(p_{\nu}) t w \in B(0, R), \quad 0 \leq t \leq 1.
\end{equation}
Before proving this, note that this implies in particular that $\psi_{\nu}$ and
its derivatives of order up to $2$ are bounded in absolute value by $M_1$ at the
points $p_{\nu} - \psi_{\nu}(p_{\nu}) t w$ for all $0 \leq t \leq 1$. Now to
prove (\ref{p-psi(p)tw}), let $0 \leq t \leq 1$. Let
\[
z = T_{\nu}^{-1}w = p_{\nu} - \psi_{\nu}(p_{\nu}) w.
\]
Then by (\ref{T_nu-inverse}) $z \in \mathcal{E}_{\nu}(r)$ and hence by
(\ref{U-contains-collar}), $z \in U$. Now
\[
p_{\nu} - \psi_{\nu}(p_{\nu}) t w = p_{\nu} + t(z - p_{\nu}) = (1-t)p_{\nu} + t
z \in B(0, R)
\]
as $p_{\nu}, z \in U \subset B(0, R)$.

\medskip

\par (i) Differentiating (\ref{def-defining-fn}) with respect to $w_{\al}$ under
the integral sign, we have
\[
\frac{\pa f}{\pa w_{\al}}(p, w)  = \psi_{\al}\big( p - \psi(p) w \big), \quad p,
w \in \mathbf{C}^{n}.
\]
Hence for $\nu \geq I$ and $w \in \mathcal{E}^{\nu}(r)$,
\[
\Big \vert \frac{\pa f_{\nu}}{\pa w_{\al}}(p_{\nu}, w) \Big \vert = \big\vert
\psi_{\nu \alpha}\big(p_{\nu} - \psi_{\nu}(p_{\nu}) w \big) \big\vert \leq M_1.
\]

\medskip

\par (ii) Differentiating (\ref{def-defining-fn}) with respect to $p_{\gamma}$
under the integral sign, we have
\[
\frac{\pa f}{\pa p_{\gamma}}(p, w) = \sum_{\al = 1}^{n} \int_{0}^{1}
\frac{\pa}{\pa
p_{\gamma}} \Big( w_{\al}  \psi_{\al}\big(
p - \psi(p) t w \big)\Big) + \frac{\pa}{\pa p_{\gamma}} \Big(\ov{w}_{\al}
\psi_{\ov{\al}} \big( p - \psi(p) t w \big)\Big)
\, dt, \quad p, w \in \mathbf{C}^{n}.
\]
Observe that
\[
\frac{\pa}{\pa p_{\gamma}} \Big( w_{\al} \psi_{\al} \big(p -\psi(p)tw \big)
\Big) =
w_{\al} \psi_{\gamma\al} \big(p -\psi(p) t
w \big)- 2 t\psi_{\gamma}(p)  \Re \sum_{i = 1}^{n}  w_{i} w_{\al}\psi_{i\al}
\big(p
-\psi(p) t w \big).
\]
Therefore,
\begin{multline}\label{der-f-p}
\frac{\pa f}{\pa p_{\gamma}}(p, w) = \sum_{\al = 1}^{n} \int_{0}^{1} \Big(
w_{\al}
\psi_{\gamma\al} \big(p -\psi(p) t w \big)
+
\ov{w}_{\al} \psi_{\gamma \ov \al} \big(p -\psi(p) t w \big) \Big) \, dt\\
- 2 \psi_{\gamma}(p) \Re \sum_{i, \al = 1}^{n} \int_{0}^{1} \Big( w_{i}
w_{\al}\psi_{i\al} \big(p -\psi(p) t w \big)
 + w_{i} \ov{w}_{\al} \psi_{i \ov \al} \big(p -\psi(p) t w \big) \Big)t \, dt.
\end{multline}
Hence, for $\nu \geq I$ and $w \in \mathcal{E}^{\nu}(r)$,
\begin{align*}
\Big \vert \frac{\pa f_{\nu}}{\pa p_{\gamma}}(p_{\nu}, w) \Big \vert & \leq \sum
_{\al
= 1}^{n} \int_{0}^{1} \vert w_{\al} \vert
\big\vert \psi_{\nu \gamma \al}\big( p_{\nu} - \psi_{\nu}(p_{\nu}) t w \big)
\big\vert +
\vert \ov{w}_{\al} \vert \big\vert \psi_{\nu \gamma \ov
\al}\big( p_{\nu} - \psi_{\nu}(p_{\nu}) t w \big) \big\vert \, dt\\
& + 2 \vert \psi_{\nu \gamma}(p_{\nu}) \vert \sum_{i, \al = 1}^{n} \int_{0}^{1}
\vert
w_i \vert \vert w_{\al} \vert \psi_{\nu i \al}(p_{\nu} -
\psi_{\nu}(p_{\nu})tw) \vert + \vert w_i \vert \vert \ov{w}_{\al} \vert
\psi_{\nu i
\ov \al}(p_{\nu} - \psi_{\nu}(p_{\nu})tw) \vert t \, dt\\
& \leq \int_{0}^{1} 2 \vert w \vert \sqrt{n} M_1 \, dt + 2 M_1 \sum_{i = 1}^{n}
\int_{0}^{1} 2 \vert w_i
\vert \vert w \vert
\sqrt{n} M_1 t \, dt\\
&\leq 2 \sqrt{n} M_1 \vert w \vert + 2 n^{3/2} (M_1)^{2} \vert w \vert ^ 2\\
& \leq M_2 \big(1 + \vert w \vert ^{-1} \big) \vert w \vert^2
\end{align*}
where $M_2 = 2 n^{3/2} (M_1)^{2} $.

\medskip

\par (iii) Differentiating (\ref{def-defining-fn}) with respect to $w_{\al}$
under the integral sign, we have
\[
\frac{\pa f}{\pa w_{\al}}(p, w)  = \psi_{\al}\big( p - \psi(p) w \big), \quad p,
w \in \mathbf{C}^{n}.
\]
Differentiating this equation with respect to $w_{\be}$,
\[
\frac{\pa ^ 2 f}{\pa w_{\be} \pa w_{\al}} (p, w) = \big(-\psi(p) \big)\psi_{\al
\be}(p - \psi(p) w), \quad p, w \in
\mathbf{C}^{n}.
\]
Let $\nu \geq I$ and $w \in \mathcal{E}^{\nu}(r)$. Let
\[
z = T_{\nu}^{-1}w = p_{\nu} - \psi_{\nu}(p_{\nu})w.
\]
Then by (\ref{p-psi(p)tw}), $z \in B(0, R)$. Now we have
\[
\Big \vert \frac{\pa ^ 2 f_{\nu}}{\pa w_{\be} \pa w_{\al}} (p_{\nu}, w) \Big
\vert
\leq \frac{\vert z - p_{\nu}\vert}{\vert w \vert}\vert \psi_{\nu \al \beta}(z)
\vert
\leq 2 R M_1 \vert w \vert^{-1} = M_3 \vert w \vert^{-1}
\]
where $M_3 = 2 R M_1$. Finally differentiating (\ref{der-f-p}), we obtain (iv)
and (v).
\end{proof}

\begin{prop}\label{upper_boun_k,k_j}
There exist $0<r<1$, a constant $C$ and an integer $I$ such that
\begin{enumerate}
\item [(1)] $\vert k_1^{\nu \ga} (w) \vert \leq C (1 + \vert w \vert^{-1}) \vert
w \vert ^ 2$,
and

\item [(2)] $\vert k_2^{\nu \ga}(w) \vert \leq C (1 + \vert w \vert^{-1} + \vert
w
\vert^{-2})\vert w \vert ^ 3$
\end{enumerate}
for all $\nu \geq I$ and $w \in \ov{\mathcal{E}}^{\nu}(r)$.
\end{prop}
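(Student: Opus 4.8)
The plan is to derive the bounds on $k_1^{\nu\ga}$ and $k_2^{\nu\ga}$ directly from their defining formulae \eqref{def-k_1^nu} and \eqref{def-k_2^nu} by combining the uniform lower bound on $|\pa_w f_\nu(p_\nu,w)|$ from Lemma~\ref{exist_de} with the uniform upper bounds on the various first and second derivatives of $f_\nu$ from Lemma~\ref{upper-bound-der-f-and-f_j}. First I would fix $r$ and $I$ to be the \emph{minimum} of the values produced by the two preceding lemmas (shrinking $r$ and enlarging $I$ as needed), so that both sets of estimates hold simultaneously on $\mathcal{E}^{\nu}(r)$ for all $\nu\ge I$; then I would pass to the closure $\ov{\mathcal{E}}^{\nu}(r)$ by continuity of all the quantities involved, noting that $\ov{\mathcal{E}}^{\nu}(r)$ still avoids the origin so no negative power of $|w|$ blows up.

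For part (1): by definition $k_1^{\nu\ga}(w) = (\pa f_\nu/\pa p_\ga)(p_\nu,w)\,|\pa_w f_\nu(p_\nu,w)|^{-1}$. Lemma~\ref{exist_de} gives $|\pa_w f_\nu(p_\nu,w)|^{-1} < 1/m$, while Lemma~\ref{upper-bound-der-f-and-f_j}(ii) gives $|(\pa f_\nu/\pa p_\ga)(p_\nu,w)| < M(1+|w|^{-1})|w|^2$. Multiplying, $|k_1^{\nu\ga}(w)| < (M/m)(1+|w|^{-1})|w|^2$, which is the claim with $C \ge M/m$.

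For part (2): by definition $k_2^{\nu\ga}(w) = \mathcal{L}^\ga f_\nu(p_\nu,w)\,|\pa_w f_\nu(p_\nu,w)|^{-3}$, and $\mathcal{L}^\ga f_\nu$ is the three-term expression in \eqref{levi}. I would estimate each term of $\mathcal{L}^\ga f_\nu$ separately: the first term is $(\pa^2 f_\nu/\pa p_\ga\pa\ov p_\ga)|\pa_w f_\nu|^2$, bounded via (v) and (i) by $M(1+|w|^{-1}+|w|^{-2})|w|^3\cdot (nM^2)$; the middle cross term is controlled using (ii), (i), and (iv), giving a bound of the shape $(1+|w|^{-1})|w|^2\cdot M\cdot (1+|w|^{-1})|w|$, which upon expanding is dominated by a constant times $(1+|w|^{-1}+|w|^{-2})|w|^3$; the last term $|\pa f_\nu/\pa p_\ga|^2\De_w f_\nu$ uses (ii) and the fact that $\De_w f_\nu$ is a sum of the Hessian entries bounded by (iii), giving $(M^2(1+|w|^{-1})^2|w|^4)\cdot(nM|w|^{-1})$, again of the stated form. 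Summing the three and dividing by $|\pa_w f_\nu|^3 > m^3$ (Lemma~\ref{exist_de}) yields $|k_2^{\nu\ga}(w)| \le C(1+|w|^{-1}+|w|^{-2})|w|^3$ for a suitable constant $C$ depending only on $m$, $M$, and $n$. Taking $C$ to be the larger of the constants arising in (1) and (2) finishes the proof.

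The only mild subtlety, rather than a genuine obstacle, is bookkeeping the polynomial-in-$|w|$ factors when expanding the products in the $\mathcal{L}^\ga f_\nu$ estimate: one must check that no term produces a worse singularity than $|w|^{-2}\cdot|w|^3 = |w|$ or a worse growth than $|w|^3$, which is immediate once one observes that in every product the total degree (counting the $|w|^{-k}$ factors with their signs) never exceeds $3$ and never drops below $1$. The uniformity in $\nu$ is automatic since all the constants $m$ and $M$ from Lemmas~\ref{exist_de} and~\ref{upper-bound-der-f-and-f_j} are already uniform in $\nu\ge I$.
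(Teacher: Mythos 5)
Your proof is correct and follows essentially the same route as the paper: both estimate $k_1^{\nu\gamma}$ and $k_2^{\nu\gamma}$ directly from the definitions in \eqref{def-k_1^nu}--\eqref{def-k_2^nu}, using the lower bound on $|\pa_w f_\nu|$ from Lemma~\ref{exist_de} together with the derivative bounds from Lemma~\ref{upper-bound-der-f-and-f_j}, and then extend to the closure by continuity since $0\notin\ov{\mathcal E}^{\nu}(r)$. The term-by-term bookkeeping for $\mathcal{L}^\gamma f_\nu$ in part (2) matches the paper's computation.
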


\begin{proof}
Let $0<r<1$, $m>0$ and $I$ be as in lemma \ref{exist_de}. Choose $M$ as in lemma
\ref{upper-bound-der-f-and-f_j}. Then from (\ref{def-k_1^nu})
\[
\vert k_1^{\nu} (w) \vert = \Big \vert \frac{\pa f_{\nu}}{\pa
p_{\gamma}}(p_{\nu}, w) \Big \vert \vert \pa_{w}f_{\nu}(p_{\nu}, w) \vert^{-1}
<\frac{M}{m}\big(1+ \vert w \vert ^{-1} \big) \vert w \vert ^ 2
\]
for $\nu \geq I$ and $w \in \mathcal{E}^{\nu}(r)$. Also, since $0 \not \in
\ov{\mathcal{E}}^{\nu}(r)$, the function
\[
\vert k_1^{\nu} (w) \vert (1 + \vert w \vert^{-1})^{-1} \vert w \vert^{-2}
\]
is continuous up to $\ov{\mathcal{E}}^{\nu}(r)$ and hence (1) follows.

\medskip

\no Similarly, from (\ref{def-k_2^nu})
\begin{multline*}
\vert k_2^{\nu}(w) \vert < \frac{1}{m^3} \Big( M (1 + \vert w \vert^{-1} +
\vert w \vert^{-2}) \vert w \vert
^3 M^2 + 2 n M (1 + \vert w \vert^{-1}) \vert w \vert ^2 M M (1 + \vert w
\vert^{-1}) \vert w \vert \\
 + (M (1 + \vert w \vert ^{-1}) \vert w \vert ^2)^2 n M \vert w
\vert^{-1}\Big) \leq C  (1 + \vert w \vert^{-1} + \vert w \vert^{-2}) \vert w
\vert ^3
\end{multline*}
for some constant $C$ whenever $\nu \geq I$ and $w \in \mathcal{E}^{\nu}(r)$.
Again the function
\[
\vert k_2^{\nu}(w) \vert  (1 + \vert w \vert^{-1} + \vert w \vert^{-2})^{-1}
\vert w \vert ^{-3}
\]
is continuous upto $\ov{\mathcal{E}}^{\nu}(r)$ and hence (2) follows.
\end{proof}

\section{Asymptotics of $\Lambda_{\nu}$}
\noindent In this section we prove theorem 1.2. First, we recall the following
stability result from \cite{BV}. 

\begin{prop}\label{stability}
Let $D$ be a domain in $\mathbf{C}^{n}$ with $C^2$-smooth boundary and let
$\{D_j\}$
be a $C^2$- perturbation of $D$. Let $G(z, p)$ be the Green function for $D$
with pole at $p$ and let
$\Lambda(p)$ be the Robin function for $D$. Similarly, let $G_j(z, p)$ be the
Green function for $D_j$ with pole at $p$ and $\Lambda_j(p)$ the Robin function
for $D_j$. Then
\[
\lim_{j \ra \infty} G_j(z, p) =G(z, p)
\]
uniformly on compact subsets of $D \setminus \{p\}$ and
\[
\lim_{j \ra \infty} D^{A \ov B} \Lambda_j (p) = D^{A, \ov B} \Lambda(p)
\]
uniformly on compact subsets of $D$.
\end{prop}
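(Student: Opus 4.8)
The plan is to pass to the regular part of the Green function. For $n>1$ the function $\phi_p(z)=|z-p|^{-2n+2}$ is, up to a positive constant, the fundamental solution of $\De$ on $\mathbf R^{2n}$; set
\[
H(z,p)=\phi_p(z)-G(z,p),\qquad H_j(z,p)=\phi_p(z)-G_j(z,p).
\]
By the defining properties of $G$, $H(\cdot,p)$ is harmonic on $D$ with boundary values $\phi_p$, and by the symmetry $G(z,p)=G(p,z)$ the same holds in the $p$-variable; thus $H$ is separately harmonic on $D\times D$ (in particular across the diagonal), hence — being locally bounded as we check below — jointly smooth, and $\La(p)=-H(p,p)$. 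The identical statements hold for each $H_j$ on $D_j\times D_j$, with $\La_j(p)=-H_j(p,p)$. Granting that $H_j\ra H$ in $C^\infty_{loc}(D\times D)$, both assertions follow: since $\phi_p$ is a fixed function smooth on $D\sm\{p\}$, $G_j=\phi_p-H_j\ra\phi_p-H=G$ uniformly on compact subsets of $D\sm\{p\}$; and since $\La_j(p)=-H_j(p,p)$, the multivariate chain rule expresses $D^{A\ov B}\La_j(p)$ as a fixed linear combination of derivatives $\big(\pa_z^{A_1}\pa_{\ov z}^{B_1}\pa_p^{A_2}\pa_{\ov p}^{B_2}H_j\big)(p,p)$ with $A_1+A_2=A$, $B_1+B_2=B$, each of which converges uniformly on compacts of $D$ by restricting the $C^\infty_{loc}(D\times D)$-convergence to a compact piece of the diagonal. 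So it suffices to prove $H_j\ra H$ in $C^\infty_{loc}(D\times D)$.

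For uniform bounds, fix a compact $L\subset D$. Since $\pa D_j\ra\pa D$, for $j$ large we have $L\subset D_j$ and $\mathrm{dist}(L,\pa D_j)\geq c_L>0$. For fixed $z\in L$, $p\mapsto H_j(z,p)$ is the harmonic extension to $D_j$ of $p\mapsto|z-p|^{-2n+2}$ restricted to $\pa D_j$, so the maximum principle gives $|H_j(z,p)|\leq\sup_{\pa D_j}|z-\cdot|^{-2n+2}\leq c_L^{-2n+2}$ for all $p\in D_j$; thus $H_j$ is bounded, uniformly in $j$, on $L\times D_j$ for every compact $L\subset D$. Feeding this into the interior estimates for harmonic functions — applied successively in the two groups of variables: first in $p$, using harmonicity of $H_j$ in $p$, and then in $z$, applied to the functions $\pa_p^{A_2}\pa_{\ov p}^{B_2}H_j$, which are again harmonic in $z$ — yields for every multi-index a bound on $\pa_z^{A_1}\pa_{\ov z}^{B_1}\pa_p^{A_2}\pa_{\ov p}^{B_2}H_j$ that is uniform in $j$ on every compact subset of $D\times D$. (This also furnishes the local boundedness of $H$ promised above.) By the Arzel\`a--Ascoli theorem and a diagonal argument over an exhaustion of $D\times D$, some subsequence of $\{H_j\}$ converges in $C^\infty_{loc}(D\times D)$ to a separately harmonic limit $H^\ast$.

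It remains to identify $H^\ast=H$, and the boundary analysis is the main obstacle. Fix $z_0\in D$. Because $\pa D$ is $C^2$ it satisfies a uniform exterior ball condition, and because the defining functions converge in $C^2$ the domains $D_j$ satisfy one with a radius $\rho>0$ independent of $j$ (for $j$ large); in particular the Dirichlet problem is solvable on each. Given $\zeta_0\in\pa D$, let $B(y_0,\rho)$ be a corresponding exterior ball and put $w(p)=\rho^{-2n+2}-|p-y_0|^{-2n+2}$. Then $w$ is harmonic and nonnegative on a neighbourhood of $\ov D_j$, tends to $0$ as $p\ra\zeta_0$, and for each $\eta>0$ satisfies $w\geq c_\eta>0$ on $\{p\in\ov D_j:|p-\zeta_0|\geq\eta\}$ with $c_\eta$ independent of $j$. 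Since $z_0$ lies in the interior, $\phi_{z_0}$ is smooth near $\pa D$, so given $\ep>0$ we may choose $\eta$ so that $|\phi_{z_0}(p)-\phi_{z_0}(\zeta_0)|<\ep$ for $|p-\zeta_0|<\eta$, and then $A>0$, independent of $j$, so that on $\pa D_j$ one has $\phi_{z_0}(\zeta_0)-\ep-Aw\leq\phi_{z_0}\leq\phi_{z_0}(\zeta_0)+\ep+Aw$. Comparing $p\mapsto H_j(z_0,p)$, the harmonic extension of $\phi_{z_0}|_{\pa D_j}$, with these harmonic barriers via the maximum principle gives $|H_j(z_0,p)-\phi_{z_0}(\zeta_0)|\leq\ep+Aw(p)$ on $D_j$, hence $H_j(z_0,p)\ra\phi_{z_0}(\zeta_0)$ as $p\ra\zeta_0$, uniformly in $j$. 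Therefore $H^\ast(z_0,\cdot)$ extends continuously to $\ov D$ with boundary values $\phi_{z_0}$, so by uniqueness of the Dirichlet solution $H^\ast(z_0,\cdot)=H(z_0,\cdot)$; as $z_0\in D$ was arbitrary, $H^\ast=H$. Since every subsequential limit of $\{H_j\}$ equals $H$, the whole sequence converges, $H_j\ra H$ in $C^\infty_{loc}(D\times D)$, and the proposition follows.
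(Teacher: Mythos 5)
Your proof is self-contained and, in its broad strokes, correct. The paper does not reprove this proposition but cites the earlier work of Borah--Verma (Propositions 7.1 and 7.2 there), so a line-by-line comparison with the authors' argument is not possible from the present text. That said, the reduction to the regular part $H(z,p)=|z-p|^{-2n+2}-G(z,p)$ with $\Lambda(p)=-H(p,p)$, the maximum-principle bound $|H_j(z,p)|\le\sup_{\partial D_j}|z-\cdot|^{-2n+2}$, the iterated interior derivative estimates (in $p$, then in $z$) to get $j$-uniform $C^\infty$ bounds on compacts of $D\times D$, the Arzel\`a--Ascoli extraction, and the passage from $C^\infty_{loc}(D\times D)$ convergence of $H_j$ to convergence of $D^{A\overline B}\Lambda_j$ via the chain rule on the diagonal are all sound and fit together.

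One step is stated too loosely and does require repair. Having fixed $\zeta_0\in\partial D$, you take an exterior ball $B(y_0,\rho)$ "corresponding" to $\zeta_0$ and then assert that $w(p)=\rho^{-2n+2}-|p-y_0|^{-2n+2}$ is nonnegative on a neighbourhood of $\overline{D_j}$. If $B(y_0,\rho)$ is the exterior ball for $D$ tangent at $\zeta_0$, this is not automatic: since $D_j$ is only $C^2$-close to $D$, its boundary can protrude slightly beyond $\partial D$ near $\zeta_0$, so $D_j$ may meet $B(y_0,\rho)$ and $w$ may be negative on part of $D_j$, invalidating the comparison. The intended argument goes through once the barrier is adapted to $D_j$: for each large $j$, pick $\zeta_0^j\in\partial D_j$ nearest $\zeta_0$ and an exterior ball $B(y_0^j,\rho)$ for $D_j$ at $\zeta_0^j$ with the $j$-independent radius $\rho$ furnished by the uniform exterior ball condition you already invoked. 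Then $\zeta_0^j\to\zeta_0$, $y_0^j\to y_0$, the barriers $w_j(p)=\rho^{-2n+2}-|p-y_0^j|^{-2n+2}$ converge locally uniformly to $w$, the modulus-of-continuity bound on $\phi_{z_0}$ near $\zeta_0^j$ and the lower bound $w_j\ge c_\eta>0$ on $\overline{D_j}\cap\{|p-\zeta_0^j|\ge\eta\}$ can both be taken uniform in $j$, and the maximum-principle comparison yields $|H_j(z_0,p)-\phi_{z_0}(\zeta_0^j)|\le\epsilon+Aw_j(p)$ on $D_j$ with $A$ independent of $j$. Passing to the subsequential limit on compacts of $D$ and using $\phi_{z_0}(\zeta_0^j)\to\phi_{z_0}(\zeta_0)$ gives your inequality for $H^*$, and the rest of the identification $H^*=H$ proceeds as written. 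This is a fixable technicality, not a flaw in the approach.
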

\noindent For a proof see \cite{BV}*{proposition 7.1, propostion 7.2}. This
proposition, together with \cite{LY}*{proposition~5.1} yields the following
boundary behaviour of the functions $G_j(z, p)$.

\begin{cor}\label{bdy-conv-Green-fn}
Let $D$ be a domain in $\mathbf{C}^{n}$ with $C^{\infty}$-smooth boundary and
let
$\{D_j\}$ be a $C^{\infty}$-perturbation of $D$. Let $z_j \in \ov D_j$ be such
that $\{z_j\}$
converges
to a point $z_0 \in \pa D$. Then for any $p \in D$,
\[
\lim_{j \ra \infty} G_j(z_j, p) = G(z_0, p)
\]
and identifying $z=(z_1, \ldots, z_n) \in \mathbf{C}^{n}$ with $x=(x_1, \ldots,
x_{2n}) \in \mathbf{R}^{2n}$,
\[
\lim_{j \ra \infty} \frac{\pa G_j}{\pa x_k}(z_j, p) = \frac{\pa G}{\pa x_k}(z_0,
p)
\]
for $1 \leq k \leq 2n$.
\end{cor}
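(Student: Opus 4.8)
\no The plan is to combine the stability statement (Proposition~\ref{stability}) with a uniform boundary Schauder estimate and a compactness argument over the varying domains. Since $z_0 \in \pa D$ and $G(\cdot, p)$ vanishes on $\pa D$, the first assertion amounts to $G_j(z_j, p) \ra 0$, and $\nab G(z_0, p)$ is normal to $\pa D$ at $z_0$. Throughout I would use that each $G_j(\cdot, p)$ extends smoothly to $\ov{D_j} \sm \{p\}$, and $G(\cdot, p)$ to $\ov D \sm \{p\}$, as recorded in \cite{LY}*{proposition~5.1}, so that the boundary values and boundary gradients in the statement make sense.

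First I would fix $\rho > 0$ with $\ov{B(p, 2\rho)} \subset D$, so $\ov{B(p, 2\rho)} \subset D_j$ for $j$ large. By Proposition~\ref{stability}, $G_j(\cdot, p) \ra G(\cdot, p)$ uniformly on $\pa B(p, \rho)$, whence a bound $0 \le G_j(\cdot, p) \le M$ there for $j$ large; since $G_j(\cdot, p)$ is nonnegative and harmonic on $D_j \sm \ov{B(p, \rho)}$ and vanishes on $\pa D_j$, the maximum principle propagates this to $0 \le G_j(\cdot, p) \le M$ on $D_j \sm B(p, \rho)$, uniformly in $j$. Because $\psi_j \ra \psi$ in the $C^{\infty}$-topology on compacts and $\nab\psi \neq 0$ near $\pa D$, the hypersurfaces $\pa D_j$ obey a uniform $C^{2,\al}$ bound in a fixed ball $B = B(z_0, \ep)$ chosen disjoint from $\ov{B(p, \rho)}$; applying the local boundary Schauder estimate to $G_j(\cdot, p)$ — harmonic and bounded by $M$ on $D_j \cap B$, vanishing on $\pa D_j \cap B$ — then gives a $j$-independent constant $C$ with $\big\| G_j(\cdot, p) \big\|_{C^{1,\al}(\ov{D_j} \cap \frac12 B)} \le C$.

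To compare functions living on different domains I would pick diffeomorphisms $\Phi_j$ of a neighbourhood of $\ov D$ onto a neighbourhood of $\ov{D_j}$ with $\Phi_j \ra \mathrm{id}$ in $C^{\infty}_{\mathrm{loc}}$ and $\Phi_j(D) = D_j$ near $z_0$ (these exist since $\psi_j \ra \psi$ in $C^{\infty}_{\mathrm{loc}}$), and set $v_j = G_j(\cdot, p) \circ \Phi_j$. On a slightly smaller ball $B'$ concentric with $B$, $v_j$ solves a uniformly elliptic equation $L_j v_j = 0$ with $L_j \ra \De$, vanishes on $\pa D \cap B'$, is bounded by $M$, and — by the chain rule and the previous estimate — is bounded in $C^{1,\al}(\ov D \cap \frac12 B')$ uniformly in $j$. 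By Arzel\`a--Ascoli, some subsequence of $\{v_j\}$ converges in $C^1(\ov D \cap \frac14 B')$ to a limit $v$; on the interior $D \cap \frac14 B'$, Proposition~\ref{stability} together with $\Phi_j \ra \mathrm{id}$ gives $v_j \ra G(\cdot, p)$ pointwise, so $v = G(\cdot, p)$ there and, by continuity up to $\pa D$, on $\ov D \cap \frac14 B'$. Writing $z_j = \Phi_j(w_j)$ with $w_j \ra z_0$, along that subsequence $G_j(z_j, p) = v_j(w_j) \ra v(z_0) = G(z_0, p)$ and $\nab_x G_j(z_j, p) = \big( (D\Phi_j(w_j))^{T} \big)^{-1} \nab_w v_j(w_j) \ra \nab G(z_0, p)$, the last step using $D\Phi_j \ra I$. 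The usual subsequence argument — every subsequence of $\{j\}$ has a further subsequence along which both convergences hold with the same limits — then upgrades this to convergence of the full sequences. The step I expect to be the main obstacle is the uniform boundary regularity of the second paragraph together with the pullback device transplanting the problem to the fixed domain $D$; the remaining ingredients are routine applications of Proposition~\ref{stability} and standard elliptic estimates.
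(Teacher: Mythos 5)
Your proof is correct, but it takes a genuinely different route from the one in the paper. The paper's proof is short: after placing $z_0$ at the origin with the normal along the $x_{2n}$-axis, it produces graph representations of $\pa D \cap B(0,r)$ and $\pa D_j \cap B(0,r)$ via the implicit function theorem, bounds $G_j(z,p)$ by $r^{-2n+2}$ on $B(0,r)\cap D_j$ using the fundamental-solution comparison (after shrinking $r$ so that $2r < \vert p\vert$), rescales by $Z = z/r$ to get normalised harmonic functions $u_j(Z) = r^{2n-2}G_j(z,p)$ bounded by $1$, and then directly invokes \cite{LY}*{Proposition~5.1} — a boundary stability result for harmonic functions on converging domains that vanish on portions of the boundary — to obtain convergence of $u_j$ and its first derivatives at the boundary point. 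Your argument avoids citing \cite{LY}*{Proposition~5.1} and instead reconstructs its content from scratch: you obtain the uniform global bound on $G_j(\cdot,p)$ by the maximum principle rather than the pointwise fundamental-solution comparison, then derive a $j$-uniform boundary $C^{1,\alpha}$ estimate from local boundary Schauder theory (valid because the $\pa D_j$ carry uniform $C^{2,\alpha}$ bounds), transplant to the fixed domain $D$ via diffeomorphisms $\Phi_j \ra \mathrm{id}$, extract a $C^1$-convergent subsequence by Arzel\`a--Ascoli, identify the limit via the interior convergence from Proposition~\ref{stability}, and then conclude by the standard subsequence trick. The trade-off is that the paper's proof is shorter because it leans on the black-box \cite{LY}*{Proposition~5.1}, whereas yours is more self-contained but essentially re-derives that proposition in the special case needed here. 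Both are valid; yours makes the role of elliptic regularity explicit, while the paper's is more economical in a context where \cite{LY} is already a foundational reference.
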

\begin{proof}
Since the Green function is invariant under translation and rotation, without
loss of generality, we assume that $z_0 = 0$ and the normal to $\pa D$ at $z_0$
is along $x_{2n}$ axis. By the implicit function theorem, we can find a ball
$B(0,
r)$, a $C^{\infty}$-smooth function $\phi$ defined on $B(0^{\prime}, r) \subset
\mathbf{R}^{2n-1}$,  a sequence $\{\phi_j\}$ of $C^{\infty}$-smooth functions
defined on $B(0^{\prime}, r)$ that converges in $C^{\infty}$-topology on compact
subsets of $B(0^{\prime}, r)$ to $\phi$ such that
\begin{equation}\label{graph}
\begin{cases}
B(0, r) \cap \pa D = \{\big(x^{\prime}, \phi(x^{\prime}) \big) : x^{\prime} \in
B(0^{\prime}, r) \},\\
B(0, r) \cap \pa D_j = \{\big(x^{\prime}, \phi_j(x^{\prime})\big) : x^{\prime}
\in B(0^{\prime}, r)\}.
\end{cases}
\end{equation}
Now let $p \in D$. Shrinking $r$ if necessary, let us assume that $2r < \vert p
\vert$. Then for $z \in B(0, r) \cap D_j$,
\begin{equation}\label{bd-G_j}
G_j(z, p) < \vert z - p\vert^{-2n+2} < r^{-2n+2}.
\end{equation}
Consider the dilation
\[
Z = Sz = \frac{z}{r}
\]
and set
\[
\Om = S \big(B(0, r) \cap D \big), \quad \Om_j = S \big(B(0, r) \cap D_j \big).
\]
Define
\[
u(Z) = r^{2n-2} G(z, p), \quad Z \in \Om, 
\]
and
\[
u_j(Z) = r^{2n - 2} G_j(z, p), \quad Z \in \Om_j
\]
Then by (\ref{graph}) , (\ref{bd-G_j}) and in view of proposition
\ref{stability}, the sequence $\{u_j\}$ on $\{\Om_j\}$ satisfies the hypothesis
of \cite{LY}*{proposition~5.1} and therefore
\[
\begin{cases}
\lim_{j \ra \infty} u_j(Z_j) = u(0),\\
\lim_{j \ra \infty} \frac{\pa u_j}{\pa \ti x_k}(Z_j) = \frac{\pa u}{\pa \ti
x_k}(0).
\end{cases}
\]
where $Z_j = Sz_j$. This implies that
\[
\begin{cases}
\lim_{j \ra \infty} G_j(z_j, p) = G(0, p),\\
\lim_{j \ra \infty} \frac{\pa G_j}{\pa x_k}(z_j, p) = \frac{\pa G}{\pa x_k}(0,
p).
\end{cases}
\]
\end{proof}

\noindent \textit{Proof of theorem 1.2.}
Consider the affine maps $T^{\nu} : \mathbf{C}^{n} \ra \mathbf{C}^{n}$ defined
by
\[
T^{\nu}(z) = \frac{z - p_{\nu}}{-\psi_{\nu}(p_{\nu})}
\] 
and the scalled domains $D^{\nu} = T^{\nu}(D_{\nu})$. Recall from the previous
section that a defining function for $D^{\nu}$ is given by
\[
f_{\nu}(p_{\nu}, w) = 2 \Re \bigg \{ \sum_{\al = 1}^{n} \int _{0}
^{1} \Big( w_{\al} \psi_{\nu\al} \big(p_{\nu} - \psi_{\nu}(p_{\nu}) t w
\big) \Big) dt \bigg \} - 1.
\]
It is evident that $\{f_{\nu}(p_{\nu}, \cdot)\}$ converges in the
$C^{\infty}$-topology on compact subsets of $\mathbf{C}^{n}$ to
\[
f(p_0, w) = 2 \Re \Big(\sum_{\al=1}^{n} \psi_{\al}(p_0) w_{\al} \Big)
-1.
\]
This implies that $\{D^{\nu}\}$ is a $C^{\infty}$-perturbation of the half space
\[
\mathcal{H} = \Big\{ w : 2 \Re \Big(\sum_{\al=1}^{n} \psi_{\al}(p_0)
w_{\al} \Big) -1 < 0 \Big\}.
\]
Therefore, by proposition \ref{stability}
\begin{equation}\label{lim-der-La_nu}
\lim_{\nu \ra \infty} D^{A \ov B} \Lambda_{D^{\nu}} (0) = D^{A \ov
B}\Lambda_{\mathcal{H}}(0).
\end{equation}
Now by \cite{BV}*{(1.1)},
\[
\Lambda_{D^{\nu}}(p) = \Lambda_{\nu}\big(p_{\nu} - p \psi_{\nu}(p_{\nu})\big)
\big(\psi_{\nu}(p)\big)^{2n-2}
\]
Differentiating this we obtain
\[
D^{A \ov B} \Lambda_{D^{\nu}}(0) = (-1)^{\vert A \vert + \vert B \vert} D^{A \ov
B} \Lambda_{\nu} (p_{\nu}) \big( \psi_{\nu}(p_{\nu})\big)^{2n-2+ \vert A \vert +
\vert B \vert}.
\]
Hence from (\ref{lim-der-La_nu}),
\[
\lim_{\nu \ra \infty} D^{A \ov B} (-1)^{\vert A \vert + \vert B \vert} D^{A \ov
B} \Lambda_{\nu} (p_{\nu}) \big( \psi_{\nu}(p_{\nu})\big)^{2n-2+ \vert A \vert +
\vert B \vert}  = D^{A \ov B}\Lambda_{\mathcal{H}}(0).
\]
which completes the proof. \hfill $\square$

\section{Estimates on the first derivatives}
\noindent Let $1 \leq \ga \leq n$. By proposition \ref{bdy-val-der-g},
$\frac{\pa g_{\nu}}{\pa p_{\gamma}}(p_{\nu},
w)$ is a harmonic function of $w \in D^{\nu}$,
\[
\frac{\pa g_{\nu}}{\pa p_{\gamma}}(p_{\nu},0) = \frac{\pa \la_{\nu}}{\pa
p_{\gamma}}(p_{\nu})
\]
and
\begin{equation}\label{bdy-val-der-g_nu}
\frac{\pa g_{\nu}}{\pa p_{\gamma}}(p_{\nu}, w) = - k_{1}^{\nu \gamma}( w) \vert
\pa_w g^{\nu}(w) \vert, \quad w \in \pa D^{\nu}.
\end{equation}
Therefore,
\begin{equation}\label{1st-der-la_nu(p_nu)}
\frac{\pa \la_{\nu}}{\pa p_{\gamma}}(p_{\nu})
=  \frac{1}{2(n-1) \sigma_{2n}}\int_{\pa D^{\nu}} k_1^{\nu \gamma}(w) \lvert
\pa_{w} g^{\nu}(w) \rvert \frac{\pa g^{\nu}}{\pa n_w}(w) dS_{w}.
\end{equation}
Thus to find the limit of the above integrals, we need to estimate the boundary
values (\ref{bdy-val-der-g_nu}). For this we modify Step 3 of
chapter 4 \cite{LY}.

\begin{lem}\label{external_ball}
There exists a number $0 < \rho < 1$ and an integer $I$ such that for $\nu \geq
I$ and $w_0 \in \pa D^{\nu}$, we can find a ball of radius $\rho \vert w_0
\vert$ that is externally tangent to $\pa D^{\nu}$ at $w_0$.
\end{lem}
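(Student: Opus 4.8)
The plan is to transfer the statement, via the affine maps $T^{\nu}$, to the unscaled domains $D_{\nu}$, where a uniform exterior ball condition is available, and then to scale back. The map $T^{\nu}(z)=(z-p_{\nu})/(-\psi_{\nu}(p_{\nu}))$ is a translation followed by dilation by the positive scalar $1/(-\psi_{\nu}(p_{\nu}))$, so $(T^{\nu})^{-1}$ scales lengths by $-\psi_{\nu}(p_{\nu})$ and $T^{\nu}$ carries $D_{\nu}$ onto $D^{\nu}$ and every ball externally tangent to $\pa D_{\nu}$ at a point $z_{0}$ to a ball externally tangent to $\pa D^{\nu}$ at $w_{0}=T^{\nu}(z_{0})$. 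Since $|z_{0}-p_{\nu}|=(-\psi_{\nu}(p_{\nu}))\,|w_{0}|$, a ball of radius $\rho\,|z_{0}-p_{\nu}|$ externally tangent to $\pa D_{\nu}$ at $z_{0}$ maps to a ball of radius $\rho\,|w_{0}|$ externally tangent to $\pa D^{\nu}$ at $w_{0}$. Hence it suffices to produce $0<\rho<1$ and an integer $I$ so that for all $\nu\geq I$ and all $z_{0}\in\pa D_{\nu}$ there is a ball of radius $\rho\,|z_{0}-p_{\nu}|$ externally tangent to $\pa D_{\nu}$ at $z_{0}$.

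The main step is a uniform exterior ball condition for the family $\{D_{\nu}\}$: there exist $r_{1}>0$ and an integer $I$ such that for $\nu\geq I$ every $z_{0}\in\pa D_{\nu}$ admits an externally tangent ball of radius $r_{1}$. Take the neighbourhood $U$ of $\pa D$ and the integer $I$ of Lemma~\ref{exist_de}, so that $|\pa\psi_{\nu}|>m$ on $U$ and $\pa D_{\nu}\subset U$ for $\nu\geq I$; the $C^{\infty}$-convergence $\psi_{\nu}\to\psi$ gives, after possibly shrinking $U$ and enlarging $I$, a constant $M$ bounding the norms of the real Hessians of all the $\psi_{\nu}$ on $U$. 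For $z_{0}\in\pa D_{\nu}$ let $n_{\nu}(z_{0})=\nab\psi_{\nu}(z_{0})/|\nab\psi_{\nu}(z_{0})|$ be the outward unit normal. If $z_{0}+s\in\ov{B}\big(z_{0}+t\,n_{\nu}(z_{0}),t\big)$ then $\langle n_{\nu}(z_{0}),s\rangle\geq|s|^{2}/(2t)$, and Taylor's theorem at $z_{0}$ yields
\[
\psi_{\nu}(z_{0}+s)\ \geq\ |\nab\psi_{\nu}(z_{0})|\,\langle n_{\nu}(z_{0}),s\rangle-\tfrac{1}{2}M|s|^{2}\ \geq\ \tfrac{|s|^{2}}{2}\Big(\tfrac{|\nab\psi_{\nu}(z_{0})|}{t}-M\Big)\ \geq\ 0
\]
whenever $t\leq|\nab\psi_{\nu}(z_{0})|/M$. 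Since $|\nab\psi_{\nu}|=2|\pa\psi_{\nu}|>2m$ on $U$, the choice $r_{1}=m/M$ is admissible, so the closed ball $\ov{B}\big(z_{0}+r_{1}\,n_{\nu}(z_{0}),r_{1}\big)$ lies in $\{\psi_{\nu}\geq0\}=\mathbf{C}^{n}\sm D_{\nu}$ and is externally tangent to $\pa D_{\nu}$ at $z_{0}$.

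To finish I would argue by a scaling comparison. Because $D_{\nu}\to D$ in the Hausdorff sense, after enlarging $I$ we have $\text{diam}(D_{\nu})\leq C_{0}$ for $\nu\geq I$, with $C_{0}=\text{diam}(D)+2$, say; hence $|z_{0}-p_{\nu}|\leq C_{0}$ for every $z_{0}\in\pa D_{\nu}$. Put $\rho=\min\{r_{1}/C_{0},\,1/2\}$, so $0<\rho<1$ and $\rho\,|z_{0}-p_{\nu}|\leq r_{1}$. Shrinking the radius-$r_{1}$ externally tangent ball at $z_{0}$ radially towards $z_{0}$ gives a ball of radius $\rho\,|z_{0}-p_{\nu}|$ that is again contained in $\mathbf{C}^{n}\sm D_{\nu}$ and externally tangent to $\pa D_{\nu}$ at $z_{0}$; transporting it by $T^{\nu}$ as in the first paragraph produces the required ball of radius $\rho\,|w_{0}|$ externally tangent to $\pa D^{\nu}$ at $w_{0}$. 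The only substantive point is the uniform exterior ball estimate of the second paragraph: making the Hessian bound $M$ independent of $\nu$ is exactly where the $C^{\infty}$- (indeed $C^{2}$-) convergence of the defining functions on a fixed neighbourhood of $\pa D$ enters, while everything else is bookkeeping with the affine maps $T^{\nu}$.
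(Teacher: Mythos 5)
Your proof is correct and follows essentially the same route as the paper: establish a uniform exterior ball condition for the unscaled domains $D_{\nu}$, then transport it to $D^{\nu}$ via the dilation $T^{\nu}$ and use the uniform diameter bound $|z_0 - p_{\nu}| \leq 2R$ to replace the fixed radius by one proportional to $|w_0|$. The only difference is that the paper simply attributes the uniform exterior ball radius to the implicit function theorem, whereas you supply the elementary Taylor/Hessian argument behind it; one small thing to tighten is that the Hessian bound $M$ should be taken on a large ball containing all of $\ov{D}_{\nu}$ (which the $C^{\infty}$-convergence on compacts gives directly), so that the exterior ball $\ov{B}(z_0 + r_1 n_{\nu}(z_0), r_1)$ stays in the region where the bound applies even if it leaves your collar $U$.
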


\begin{proof}
Since $D$ is bounded, we can find a ball $B(0, R)$ which contains $D$. 
Since $\{D_{\nu}\}$ converges in $C^2$-topology to $D$, there exists an integer
$I$ such that  $D_{\nu} \subset B(0, R)$ for all $\nu \geq I$. By implicit
function theorem, there exists a number $\ti \rho$ such that modifying $I$ we
can
find for each $\nu \geq I$ and $z_0 \in \pa D_{\nu}$, a ball of radius $\ti
\rho$ that is externally tangent to $\pa D_{\nu}$ at $z_0$. Now let $\nu \geq I$
and $w_0 \in \pa D^{\nu}$. Since $D^{\nu}$ is obtained from $D_{\nu}$ by means
of a translation followed by dialation of factor $-\psi_{\nu}(p_{\nu})$, it
follows that we can find a ball of radius $\ti{\rho}/\big(-\psi_{\nu}(p_{\nu})
\big)$
that is externally tangent to $\pa D^{\nu}$ at $w_0$.
Also there exists $z_0 \in \pa D_{\nu}$ such that
\[
w_0 = \frac{z_0 - p_{\nu}}{-\psi_{\nu}(p_{\nu})}
\]
which implies that
\[
\frac{\ti \rho}{-\psi_{\nu}(p_{\nu})} = \frac{\ti \rho \vert w_0 \vert}{\vert
z_0 - p_{\nu} \vert} \geq \frac{\ti \rho}{2 R} \vert w_0 \vert.
\]
Thus taking $\rho = \ti \rho / 2R$, it follows that the we can find a ball of
radius $\rho \vert w_0 \vert$ that is tangent to $\pa D^{\nu}$ at $w_0$.
\end{proof}

\begin{prop}\label{bd-grad-g^nu}
There exists an an integer $I$ and a constanct $C>0$ such that
\[
\big \vert \pa_w g^{\nu}(w) \big \vert \leq C \vert w \vert^{-2n +1}
\]
for all $\nu \geq I$ and $w \in \pa D^{\nu}$.
\end{prop}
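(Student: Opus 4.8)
The plan is to bound the inward normal derivative of $g^{\nu}$ at a boundary point $w_0\in\pa D^{\nu}$ by comparing $g^{\nu}$ with an explicit harmonic barrier attached to the externally tangent ball furnished by Lemma~\ref{external_ball}, in the spirit of Step~3 of Chapter~4 of \cite{LY}. The first ingredient is the crude a priori bound $g^{\nu}(w)<\vert w\vert^{-2n+2}$ on $D^{\nu}$: since $D^{\nu}=T^{\nu}(D_{\nu})$ is a bounded domain and $g^{\nu}(w)-\vert w\vert^{-2n+2}$ is harmonic on $D^{\nu}$ (the pole at $0$ being cancelled), continuous on $\ov D^{\nu}$, and equal to $-\vert w\vert^{-2n+2}<0$ on $\pa D^{\nu}$, the maximum principle gives the claim.

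Next I would fix $\rho$ and $I$ as in Lemma~\ref{external_ball}, shrinking $\rho$ so that $\rho<1/4$, and fix a small $\sig>0$ with $2\rho+\sig<1$. Given $\nu\ge I$ and $w_0\in\pa D^{\nu}$, let $B(a,\rho\vert w_0\vert)$ be externally tangent to $\pa D^{\nu}$ at $w_0$, so that $D^{\nu}\subset\{\vert w-a\vert>\rho\vert w_0\vert\}$, $\vert w_0-a\vert=\rho\vert w_0\vert$, and $\mathbf n:=(w_0-a)/(\rho\vert w_0\vert)$ is the inward unit normal to $\pa D^{\nu}$ at $w_0$. Put
\[
\Om_0=D^{\nu}\cap\{\vert w-a\vert<(\rho+\sig)\vert w_0\vert\},\qquad h(w)=A\big((\rho\vert w_0\vert)^{-2n+2}-\vert w-a\vert^{-2n+2}\big),
\]
with $A>0$ a constant, depending only on $n,\rho,\sig$, to be chosen. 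Since $\vert a\vert\ge(1-\rho)\vert w_0\vert>(\rho+\sig)\vert w_0\vert$, neither the pole $0$ of $g^{\nu}$ nor the singularity $a$ of $h$ lies in $\ov\Om_0$; hence $g^{\nu}$ and $h$ are both harmonic on $\Om_0$, continuous on $\ov\Om_0$, and $h\ge0$ there.

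Now I would compare on $\pa\Om_0$. On $\pa\Om_0\cap\pa D^{\nu}$ one has $g^{\nu}=0\le h$. On the spherical part $\pa\Om_0\cap\{\vert w-a\vert=(\rho+\sig)\vert w_0\vert\}$ the triangle inequality gives $\vert w-w_0\vert\le(2\rho+\sig)\vert w_0\vert$, hence $\vert w\vert\ge(1-2\rho-\sig)\vert w_0\vert$, so $g^{\nu}(w)<\vert w\vert^{-2n+2}\le(1-2\rho-\sig)^{-2n+2}\vert w_0\vert^{-2n+2}$, whereas $h(w)=A\big(\rho^{-2n+2}-(\rho+\sig)^{-2n+2}\big)\vert w_0\vert^{-2n+2}$; choosing $A=(1-2\rho-\sig)^{-2n+2}\big/\big(\rho^{-2n+2}-(\rho+\sig)^{-2n+2}\big)$ makes $h\ge g^{\nu}$ on all of $\pa\Om_0$, and so $g^{\nu}\le h$ on $\Om_0$ by the maximum principle. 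Since $g^{\nu}(w_0)=0=h(w_0)$ and $w_0+t\,\mathbf n\in\Om_0$ for small $t>0$ with $\vert w_0+t\,\mathbf n-a\vert=\rho\vert w_0\vert+t$, comparing difference quotients along $\mathbf n$ and using $\vert\pa_w g^{\nu}(w_0)\vert=\frac{1}{2}\,\pa_{\mathbf n}g^{\nu}(w_0)$ (as recorded in Section~\ref{relation-la-g}) gives
\[
\vert\pa_w g^{\nu}(w_0)\vert\le\frac{1}{2}\,\pa_{\mathbf n}h(w_0)=\frac{1}{2}\,A(2n-2)\rho^{-2n+1}\vert w_0\vert^{-2n+1},
\]
so one takes $C=\frac{1}{2}A(2n-2)\rho^{-2n+1}$.

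The point that genuinely needs care, and the reason the bound carries the factor $\vert w_0\vert$ rather than a fixed distance to the boundary, is uniformity: as $\nu\to\infty$ the domains $D^{\nu}$ swell and their boundary points $w_0$ can be large, yet both the radius $\rho\vert w_0\vert$ of the external ball supplied by Lemma~\ref{external_ball} and the intrinsic length scale of the barrier $h$ scale like $\vert w_0\vert$, so the whole construction is scale covariant and $A$, hence $C$, depends only on $n$ and the fixed constants $\rho,\sig$ and not on $\nu$ or $w_0$. Everything else reduces to the routine Hopf-type barrier computation.
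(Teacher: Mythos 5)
Your proposal is correct and follows essentially the same strategy as the paper's: combine the a priori bound $g^{\nu}(w)<\vert w\vert^{-2n+2}$ with the externally tangent ball of radius $\rho\vert w_0\vert$ furnished by Lemma~\ref{external_ball} to obtain a gradient estimate at $w_0$ that scales like $\vert w_0\vert^{-2n+1}$. The only difference is one of exposition: where the paper invokes Step~2 of Chapter~4 of \cite{LY} as a black box to pass from the sup bound on $g^\nu$ in a collar of radius comparable to $\vert w_0\vert$ to the boundary gradient bound, you carry out that Hopf-type barrier computation explicitly by comparing $g^\nu$ with the shifted fundamental solution $h$ on the shell $\Omega_0$; the constants match and the scale covariance observation that makes the estimate uniform in $\nu$ and $w_0$ is exactly the point the paper is also relying on.
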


\begin{proof}
Choose $0 < \rho <1$, an integer $I$ and a constant $C$ as in lemma
\ref{external_ball}. Let $\nu \geq I$ and $w_0 \in \pa D^{\nu}$. Let $B$ be the
ball
of radius $\rho \vert w_0 \vert$ that is externally tangent to $\pa D^{\nu}$
at $w_0$. Let $E$ be the ball centred at $w_0$ and
of radius $\rho \vert w_0 \vert$. Then $w \in E$ implies that
\[
\vert w \vert > \vert w_0 \vert - \rho \vert w_0 \vert = (1 - \rho) \vert w_0
\vert.
\]
Therefore, for $w \in E \cap D^{\nu}$,
\[
0 < g^{\nu}(w) \leq \vert w \vert ^{-2n + 2} < \big( (1 - \rho) \vert w_0
\vert
\big)^{-2n + 2}.
\]
By step 2 of chapter 4 \cite{LY}, we have
\[
\vert \pa_{w} g^{\nu} (w_0) \vert \leq c \big ( (1 - \rho)  \lvert w_0 \rvert
\big)
^ {-2n + 2}
(\rho \lvert w_0 \rvert)^{-1}
\]
where $c$ does not depend on $g^{\nu}(w)$ or $D^{\nu}$. Thus
\[
\vert \pa_{w} g^{\nu}(w_0) \vert \leq C \lvert w_0 \rvert ^{-2n +1},
\]
where $C = c \rho^{-1} (1 - \rho)^ {-2n + 2}$ is independent of $\nu$
and $w_0 \in \pa D^{\nu}$.
\end{proof}

\begin{prop}\label{bdy-est-1-der-g}
There exists a constant $C>0$ and an integer $I$ such that
\[
\Big \vert \frac{\pa g_{\nu}}{\pa p_{\gamma}}(p_{\nu}, w) \Big \vert = \big
\vert k_{1}^{\nu \gamma}(w)\big \vert \big \vert \pa_w g^{\nu}(w) \big \vert
\leq C  \big(1 + \vert w \vert^{-1} \big) \vert w \vert ^{-2n+3}, \quad w \in
\pa D^{\nu}
\]
for all $\nu \geq I$.
\end{prop}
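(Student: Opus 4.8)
The plan is to read off the claimed bound by simply combining the pointwise estimate for $k_1^{\nu\gamma}$ obtained in Proposition \ref{upper_boun_k,k_j} with the gradient estimate of Proposition \ref{bd-grad-g^nu}. First I would observe that the equality in the statement is nothing new: by \eqref{bdy-val-der-g_nu}, for every $w \in \pa D^{\nu}$ we have
\[
\frac{\pa g_{\nu}}{\pa p_{\gamma}}(p_{\nu}, w) = - k_{1}^{\nu \gamma}(w)\,\big\vert \pa_{w} g^{\nu}(w)\big\vert ,
\]
so taking absolute values gives the asserted identity, and only the inequality requires an argument.

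For the inequality, I would fix $0<r<1$, a constant $C_1$ and an integer $I_1$ as furnished by Proposition \ref{upper_boun_k,k_j}, together with a constant $C_2$ and an integer $I_2$ as furnished by Proposition \ref{bd-grad-g^nu}, and set $I = \max\{I_1, I_2\}$. The one small point to check is that $\pa D^{\nu} \subset \ov{\mathcal{E}}^{\nu}(r)$; this is immediate from the definition $\mathcal{E}^{\nu}(r) = \cup_{w_0 \in \pa D^{\nu}}\{w \in D^{\nu} : \vert w - w_0\vert < r\vert w_0\vert\}$, since any $w_0 \in \pa D^{\nu}$ is a limit of points of $\mathcal{E}^{\nu}(r)$ (push $w_0$ slightly into $D^{\nu}$ along the inner normal). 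Consequently, for $\nu \geq I$ and $w \in \pa D^{\nu}$, Proposition \ref{upper_boun_k,k_j} applies to give $\vert k_1^{\nu\gamma}(w)\vert \leq C_1(1 + \vert w\vert^{-1})\vert w\vert^2$, while Proposition \ref{bd-grad-g^nu} gives $\vert \pa_w g^{\nu}(w)\vert \leq C_2\vert w\vert^{-2n+1}$.

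Multiplying the two bounds then yields
\[
\Big\vert \frac{\pa g_{\nu}}{\pa p_{\gamma}}(p_{\nu}, w)\Big\vert = \vert k_1^{\nu\gamma}(w)\vert\,\vert \pa_w g^{\nu}(w)\vert \leq C_1 C_2\,(1 + \vert w\vert^{-1})\,\vert w\vert^{2}\,\vert w\vert^{-2n+1} = C\,(1 + \vert w\vert^{-1})\,\vert w\vert^{-2n+3}
\]
with $C = C_1 C_2$, which is exactly the assertion. There is essentially no genuine obstacle here: the substantive work is carried by Propositions \ref{upper_boun_k,k_j} and \ref{bd-grad-g^nu}, and the only mild subtlety is that the estimate for $k_1^{\nu\gamma}$ was recorded on the closure of the collar $\ov{\mathcal{E}}^{\nu}(r)$ rather than merely on $\pa D^{\nu}$ — which is precisely why that proposition was phrased on $\ov{\mathcal{E}}^{\nu}(r)$ in the first place.
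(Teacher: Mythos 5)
Your proof is correct and coincides with the paper's: both the identity from \eqref{bdy-val-der-g_nu} and the multiplication of the bounds from Propositions \ref{upper_boun_k,k_j} and \ref{bd-grad-g^nu} are exactly what the paper does. Your explicit remark that $\pa D^{\nu} \subset \ov{\mathcal{E}}^{\nu}(r)$ is a sensible point the paper leaves implicit, but it does not change the argument.
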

\begin{proof}
By proposition \ref{upper_boun_k,k_j}, there exists a constant $C$ and an
integer $I$ such that
\[
\vert k_{1}^{\nu \ga}(w) \vert \leq C  \big( 1 + \vert w \vert ^{-1}\big)
\vert w \vert^2, \quad w \in \pa D^{\nu}
\]
for all $\nu \geq I$. In view of proposition \ref{bd-grad-g^nu}, we can modify
the constant $C$ and the integer $I$ so that
\[
\vert \pa_{w}g^{\nu}(w) \vert \leq C \vert w \vert ^{-2n+1}, \quad w \in \pa
D^{\nu}
\]
for all $\nu \geq I$. Hence, from (\ref{bdy-val-der-g_nu}),
\[
\Big \vert \frac{\pa g_{\nu}}{\pa p_{\gamma}}(p_{\nu}, w) \Big \vert = \big
\vert k_{1}^{\nu \gamma}(w)\big \vert \big \vert \pa_w g^{\nu}(w) \big \vert
\leq C^2  \big(1 + \vert w \vert^{-1} \big) \vert w \vert ^{-2n+3}, \quad w \in
\pa D^{\nu}
\]
for all $\nu \geq I$.
\end{proof}

\begin{prop}\label{estimate2}
$
\displaystyle \lim_{\nu \ra \infty} \frac{\pa \la_{\nu}}{\pa
p_{\gamma}}(p_{\nu}) = \frac{\pa
\la}{\pa p_{\gamma}}(p_0).
$
\end{prop}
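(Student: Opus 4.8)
The plan is to pass to the limit under the integral sign in the Poisson‑type representation \eqref{1st-der-la_nu(p_nu)},
\[
\frac{\pa \la_\nu}{\pa p_\gamma}(p_\nu)=\frac{1}{2(n-1)\sigma_{2n}}\int_{\pa D^\nu}k_1^{\nu\gamma}(w)\,\bigl|\pa_w g^\nu(w)\bigr|\,\frac{\pa g^\nu}{\pa n_w}(w)\,dS_w ,
\]
and to identify the limit, via Proposition \ref{varn-formula} applied at the boundary point $p_0$, with
\[
\frac{\pa \la}{\pa p_\gamma}(p_0)=\frac{1}{2(n-1)\sigma_{2n}}\int_{\pa\mathcal H}k_1^{\gamma}(p_0,w)\,\bigl|\pa_w g_{\mathcal H}(w)\bigr|\,\frac{\pa g_{\mathcal H}}{\pa n_w}(w)\,dS_w ,
\]
where $\mathcal H=D(p_0)$ and $g_{\mathcal H}=g(p_0,\cdot)$ is the Green function of $\mathcal H$ with pole at the origin. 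Recall from the proof of Theorem 1.2 that $\{D^\nu\}$ is a $C^\infty$‑perturbation of $\mathcal H$, with defining functions $f_\nu(p_\nu,\cdot)$ converging to $f(p_0,\cdot)$ in the $C^\infty$‑topology on compact subsets of $\mathbf C^n$.

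First I would record the convergence of the integrands along the moving boundaries: if $w_\nu\in\pa D^\nu$ and $w_\nu\to w_0\in\pa\mathcal H$, then $k_1^{\nu\gamma}(w_\nu)\to k_1^{\gamma}(p_0,w_0)$, by the $C^\infty$‑convergence of $f_\nu(p_\nu,\cdot)$ together with the uniform lower bound $\lvert\pa_w f_\nu(p_\nu,\cdot)\rvert>m$ of Lemma \ref{exist_de}, while $\pa_w g^\nu(w_\nu)\to\pa_w g_{\mathcal H}(w_0)$ — hence also $\frac{\pa g^\nu}{\pa n_w}(w_\nu)=-2\lvert\pa_w g^\nu(w_\nu)\rvert\to\frac{\pa g_{\mathcal H}}{\pa n_w}(w_0)$ — by Corollary \ref{bdy-conv-Green-fn} applied with base domain $\mathcal H$, perturbation $\{D^\nu\}$, and pole $0$ (the relevant argument there is local, so the unboundedness of $\mathcal H$ is immaterial, exactly as already used in the proof of Theorem 1.2). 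Next, a uniform estimate on the integrand: by Propositions \ref{bdy-est-1-der-g} and \ref{bd-grad-g^nu}, together with $\frac{\pa g^\nu}{\pa n_w}=-2\lvert\pa_w g^\nu\rvert$, the integrand in \eqref{1st-der-la_nu(p_nu)} is bounded on $\pa D^\nu$ by $C(1+\lvert w\rvert^{-1})\lvert w\rvert^{4-4n}$ for all $\nu\ge I$; moreover $d(0,\pa\mathcal H)>0$, so by Hausdorff convergence $\lvert w\rvert\ge c_0>0$ on $\pa D^\nu$ for $\nu$ large, and there the bound simplifies to $\le C'\lvert w\rvert^{4-4n}$. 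The same bound applies to the limiting integrand on $\pa\mathcal H$.

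The key remaining ingredient is a uniform control of the surface area of $\pa D^\nu$ near infinity. Writing $T^\nu$ as a translation followed by the dilation of factor $\lambda_\nu:=1/\bigl(-\psi_\nu(p_\nu)\bigr)\to\infty$, one has $\int_{\pa D^\nu}dS_w=\lambda_\nu^{2n-1}\int_{\pa D_\nu}dS=O(\lambda_\nu^{2n-1})$; and for small $s$ the piece $\pa D_\nu\cap B(p_\nu,s)$ is, uniformly in large $\nu$, the graph of a function with bounded gradient over a piece of the tangent hyperplane to $\pa D$ at $p_0$, so that $\int_{\pa D_\nu\cap B(p_\nu,s)}dS\le Cs^{2n-1}$. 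Pushing these two facts forward by $T^\nu$ and separating the ranges $t\le c\lambda_\nu$ and $t>c\lambda_\nu$ yields
\[
\int_{\pa D^\nu\cap B(0,t)}dS_w\ \le\ C\,t^{2n-1}\qquad(\nu\ge I,\ \ t>0).
\]
A dyadic decomposition of $\{\lvert w\rvert>S\}$, combined with the integrand bound $C'\lvert w\rvert^{4-4n}$, then gives
\[
\int_{\pa D^\nu\cap\{\lvert w\rvert>S\}}\Bigl|\,k_1^{\nu\gamma}(w)\,\lvert\pa_w g^\nu(w)\rvert\,\tfrac{\pa g^\nu}{\pa n_w}(w)\,\Bigr|\,dS_w\ \le\ C''\,S^{3-2n},
\]
with $C''$ independent of $\nu\ge I$; since $3-2n<0$ for $n\ge2$, the right side tends to $0$ as $S\to\infty$, and the same estimate holds for the corresponding tail of the $\pa\mathcal H$‑integral.

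Finally I would conclude by an $\ep/3$ argument. Given $\ep>0$, fix $S$ so large that the two tail integrals above are both $<\ep/3$. For $\nu$ large the hypersurface $\pa D^\nu\cap B(0,2S)$ is, by the implicit function theorem (exactly as in the proof of Corollary \ref{bdy-conv-Green-fn}), the graph of a function over $\pa\mathcal H$ converging to $0$ in $C^\infty$ on the relevant compact set; changing variables to this fixed parametrizing domain, the integrand is dominated uniformly in $\nu$ by a fixed integrable function ($C'\lvert w\rvert^{4-4n}$, which is bounded since $\lvert w\rvert$ is bounded below there) and converges pointwise to the integrand of the $\mathcal H$‑formula, so dominated convergence yields
\[
\int_{\pa D^\nu\cap B(0,S)}k_1^{\nu\gamma}\,\lvert\pa_w g^\nu\rvert\,\tfrac{\pa g^\nu}{\pa n_w}\,dS_w\ \longrightarrow\ \int_{\pa\mathcal H\cap B(0,S)}k_1^{\gamma}(p_0,\cdot)\,\lvert\pa_w g_{\mathcal H}\rvert\,\tfrac{\pa g_{\mathcal H}}{\pa n_w}\,dS_w .
\]
Taking $\nu$ large enough that this difference is $<\ep/3$ gives $\bigl\lvert\frac{\pa\la_\nu}{\pa p_\gamma}(p_\nu)-\frac{\pa\la}{\pa p_\gamma}(p_0)\bigr\rvert<\ep$, as desired. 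The main obstacle is precisely the non‑compactness built into the construction: the rescaled domains $D^\nu$ blow up, so \eqref{1st-der-la_nu(p_nu)} is effectively an integral over a region that grows without bound, and the convergence cannot simply be read off from a converging sequence of compact domains. Everything hinges on the uniform tail bound, which itself rests on the uniform surface‑area growth estimate for $\pa D^\nu$ together with the decay of the integrand supplied by Propositions \ref{bd-grad-g^nu} and \ref{bdy-est-1-der-g}; the convergence on a fixed compact scale, once Corollary \ref{bdy-conv-Green-fn} is in hand, is comparatively routine.
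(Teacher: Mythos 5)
Your proposal is correct and follows the same overall strategy (split the integral from Proposition \ref{varn-formula} over a ball $B(0,R)$ and its complement, get convergence on $B(0,R)$ from Corollary \ref{bdy-conv-Green-fn} and the $C^\infty$-convergence of $f_\nu(p_\nu,\cdot)$, and control the tail), but it handles the tail estimate by a genuinely different mechanism. You bound the full integrand $|k_1^{\nu\gamma}||\pa_w g^\nu||\pa g^\nu/\pa n_w|$ by $C|w|^{4-4n}$ via Propositions \ref{upper_boun_k,k_j} and \ref{bd-grad-g^nu}, and then need the uniform surface-area growth bound $\mathrm{area}\bigl(\pa D^\nu\cap B(0,t)\bigr)\le Ct^{2n-1}$ (obtained by pushing forward the scaling $T^\nu$) and a dyadic summation to conclude the tail is $O(S^{3-2n})$. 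The paper instead bounds only the product $|k_1^{\nu\gamma}||\pa_w g^\nu|\le CR^{-2n+3}$ outside $B(0,R)$, keeps the nonnegative density $-\pa g^\nu/\pa n_w$ intact, and uses the exact flux identity
\[
\int_{\pa D^\nu}\Bigl(-\frac{\pa g^\nu}{\pa n_w}\Bigr)\,dS_w = (2n-2)\sigma_{2n},
\]
valid because $g^\nu$ is the Green function with pole at $0$ normalized to have singularity $|w|^{-2n+2}$. This sidesteps any surface-area estimate for the blown-up boundaries $\pa D^\nu$ and yields the same $O(R^{-2n+3})$ tail in one line. Your route requires the extra geometric lemma on $\mathrm{area}(\pa D^\nu\cap B(0,t))$, which your scaling argument does justify, but the flux identity is both shorter and more robust (it uses no regularity beyond what makes the Green function defined); it is worth knowing as the standard device for such unbounded-rescaling situations.
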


\begin{proof}
In view of proposition \ref{varn-formula} we
have to prove that
\begin{multline}\label{conv-1stder-int}
\lim_{\nu \ra \infty} \frac{1}{2(n-1) \sigma_{2n}}\int_{\pa D^{\nu}} k_{1}^{\nu
\gamma}(w) \vert \pa_w
g^{\nu}(w)\vert \frac{\pa g^{\nu}}{\pa n_w}(w) dS_w \\
= \frac{1}{2(n-1) \sigma_{2n}} \int_{\pa \mathcal{H}} k_1^{\gamma}(p_0, w) \vert
\pa
g(p_0, w) \vert \frac{\pa g}{\pa n_w}(p_0, w)
dS_w.
\end{multline}
where $\mathcal{H} = D(p_0)$. Let $R>1$. Then the boundary surfaces $B(0, R)
\cap \pa D^{\nu}$ converge to
$B(0, R) \cap \mathcal{H}$ continuously in the sense that the unit normal
vectors
\[
\frac{\pa_w g^{\nu}(w)}{\vert \pa_w g^{\nu}(w) \vert} \ra \frac{\pa g(p_0,
w)}{\vert \pa_w g(p_0, w) \vert}
\]
uniformly on compact sets, except at the corners $B(0, R) \cap \pa D^{\nu}$.
Also, if $w^{\nu} \in \pa D^{\nu}$ and $\{w^{\nu}\}$ converges to $w^0 \in \pa
\mathcal{H}$, then by definition
\begin{equation}\label{conv-k_nu}
\lim_{\nu \ra \infty} k_1^{\nu \gamma}(w^{\nu}) = k_1^{\gamma}(p_0, w^0)
\end{equation}
and by corollary \ref{bdy-conv-Green-fn}
\begin{equation}\label{bdy-conv-1st-der}
\lim_{\nu \ra \infty} \frac{\pa g^{\nu}}{\pa w_{\al}}(w^{\nu}) = \frac{\pa
g}{\pa w_{\al}}(p_0, w^0)
\end{equation}
for $1 \leq \al \leq n$. Hence,
\begin{multline}\label{conv-1stder-int-cpt}
\lim_{\nu \ra \infty} \frac{1}{2(n-1) \sigma_{2n}} \int_{B(0, R) \cap \pa
D^{\nu}} k_{1}^{\nu \gamma}(w)
\vert \pa_w g^{\nu}(w)\vert \frac{\pa g^{\nu}}{\pa n_w}(w) dS_w\\
= \frac{1}{2(n-1) \sigma_{2n}} \int_{B(0, R) \cap \pa \mathcal{H}}
k_1^{\gamma}(p_0, w)
\vert \pa g(p_0, w) \vert
\frac{\pa g}{\pa n_w}(p_0, w) dS_w.
\end{multline}
To esitmate these integrals outside the ball $B(0, R)$, note that by proposition
\ref{bdy-est-1-der-g}, there exists a constant $C$ and an integer $I$ such that
\[
\big \vert k_1^{\nu \gamma}(w) \big \vert \big\vert \pa_{w} g^{\nu}(w) \big
\vert \leq C \vert w \vert^{-2n+3}, \quad w \in \pa D^{\nu}, \vert w \vert > 1
\]
for all $\nu \geq I$. Therefore,
\begin{multline}\label{bound-1st-int}
\left\vert \frac{1}{2(n-1) \sigma_{2n}} \int_{B^c(0, R) \cap \pa D^{\nu}}
k_1^{\nu \gamma}(w) \lvert
\pa_{w} g^{\nu}(w) \rvert \frac{\pa g^{\nu}}{\pa n_w}(w) dS_{w} \right\vert \\
\leq C R^{-2n+3} \frac{1}{2(n-1) \sigma_{2n}} \int_{\pa B^c(0, R) \cap \pa
D^{\nu}}
\Big( - \frac{\pa g^{\nu}}{\pa n_{\z}}(w)\Big) \, dS_{w}
\end{multline}
for all $\nu \geq I$. Since
\[
\int_{\pa B^c(0, R) \cap \pa D^{\nu}}
\Big( - \frac{\pa g^{\nu}}{\pa n_{\z}}(w)\Big) \, dS_{w} \leq \int_{\pa D^{\nu}}
\Big( - \frac{\pa g^{\nu}}{\pa n_{w}}(w)\Big) \, dS_{w}
= (2n - 2) \sigma_{2n},
\]
we have from (\ref{bound-1st-int})
\begin{equation}\label{1st-int-small}
\left\vert \frac{1}{2(n-1) \sigma_{2n}} \int_{B^c(0, R) \cap \pa D^{\nu}}
k_1^{\nu \gamma}(w) \lvert
\pa_{w} g^{\nu}(w) \rvert \frac{\pa g^{\nu}}{\pa n_w}(w) dS_{w} \right\vert =
O(R^{-2n+3})
\end{equation}
uinformly for all $\nu \geq I$.
By (\ref{estimates}), we can modify the constant $C$ so that
\[
\big \vert k_1^{\gamma}(p_0, w) \big \vert \big \vert \pa_w g(p_0, w) \big
\vert \leq C \vert w \vert^{-2n+3}, \quad w \in \pa \mathcal{H}, \vert w \vert
>1
\]
and as above we obtain
\begin{equation}\label{1st-int-small2}
\left \vert \frac{1}{2(n-1) \sigma_{2n}} \int_{B^c(0, R) \cap \pa \mathcal{H}}
k_1^{\gamma}(p_0, w) \big\vert
\pa_w
g(p_0, w) \big \vert \frac{\pa g}{\pa n_w}(w) \, dS_w \right \vert =
O(R^{-2n+3}).
\end{equation}
Now (\ref{conv-1stder-int}) follows from (\ref{conv-1stder-int-cpt}),
(\ref{1st-int-small}) and (\ref{1st-int-small2}).
\end{proof}

\begin{rem}\label{unif-conv-g_nu}
Note that the arguments of this section also imply that
for any $a \in \mathcal{H}$, 
\begin{multline*}
\lim_{\nu \ra \infty} \frac{\pa g_{\nu}}{\pa p_{\ga}}(p_{\nu}, a) =\lim_{\nu \ra
\infty} \frac{1}{2(n-1) \sigma_{2n}}\int_{\pa D^{\nu}} k_{1}^{\nu \gamma}(w)
\vert \pa_w
g^{\nu}(w)\vert \frac{\pa g_{\nu a}}{\pa n_w}(p_{\nu},w) dS_w\\
=\frac{1}{2(n-1) \sigma_{2n}}\int_{\pa \mathcal{H}} k_{1}^{\gamma}(w) \vert
\pa_w
g^{0}(w)\vert \frac{\pa g_{a}}{\pa n_w}(p_{\nu},w) dS_w = \frac{\pa g}{\pa
p_{\ga}}(p_0, a).
\end{multline*}
Moreover, by proposition \ref{bdy-est-1-der-g}, the functions $\frac{\pa
g_{\nu}}{\pa p_{\ga}}(p_{\nu}, w)$ are uniformly bounded on compact subsets of
$\mathcal{H}$ for all large $\nu$. Indeed, let $\ov B(0, r) \subset
\mathcal{H}$. Then $\ov B(0, r) \subset D^{\nu}$ for all large $\nu$. It follows
that
\[
\Big \vert \frac{\pa g_{\nu}}{\pa p_{\ga}}(p_{\nu}, w) \Big \vert \leq
C r^{-2n+3} (1 + r^{-1})
\]
for $w \in \pa D^{\nu}$ and hence for $w \in D^{\nu}$ by the maximum priciple.
Therefore, $\big\{\frac{\pa g_{\nu}}{\pa p_{\ga}}(p_{\nu}, a)\big\}$
converges uniformly on compact subsets of $\mathcal{H}$
to $\frac{\pa g}{\pa p_{\ga}}(p_0, a)$.
\end{rem}

\section{Estimates on the second derivatives}
\noindent By proposition \ref{bdy-val-der-g}, $\frac{\pa^2 g_{\nu}}{\pa
p_{\gamma} \pa \ov p_{\ga}}(p_{\nu}, w)$ is a harmonic function of $w \in
D^{\nu}$,
\[
\frac{\pa^2 g_{\nu}}{\pa p_{\gamma} \pa \ov p_{\ga}}(p_{\nu},0) = \frac{\pa^2
\la_{\nu}}{\pa p_{\gamma} \pa \ov p_{\ga}}(p_{\nu}),
\]
and
\begin{equation}\label{bdy-val-der-g_nu11}
\frac{\pa^2 g_{\nu}}{\pa p_{\gamma} \pa \ov p_{\ga}}(p_{\nu}, w) =  - k_2^{\nu
\gamma}(w)
\vert \pa_{w} g^{\nu}(w) \vert -  2 \Re \bigg(k_1^{\nu\gamma}(w)
\sum_{\al=1}^{n}
\frac{\frac{\pa g^{\nu}}{\pa \ov w_{\al}}(w)}{\vert \pa_{w} g^{\nu}(w) \vert}
\frac{\pa^2
g_{\nu}}{\pa w_{\al} \pa \ov p_{\gamma}}(p_{\nu},w)\bigg), \quad w \in \pa
D^{\nu}.
\end{equation}
Therefore,
\begin{multline}\label{exp-2nd-der-la_nu}
\frac{\pa^2 \la_{\nu}}{\pa p_{\gamma}\pa \ov{p}_{\gamma}}(p_{\nu}) =
\frac{1}{2(n-1) \sigma_{2n}} \int_{\pa D^{\nu}} k_2^{\nu}(w)
\vert \pa_{w}g^{\nu}(\z) \vert \frac{\pa g^{\nu}}{\pa n_w}(w) \, dS_{w}\\
+ \frac{1}{(n-1) \sigma_{2n}} \Re \sum_{\al=1}^{n} \int_{\pa D^{\nu}} k_1^{\nu
\gamma}(w) \frac{\frac{\pa g^{\nu}}{\pa \ov w_{\al}}(w)}{\vert \pa_{w}
g^{\nu}(w) \vert} \frac{\pa^2 g_{\nu}}{\pa w_{\al} \pa \ov
p_{\gamma}}(p_{\nu},w) \frac{\pa g^{\nu}}{\pa n_w}(w) \, dS_{w}.
\end{multline}
By similar arguments as in the previous section
\begin{multline}\label{conv-1st-int}
\lim_{\nu \ra \infty} \frac{1}{2(n-1) \sigma_{2n}} \int_{\pa D^{\nu}}
k_2^{\nu}(w)
\vert \pa_{w}g^{\nu}(\z) \vert \frac{\pa g^{\nu}}{\pa n_w}(w) \, dS_{w}\\
= \frac{1}{2(n-1) \sigma_{2n}} \int_{\pa \mathcal{H}} k_2(p_0, w) \vert
\pa_{w}g(p_0,
w) \vert \frac{\pa g}{\pa n_w}(w) \, dS_{w}
\end{multline}
where $\mathcal{H} = D(p_0)$. Thus we only need to find the limit of the second
integrals. This requires to
estimate the functions
\begin{equation}\label{mix-der}
\frac{\pa^2 g_{\nu}}{\pa w_{\al} \pa \ov p_{\gamma}}(p_{\nu}, w)
\end{equation}
on $\pa D^{\nu}$. Since $\frac{\pa g_{\nu}}{\pa p_{\ga}}(p_{\nu}, w)$ is a
harmonic function of $w \in D^{\nu}$ with boundary values
\begin{equation}\label{defn-F^nu}
F^{\nu}(w) = -k_1^{\nu \ga}(w) \vert \pa_w g^{\nu}(w) \vert = - \frac{ \frac{\pa
f_{\nu}}{\pa p_{\ga}}(p_{\nu}, w)}{\vert \pa_w f_{\nu}(p_{\nu}, w) \vert}  \vert
\pa_w g^{\nu}(w) \vert,
\end{equation}
to estimate (\ref{mix-der}), we need to estimate the derivatives of
$F^{\nu}(w)$. This will be done by modifying Steps 2 and 3 of chapter
5 \cite{LY}.

\medskip

In what follows we will identify the point $z = (z_1, \ldots, z_n)$ in
$\mathbf{C}^{n}$ with the
point $x = (x_1, \ldots x_{2n})$ in $\mathbf{R}^{2n}$. Similarly $w = (w_1,
\ldots, w_n)$ and $W = (W_1, \ldots, W_{n})$ in $\mathbf{C}^{n}$ will be
identified with $y = (y_1, \ldots y_{2n})$ and $Y = (Y_1, \ldots, Y_{2n})$ in
$\mathbf{R}^{2n}$ respectively. First, we note the following version of a
tubular neighbourhood theorem:
\begin{prop}\label{implicit-fn}
There exist $0<r<1$ and $M>1$ and an integer $I$ such that for $\nu \geq I$
and any $z_0 = (x^{\prime}_{0}, x_{02n})$ in the neighbourhood
\[
\bigcup _{z \in \pa D_{\nu}} \{ z+tn_{z} : -r < t < r \}
\]
of $\pa D_{\nu}$, $B(z_0, r) \cap \pa D_{\nu}$ can be represented, after a
rotation and translation of coordinates, in the form $x_{2n} = \phi
(x^{\prime})$ where
\begin{itemize}
\item [(a)] $\phi(x^{\prime})$ is smooth in $B(x_0^{\prime}, r) \subset
\mathbf{R}^{2n-1}$ with $\phi(x^{\prime}_{0}) = x_{02n} - t$, where $t$ is such
that $z_0 = z_0^{*}+t n_{z_0^{*}}$ for some $z_0^{*} \in \pa D^{\nu}$, and 

\item [(b)] all partial derivatives of $\phi$ of order upto $6$ are bounded in
absolute value on $B(x_0^{\prime}, r)$ by $M$.
\end{itemize}
\end{prop}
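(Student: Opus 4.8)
\noindent The plan is to deduce this from the $C^{\infty}$-convergence $\psi_{\nu}\ra\psi$ together with the implicit function theorem, the only real task being to check that all constants can be taken independent of $\nu$. First I would fix the ambient data. Since $\ov D$ is compact, choose $R^{\prime}>0$ with $\ov D\subset B(0,R^{\prime})$; since $\{D_{\nu}\}$ is a $C^{\infty}$-perturbation of $D$, there is an integer $I$ with $D_{\nu}\subset B(0,R^{\prime})$ for $\nu\geq I$. As in the proof of Lemma~\ref{exist_de}, pick a $\de$-neighbourhood $U$ of $\pa D$ with $\vert\nab\psi\vert\geq 2c>0$ on $U$; by the uniform $C^{1}$-convergence of $\psi_{\nu}$ on the compact set $\ov U$, after enlarging $I$ we may assume $\vert\nab\psi_{\nu}\vert\geq c$ on $U$ and $\pa D_{\nu}\subset U$ for $\nu\geq I$. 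Finally, by the uniform $C^{6}$-convergence on a fixed ball $\ov B(0,R)$ large enough to contain the $1$-neighbourhood of $\ov U$, there is $M_{1}>1$ bounding every partial derivative of order $\leq 6$ of every $\psi_{\nu}$ on $\ov B(0,R)$.

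Next I would produce the graph. Let $z_{0}$ lie in the collar $\bigcup_{z\in\pa D_{\nu}}\{z+tn_{z}:-r<t<r\}$ and write $z_{0}=z_{0}^{*}+t\,n_{z_{0}^{*}}$ with $z_{0}^{*}\in\pa D_{\nu}$, $\vert t\vert<r$; this representation comes for free from membership in the collar. Apply the rigid motion taking $z_{0}^{*}$ to the origin and $n_{z_{0}^{*}}$ to the positive $x_{2n}$-direction, and, if necessary, replace $\psi_{\nu}$ by $-\psi_{\nu}$; denote the resulting defining function by $\ti\psi_{\nu}$. A rigid motion alters $C^{6}$-norms only by a dimensional factor, so $\ti\psi_{\nu}$ still satisfies a uniform $C^{6}$-bound $M_{1}$ near the origin, and $\ti\psi_{\nu}(0)=0$, $(\pa\ti\psi_{\nu}/\pa x_{2n})(0)=\vert\nab\psi_{\nu}(z_{0}^{*})\vert\geq c$. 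Hence by the uniform bound on second derivatives there is $r_{0}\in(0,1)$, independent of $\nu$ and of $z_{0}$, with $\pa\ti\psi_{\nu}/\pa x_{2n}\geq c/2$ on $B(0,r_{0})$. For $\nu\geq I$ the implicit function theorem then gives a smooth $\phi$ on $B(0^{\prime},r_{0})\subset\mathbf{R}^{2n-1}$ with $\ti\psi_{\nu}(x^{\prime},\phi(x^{\prime}))\equiv0$ and $\phi(0^{\prime})=0$; since $z_{0}$ has coordinates $(0^{\prime},t)$ in the new frame, $x_{0}^{\prime}=0^{\prime}$, $x_{02n}=t$, and $\phi(x_{0}^{\prime})=0=x_{02n}-t$, which is part (a).

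For part (b) I would differentiate the identity $\ti\psi_{\nu}(x^{\prime},\phi(x^{\prime}))\equiv0$ up to six times: by the Fa\`a di Bruno formula each $\pa^{\al}\phi$ with $\vert\al\vert\leq 6$ is a universal polynomial in the partial derivatives of $\ti\psi_{\nu}$ of order $\leq 6$ and in $(\pa\ti\psi_{\nu}/\pa x_{2n})^{-1}$, the latter occurring only to powers bounded in terms of $n$; bounding the former by $M_{1}$ and the latter by $2/c$ on $B(0^{\prime},r_{0})$ yields a constant $M>1$, depending only on $n$, $c$ and $M_{1}$, with $\vert\pa^{\al}\phi\vert\leq M$ for $\vert\al\vert\leq 6$ on $B(0^{\prime},r_{0})$. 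It then remains to shrink $r$ below $r_{0}$, uniformly in $\nu$, so that for $z_{0}$ in the smaller collar and $w\in B(z_{0},r)\cap\pa D_{\nu}$ one has $w^{\prime}\in B(0^{\prime},r_{0})$, while the strict positivity $\pa\ti\psi_{\nu}/\pa x_{2n}\geq c/2$ on $B(0,r_{0})$ guarantees, after a further uniform shrinking of $r_{0}$ if needed, that the zero set of $\ti\psi_{\nu}$ in $B(0,r_{0})$ is exactly the graph of $\phi$, so $w$ lies on that graph; one also takes $r<1$. With these $r$, $M$ and $I$ the proposition follows. I expect the only subtle point --- though it is routine --- to be keeping the lower bound $c/2$ on $\pa\ti\psi_{\nu}/\pa x_{2n}$, and hence the $C^{6}$-bounds on $\phi$, uniform as $z_{0}^{*}$ ranges over $\pa D_{\nu}$ and $\nu\ra\infty$; this is precisely where the $C^{\infty}$-convergence $\psi_{\nu}\ra\psi$ and the uniform non-vanishing of $\nab\psi_{\nu}$ near $\pa D$ enter.
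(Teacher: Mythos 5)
The paper itself does not supply a proof of Proposition~\ref{implicit-fn}: it is introduced as ``a version of a tubular neighbourhood theorem'' and used directly, so there is no argument in the source to compare against. Your proof is correct and is the natural one to insert here. All the essential points are in place: a collar $U$ of $\pa D$ on which $\vert\nab\psi\vert$ is bounded below, whence by the uniform $C^1$-convergence the same lower bound (after halving) holds for all $\psi_\nu$ with $\nu$ large; uniform $C^6$-bounds for all $\psi_\nu$ on a fixed compact set, again from the $C^\infty$-convergence; a $\nu$- and basepoint-independent radius $r_0$ on which $\pa\ti\psi_\nu/\pa x_{2n}\geq c/2$ after the rigid motion, so that the implicit function theorem applies with a uniform domain; and then a chain-rule/Fa\`a di Bruno bookkeeping to convert the $C^6$-bounds on $\ti\psi_\nu$ and the lower bound on $\pa\ti\psi_\nu/\pa x_{2n}$ into uniform bounds on the first six derivatives of $\phi$. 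You also correctly note the mild but necessary point that strict monotonicity of $\ti\psi_\nu(x',\,\cdot\,)$ on $B(0,r_0)$ forces the zero set to be exactly the graph, so $B(z_0,r)\cap\pa D_\nu$ is captured once $r$ is shrunk below $r_0/2$. One incidental remark: the paper's statement writes $z_0^*\in\pa D^\nu$, which is surely a typographical slip for $\pa D_\nu$, and your proof implicitly makes the correct reading.
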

\noindent Now fix $r$, $M$ and $I$ as in proposition \ref{implicit-fn}.
Modifying the
integer $I$, if necessary, we may assume that
\[
d(p_{\nu}, \pa D) < r
\]
and
\[
\pa D_{\nu} \subset \big\{z : d(z, \pa D) < r  \big\}
\]
for all $\nu \geq I$. This would imply that
\begin{equation}\label{ti-z_nu-p_nu}
\vert \ti{z}_{\nu} - p_{\nu} \vert < \text{diam}(D) + 2r
\end{equation}
for $\nu \geq I$ and $\ti{z}_{\nu} \in \pa D_{\nu}$. Now, choose $0 < \eta < 1$
such that
\begin{equation}\label{defn-eta}
\frac{\eta}{1-\eta}\big(\text{diam}(D) + 2r \big) < r.
\end{equation}

\begin{lem} \label{implicit-graph}
Let $\nu \geq I$ and $w^{\nu} \in D^{\nu} \setminus \{0\}$ be such that
\[
\{ w \in \mathbf{C}^{n} : \vert w - w^{\nu} \vert < \eta \vert w^{\nu} \vert \}
\cap \pa D^{\nu} \neq \emptyset .
\]
Let $S^{\nu} : \mathbf{C}^{n} \ra \mathbf{C}^{n}$ be the affine map
defined by
\[
W = S^{\nu}(w) = \frac{w - w^{\nu}}{\eta \vert w^{\nu} \vert}
\]
and set
\[
\Om^{\nu} = S^{\nu} \Big( \big\{ w \in \mathbf{C}^{n} : \vert w - w^{\nu} \vert
< \eta \vert w^{\nu} \vert \big\}
\cap  D^{\nu} \Big) = \{ \vert W \vert < 1 \} \cap S^{\nu}(D^{\nu}).
\]
Then we can find $\Phi^{\nu} \in C^{\infty}(\{Y^{\prime} :
\vert Y^{\prime} \vert < 1 \})$ with
\begin{enumerate}
\item $\{ \vert W \vert < 1\} \cap \pa \Om^{\nu}  = \{ Y_{2n} =
\Phi^{\nu}(Y^{\prime}) \}$, and

\item $\Big \vert \frac{\pa^{\al} \Phi^{\nu}}{\pa Y^{\al}} \Big\vert \leq M$ for
$\al = (\al_1, \ldots, \al_n)$ with $\vert \al\vert \leq 6$ if $\vert Y^{\prime}
\vert < 1$.
\end{enumerate}
\end{lem}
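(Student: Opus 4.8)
\textbf{Proof proposal for Lemma~\ref{implicit-graph}.}
The plan is to transport the tubular neighbourhood structure of $\pa D_{\nu}$ from Proposition~\ref{implicit-fn} through the two successive dilations that relate $D_{\nu}$, $D^{\nu}$, and $\Om^{\nu}$, and to track how the derivative bounds of the local graphing function transform under these affine maps. The key point is that the map $W = S^{\nu}(w)$ is a composition of a translation and a dilation by the factor $\eta\lvert w^{\nu}\rvert$, and similarly $w = T^{\nu}(z)$ is a translation followed by dilation by $-\psi_{\nu}(p_{\nu})$; so the composite $z \mapsto W$ is a translation composed with a dilation by $-\eta\lvert w^{\nu}\rvert\,\psi_{\nu}(p_{\nu})$. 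Since derivatives of a graphing function are \emph{invariant} under isotropic dilation (if $x_{2n} = \phi(x')$ locally represents a hypersurface, then $X_{2n} = r^{-1}\phi(rX')$ represents the dilated one, and $\partial^{\al}$ of the latter equals $r^{\lvert\al\rvert - 1}\partial^{\al}\phi$ evaluated at $rX'$, which is \emph{bounded by the same constant} once $r \le 1$ for $\lvert\al\rvert \ge 1$; the $\lvert\al\rvert = 0$ value shrinks), the bound $M$ will be preserved provided the relevant dilation factors are at most $1$ in absolute value, i.e. provided $\lvert\psi_{\nu}(p_{\nu})\rvert$ and $\eta\lvert w^{\nu}\rvert$ stay controlled.

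First I would unwind the definitions: set $z^{\nu} = (T^{\nu})^{-1}(w^{\nu}) = p_{\nu} - \psi_{\nu}(p_{\nu})w^{\nu}$, so that the ball $\{\lvert w - w^{\nu}\rvert < \eta\lvert w^{\nu}\rvert\}$ pulls back under $T^{\nu}$ to the ball $\{\lvert z - z^{\nu}\rvert < \eta\lvert w^{\nu}\rvert\,\lvert\psi_{\nu}(p_{\nu})\rvert\} = \{\lvert z - z^{\nu}\rvert < \eta\lvert z^{\nu} - p_{\nu}\rvert\}$. The hypothesis that this ball meets $\pa D^{\nu}$ says the corresponding ball about $z^{\nu}$ meets $\pa D_{\nu}$; pick $\ti z_{\nu}\in \pa D_{\nu}$ in it. Using \eqref{ti-z_nu-p_nu} and the choice of $\eta$ in \eqref{defn-eta}, one checks $\lvert z^{\nu} - \ti z_{\nu}\rvert < \eta\lvert z^{\nu} - p_{\nu}\rvert \le \tfrac{\eta}{1-\eta}\lvert \ti z_{\nu} - p_{\nu}\rvert < \tfrac{\eta}{1-\eta}(\operatorname{diam}(D) + 2r) < r$; hence $z^{\nu}$ lies in the $r$-tubular neighbourhood of $\pa D_{\nu}$ and the whole ball $\{\lvert z - z^{\nu}\rvert < \eta\lvert z^{\nu} - p_{\nu}\rvert\}$ is contained in $B(z^{\nu}, r)$. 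Then Proposition~\ref{implicit-fn} applies at $z_0 = z^{\nu}$: after a rotation and translation, $B(z^{\nu}, r)\cap\pa D_{\nu}$ is a graph $x_{2n} = \phi(x')$ with all derivatives of order $\le 6$ bounded by $M$.

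Next I would push this graph forward. The affine map $z \mapsto W = S^{\nu}\circ T^{\nu}(z)$ is (rotation/translation already absorbed) a dilation by the scalar $\rho_{\nu} := \eta\lvert w^{\nu}\rvert \cdot \lvert{-\psi_{\nu}(p_{\nu})}\rvert = \eta\lvert z^{\nu} - p_{\nu}\rvert$, composed with translations, and I just showed $\rho_{\nu} < r < 1$. Under a dilation by $\rho_{\nu}$, the graph $x_{2n} = \phi(x')$ becomes $Y_{2n} = \Phi^{\nu}(Y') := \rho_{\nu}^{-1}\phi(\rho_{\nu}Y')$ on $\{\lvert Y'\rvert < 1\}$ (since the preimage ball has radius $\rho_{\nu}$, its image under the $\rho_{\nu}^{-1}$-dilation is the unit ball), and for any multi-index $\al$ with $1 \le \lvert\al\rvert \le 6$,
\[
\frac{\pa^{\al}\Phi^{\nu}}{\pa Y^{\al}}(Y') = \rho_{\nu}^{\lvert\al\rvert - 1}\,\frac{\pa^{\al}\phi}{\pa x^{\al}}(\rho_{\nu}Y'),
\]
so $\bigl\lvert\pa^{\al}\Phi^{\nu}/\pa Y^{\al}\bigr\rvert \le \rho_{\nu}^{\lvert\al\rvert-1}M \le M$ because $\rho_{\nu} < 1$. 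This gives part (2) for $\lvert\al\rvert \ge 1$; the statement for $\lvert\al\rvert = 0$ (if one insists) follows because $\ti z_{\nu}$ lies within $\rho_{\nu}$ of $z^{\nu}$, so the graph height at the base point is small, and $M > 1$. Part (1), the identification $\{\lvert W\rvert < 1\}\cap\pa\Om^{\nu} = \{Y_{2n} = \Phi^{\nu}(Y')\}$, is immediate since affine maps carry boundary to boundary and the unit ball $\{\lvert W\rvert < 1\}$ is exactly the $S^{\nu}$-image of the ball we graphed over.

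The main obstacle is the bookkeeping in the second paragraph: verifying that $z^{\nu}$ genuinely lands in the $r$-tube of $\pa D_{\nu}$ and that the pulled-back ball stays inside $B(z^{\nu}, r)$, using only \eqref{ti-z_nu-p_nu} and \eqref{defn-eta}. Once the scalar dilation factor $\rho_{\nu} = \eta\lvert z^{\nu} - p_{\nu}\rvert$ is pinned below $r$, everything else is the routine dilation-covariance of graph derivatives; the subtlety is purely that one must feed the \emph{right} center $z^{\nu}$ (not $p_{\nu}$ or $\ti z_{\nu}$) into Proposition~\ref{implicit-fn} and check its hypothesis is met uniformly in $\nu$.
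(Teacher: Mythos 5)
Your proposal is correct and follows essentially the same route as the paper: pull $w^{\nu}$ back to $z^{\nu}=(T^{\nu})^{-1}(w^{\nu})$, use $|\tilde z_{\nu}-z^{\nu}|<\frac{\eta}{1-\eta}(\operatorname{diam}(D)+2r)<r$ to place $z^{\nu}$ in the $r$-tube and invoke Proposition~\ref{implicit-fn} there, then push the local graph forward through $S^{\nu}\circ T^{\nu}$, observing that the net dilation scalar $\rho_{\nu}=\eta|z^{\nu}-p_{\nu}|<r<1$ makes the derivative bounds of orders $1$ through $6$ only contract. The paper writes out the resulting graphing function $\Phi^{\nu}$ explicitly and records the exit radius $R^{\nu}=r/\rho_{\nu}>1$, while you package the same computation via the dilation-covariance identity $\pa^{\al}\Phi^{\nu}=\rho_{\nu}^{|\al|-1}\pa^{\al}\phi$, but the substance is identical; both treatments also gloss over the $|\al|=0$ bound in the same way.
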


\begin{proof}
Let
\[
z_{\nu} = (T^{\nu})^{-1}(w^{\nu}) = p_{\nu} - \psi_{\nu}(p_{\nu}) w_{\nu}
\]
and let
\[
b_{\nu} = (T^{\nu})^{-1}\Big(\big\{w : \vert w - w^{\nu} \vert < \eta \vert
w_{\nu}
\vert \big\}\Big) = \big\{ z \in \mathbf{C}^{n} : \vert z - z_{\nu} \vert <
\eta \vert z_{\nu} - p_{\nu} \vert \big\}.
\]
Then $b_{\nu} \cap \pa D_{\nu} \neq \emptyset$ and hence there is a point
$\tilde z_{\nu} \in \pa D_{\nu}$ such that
\[
\vert \tilde z_{\nu} - z_{\nu} \vert < \eta \vert z_{\nu} - p_{\nu} \vert \leq
\eta (\vert z_{\nu} -
\tilde z_{\nu} \vert + \vert \tilde z_{\nu} - p_{\nu} \vert).
\]
Therefore,
\begin{equation}\label{z-z_0}
\vert \tilde z_{\nu} - z_{\nu} \vert < \frac{\eta}{1-\eta} \vert \tilde z_{\nu}
- p_{\nu} \vert \leq
\frac{\eta}{1-\eta} \big(\text{diam}(D) + 2r \big) < r
\end{equation}
by (\ref{ti-z_nu-p_nu}) and (\ref{defn-eta}) and hence
\[
z_{\nu} \in \bigcup_{z \in \pa D_{\nu}} \{ z+tn_{z} : -r < t < r \}.
\]
By proposition \ref{implicit-fn}, $B(z_{\nu}, r) \cap \pa D_{\nu}$
can be represented after a rotation and translation of coordinates in the form
$x_{2n} = \phi_{\nu}(x^{\prime})$
where $\phi_{\nu}(x^{\prime})$ is $C^{\infty}$ on $B(x^{\prime}_{\nu}, r)$,
\begin{equation}\label{x_nu}
\phi_{\nu}(x_{\nu}^{\prime}) = x_{\nu 0} -t_{\nu}
\end{equation}
where
\begin{equation}\label{t_nu}
-t_{\nu} = d(z_{\nu}, \pa D_{\nu}) < \eta \vert z_{\nu} - p_{\nu}\vert
\end{equation}
and all partial
derivatives of $\phi_{\nu}$ of order up to
$6$ are bounded in absolute value by $M$.  The surface
\[
\{(x^{\prime}, x_{2n}) : x_{2n} = \phi_{\nu}(x^{\prime}), \vert x^{\prime} -
x^{\prime}_{\nu} \vert < r \}
\]
is mapped by $S^{\nu} \circ T^{\nu}$ onto the surface
\[
\{(Y^{\prime}, Y_{2n}) : Y_{2n} = \Phi^{\nu}(Y^{\prime}), \vert Y^{\prime} \vert
< R^{\nu}\}
\]
where, letting $w^{\nu} = (y^{\nu \prime}, y^{\nu}_{2n})$, $p_{\nu} =
(p_{\nu}^{\prime}, p_{\nu 2n})$
\[
\Phi^{\nu}(Y^{\prime}) = \frac{\phi_{\nu}(p_{\nu}^{\prime} -
\psi_{\nu}(p_{\nu})y^{\nu \prime} - \psi_{\nu}(p_{\nu}) \eta
\vert w^{\nu} \vert Y^{\prime})}{-\psi_{\nu}(p_{\nu}) \eta
\vert w^{\nu} \vert} + \frac{\psi_{\nu}(p_{\nu})y^{\nu}_{2n} - p_{\nu
2n}}{-\psi_{\nu}(p_{\nu}) \eta \vert w^{\nu}
\vert}
\]
and
\[
R^{\nu} = \frac{r}{- \psi_{\nu}(p_{\nu}) \eta \vert w^{\nu} \vert} =
\frac{r}{\eta
\vert z_{\nu} - p_{\nu} \vert}.
\]
But from (\ref{z-z_0})
\[
\eta \vert z_{\nu} - p_{\nu} \vert \leq \eta(\vert z_{\nu} - \tilde z_{\nu}
\vert + \vert \tilde z_{\nu}
- p_{\nu} \vert) \leq \eta(\frac{\eta}{1 - \eta} \vert \tilde z_{\nu} - p_{\nu}
\vert + \vert
\tilde z_{\nu} - p_{\nu} \vert) = \frac{\eta}{1-\eta} \vert \tilde z_{\nu} -
p_{\nu} \vert < r
\]
so that $R^{\nu} > 1$. This implies that
\[
\{ \vert W \vert < 1 \} \cap \pa \Om^{\nu}  \subset \{(Y^{\prime}, Y_{2n}) :
Y_{2n} = \Phi^{\nu}(Y^{\prime}), \vert Y^{\prime} \vert < R^{\nu}\}.
\]
By using the properties of $\phi_{\nu}$ and the explicit formula for
$\Phi^{\nu}$ above, it follows that
\[
\{ \vert W \vert < 1 \} \cap \pa \Om^{\nu} = \{Y_{2n} =
\Phi^{\nu}(Y^{\prime})\}
\]
where $\Phi^{\nu} \in C^{\infty}(\{ Y^{\prime} : \vert Y^{\prime} \vert < 1 \})$
and satisfies
\begin{itemize}
\item [(a)] $0 < \Phi^{\nu}(0) < 1$ by (\ref{x_nu}) and (\ref{t_nu}), and
\item [(b)] $\vert \frac{\pa^{\al} \Phi^{\nu}}{\pa Y^{\al}} \vert < M $ for all
$\al = (\al_1, \cdots, \al_n)$ with $\vert \al \vert \leq 6$ if $\vert
Y^{\prime} \vert < 1$.
\end{itemize}
\end{proof}
\noindent Now we modify Step 2 of chapter 5 \cite{LY}, to obtain the following
uniform estimates:
\begin{prop}\label{estimate-der-g}
There exists a constant $C > 0$ and an integer $I $ such that for $1 \leq i, j,
k \leq 2n$

\begin{enumerate}
\item $\vert (\pa g^{\nu} / \pa y_i)( w) \vert \leq C \vert w \vert^{-2n+1}$,

\item $\vert (\pa^2 g^{\nu} / \pa y_i \pa y_j)(w) \vert \leq C \vert w
\vert^{-2n}$,

\item $\vert (\pa^3 g^{\nu} / \pa y_i \pa y_j \pa y_k)(w) \vert \leq C \vert
w\vert ^{-2n - 1}$
\end{enumerate}
for all $\nu \geq I$ and $w \in \ov D^{\nu} \setminus \{0\}$.
\end{prop}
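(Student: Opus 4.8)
\noindent The plan is to follow, and make uniform in $\nu$, the argument of Step 2 of Chapter~5 of \cite{LY}: interior elliptic estimates for the harmonic function $g^{\nu}$ will handle points of $D^{\nu}$ lying a fixed proportion of $|w|$ away from $\pa D^{\nu}$, while for points near $\pa D^{\nu}$ one rescales as in lemma \ref{implicit-graph} and invokes uniform boundary (Schauder) estimates on the resulting graph domains. I would fix $0<r<1$, $M>1$, $0<\eta<1$ and the integer $I$ as in proposition \ref{implicit-fn} and lemma \ref{implicit-graph}, set $C_0=\big(\eta/(1-\eta)\big)^{2n-2}$, and for $\nu\geq I$ and $w\in D^{\nu}\sm\{0\}$ distinguish the two cases $d(w,\pa D^{\nu})\geq\eta|w|$ and $d(w,\pa D^{\nu})<\eta|w|$.

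In the first case the ball $B(w,\eta|w|/2)$ is contained in $D^{\nu}$ and does not contain the pole $0$ (since $|w|>\eta|w|/2$), so $g^{\nu}$ is harmonic on it; as $g^{\nu}(w')\leq|w'|^{-2n+2}$ and $|w'|\geq(1-\eta/2)|w|$ there, $g^{\nu}$ is bounded on this ball by a constant multiple of $|w|^{-2n+2}$. The classical interior derivative bounds for harmonic functions on a ball of radius comparable to $|w|$ then give $|\pa^{j}g^{\nu}(w)|\leq C|w|^{-2n+2-j}$ for $j=1,2,3$, with $C$ depending only on $n$ and $\eta$; for these values of $j$ this is exactly the asserted decay $|w|^{-2n+1}$, $|w|^{-2n}$, $|w|^{-2n-1}$.

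In the second case there is a point of $\pa D^{\nu}$ within distance $\eta|w|$ of $w$, so lemma \ref{implicit-graph} applies with $w^{\nu}=w$: writing $W=S^{\nu}(w')=(w'-w)/(\eta|w|)$ and $\Om^{\nu}=\{|W|<1\}\cap S^{\nu}(D^{\nu})$, the part $\{|W|<1\}\cap\pa\Om^{\nu}$ is a graph $Y_{2n}=\Phi^{\nu}(Y')$ whose derivatives of order $\leq 6$ are bounded by $M$, uniformly in $\nu$. I would then consider $\ti g^{\nu}(W)=(\eta|w|)^{2n-2}\,g^{\nu}\big(w+\eta|w|W\big)$. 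Since $S^{\nu}$ carries the pole $0$ to a point of modulus $1/\eta>1$, the function $\ti g^{\nu}$ is harmonic on $\Om^{\nu}\cap\{|W|<1\}$, vanishes on the graph part of its boundary, and satisfies $0<\ti g^{\nu}(W)\leq C_0$ (again from $g^{\nu}\leq|w'|^{-2n+2}$ and $|w'|\geq(1-\eta)|w|$ when $|W|<1$). Because $\{|W|\leq 1/2\}$ stays at distance $\geq 1/2$ from the remaining, spherical part of $\pa\Om^{\nu}$, boundary Schauder estimates for the harmonic function $\ti g^{\nu}$ with zero Dirichlet data on the uniformly $C^{6}$ surface $\{Y_{2n}=\Phi^{\nu}(Y')\}$ yield $|\pa^{j}_{W}\ti g^{\nu}(W)|\leq C_{j}$ on $\{|W|\leq 1/2\}$ for $j=1,2,3$, with $C_{j}$ independent of $\nu$ and of $w$. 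Evaluating at $W=0$ and undoing the rescaling through $\pa^{j}_{W}\ti g^{\nu}(0)=(\eta|w|)^{2n-2+j}\,\pa^{j}g^{\nu}(w)$ gives $|\pa^{j}g^{\nu}(w)|\leq C_{j}(\eta|w|)^{-2n+2-j}\leq C|w|^{-2n+2-j}$.

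Combining the two cases (and taking the larger constant) proves the three bounds for $w\in D^{\nu}\sm\{0\}$, and they extend to $\ov D^{\nu}\sm\{0\}$ by continuity of $g^{\nu}$ and its derivatives up to order $3$, which follows from the $C^{6}$-regularity of the rescaled boundaries. The one step I expect to require genuine care is the uniformity in $\nu$ of the boundary Schauder constants $C_{j}$ — precisely what the uniform $C^{6}$ control of the graphs $\Phi^{\nu}$ in lemma \ref{implicit-graph} is designed to supply — together with the bookkeeping needed to keep the pole $0$ outside every ball and every rescaling window used above; the remaining computations are routine.
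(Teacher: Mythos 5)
Your proof is correct and uses the same geometric preparation as the paper — the dichotomy between points at distance $\geq\eta|w|$ from $\pa D^{\nu}$ (handled by interior estimates for harmonic functions and the upper bound $g^{\nu}\leq|\cdot|^{-2n+2}$) and points closer to the boundary, where one rescales by $S^{\nu}$ and invokes the uniform $C^{6}$ graph description of lemma~\ref{implicit-graph}. The route from there to the conclusion, however, is genuinely different. The paper argues by contradiction: assuming the estimate fails along some sequence $\{w^{\nu}\}$, it uses Arzel\`a--Ascoli on the graphs $\Phi^{\nu}$ and Harnack's principle on the rescaled functions $u^{\nu}$ to extract convergent subsequences, and then cites the boundary-stability result \cite{LY}*{proposition~5.1} to deduce that the rescaled first derivative converges to a finite limit, contradicting the assumed divergence. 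You instead give a direct quantitative argument: boundary Schauder estimates for the harmonic function $\ti g^{\nu}$ (zero Dirichlet data on a uniformly $C^{6}$ graph, $L^{\infty}$ bound $C_{0}$) give $|\pa^{j}_{W}\ti g^{\nu}|\leq C_{j}$ on $\{|W|\leq1/2\}$ with constants depending only on $M$, $\eta$ and $n$, and undoing the rescaling yields the claimed decay. Your version is more constructive and sidesteps the compactness/subsequence bookkeeping, at the cost of invoking uniform boundary Schauder theory directly; the paper's version is softer and reduces to a single citation. You are also right to flag the uniformity of the Schauder constants as the delicate point — this is precisely what the uniform $C^{6}$ bound on the $\Phi^{\nu}$ in lemma~\ref{implicit-graph} is designed to deliver, and the paper's compactness argument is consuming the same geometric input in a different form. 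One minor point: the paper states only the proof of item (1) and asserts that (2) and (3) are similar; your Schauder approach handles all three orders of derivatives simultaneously, which is a small expository advantage.
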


\begin{proof}
The proofs for (1), (2) and (3) are similar and so we prove only (1). Fix $1
\leq i \leq 2n$. Suppose that (1) is not true. Then there exists a sequence
$\{w^{\nu}\}$ such that $w^{\nu} \in D^{\nu} \setminus \{0\}$ and
\begin{equation}\label{g-diverges}
\lim_{\nu \ra \infty} \Big \vert \frac{\pa g^{\nu}}{\pa y_i}(w^{\nu})
\Big \vert \vert w^{\nu} \vert ^{ 2n - 1} = \infty.
\end{equation}
We claim that for all but finitely many $\nu$,
\[
B(w^{\nu}) = \big \{ w \in \mathbf{C}^{n} : \vert w - w^{\nu} \vert < \eta \vert
w^{\nu} \vert \big \}
\]
intersects $\pa D^{\nu}$. Indeed, suppose that $B(w^{\nu}) \cap \pa D^{\nu} =
\emptyset$
for some $\nu$. Then $B(w^{\nu}) \subset D^{\nu}$ and therefore,
\[
g^{\nu}(w) \leq \vert w \vert ^{-2n + 2} \leq (1 - \eta)^{-2n+2} \vert w^{\nu}
\vert ^{-2n+2}, \quad w \in \pa B(w^{\nu}).
\]
Now, by Poisson integral formula, there exists a constant $c_n>0$ independent of
$\nu$ such
that
\[
\Big \vert \frac{\pa g^{\nu} }{\pa y_i} (w^{\nu}) \Big \vert \leq
\frac{c_n}{(1-\eta)^{2n-2}\eta} \vert w^{\nu} \vert ^{-2n + 1}.
\]
But this can be true only for finitely many $\nu$ by (\ref{g-diverges}) and
hence the claim. Therefore, if we let
\[
\Om^{\nu} = S^{\nu} \big(B(w^{\nu}) \cap D^{\nu}\big) = \{\vert W \vert < 1 \}
\cap S^{\nu}(D^{\nu})
\]
then by lemma \ref{implicit-graph}, we can find for all large $\nu$,
functions $\Phi^{\nu}\in C^{\infty}\big(\{Y^{\prime} : \vert Y^{\prime}
\vert < 1\}\big)$ such that
\[
\Om^{\nu} = \{\vert W \vert < 1\} \cap \{ Y = (Y^{\prime}, Y_{2n}) : \vert
Y^{\prime} \vert < 1, Y_{2n} < \Phi^{\nu}(Y^{\prime}) \}
\]
and
\[
\Big \vert \frac{\pa^{\al} \Phi^{\nu}}{\pa Y^{\al}} \Big \vert < M \text{ for
all }
\vert \al \vert \leq N, \text{ if }
\vert
Y^{\prime} \vert < 1.
\]
Since $M$ is independent of $\nu$, by the Arzela-Ascoli theorem, after passing
to
a subsequence, $\{\Phi^{\nu}\}$ together with all
partial derivatives of order up to $6$ converge uniformly on
compact subsets of $\{Y^{\prime} : \vert Y^{\prime} \vert < 1\}$ to a function
$\Phi \in C^6\big(\{Y^{\prime} : \vert Y^{\prime} \vert < 1\}\big)$. Set
\[
\Om = \{ \vert W \vert < 1 \} \cap \big\{ Y = (Y^{\prime}, Y_{2n}) : \vert
Y^{\prime} \vert <1, Y_{2n} < \Phi(Y^{\prime}) \big\}.
\]
Now define the function $u^{\nu}$ on $\Om^{\nu}$ by
\[
u^{\nu}(W) = \vert w_{\nu} \vert ^{2n - 2} (1 - \eta)^{2n-2} g^{\nu}(w)
\]
for $W = (w - w_{\nu}) / (\eta \vert w_{\nu} \vert)$. Then $u^{\nu}$ is harmonic
on $\Om^{\nu}$, continuous up to $\pa \Om^{\nu}$,
and $u_{\nu}(W) = 0$ on $\{ \vert W \vert < 1 \} \cap \pa \Om^{\nu}$. Since
\[
0 < g^{\nu}(w) < \vert w \vert ^{-2n + 2} < (1 - \eta)^{-2n + 2} \vert w^{\nu}
\vert ^{-2n + 2}, \quad w \in B(w^{\nu}) \cap D^{\nu}
\]
we have
\[
0 < u^{\nu}(W) < 1, \quad W \in \Om^{\nu}
\]
By Harnack's theorem, passing
to a subsequence, $\{u^{\nu}\}$ converges uniformly on compact subsets of $\Om$
to a
harmonic function $u$ on $\Om$. From
\cite{LY}*{proposition~5.1}, it follows that
\[
\lim_{\nu \ra \infty} \Big\vert \frac{\pa u^{\nu}}{\pa y_i}(0) \Big\vert =
\frac{\pa u}{\pa y_i}(0)
\]
which is finite. Hence from the definition of $u^{\nu}$,
\[
\lim_{\nu \ra \infty} \Big\vert \frac{\pa g^{\nu}}{\pa y_i}(w^{\nu})
\Big\vert \vert w^{\nu} \vert ^{2n - 1} < \infty
\]
which is a contradiction. Hence (1) must hold.
\end{proof}

\medskip

We now want to modify Step 3 of chapter 4 \cite{LY}. Recall that
\[
\mathcal{E}^{\nu}(r) = \bigcup_{w_0 \in \pa D^{\nu}}\big\{w \in D^{\nu} : \vert
w -
w_0 \vert < r \vert w_0 \vert \big\}
\]
is a collar about $\pa D^{\nu}$ lying in $D^{\nu}$ whose closure does not
contain the origin. Similarly
\[
\mathcal{E}_{\nu}(r) = (T^{\nu})^{-1}\big(\mathcal{E}^{\nu}(r)\big) =
\bigcup_{z_0\in\pa D_{\nu}}\{z \in D_{\nu} : \vert z - z_0 \vert <
r_0 \vert z_0 - p_{\nu} \vert\}
\]
is a collar about $\pa D_{\nu}$ lying in $D_{\nu}$ whose closure does noth
contain the point $p_{\nu}$.

\begin{lem} \label{der_g_nu/grad_g_nu-bdd}
There exist $0 < r_0 < 1$, a constant $C>0$ and an integer $I$ such that
\begin{equation}\label{der_g/grad_g}
\bigg \vert \frac{\pa^2 g^{\nu}}{\pa y_i \pa y_j}(w) \bigg\vert \big\vert
\pa_w g^{\nu}(w) \big\vert^{-1} \leq C \vert w \vert ^{-1}, \quad w \in
\mathcal{E}^{\nu}(r_0)
\end{equation}
for all $\nu \geq I$.
\end{lem}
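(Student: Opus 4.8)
The plan is to follow the contradiction-and-rescaling scheme of \cite{LY}*{chapter~4, step~3}, feeding in the uniform tubular-neighbourhood description of Lemma~\ref{implicit-graph} and the derivative bounds of Proposition~\ref{estimate-der-g}. Fix $\eta\in(0,1)$ as in \eqref{defn-eta} and choose $r_0\in(0,1)$ small, to be pinned down at the end, in particular so small that $r_0<\eta/(1+\eta)$; then for every $w\in\mathcal{E}^\nu(r_0)$ the ball $B(w,\eta|w|)$ meets $\pa D^\nu$, since $d(w,\pa D^\nu)\le|w-w_0|<r_0|w_0|<\eta|w|$ whenever $w_0\in\pa D^\nu$ realises the collar condition for $w$. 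Suppose \eqref{der_g/grad_g} fails. Then, after relabelling, there are $w^\nu\in\mathcal{E}^\nu(r_0)$ with
\[
|w^\nu|\,\frac{\bigl|\,\pa^2 g^\nu/\pa y_i\pa y_j\,(w^\nu)\,\bigr|}{\bigl|\,\pa_w g^\nu(w^\nu)\,\bigr|}\ \longrightarrow\ \infty ;
\]
since $\bigl|\,\pa^2 g^\nu/\pa y_i\pa y_j\,(w^\nu)\,\bigr|\le C|w^\nu|^{-2n}$ by Proposition~\ref{estimate-der-g}, this forces $|\pa_w g^\nu(w^\nu)|\,|w^\nu|^{2n-1}\to 0$.

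Apply the affine rescaling $S^\nu(w)=(w-w^\nu)/(\eta|w^\nu|)$. By Lemma~\ref{implicit-graph}, inside $\{|W|<1\}$ the domain $\Om^\nu:=S^\nu\bigl(B(w^\nu,\eta|w^\nu|)\cap D^\nu\bigr)$ is the subgraph $\{Y_{2n}<\Phi^\nu(Y')\}$ with $0<\Phi^\nu(0)<1$ and all derivatives of $\Phi^\nu$ of order $\le 6$ bounded by $M$; passing to a subsequence (Arzela--Ascoli), $\Phi^\nu\to\Phi$ in $C^5$ on compact subsets of $\{|Y'|<1\}$, which yields a limit domain $\Om$ with $C^5$ boundary piece $\Gamma=\{Y_{2n}=\Phi(Y')\}\cap\{|Y'|<1\}$. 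Set
\[
u^\nu(W)=(1-\eta)^{2n-2}\,|w^\nu|^{2n-2}\,g^\nu\bigl(w^\nu+\eta|w^\nu|W\bigr),
\]
which is positive and harmonic on $\Om^\nu$, vanishes on $\{|W|<1\}\cap\pa\Om^\nu$, and (using $g^\nu(w)\le|w|^{-2n+2}$ and $|w|\ge(1-\eta)|w^\nu|$ on the relevant sphere) satisfies $0<u^\nu<1$ there. Normalise by $\ti u^\nu=u^\nu/u^\nu(q)$ with $q=(0,\dots,0,-\tfrac12)$, which lies in every $\Om^\nu$ at a distance from $\pa\Om^\nu$ bounded below uniformly in $\nu$ (by $|\nabla\Phi^\nu|\le M$); the quotient $|\pa^2\ti u^\nu|/|\pa_W\ti u^\nu|$ is unchanged. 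By Harnack's inequality in the interior, Harnack chains, and the uniform boundary Schauder estimates furnished by the $C^6$-bounds on $\Phi^\nu$, a further subsequence gives $\ti u^\nu\to u$ in $C^2$ on compact subsets of $(\Om\cup\Gamma)\cap\{|W|<1\}$, with $u\ge 0$ harmonic, $u=0$ on $\Gamma$, and $u(q)=1$, so $u\not\equiv 0$. A routine check of how derivatives transform under $S^\nu$ gives
\[
|w^\nu|\,\frac{\bigl|\,\pa^2 g^\nu/\pa y_i\pa y_j\,(w^\nu)\,\bigr|}{\bigl|\,\pa_w g^\nu(w^\nu)\,\bigr|}\ =\ \frac{1}{\eta}\,\frac{\bigl|\,\pa^2\ti u^\nu/\pa Y_i\pa Y_j\,(0)\,\bigr|}{\bigl|\,\pa_W\ti u^\nu(0)\,\bigr|},
\]
and since $\ti u^\nu\to u$ in $C^2$ near the origin (which lies in $(\Om\cup\Gamma)\cap\{|W|<1\}$ because $\Phi^\nu(0)\ge 0$ in the limit), the left-hand side can diverge only if $\pa_W u(0)=0$, i.e.\ only if the real gradient of $u$ vanishes at the origin.

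It remains to rule this out, which is the heart of the matter. Since $w^\nu\in\mathcal{E}^\nu(r_0)$ there is $w_0^\nu\in\pa D^\nu$ with $|w^\nu-w_0^\nu|<r_0|w_0^\nu|$, and then $S^\nu(w_0^\nu)$ lies on $\{|W|<1\}\cap\pa\Om^\nu$ at distance less than $\rho_0:=r_0/\bigl(\eta(1-r_0)\bigr)$ from the origin; passing to a further subsequence, $S^\nu(w_0^\nu)\to\zeta^*\in\Gamma$ with $|\zeta^*|\le\rho_0$. Now Hopf's boundary point lemma applies to $u$: it is a non-negative, nontrivial harmonic function vanishing on the $C^2$ hypersurface $\Gamma$, which satisfies an interior ball condition, so $|\nabla u|$ is bounded away from $0$ near $\zeta^*$. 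Moreover, by the uniform interior ball radius, the normalisation $u(q)=1$ together with a uniform Harnack chain from $q$ to the centre of the tangent interior ball, and the uniform $C^1$ modulus of continuity of $\nabla u$ up to $\Gamma$ coming from the $C^6$-bounds of Lemma~\ref{implicit-graph}, one obtains a single $\rho_0>0$ and a single $c_1>0$ such that, for every limit $u$ produced by this construction, $|\nabla u|\ge c_1$ at every point of $\Om$ within distance $\rho_0$ of $\Gamma$. Fixing $r_0$ once and for all so that $r_0/\bigl(\eta(1-r_0)\bigr)<\rho_0$ then gives $|\nabla u(0)|\ge c_1>0$, contradicting $\pa_W u(0)=0$. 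This establishes \eqref{der_g/grad_g}.

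The step I expect to be the main obstacle is exactly this last one: promoting the pointwise Hopf lemma to a \emph{uniform} positive lower bound for $|\nabla u|$ in a collar of fixed relative width about the smooth boundary piece, valid simultaneously for the whole compact family of rescaled limits. This is what forces $r_0$ to be small, and it is the only place where passing from the normal approach of \cite{BV} to an arbitrary approach to $\pa D$ requires genuine care; the rest is bookkeeping with the uniform estimates already in hand.
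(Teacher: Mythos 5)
Your approach is genuinely different from the paper's. The paper first reduces \eqref{der_g/grad_g} to the equivalent statement for the unscaled Green function $G_\nu(\cdot,p_\nu)$ on $D_\nu$, argues by contradiction, and then rescales around the pole $p_\nu$ by the factor $k_\nu=|p_\nu-z_{0\nu}|$; the rescaled domains converge to a half-space $\ti H$, and the pole lands at the origin. Because the pole is carried along, the Green functions keep a canonical normalization, but one then has to examine the cases $p_0\ne z_0$ (a preliminary Claim using Hopf), $0\in\ti H$, and $0\in\pa\ti H$ (handled via a reflection argument). You instead rescale around the point $w^\nu$ where the alleged blow-up occurs, by the factor $\eta|w^\nu|$ from Lemma~\ref{implicit-graph}, which moves the pole to distance $1/\eta>1$, entirely off the unit ball; this yields one unified picture in which $g^\nu$ is treated simply as a positive harmonic function of size between $0$ and $1$ vanishing on a smooth boundary piece. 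This is conceptually cleaner and eliminates the case analysis, but the price is that the natural pole normalization is lost, and you must supply your own.

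That is where your proposal has a real gap. The normalization $\ti u^\nu=u^\nu/u^\nu(q)$ does not, by itself, make $\ti u^\nu$ uniformly bounded on any set touching $\Gamma^\nu$: interior Harnack controls $\ti u^\nu$ on compacts of $\Om$, and the $C^6$-bounds on $\Phi^\nu$ give uniform \emph{boundary regularity}, but boundary Schauder requires a uniform $L^\infty$ bound on $\ti u^\nu$ near $\Gamma^\nu$ as input, which you do not have. If $u^\nu(q)\to 0$, nothing a priori prevents $\ti u^\nu$ from blowing up at points near but off $\Gamma^\nu$. To close this you need a boundary Harnack (Carleson-type) inequality comparing $u^\nu$ near $\Gamma^\nu$ to $u^\nu(q)$; this is classical for domains with the uniform regularity you have, but it is a genuinely nontrivial ingredient, you do not invoke it, and the phrase ``uniform boundary Schauder estimates furnished by the $C^6$-bounds on $\Phi^\nu$'' is not a substitute for it. (A more economical variant is to apply boundary Schauder to the unnormalized $u^\nu$, which \emph{is} uniformly bounded, conclude that the sup of $u^\nu$ over a half-ball is attained at uniform distance from $\Gamma^\nu$, and normalize by that sup; this avoids boundary Harnack proper, but still needs a careful chain of estimates that you have not written out.) The uniform Hopf lower bound in your final paragraph has the same character: the statement that a single $c_1>0$ works simultaneously for all limits $u$ of the construction requires precisely the kind of quantitative control whose absence you flagged as ``the main obstacle,'' and your sketch of how to obtain it reuses the same ingredients whose uniformity is in question. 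In short, the skeleton is sound and arguably more elegant than the paper's, but as written it silently relies on a uniform boundary Harnack/Carleson estimate and a quantitative Hopf lemma; the paper's longer case analysis around the pole is exactly what lets it avoid these tools.
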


\begin{proof}
By the relation
\[
g^{\nu}(w) = \psi_{\nu}(p_{\nu})^{2n-2} G_{\nu}(z,p_{\nu}), \quad z = p_{\nu} -
\psi_{\nu}(p_{\nu})
w,
\]
we observe that (\ref{der_g/grad_g}) is equivalent to
\begin{equation}\label{der_G_nu/grad_G_nu}
\bigg \vert \frac{\pa^2 G_{\nu}}{\pa x_i \pa x_j}(z, p_{\nu}) \bigg
\vert \big\vert
\pa_z G_{\nu}(z, p_{\nu}) \big\vert ^{-1} \leq C \vert z - p_{\nu} \vert ^{-1},
\quad
z \in \mathcal{E}_{\nu}(r_0).
\end{equation}
We prove (\ref{der_G_nu/grad_G_nu}) by contradiction. So, suppose that there do
not exist $0<r_0<1$, $C>0$ and integer $I$ such that (\ref{der_G_nu/grad_G_nu})
holds for all $\nu \geq I$.
Then there exist a sequence
$\{z_{0\nu}\}$ with $z_{0\nu} \in \pa D_{\nu}$, and a sequence $\{z_{\nu}\}$
with
\begin{equation}\label{z_nu1}
z_{\nu} \in D_{\nu} \text{ and } \vert z_{\nu} - z_{0\nu} \vert < \frac{1}{\nu}
\vert z_{0\nu} - p_{\nu} \vert, \quad \nu \geq 1
\end{equation}
such that
\begin{equation}\label{der_G_nu/grad_G_nu-ubd}
\bigg\vert \frac{\pa ^2 G_{\nu}}{\pa x_i \pa x_j} (z_{\nu}, p_{\nu})
\bigg\vert \big\vert \pa_z G_{\nu}(z_{\nu}, p_{\nu}) \big\vert^{-1} \geq
\nu \vert z_{\nu} - p_{\nu} \vert^{-1}, \quad \nu \geq 1.
\end{equation}
By passing to a subsequence if necessary, we may assume that
\[
\lim_{\nu \ra \infty} z_{0\nu} = z_0 \in \pa D.
\]
Then, from (\ref{z_nu1}),
\[
\lim_{\nu \ra \infty} z_{\nu} = z_0.
\]
\textit{Claim}: $p_0 = z_0$. Suppose that this is not
true. Then we can find an $\ep > 0$
such that $B(p_0, 2\ep) \cap B(z_0, \ep) = \emptyset$.
Taking $\ep$ sufficiently small and $\nu$ sufficiently large, we can find by
the implicit function theorem a $C^{\infty}$-smooth function $\phi$ on
$B(x_0^{\prime}, \ep)$ and a sequence
$\{\phi_{\nu}\}$ of $C^{\infty}$-smooth functions on $B(x_0^{\prime}, \ep)$ that
converges in $C^{\infty}$-topology on compact subsets of $B(x_0^{\prime}, \ep)$
to $\phi$ such that
\begin{equation}\label{graphs-D_nu}
\begin{cases}
B(z_0, \ep) \cap \pa D = \Big\{\big(x^{\prime}, \phi(x^{\prime})\big):
x^{\prime} \in
B(x_0^{\prime}, \ep) \Big\},\\
B(z_0, \ep) \cap \pa D_{\nu}  =\Big\{\big(x^{\prime},
\phi_{\nu}(x^{\prime})\big):
x^{\prime} \in B(x_0^{\prime}, \ep) \Big\}.
\end{cases}
\end{equation}
Without loss of generality let us assume that all $p_{\nu}$ lie in $B(p_0,
\ep)$. Then
\begin{equation}\label{bd-G_nu}
G_{\nu}(z, p_{\nu}) \leq \vert z - p_{\nu} \vert^{-2n+2} < \ep^{-2n+2}, \quad z
\in B(z_0, \ep) \cap D_{\nu} 
\end{equation}
Now consider the affine map
\[
Z = Sz = \frac{z - z_0}{\ep}
\]
and set
\[
\Om =  S\big(B(z_0, \ep/2) \cap D \Big), \quad \Om_{\nu} = S \big(B(z_0, \ep/2)
\cap D_{\nu}\Big).
\]
Define
\[
h_{\nu}(Z) = \ep^{2n-2} G(z, p_{\nu}), \quad Z \in \Om_{\nu}.
\]
Then $h_{\nu}$ is harmonic on $\Om_{\nu}$, $h_{\nu} = 0$ on $B(0, 1) \cap
\pa \Om_{\nu}$  and by (\ref{bd-G_nu})
\[
0 < h_{\nu}(Z) \leq 1, \quad Z \in \Om_{\nu}.
\]
Therefore, by Harnack's principle, after passing to a subsequence if necessary,
$\{h_{\nu}\}$ converges uniformly on compact subsets of $\Om$ to a positive
harmonic function $h$.  In view of (\ref{graphs-D_nu}), the sequence
$\{h_{\nu}\}$ on $\{\Om_{\nu}\}$ satisfies the hypothesis of
\cite{LY}*{proposition~5.1} and hence
\begin{equation}\label{lim-der-h_nu_1}
\begin{cases}
\lim_{\nu \ra \infty} \vert \pa_{Z} h_{\nu} ( Z_{\nu}) \vert = \vert
\pa_{Z} h(0)
\vert,\\
\lim_{\nu \ra \infty} \Big \vert \frac{\pa^2 h_{\nu}}{\pa \ti X_i \pa \ti
X_j}(Z_{\nu})
\Big \vert = \Big \vert \frac{\pa^2 h}{\pa \ti X_i \pa \ti X_j}(0) \Big
\vert < \infty
\end{cases}
\end{equation}
where $Z_{\nu} = Sz_{\nu}$. By the Hopf lemma,
\[
\vert \pa_{Z} h(0) \vert > 0.
\]
Hence
\[
\lim_{\nu \ra \infty} \frac{\Big \vert \frac{\pa ^2 G_{\nu}}{\pa x_i \pa x_j}
(z_{\nu}, p_{\nu}) \Big \vert}{\vert \pa_z G_{\nu}(z_{\nu}, p_{\nu}) \vert}
\vert z_{\nu} - p_{\nu} \vert = \ep \lim_{\nu \ra \infty} \frac{\Big \vert
\frac{\pa^2 h_{\nu}}{\pa X_i \pa X_j}(Z_{\nu}) \Big \vert}{\vert
\pa_{Z} h_{\nu}
(Z_{\nu}) \vert} \vert z_{\nu} - p_{\nu} \vert = \frac{\Big \vert
\frac{\pa^2
h}{\pa X_i \pa X_j}(0) \Big \vert}{\vert \pa_{Z} h (0) \vert} \vert z_0 - p_0
\vert < \infty
\]
which contradicts (\ref{der_G_nu/grad_G_nu-ubd}). Therefore, we must have $p_0 =
z_0$ and hence the claim.

\medskip

Now define
\[
k_{\nu} = \vert p_{\nu} - z_{0\nu} \vert.
\]
Consider the affine maps $S_{\nu} : \mathbf{C}^{n} \ra \mathbf{C}^{n}$ defined
by
\[
\ti z = S_{\nu}(z) = \frac{z - p_{\nu}}{k_{\nu}}
\]
and let $\ti D _{\nu} = S_{\nu}(D_{\nu})$. A defining function for $\ti D_{\nu}$
is given by
\begin{align*}
\psi_{\nu} \circ S_{\nu}^{-1}(\ti z) & =  \psi_{\nu}(p_{\nu} + k_{\nu} \ti z)\\
& = \psi_{\nu}(p_{\nu}) + 2 k_{\nu} \Re \big( \sum_{\al = 1}^{n}
(\psi_{\nu})_{\al}(p_{\nu}) \ti z_{\al} \big) + k_{\nu}^2 O(1)
\end{align*}
for $\ti z$ on a compact subset of $\mathbf{C}^{n}$. Since $\{\psi_{\nu}\}$
converges in the $C^{\infty}$-topology on compact subsets of $\mathbf{C}^{n}$ to
$\psi$, we note that $O(1)$ is independent of $\nu$. Now
\[
\ti \psi_{\nu} (\ti z) = \frac{\psi_{\nu} \circ S_{\nu}^{-1} (\ti z) }{k_{\nu}}
=
\frac{\psi_{\nu}(p_{\nu})}{k_{\nu}} + 2 \Re \big( \sum_{\al =
1}^{n}(\psi_{\nu})_{\al}(p_{\nu}) \ti z_{\al} \big) +  k_{\nu} O(1)
\]
is again a defining function for $\ti D_{\nu}$. Note that we can find a ball
$B$ centered at $p_0$, positive smooth functions $\phi_{\nu}$ on $B$ such
that
\[
- \psi_{\nu}(p) = \phi_{\nu}(p) d(p, \pa D_{\nu}), \quad p \in B.
\]
By differentiating the above relation, it can be seen that the functions
$\phi_{\nu}$, for all large $\nu$, are unifomly bounded above by a
constant $c > 0$ on possibly a smaller ball $B^{\prime}$ centered
at $p_0$. This implies that
for all large $\nu$,
\[
\Big \vert \frac{\psi_{\nu}(p_{\nu})}{k_{\nu}} \Big \vert \leq \frac{c
d_{\nu}(p_{\nu}, \pa D_{\nu})}{\vert p_{\nu} - z_{0\nu} \vert} \leq c
\]
and hence after passing to a subsequence, $\{\psi_{\nu}(p_{\nu}) /k_{\nu}\}$
converges to a number $\ti c \leq 0$. Thus the functions $\ti \psi_{\nu}$
converge in
the $C^{\infty}$-topology on compact subsets of $\mathbf{C}^{n}$ to the
function
\[
\ti \psi(\ti z) = \ti c + 2 \Re \big( \sum_{\al = 1}^{n} \psi_{\al}(p_0) \ti
z_{\al}
\big).
\]
This implies that the domains $\ti D_{\nu}$ are $C^{\infty}$-perturbation of
the half space
\[
\ti H = \{ \ti z \in \mathbf{C}^{n} : \ti c + 2 \Re \big( \sum_{\al = 1}^{n}
\psi_{\al}(p_0) \ti z_{\al} \big) < 0 \}.
\]
Since $\ti c \leq 0$, it is evident that
\begin{equation}\label{contrad}
0 \in \ov{\ti{\mathcal{H}}}.
\end{equation}

\medskip

\no We will now derive a contradiction by proving that (\ref{contrad}) is
false. 
First, observe that $0 = S_{\nu}(p_{\nu}) \in \ti
D_{\nu}$. Let $\ti g_{\nu}(\ti z)$ be the Green function for $\ti D_{\nu}$ with
pole
at $0$. Then
\begin{equation}\label{ti_g-G}
\ti g_{\nu}(\ti z) = G(z, p_{\nu}) k_{\nu}^{2n - 2}.
\end{equation}
Now let $\ti z_{0 \nu} = S_{\nu}(z_{0 \nu})$. Then $\ti z_{0 \nu} \in \pa \ti
D_{\nu}$
and
\[
\vert \ti z_{0\nu} \vert = \Big\vert \frac{z_{0\nu} - p_{\nu}}{k_{\nu}} \Big
\vert =
1.
\]
Therefore, after passing to a subsequence, $\{\ti z_{0\nu}\}$ converges
to a point $\ti z_0$ with
\[
\vert \ti z_0 \vert = 1.
\]
Evidently, $\ti z_0 \in \pa \ti H$. Also, let $\ti z_{\nu} = S_{\nu}(z_{\nu})$.
Then
\[
\vert \ti z_{\nu} - \ti z_{0 \nu} \vert = \Big \vert \frac{z_{\nu} - z_{0 \nu
}}{k_{\nu}} \Big \vert < \frac{1}{\nu}
\]
by (\ref{z_nu1}). Therefore,
\[
\lim_{\nu \ra \infty} \ti z_{\nu} = \ti z_0.
\]
Now we derive the contradiction by considering the following two cases:

\medskip

\textit{Case I}. $0 \in \ti H$. Let $\ti g(\ti z)$ be the Green
function for $\ti H$ with pole at $0$. Then by
corollary~\ref{bdy-conv-Green-fn},
\[
\begin{cases}
\lim_{\nu \ra \infty} \vert \pa_{\ti z}\ti g_{\nu}(\ti z_{\nu}) \vert = \vert
\pa_{\ti z}
\ti g(\ti z_0) \vert > 0,\\
\lim_{\nu \ra \infty} \frac{\pa^2 \ti g_{\nu}}{\pa \ti x_k \pa \ti x_l} (\ti
z_{\nu}) =
\frac{\pa^2 \ti g}{\pa \ti x_k \pa \ti x_l}(\ti z_0) \neq \infty.
\end{cases}\]
Now from (\ref{ti_g-G}),
\[
\lim_{\nu \ra \infty} \bigg \vert \frac{\pa^2 G}{\pa x_i \pa x_j} (z_{\nu},
p_{\nu}) \bigg \vert \big\vert \pa_{z} G (z_{\nu}, p_{\nu}) \big\vert^{-1} \vert
z_{\nu} - p_{\nu}
\vert = \lim_{\nu \ra \infty} \bigg \vert \frac{\pa^2 \ti g_{\nu}}{\pa \ti x_i
\pa \ti x_j}(\ti z_{\nu}) \bigg \vert \big\vert \pa_{\ti z} \ti g_{\nu} (\ti
z_{\nu}) \big\vert^{-1} \vert \ti z_{\nu}
\vert < \infty
\]
which contradicts (\ref{der_G_nu/grad_G_nu-ubd}) and hence $0 \not \in
\ti{\mathcal{H}}$.

\medskip

\textit{Case II}. $0 \in \pa \ti H$. By the implicit function
theorem we can find a ball $B(\ti z_0, \ep)$, a $C^{\infty}$-smooth function
$\phi$
on $B(\ti x_0^{\prime}, \ep)$ and a sequence $\{\phi_{\nu}\}$ of
$C^{\infty}$-smooth
functions on $B(\ti x_0^{\prime}, \ep)$ that converges in the
$C^{\infty}$-topology on
compact subsets of $B(\ti x_0^{\prime}, \ep)$ to $\phi$ such that
\begin{equation}\label{graphs-ti-D_nu}
\begin{cases}
B(\ti x_0, \ep) \cap \pa \ti{\mathcal{H}} = \Big\{ \big(\ti x^{\prime},
\phi(\ti x^{\prime})\big) : \ti x^{\prime} \in B(\ti x_0^{\prime}, \ep) \Big\}\\
B(\ti x_0, \ep) \cap \pa \ti D_{\nu}= \Big\{ \big(\ti x^{\prime},
\phi_{\nu}(\ti x^{\prime})\big) : \ti x^{\prime} \in B(\ti x_0^{\prime}, \ep)
\Big\}
\end{cases}
\end{equation}
Without loss of generality let us assume that $\ep < 1/2$. Then, since $\vert
\ti z_0 \vert
= 1$,
\begin{equation}\label{bd-g_nu(w)}
g_{\nu}(\ti z) < \vert \ti z \vert^{-2n+2} < 2^{2n-2}, \quad \ti z \in B(\ti
z_0, \ep) \cap \ti
D_{\nu}.
\end{equation}
Now, consider the affine map
\[
\ti Z = S\ti z = \frac{\ti z - \ti z_0}{\ep}
\]
and set
\[
\Om = S \big(B(\ti z_0, \ep) \cap \ti H\big), \quad \Om_{\nu} = S\big(B(\ti z_0,
\ep)
\cap \ti D_{\nu} \big).
\]
Define
\begin{equation}\label{defn-h-ti-Z}
h(\ti Z) = 2^{-2n+2}g(\ti z), \quad \ti Z \in \Om
\end{equation}
and
\begin{equation}\label{defn-h_nu-ti-Z}
h_{\nu}(\ti Z) =2^{-2n+2} g_{\nu}(\ti z), \quad \ti Z \in \Om_{\nu}.
\end{equation}
Then $h_{\nu}$ is a positive harmonic funtion on $\Om_{\nu}$
and satisfies $h_{\nu} = 0$ on $B(0, 1) \cap \pa \Om_{\nu}$. Moreover, by
(\ref{bd-g_nu(w)}),
\[
0 < h_{\nu}(\ti Z) <1, \quad \ti Z \in \Om_{\nu}.
\]
By passing to a subsequence if necessary, it follows from Harnack's principle
that
$\{h_{\nu}\}$ converges uniformly on compact subsets of $\Om$ to a positive
harmonic function $h$ which satisfies $h = 0$ on $B(0, 1) \cap \pa \Om$. In view
of (\ref{graphs-ti-D_nu}), the sequence $\{h_{\nu}\}$ satisfies the hypothesis
of \cite{LY}*{proposition~5.1} and hence from (\ref{ti_g-G}) and
(\ref{defn-h_nu-ti-Z}),
\begin{multline*}
\lim_{\nu \ra \infty} \bigg \vert \frac{\pa^2 G}{\pa x_i \pa x_j}(z_{\nu},
p_{\nu})\bigg \vert \big\vert \pa_z G(z_{\nu}, p_{\nu}) \big\vert^{-1} \vert
z_{\nu} -
p_{\nu} \vert \\
 = \ep \lim_{\nu \ra \infty} \bigg\vert \frac{\pa^2
h_{\nu}}{\pa \ti X_i \pa \ti X_j}(\ti Z_{\nu})\bigg\vert \big\vert \pa_{\ti Z}
h_{\nu}(\ti Z_{\nu}) \big\vert^{-1}\vert \ti z_{\nu}
\vert = \ep \bigg\vert \frac{\pa^2 h}{\pa \ti X_i \pa \ti X_j} (0) \bigg
\vert \big \vert \pa_{\ti Z} h(0) \big\vert^{-1}
\end{multline*}
where $\ti Z_{\nu} = S \ti z_{\nu}$. Now by the reflection principle, $h$
extends as a harmonic function to a
neighbourhood of $0$ and hence the quantity on the extreme right of the
above equation is finite. This contradicts (\ref{der_G_nu/grad_G_nu-ubd}) and
hence $0 \not \in \pa \ti{\mathcal{H}}$.

By Case I and Case II,  $0 \not \in \ov{\ti{\mathcal{H}}}$ which contradicts
(\ref{contrad}). Therefore (\ref{der_G_nu/grad_G_nu}) holds and the lemma is
proved.
\end{proof}

Recall that if $r>0$  and $I$ are as in lemma \ref{exist_de}, then the
function $F^{\nu}(w)$ is defined and smooth on the collar
$\mathcal{E}^{\nu}(r)$.

\begin{prop}\label{est-F_nu}
There exists $0 < r < 1$, a constant $C > 0$ and an integer $I$ such that
\begin{enumerate}
\item $\vert F^{\nu}(w) \vert < C (1 + \vert w \vert^{-1}) \vert w \vert ^{-2n +
3}$,

\item $\vert (\pa F^{\nu} / \pa y_i) (w) \vert < C (1+ \vert w \vert^{-1}) \vert
w \vert^{-2n + 2}$,

\item $\vert (\pa^2 F^{\nu} / \pa y_i \pa y_j) (w) \vert < C (1+ \vert w
\vert^{-1})\vert w \vert
^{-2n + 1}$
\end{enumerate}
for all $\nu \geq I$ and $w \in \mathcal{E}^{\nu}(r)$.
\end{prop}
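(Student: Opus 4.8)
The plan is to write $F^{\nu}=A^{\nu}\cdot B^{\nu}$ on the collar $\mathcal{E}^{\nu}(r)$, where
\[
A^{\nu}(w)=-k_1^{\nu\ga}(w)=-\frac{\pa f_{\nu}}{\pa p_{\ga}}(p_{\nu},w)\,\big\vert \pa_w f_{\nu}(p_{\nu},w)\big\vert^{-1},\qquad B^{\nu}(w)=\big\vert \pa_w g^{\nu}(w)\big\vert=\tfrac12\big\vert \nab g^{\nu}(w)\big\vert ,
\]
and to estimate each factor together with its first two real $w$-derivatives $\pa/\pa y_i$, and then combine via the Leibniz rule. Both factors are smooth on $\mathcal{E}^{\nu}(r)$ for $\nu\ge I$: Lemma~\ref{exist_de} gives $\vert \pa_w f_{\nu}(p_{\nu},\cdot)\vert>m$ there, and the Hopf lemma gives $\nab g^{\nu}\neq 0$ near $\pa D^{\nu}$.

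For $A^{\nu}$ I would prove, uniformly in $\nu\ge I$ and $w\in\mathcal{E}^{\nu}(r)$,
\[
\vert A^{\nu}(w)\vert\le C(1+\vert w\vert^{-1})\vert w\vert^{2},\quad \Big\vert \tfrac{\pa A^{\nu}}{\pa y_i}(w)\Big\vert\le C(1+\vert w\vert^{-1})\vert w\vert ,\quad \Big\vert \tfrac{\pa^2 A^{\nu}}{\pa y_i\pa y_j}(w)\Big\vert\le C(1+\vert w\vert^{-1}).
\]
The first is Proposition~\ref{upper_boun_k,k_j}; the other two follow by differentiating the quotient $-(\pa f_{\nu}/\pa p_{\ga})\vert \pa_w f_{\nu}\vert^{-1}$, using $\vert \pa_w f_{\nu}(p_{\nu},w)\vert>m$ from Lemma~\ref{exist_de} and bounds on the $w$-derivatives of $f_{\nu}$. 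For the latter I would extend Lemma~\ref{upper-bound-der-f-and-f_j} in the obvious way — differentiating \eqref{defn-f_nu} once more under the integral sign, and using, as in its proof, that $\vert \psi_{\nu}(p_{\nu})\vert\le 2R\vert w\vert^{-1}$ on the collar — so as to cover the third-order derivatives $\pa^3 f_{\nu}/\pa w\,\pa w\,\pa p_{\ga}$ and all mixtures of holomorphic and antiholomorphic $w$-, $p_{\ga}$- and $\ov p_{\ga}$-derivatives that occur; the point is that each extra $w$-derivative lowers the $\vert w\vert$-power by one, since it either strikes a factor $w_{\al}$ or brings down a factor $-\psi_{\nu}(p_{\nu})=O(\vert w\vert^{-1})$. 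Thus $\pa f_{\nu}/\pa p_{\ga}$ and its first two $w$-derivatives are $O\big((1+\vert w\vert^{-1})\vert w\vert^{2-k}\big)$, and $\vert \pa_w f_{\nu}\vert$ and its first two $w$-derivatives are $O(\vert w\vert^{-k})$, for $k=0,1,2$; the three displayed bounds for $A^{\nu}$ follow.

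For $B^{\nu}=\tfrac12\vert \nab g^{\nu}\vert$ I would use Proposition~\ref{estimate-der-g}, which gives $\vert \nab^k g^{\nu}(w)\vert\le C\vert w\vert^{-2n+1-k}$ for $k=1,2,3$ on $\ov D^{\nu}\sm\{0\}$. One differentiation gives $\vert \pa B^{\nu}/\pa y_i\vert\le\tfrac12\vert \nab\pa_{y_i}g^{\nu}\vert\le C\vert w\vert^{-2n}$, with no denominator. Two differentiations of the square root produce, besides a term bounded by $\vert \nab^3 g^{\nu}\vert\le C\vert w\vert^{-2n-1}$, terms of the shape $\vert \nab\pa_{y_i}g^{\nu}\vert\cdot\big(\vert \nab\pa_{y_j}g^{\nu}\vert/\vert \nab g^{\nu}\vert\big)$, where the first factor is $\le C\vert w\vert^{-2n}$ and the second is $\le C\vert w\vert^{-1}$ by Lemma~\ref{der_g_nu/grad_g_nu-bdd} (after shrinking $r$ to be at most the $r_0$ of that lemma); hence $\vert \pa^2 B^{\nu}/\pa y_i\pa y_j\vert\le C\vert w\vert^{-2n-1}$ on $\mathcal{E}^{\nu}(r)$.

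Feeding these into the Leibniz rule for $F^{\nu}=A^{\nu}B^{\nu}$ finishes the proof: $\vert F^{\nu}\vert\le\vert A^{\nu}\vert\,\vert B^{\nu}\vert\le C(1+\vert w\vert^{-1})\vert w\vert^{-2n+3}$; each of $(\pa_{y_i}A^{\nu})B^{\nu}$ and $A^{\nu}(\pa_{y_i}B^{\nu})$ is $\le C(1+\vert w\vert^{-1})\vert w\vert^{-2n+2}$; and each of $(\pa^2A^{\nu})B^{\nu}$, $(\pa A^{\nu})(\pa B^{\nu})$ and $A^{\nu}(\pa^2B^{\nu})$ is $\le C(1+\vert w\vert^{-1})\vert w\vert^{-2n+1}$, giving (1)--(3). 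I expect the main obstacle to be the second $w$-derivative of $B^{\nu}=\vert \pa_w g^{\nu}\vert$: differentiating the square root twice throws up a term of order $(\text{second derivatives of }g^{\nu})^2/\vert \pa_w g^{\nu}\vert$ that could a priori blow up at $\pa D^{\nu}$, and it is precisely the non-degeneracy of $\pa_w g^{\nu}$ near the boundary, encoded in the quotient estimate of Lemma~\ref{der_g_nu/grad_g_nu-bdd}, that brings it down to the required order $\vert w\vert^{-2n-1}$; keeping the factor $(1+\vert w\vert^{-1})$ — rather than $1+\vert w\vert^{-1}+\vert w\vert^{-2}$ — in the $A^{\nu}$-estimates needs a little bookkeeping in the extension of Lemma~\ref{upper-bound-der-f-and-f_j}, but is otherwise routine.
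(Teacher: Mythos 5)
Your proposal is correct and follows essentially the same route as the paper: the paper likewise works with the expression $F^{\nu} = -(\pa f_{\nu}/\pa p_{\gamma})\vert\pa_w f_{\nu}\vert^{-1}\vert\pa_w g^{\nu}\vert$, differentiates it twice in $y$, and then appeals to Lemma~\ref{exist_de}, Lemma~\ref{upper-bound-der-f-and-f_j}, Proposition~\ref{estimate-der-g}, and, crucially for the second derivative, the quotient bound of Lemma~\ref{der_g_nu/grad_g_nu-bdd}. Your factor-by-factor Leibniz organization is a cosmetic repackaging of the paper's direct expansion (\ref{1st-der-F}), and you correctly identified both the key role of Lemma~\ref{der_g_nu/grad_g_nu-bdd} in taming the $(\pa^2 g^{\nu})^2/\vert\pa_w g^{\nu}\vert$ term and the need for the unstated extension of Lemma~\ref{upper-bound-der-f-and-f_j} to third-order derivatives, which the paper also glosses over.
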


\begin{proof}
Choose $m>0$, $0<r<1$ and $I$ as in lemma \ref{exist_de}. Choose $M>0$ as in
lemma \ref{upper-bound-der-f-and-f_j}. Modify $I$ and choose a constant $C$ so
that  proposition \ref{estimate-der-g} holds. Modify $r$ and $I$ so that
lemma \ref{der_g_nu/grad_g_nu-bdd} holds. Now fix $\nu \geq I$.

\medskip

(1) Let $w \in \mathcal{E}^{\nu}(r)$, $\vert w \vert > 1$. Then
by lemma \ref{exist_de}, lemma \ref{upper-bound-der-f-and-f_j} and proposition
\ref{estimate-der-g},
\[
\vert F^{\nu}(w) \vert = \frac{\vert \frac{\pa f_{\nu}}{\pa p_{\gamma}}(p_{\nu},
w)\vert}{\vert \pa_w f_{\nu}(p_{\nu}, w) \vert} \vert \pa_w g^{\nu}(w) \vert
\leq
\frac{M (1+ \vert w \vert^{-1})\vert w \vert^2}{m} C \vert w \vert^{-2n+1} = C_2
(1 + \vert w \vert ^{-1})\vert w \vert^{-2n +
3}.
\]
where $C_1 = M C/m$ is independent of $\nu$ and $w$.

\medskip

(2) Differentiating $F^{\nu}(w)$ with respect to $y_i$,
\begin{equation}\label{1st-der-F}
\frac{\pa F^{\nu}}{\pa y_i} = \frac{-\frac{\pa^2 f_{\nu}}{\pa p_{\gamma} \pa
y_i}}{\vert \pa_w f_{\nu} \vert} \vert \pa_w g^{\nu} \vert + \frac{1}{4}
\frac{\pa f_{\nu}}{\pa p_{\gamma}} \frac{\sum_{k=1}^{2n} \frac{\pa f_{\nu}}{\pa
y_k} \frac{\pa^2 f_{\nu}}{\pa y_k \pa y_i}}{\vert \pa_w f_{\nu} \vert^3} \vert
\pa_w g^{\nu} \vert - \frac{1}{4} \frac{\pa f_{\nu}}{\pa p_{\gamma}}
\frac{1}{\vert \pa_w f_{\nu} \vert}
\frac{\sum_{k=1}^{2n} \frac{\pa g_{\nu}}{\pa y_k} \frac{\pa^2 g_{\nu}}{\pa y_k
\pa y_i}}{\vert \pa_w g^{\nu} \vert}
\end{equation}
Thus for $w \in \mathcal{E}^{\nu}(r)$ with $\vert w \vert > 1$, by lemma
\ref{exist_de}, lemma \ref{upper-bound-der-f-and-f_j} and proposition
\ref{estimate-der-g}, and the fact that
\[
\frac{\frac{\pa g^{\nu}}{\pa y_k}}{\vert\pa_w g^{\nu}\vert} \leq 2,
\]
we have
\begin{multline*}
\Big \vert \frac{\pa F^{\nu}}{\pa y_i} (w) \Big\vert \leq \frac{M (1+ \vert w
\vert^{-1})
\vert w \vert}{m} C
\vert w \vert ^{-2n + 1} + \frac{1}{4} M (1+ \vert w \vert^{-1})
\vert w \vert^2 \frac{2n M M \vert w \vert^{-1}}{m^3} C \vert w
\vert^{-2n+1}\\ + \frac{1}{4} M (1+ \vert w \vert^{-1}) \vert w \vert^2
\frac{1}{m} 2n 2 C \vert w \vert^{-2n} \leq C_2 (1+ \vert w \vert^{-1})\vert w
\vert^{-2n + 2}.
\end{multline*}

\medskip

(3) In order to prove this estimate, we differentiate (\ref{1st-der-F}) with
respect to $y_j$ and estimate as before. All terms except those of the form
\[
\frac{\frac{\pa f_{\nu}}{\pa p_{\gamma}}}{\vert \pa_{w}f_{\nu} \vert}
\frac{\frac{\pa^2 g_{\nu}}{\pa y_k \pa y_i} \frac{\pa^2 g_{\nu}}{\pa y_l \pa
y_i}}{\vert \pa _w g_{\nu} \vert}  \quad \text{or}
\quad \frac{\frac{\pa f_{\nu}}{\pa p_{\nu}}}{\vert \pa_{w}f_{\nu} \vert}
\frac{\frac{\pa g_{\nu}}{\pa y_k} \frac{\pa g_{\nu}}{\pa y_l} \frac{\pa^2
g_{\nu}}{\pa y_k \pa y_i} \frac{\pa^2 g_{\nu}}{\pa y_l \pa y_j}}{\vert \pa _w
g_{\nu} \vert^3}
\]
can be estimated from the above by $\text{const.} (1+ \vert w \vert^{-1}) \vert
w \vert^{-2n+1}$ for
$w \in \mathcal{E}^{\nu}(r)$. Also by lemma \ref{der_g_nu/grad_g_nu-bdd}, the
above terms can be esitmated from the above by $\text{const.} (1+ \vert w
\vert^{-1}) \vert w \vert^{-2n+1}$ for $w \in \mathcal{E}^{\nu}(r_0)$.
\end{proof}

We now modify the Steps 4 and 5 of chapter 5 \cite{LY} to find an upper bound
for $\frac{\pa^2 g_{\nu}}{\pa \ov w_{\al}\pa p_{\ga}}(p_{\nu}, w)$.

\begin{prop}\label{F*}
There exist $0 < r < 1$ and an integer $I$ such that for $\nu \geq I$ and $w_0
\in \pa D^{\nu}$, we can find a function $F^{*}(w)$ (depending on the parameters
$\nu$ and $w_0$) of class $C^2$ on
\[
E = \{w \in D^{\nu} : \vert w - w_0 \vert < r \vert w_0
\vert\}
\]
such that
\[
H_E F^{*}(w) = \frac{\pa g_{\nu}}{\pa p_{\gamma}}(p_{\nu}, w), \quad w \in E.
\]
Moreover, there exists a constant $C>0$ independent of $\nu$
and $w_0 \in \pa D^{\nu}$ such that
\begin{enumerate}
\item $\vert F^{*}(w) \vert < C (1 + \vert w_0 \vert^{-1}) \vert w_0 \vert ^{-2n
+ 3}$ in $E$.

\item $\vert (\pa F^{*}/\pa y_i)(w_0) \vert < C(1 + \vert w_0 \vert^{-1})  \vert
w_0 \vert ^{-2n + 2}$,
$i=1, \ldots, n$.

\item $\vert \Delta _w F^{*}(w) \vert < C (1 + \vert w_0 \vert^{-1}) \vert w_0
\vert^{-2n+1}$ in $E$.
\end{enumerate}
\end{prop}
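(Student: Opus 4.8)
The plan is to transcribe Steps~4 and 5 of chapter~5 of \cite{LY}, keeping every constant uniform in $\nu$, and to take $F^{*}$ to be a patch of the collar function $F^{\nu}$ of $(\ref{defn-F^nu})$, which already carries the bounds of Proposition~\ref{est-F_nu} and agrees with $u_{\nu}:=\pa g_{\nu}/\pa p_{\ga}(p_{\nu},\cdot)$ on $\pa D^{\nu}$, with the harmonic function $u_{\nu}$ itself, used near the interior cap of $\pa E$. Here $H_{E}$ is the Poisson (Dirichlet) extension for $E$, so ``$H_{E}F^{*}=u_{\nu}$ in $E$'' means exactly ``$F^{*}=u_{\nu}$ on $\pa E$''; since $u_{\nu}$ is harmonic, any $C^{2}$ function on $\ov E$ with that boundary trace works, and the content of the proposition is that one can be chosen with the three quantitative bounds.

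First I would fix $w_{0}\in\pa D^{\nu}$ and choose $0<r<1$ and $I$ so that: (a) $r$ is below the collar radius of Proposition~\ref{est-F_nu}, whence $E=\{w\in D^{\nu}:\vert w-w_{0}\vert<r\vert w_{0}\vert\}\subset\mathcal{E}^{\nu}(r)$ and $F^{\nu}$ and its first two derivatives obey the bounds of that proposition on $\ov E$; and (b) $r$ is below the radius of Proposition~\ref{implicit-fn}, so that after rescaling to unit size $\pa D^{\nu}\cap B(w_{0},r\vert w_{0}\vert)$ is a graph with derivatives bounded uniformly in $\nu$, i.e. $E$ is, up to scaling, a smooth ``half-ball'' with flat part $\Sigma_{1}=\pa E\cap\pa D^{\nu}$ and spherical part $\Sigma_{2}=\pa E\cap D^{\nu}$, the two meeting at a corner a distance $\asymp r\vert w_{0}\vert$ from $w_{0}$. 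I would also record that $\vert w_{0}\vert\ge c_{0}>0$ and $\vert w\vert\asymp\vert w_{0}\vert$ on $\ov E$, both uniformly in $\nu$ because $d(0,\pa D^{\nu})$ is bounded below uniformly; these replace each $\vert w\vert^{k}$ by $\vert w_{0}\vert^{k}$ up to constants and render the factors $(1+\vert w\vert^{-1})$ harmless.

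Next comes an a priori bound: $\vert u_{\nu}(w)\vert\le C_{1}\vert w\vert^{-2n+3}$ on all of $D^{\nu}$, with $C_{1}$ independent of $\nu$. Indeed $\vert w\vert^{-2n+3}$ is superharmonic on $\mathbf{C}^{n}\sm\{0\}$ for $n>1$; by Proposition~\ref{bdy-est-1-der-g} and $\vert w\vert\ge c_{0}$ one has $\vert u_{\nu}\vert\le C_{1}\vert w\vert^{-2n+3}$ on $\pa D^{\nu}$; and $u_{\nu}$, being harmonic on $D^{\nu}\ni0$, is bounded near $0$, where $\vert w\vert^{-2n+3}\to\infty$. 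Applying the minimum principle to $C_{1}\vert w\vert^{-2n+3}\pm u_{\nu}$ on $D^{\nu}\sm\ov B(0,\ep)$ and letting $\ep\to0$ proves it. Interior gradient and Hessian estimates for harmonic functions then give $\vert\nab u_{\nu}(w)\vert\lesssim d(w,\pa D^{\nu})^{-1}\vert w_{0}\vert^{-2n+3}$ and $\vert\nab^{2}u_{\nu}(w)\vert\lesssim d(w,\pa D^{\nu})^{-2}\vert w_{0}\vert^{-2n+3}$ on $E$.

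Finally I would set $F^{*}=\chi F^{\nu}+(1-\chi)u_{\nu}=u_{\nu}+\chi\Psi^{\nu}$ with $\Psi^{\nu}:=F^{\nu}-u_{\nu}$, where $\chi$ is a smooth cut-off equal to $1$ on $B(w_{0},\tfrac{r}{4}\vert w_{0}\vert)$, equal to $0$ outside $B(w_{0},\tfrac{r}{2}\vert w_{0}\vert)$, with $\vert\nab^{j}\chi\vert\lesssim(r\vert w_{0}\vert)^{-j}$. Since $F^{\nu}=u_{\nu}$ on $\pa D^{\nu}$ (so $\Psi^{\nu}=0$ on $\Sigma_{1}$ and $F^{*}=u_{\nu}$ there) and $\chi\equiv0$ on $\Sigma_{2}$ (so $F^{*}=u_{\nu}$ there too), we get $F^{*}=u_{\nu}$ on $\pa E$, hence $H_{E}F^{*}=u_{\nu}$. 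Bound (1) follows from $\vert F^{\nu}\vert\lesssim(1+\vert w\vert^{-1})\vert w\vert^{-2n+3}$ and $\vert u_{\nu}\vert\le C_{1}\vert w\vert^{-2n+3}$ together with $\vert w\vert\asymp\vert w_{0}\vert$; bound (2) at $w_{0}$ is immediate, since $\chi\equiv1$ near $w_{0}$ forces $F^{*}=F^{\nu}$ there and Proposition~\ref{est-F_nu}(2) applies verbatim. For (3), since $\De_{w}u_{\nu}=0$,
\[
\De_{w}F^{*}=\chi\,\De_{w}F^{\nu}+2\,\nab\chi\cdot\nab\Psi^{\nu}+(\De_{w}\chi)\,\Psi^{\nu};
\]
the first term is $O(\vert w_{0}\vert^{-2n+1})$ by Proposition~\ref{est-F_nu}(3), while the other two are supported in the annulus $\tfrac{r}{4}\vert w_{0}\vert\le\vert w-w_{0}\vert\le\tfrac{r}{2}\vert w_{0}\vert$, which lies within distance $\lesssim r\vert w_{0}\vert$ of $\Sigma_{1}$ but a definite distance from the corner $\ov{\Sigma_{1}}\cap\ov{\Sigma_{2}}$. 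There $\Psi^{\nu}$ vanishes on $\Sigma_{1}$, satisfies $\De_{w}\Psi^{\nu}=\De_{w}F^{\nu}=O(\vert w_{0}\vert^{-2n+1})$, and has $\vert\Psi^{\nu}\vert\lesssim\vert w_{0}\vert^{-2n+3}$, so the standard $L^{\infty}$-estimates for $\De v=f$ with $v=0$ on the flat face, on half-balls of radius $\asymp r\vert w_{0}\vert$ (uniform in $\nu$ thanks to the graph representation of Proposition~\ref{implicit-fn}), give $\vert\Psi^{\nu}\vert\lesssim\vert w_{0}\vert^{-2n+3}$ and $\vert\nab\Psi^{\nu}\vert\lesssim\vert w_{0}\vert^{-2n+2}$ up to $\Sigma_{1}$. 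Plugging these in yields $\vert\De_{w}F^{*}\vert\lesssim\vert w_{0}\vert^{-2n+1}+(r\vert w_{0}\vert)^{-1}\vert w_{0}\vert^{-2n+2}+(r\vert w_{0}\vert)^{-2}\vert w_{0}\vert^{-2n+3}\lesssim(1+\vert w_{0}\vert^{-1})\vert w_{0}\vert^{-2n+1}$, which is (3). The main obstacle is exactly this last point: controlling $\Psi^{\nu}$ and $\nab\Psi^{\nu}$ up to $\pa D^{\nu}$ on the support of $\nab\chi$, where the a priori bound on $u_{\nu}$ and the uniform boundary elliptic estimates legitimized by Proposition~\ref{implicit-fn} do the real work; the rest is bookkeeping of scales.
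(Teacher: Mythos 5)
Your construction is correct but genuinely different from the paper's, and it is worth contrasting the two. The paper takes $F^{*}=F^{\nu}+u$, where $u$ is the \emph{harmonic} function on $E$ vanishing on $B\cap\pa D^{\nu}$ and equal to $H_{D^{\nu}}F^{\nu}-F^{\nu}$ on the spherical cap $\pa B\cap D^{\nu}$. The two constructions distribute the difficulty oppositely. Because the paper's correction $u$ is harmonic, $\De_{w}F^{*}=\De_{w}F^{\nu}$ identically, so bound~(3) drops out of Proposition~\ref{est-F_nu}(3) in one line, whereas bound~(2) at $w_{0}$ needs $\vert\pa_{w}u(w_{0})\vert$, which the paper controls by the barrier argument (Step~2 of chapter~4 of \cite{LY}) driven by the external tangent ball of Lemma~\ref{external_ball}. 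Your cutoff construction $F^{*}=u_{\nu}+\chi\Psi^{\nu}$ reverses this: bound~(2) is immediate since $F^{*}\equiv F^{\nu}$ near $w_{0}$, but bound~(3) inherits the commutator terms $2\nab\chi\cdot\nab\Psi^{\nu}+(\De\chi)\Psi^{\nu}$ on the support of $\nab\chi$, and these require a uniform-in-$\nu$ boundary gradient (or $C^{1,\alpha}$) estimate for $\Psi^{\nu}$ up to $\Sigma_{1}$. That estimate is true and you cite the right ingredients, but note that it is Lemma~\ref{implicit-graph} (the rescaled graph representation), not Proposition~\ref{implicit-fn} directly, that supplies the uniform-geometry input after the scaling $S^{\nu}$ that normalizes the half-ball to unit size; and one should say explicitly that the flattened operator has coefficients with uniformly bounded derivatives (the $C^{6}$ bounds in Lemma~\ref{implicit-graph}) so the constants in the boundary $L^{\infty}\!\to C^{1,\alpha}$ estimate do not degenerate. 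Your a priori maximum-principle bound $\vert u_{\nu}\vert\le C_{1}\vert w\vert^{-2n+3}$ via superharmonicity of $\vert w\vert^{-2n+3}+\vert w\vert^{-2n+2}$ is exactly the same device the paper uses to bound $H_{D^{\nu}}F^{\nu}$. Overall: correct, a clean alternative, somewhat heavier on elliptic machinery for~(3) than the paper, lighter on~(2).
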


\begin{proof}
Choose $0 < r < 1$,  a constant $C$ and an integer $I$ as in proposition
\ref{est-F_nu}. Now fix $\nu \geq I$
and $w_0 \in \pa
D^{\nu}$ and let
\[
B = \{ w : \vert w - w_0 \vert < r \vert w_0 \vert\}.
\]
Then $E =B \cap D^{\nu}$. Since
\[
\frac{\pa g_{\nu}}{\pa p_{\gamma}}(p_{\nu}, w) = H_{D^{\nu}} F^{\nu} (w)
\]
on $D^{\nu}$, the function $(\pa g_{\nu}/\pa p_{\gamma})(p_{\nu}, w)$ is
harmonic
on $E$ with boundary values
\begin{equation}\label{bdy-value}
\begin{cases}
F^{\nu} & ; \quad \text{if } w \in B \cap \pa D^{\nu},\\
H_{D^{\nu}}F^{\nu} & ; \quad \text{if } w \in \pa B \cap  D^{\nu}.
\end{cases}
\end{equation}
Let $u$ be the harmonic function on $E$ with boundary values
\[
u(w) =
\begin{cases}
0 & ; \quad \text{if } w \in  B \cap \pa D^{\nu},\\
H_{D^{\nu}}F^{\nu} - F^{\nu}  & ; \quad \text{if } w \in \pa B \cap D^{\nu}
\end{cases}
\]
and set
\[
F^{*}(w) = F^{\nu}(w) + u(w), \quad w \in E.
\]
Then
\[
H_{E} F^{*} = H_{E} F^{\nu} + u
\]
is a harmonic function on $E$ with boundary values (\ref{bdy-value}) and hence
\[
H_{E} F^{*} = \frac{\pa g_{\nu}}{\pa p_{\gamma}}(p_{\nu}, w)
\]
on $E$ and this proves the first part of the proposition.

\medskip

To prove the second part, note that by proposition \ref{est-F_nu} and continuity
of the function
\[
\vert F^{\nu}(w)\vert (1 + \vert w \vert^{-1})^{-1} \vert w \vert ^{2n-3}
\]
up to $\ov
{\mathcal{E}}^{\nu}(r)$,
we have
\[
\vert F^{\nu}(w) \vert \leq C (1 + \vert w \vert ^{-1} )\vert w \vert^{-2n+3},
\quad w \in \ov{\mathcal{E}}^{\nu}(r).
\]
In particular, the above hods for $w \in \ov E$. Also, since $(1 + \vert w \vert
^{-1} ) \vert w \vert^{-2n+3}$ is superharmonic on $\mathbf{C}^{n}$, this also
implies that
\begin{equation}
\vert H_{D^{\nu}} F^{\nu}(w) \vert \leq C (1 + \vert w \vert ^{-1} ) \vert w
\vert^{-2n+3}, \quad w \in \ov D^{\nu}.
\end{equation}
Therefore,
\[
\vert H_{D^{\nu}} F^{\nu} (w) - F^{\nu}(w) \vert \leq 2C(1 + \vert w \vert ^{-1}
)\vert w \vert^{-2n+3},
\quad w \in \pa B \cap D^{\nu}
\]
which implies that
\[
\vert u(w) \vert \leq 2C(1 + \vert w \vert ^{-1} ) \vert w \vert^{-2n+3}, \quad
w
\in \ov E.
\]
Since $E \subset \{w : \vert w - w_0 \vert < r \vert w_0 \vert \}$,
\[
\vert F^{*}(w) \vert \leq \vert F_{\nu}(w) \vert + \vert u(w) \vert \leq 3 C (1
+ \vert w \vert ^{-1} ) \vert w \vert^{-2n+3} \leq 3 C (1-r)^{-2n+2}( 1 + \vert
w_0 \vert^{-1}) \vert w_0 \vert^{-2n+3}, \quad w \in E
\]
which proves (1).

\medskip

To prove (2), note that from the above calculation
\[
\vert u(w) \vert \leq 2 C (1-r)^{-2n+2}(1 + \vert w_0 \vert^{-1}) \vert w_0
\vert^{-2n+3}, \quad w \in E.
\]
Also $u(w) = 0$ for $w \in B \cap \pa D^{\nu}$. Moreover by
lemma \ref{external_ball}, we can modify the integer $I$ if necessary, to find a
$\rho > 0$ which is
independent of $\nu$ and $w_0$ such that there exists a ball of radius $\rho
\vert
w_0 \vert$ which is externally tangent to $\pa D^{\nu}$ at $w_0$. Hence taking
$R=\min(\rho \vert w_0 \vert, r \vert w_0 \vert )$ in Step 2 of chapter 4
\cite{LY}, we can find a constant $c$ independent of $D^{\nu}$ and $u$ such
that
\[
\vert \pa_w u (w_0) \vert < \frac{ 2 c C (1-r)^{-2n+2} (1 + \vert w_0
\vert^{-1})
\vert
w_0 \vert^{-2n + 3} }{
\min(r \vert w_0 \vert, \rho \vert w_0 \vert)} = \ti C (1 + \vert w_0
\vert^{-1})  \vert w_0 \vert^{-2n + 2}
\]
where $\ti C$ is independent of $\nu$ and $w_0 \in \pa D^{\nu}$. This together
with proposition \ref{est-F_nu} implies
\[
\Big \vert \frac{\pa F^{*} }{\pa y_i} (w_0) \Big \vert \leq \Big \vert
\frac{\pa F_{\nu}}{\pa y_i}(w_0) \Big \vert + \Big \vert \frac{\pa u}{\pa
y_i}(w_0)
\Big \vert \leq (C + \ti C) (1 + \vert w_0 \vert^{-1}) \vert w_0 \vert ^{-2n +
2}
\]
which proves (2).

\medskip

Finally using the fact that $u$ is harmonic we obtain from proposition
\ref{est-F_nu} that
\[
\vert \Delta_w F^{*} (w) \vert = \vert \Delta_w F^{\nu}(w) \vert \leq n C (1 +
\vert w\vert^{-1}) 
\vert w
\vert ^{-2n + 1} \leq n C (1 - r)^{-2n }(1 + \vert w_0 \vert^{-1})  \vert w_0
\vert ^{-2n + 1} , \quad
w \in E
\]
and this proves (3).
\end{proof}

\begin{prop}\label{estimate-mixed-2nd-der-g}
There exist a constant $C>0$ and an integer $I$ such that
\begin{equation}\label{mixed-2nd-der-g}
\Big \vert \frac{\pa^2 g_{\nu}}{\pa \ov w_{\al} \pa p_{\gamma}} (p_{\nu}, w)
\Big \vert < C (1 + \vert w \vert ^{-1}) \vert w \vert ^{-2n + 2}
\end{equation}
for all $\nu \geq I$ and $ w \in \ov D^{\nu}$.
\end{prop}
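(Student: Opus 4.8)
The plan is to reduce the assertion to a single gradient estimate for the function $h^{\nu}(w) := \frac{\pa g_{\nu}}{\pa p_{\gamma}}(p_{\nu}, w)$, which by proposition \ref{bdy-val-der-g} is harmonic in $w$ on all of $D^{\nu}$ and, as noted at the start of section 5, equals $H_{D^{\nu}}F^{\nu}$. Since $\frac{\pa^2 g_{\nu}}{\pa \ov w_{\al} \pa p_{\gamma}}(p_{\nu}, w) = \frac{\pa h^{\nu}}{\pa \ov w_{\al}}(w)$, it suffices to prove
\[
|\pa_w h^{\nu}(w)| \leq C\big(1+|w|^{-1}\big)|w|^{-2n+2}, \qquad w \in \ov D^{\nu},
\]
with $C$ independent of $\nu \geq I$ (here $\pa_w$ denotes the full first-order gradient, which dominates $|\pa_{\ov w_{\al}}h^{\nu}|$). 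I will use two facts established earlier: the bound $|h^{\nu}(w)| \leq C(1+|w|^{-1})|w|^{-2n+3}$ on $\ov D^{\nu}$ obtained inside the proof of proposition \ref{F*}, and the fact that $0$ stays at a uniform positive distance from $\pa D^{\nu}$ for $\nu \geq I$ (so that $|w|$ is bounded below on $\ov{\mathcal E}^{\nu}(r)$, hence on $\pa D^{\nu}$). Fix $0 < r < 1$ and $I$ small enough that propositions \ref{est-F_nu}, \ref{F*} and lemma \ref{implicit-graph} all apply with parameter $r$.

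The core step is the estimate on $\pa D^{\nu}$. Fix $w_0 \in \pa D^{\nu}$ and set $E = \{w' \in D^{\nu} : |w'-w_0| < r|w_0|\}$. By proposition \ref{F*}, $h^{\nu} = H_E F^{*} = F^{*} + v$ on $E$, where $v := h^{\nu} - F^{*}$ vanishes on $\pa E$, satisfies $\Delta_w v = -\Delta_w F^{*}$, and — writing $C_1 = (1+|w_0|^{-1})|w_0|^{-2n+3}$ and using that $|w| \asymp |w_0|$ on $E$ — satisfies $|v| \leq CC_1$, $|\Delta_w v| \leq CC_1|w_0|^{-2}$ and $|w_0|\,|\pa_w F^{*}(w_0)| \leq CC_1$. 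Now rescale by $W = (w'-w_0)/|w_0|$: by lemma \ref{implicit-graph} the image of $E$ near $W = 0$ is the subgraph of a function with $C^{6}$-norm bounded by a constant $M$ independent of $\nu$ and $w_0$, while the spherical part of $\pa E$ is pushed out to $|W| = r$; the rescaled function $\hat v = v/C_1$ then has $|\hat v| \leq C$, $|\Delta_W \hat v| \leq C$ and vanishes on the graph portion of the rescaled boundary. The standard boundary gradient estimate for the Poisson equation in such a domain — whose constant depends only on $M$ — gives $|\pa_W \hat v(0)| \leq C$, i.e. $|\pa_w v(w_0)| \leq C|w_0|^{-1}C_1$; combined with proposition \ref{F*}(2) this yields
\[
|\pa_w h^{\nu}(w_0)| \leq C|w_0|^{-1}C_1 = C(1+|w_0|^{-1})|w_0|^{-2n+2} \leq C'|w_0|^{-2n+2},
\]
uniformly in $\nu \geq I$ and $w_0 \in \pa D^{\nu}$ (the last inequality using that $|w_0|$ is bounded below).

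To transfer this bound into $D^{\nu}$, first take $w \in D^{\nu}$ with $\delta(w) := d(w, \pa D^{\nu}) \geq \tfrac r2 |w|$; this includes every $w \notin \mathcal E^{\nu}(r)$, for which one checks $\delta(w) \geq \tfrac{r}{1+r}|w|$. Then $B(w, \tfrac r4 |w|) \subset D^{\nu}$, $h^{\nu}$ is harmonic there and bounded by $C(1+|w|^{-1})|w|^{-2n+3}$ (as $|w'| \asymp |w|$), and the interior gradient estimate gives $|\pa_w h^{\nu}(w)| \leq \tfrac{c_n}{(r/4)|w|}\,C(1+|w|^{-1})|w|^{-2n+3} \leq C(1+|w|^{-1})|w|^{-2n+2}$. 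For the remaining points $w \in \mathcal E^{\nu}(r)$ with $\delta(w) < \tfrac r2 |w|$, note that each $\pa h^{\nu}/\pa y_i$ is harmonic on $\mathcal E^{\nu}(r)$, that $|w|^{-2n+2}$ is harmonic there because $0 \notin \ov{\mathcal E}^{\nu}(r)$, and that on $\pa \mathcal E^{\nu}(r)$ one has $|\pa h^{\nu}/\pa y_i| \leq C|w|^{-2n+2}$ — on the part of $\pa\mathcal E^{\nu}(r)$ interior to $D^{\nu}$ because $\delta(w) \asymp |w|$ there and the interior estimate applies, and on $\pa D^{\nu}$ by the previous paragraph — where the factor $(1+|w|^{-1})$ is absorbed using that $|w|$ is bounded below on $\ov{\mathcal E}^{\nu}(r)$. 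Since $|\pa h^{\nu}/\pa y_i|$ is subharmonic and tends to $0$ at infinity, the maximum principle gives $|\pa h^{\nu}/\pa y_i| \leq C|w|^{-2n+2}$ on all of $\mathcal E^{\nu}(r)$, hence $|\pa_w h^{\nu}(w)| \leq C(1+|w|^{-1})|w|^{-2n+2}$ there too.

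Combining the two regions proves the displayed bound on $\ov D^{\nu}$, uniformly in $\nu \geq I$, which is (\ref{mixed-2nd-der-g}). I expect the delicate point to be the boundary estimate of the second paragraph, specifically making the rescaled gradient estimate uniform in $\nu$ and in the boundary point $w_0$: this uniformity rests on the $\nu$-uniform boundary regularity after rescaling furnished by lemma \ref{implicit-graph} (hence proposition \ref{implicit-fn}), together with the normalisation by $C_1 = (1+|w_0|^{-1})|w_0|^{-2n+3}$, which is precisely what renders the rescaled Dirichlet datum, the rescaled function and the rescaled Laplacian all of size $O(1)$ — the very properties that the estimates of propositions \ref{est-F_nu} and \ref{F*} were designed to deliver.
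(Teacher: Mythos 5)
Your argument is correct and follows the paper's overall strategy: establish the estimate at $w_0\in\pa D^\nu$ by rescaling to unit scale and applying a $\nu$-uniform boundary gradient estimate fed by the bounds of Proposition~\ref{F*}, then propagate into $D^\nu$ by the maximum principle. The genuine difference is in how the boundary gradient estimate is realized. The paper uses the exterior tangent ball of Lemma~\ref{external_ball}, which after rescaling becomes a ball of fixed radius $\rho/r$ tangent to $\Om$ at $0$, and quotes the barrier-type estimate \cite{LY}*{p.~60, Lemma~5.1$^{\prime}$}; you instead write $h^\nu=F^*+v$ with $v=H_EF^*-F^*$ vanishing on $\pa E$, rescale so that the boundary graph is controlled in $C^6$ via Lemma~\ref{implicit-graph}, and apply a standard Schauder-type boundary gradient estimate for the Poisson equation. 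Both routes deliver a constant depending only on the uniform boundary geometry; the barrier needs only the exterior ball condition, while your route leans on the stronger $C^6$ control but is more self-contained in spelling out where the uniform constant comes from. Two minor points. First, Lemma~\ref{implicit-graph} is stated for interior centers $w^\nu\in D^\nu\setminus\{0\}$ whose $\eta|w^\nu|$-ball meets $\pa D^\nu$, not for $w_0\in\pa D^\nu$, so you are invoking it just outside its stated hypotheses; the version you need follows directly from Proposition~\ref{implicit-fn} (or by taking interior centers tending to $w_0$). Second, your two-region interior transfer is valid but more laborious than the paper's observation: $\pa^2g_\nu/\pa\ov w_\al\pa p_\gamma$ is itself harmonic on all of $D^\nu$, $|w|^{-2n+2}$ is superharmonic on $D^\nu$ (the pole at $0\in D^\nu$ has the right sign), and $|w|$ is bounded below on $\pa D^\nu$ so the factor $(1+|w|^{-1})$ there is absorbed into the constant; this is precisely what ``by the maximum principle it suffices to prove the bound on $\pa D^\nu$'' means. (Also, ``tends to $0$ at infinity'' is vacuous since each $D^\nu$ is bounded, but that does no harm.)
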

\bigskip

\begin{proof} \ref{estimate-mixed-2nd-der-g}.
Let $0< r < 1$, $C> 0$ and $I$ be as in proposition \ref{F*} and fix $\nu \geq
I$. By the maximum
principle, it suffices to
prove (\ref{mixed-2nd-der-g}) for $w_0 \in \pa D^{\nu}$. Given
such $w_0$, we let $F^{*}$ be a $C^2$-smooth function on
\[E = \{ w \in D^{\nu} : \vert w - w_0 \vert < r \vert w_0 \vert \}\]
satisfying the estimates of proposition \ref{F*}. Now consider the affine map
\[
W = S(w) = \frac{w - w_0}{r \vert w_0 \vert}
\]
and let $\Om = S(E)$. Define the functions $u$ and $h$ on $\Om$ by setting
\[
u(W) = \frac{\pa g_{\nu}}{\pa p_{\gamma}}(p_{\nu}, w) \quad \text{and}  \quad
h(W) = F^{*}(w).
\]
Then $u = H_{\Om}h$ on $\Om$ and by proposition \ref{F*},
\begin{align*}
\text{(1)} & \vert h(W) \vert < C (1 + \vert w_0 \vert^{-1})\vert w_0
\vert ^{-2n + 3} \quad
\text{in $\Om$},\\
\text{(2)} & \Big \vert \frac{\pa h}{\pa Y_i}(0) \Big \vert = \Big \vert
\frac{\pa F^{*}}{\pa y_i}(w_0) \Big \vert r \vert w_0 \vert < C r (1 + \vert w_0
\vert^{-1})\vert w_0
\vert ^{-2n + 3}, \text{ and}\\
\text{(3)} & \vert \Delta_{W} h (W)\vert = \vert \Delta_{w}
F^{*}(w) \vert r^2 \vert w_0 \vert ^2 \leq C r^2 (1 + \vert w_0 \vert^{-1})
\vert w_0 \vert ^{-2n + 3} \leq C r  (1 + \vert w_0 \vert^{-1})
\vert w_0 \vert ^{-2n + 3}
\quad \text{in $\Om$}.
\end{align*}
By lemma \ref{external_ball}, we can modify the integer $I$ to find a $\rho >
0$ which is independent of $\nu $ and $w_0$ such that there exists a ball $B$ of
radius
$\rho \vert  w_0 \vert$ which is externally tangent to $\pa D^{\nu}$ at $w_0$.
Setting
$T(B) = \tilde{B}$, we see that the ball $\tilde{B} \subset \mathbf{C}^{n}
\setminus \Om$ has radius $\rho / r$ and is tangent to $\pa \Om$ at
$0$. Let $\tilde{B}_2$ be the ball with centre same as $\tilde{B}$ and
radius $\rho/r+2$. Hence by \cite{LY}*{pp 60, lemma 5.1$^{\prime}$, }, there
exists a constant $M$ depending only on $\rho/r$ such that
\[
\vert \pa_{\ov W} u(0) \vert \leq M C (1 + \vert w_0 \vert^{-1}) \vert w_0 \vert
^{-2n+3}
\]
Since
\[
\frac{\pa u}{\pa \ov W_{\al}}(0) = \frac{\pa ^ 2 g_{\nu}}{\pa \ov w_{\al} \pa
p_{\gamma}}(p, w_0) r \vert w_0 \vert,
\]
we have
\[
\Big \vert \frac{\pa ^ 2 g_{\nu}}{\pa \ov w_{\al} \pa p_{\gamma}}(p_{\nu}, w_0)
\Big
\vert
\leq  \frac{M C}{r} (1 + \vert w_0 \vert^{-1})\vert w_0 \vert ^{-2n+2}
\]
which proves the proposition.
\end{proof}

\medskip

\begin{prop}\label{conv-cpt-2nd-der-g_nu}
Let $w^{\nu} \in \pa D^{\nu}$ be such that $\{w^{\nu}\}$ converges to $w^0 \in
\pa
\mathcal{H} = \pa D(p_0)$. Then
\[
\lim_{\nu \ra \infty} \frac{\pa ^ 2 g_{\nu}}{\pa \ov w_{\al} \pa
p_{\gamma}}(p_{\nu}, w^{\nu}) = \frac{\pa ^ 2 g}{\pa \ov w_{\al} \pa
p_{\gamma}}(p_0, w^{0}).
\]
\end{prop}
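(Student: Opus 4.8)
The plan is to prove this by a normal families argument localized near $w^0$, resting on the uniform boundary estimates of Propositions \ref{est-F_nu}, \ref{F*} and \ref{estimate-mixed-2nd-der-g} together with the interior convergence recorded in Remark \ref{unif-conv-g_nu}. Put $u^{\nu}(w) = \frac{\pa g_{\nu}}{\pa p_{\gamma}}(p_{\nu}, w)$, a harmonic function of $w \in D^{\nu}$ with $\frac{\pa^2 g_{\nu}}{\pa\ov w_{\al}\pa p_{\gamma}}(p_{\nu},\cdot) = \frac{\pa u^{\nu}}{\pa\ov w_{\al}}$, and $u^{0}(w) = \frac{\pa g}{\pa p_{\gamma}}(p_0, w)$ on $\mathcal{H}$, which is $C^{1}$ up to $\pa\mathcal{H}$ by the regularity of $g$ recorded in and after Proposition \ref{bdy-val-der-g}. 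Since $0 \in \mathcal{H}$ we have $w^{0}\neq 0$, so $|w^{\nu}| \to |w^{0}| > 0$; I would fix $\rho > 0$ small enough that $0 \notin \overline{B(w^{0},2\rho)}$, that $B(w^{0},2\rho) \subset \{w : |w-w^{\nu}| < r|w^{\nu}|\}$ for all large $\nu$ (with $r$ as in Proposition \ref{F*}), and that, by the implicit function theorem together with $\psi_{\nu}\to\psi$ in $C^{\infty}$, the boundary portions $\pa D^{\nu}\cap B(w^{0},2\rho)$ and $\pa\mathcal{H}\cap B(w^{0},2\rho)$ are graphs over a common base with the former converging in $C^{6}$ to the latter, as in the proofs of Corollary \ref{bdy-conv-Green-fn} and Lemma \ref{implicit-graph}. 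On this scale the weights $|w|^{-k}$ occurring below are comparable to constants.

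The core step is a uniform $C^{1,\beta}$ estimate for $u^{\nu}$ up to $\pa D^{\nu}$ near $w^{0}$, uniform in $\nu$. This is exactly what the machinery already in place is designed to deliver: on $E^{\nu} := \{w\in D^{\nu} : |w-w^{\nu}| < r|w^{\nu}|\}$ one has, by Proposition \ref{F*} (with $w_0 = w^{\nu}$), a $C^{2}$ function $F^{*}_{\nu}$ with $u^{\nu} = H_{E^{\nu}}F^{*}_{\nu}$ whose sup-norm, first derivatives and Laplacian on $E^{\nu}$ are bounded independently of $\nu$, the uniform $C^{2}$-control of the Dirichlet datum $F^{\nu}$ feeding into this being Proposition \ref{est-F_nu}; the domains $E^{\nu}$ converge to $\{w\in\mathcal{H} : |w-w^{0}| < r|w^{0}|\}$, their boundaries are uniformly regular and converge in $C^{6}$, and $\overline{E^{\nu}}$ stays away from $0$. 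Flattening these boundaries by $C^{6}$-diffeomorphisms converging in $C^{6}$, and invoking \cite{LY}*{proposition~5.1} (a boundary Schauder estimate, applied as in the proofs of Proposition \ref{estimate-der-g} and Proposition \ref{estimate-mixed-2nd-der-g}), one obtains that $\{u^{\nu}\}$, transported to a fixed domain, is bounded in $C^{1,\beta}$ up to the flattened image of $\pa\mathcal{H}\cap B(w^{0},\rho)$, for some $\beta\in(0,1)$; in particular $\{u^{\nu}\}$ together with its first-order $w$-derivatives is equicontinuous on $\overline{B(w^{0},\rho)\cap D^{\nu}}$, uniformly in $\nu$.

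Granting this, the proposition follows by a standard compactness argument. By Arzel\`a--Ascoli, along an arbitrary subsequence there is a further subsequence along which $u^{\nu}$ converges in $C^{1}$ up to $\pa\mathcal{H}\cap B(w^{0},\rho)$ to some $v$. On the interior of $\mathcal{H}$, Remark \ref{unif-conv-g_nu} gives $u^{\nu}\to u^{0}$ uniformly on compact subsets, so $v = u^{0}$ there, hence on $\overline{\mathcal{H}}\cap B(w^{0},\rho)$ by continuity, and therefore $\frac{\pa v}{\pa\ov w_{\al}} = \frac{\pa u^{0}}{\pa\ov w_{\al}}$ there. Since $w^{\nu}\to w^{0}$ and $u^{\nu}\to v$ in $C^{1}$ up to the boundary, $\frac{\pa u^{\nu}}{\pa\ov w_{\al}}(w^{\nu}) \to \frac{\pa v}{\pa\ov w_{\al}}(w^{0}) = \frac{\pa u^{0}}{\pa\ov w_{\al}}(w^{0})$. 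The limit being independent of the subsequence chosen, the full sequence converges, and recalling $\frac{\pa^2 g_{\nu}}{\pa\ov w_{\al}\pa p_{\gamma}}(p_{\nu},\cdot) = \frac{\pa u^{\nu}}{\pa\ov w_{\al}}$ and $\frac{\pa^2 g}{\pa\ov w_{\al}\pa p_{\gamma}}(p_0,\cdot) = \frac{\pa u^{0}}{\pa\ov w_{\al}}$, this is precisely the assertion.

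The main obstacle is the second paragraph: passing from the purely interior convergence of $\frac{\pa g_{\nu}}{\pa p_{\gamma}}(p_{\nu},\cdot)$ to uniform control of its $\ov w$-derivative at the moving boundary points $w^{\nu}$. The point is that the Neumann datum of $u^{\nu}$ on $\pa D^{\nu}$ is not determined by its Dirichlet datum $F^{\nu}$ alone; it is precisely the uniform $C^{2}$-control of $F^{*}_{\nu}$ together with the uniformly $C^{6}$-regular, $C^{6}$-convergent boundaries $\pa D^{\nu}$ — that is, the full force of the $F^{*}$-construction and of \cite{LY}*{proposition~5.1} — that forces the required equicontinuity up to the boundary.
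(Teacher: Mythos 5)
Your argument is correct and is essentially an explicit unpacking of the paper's terse one-line proof (``standard boundary elliptic regularity arguments and the fact that $D^{\nu}$ is $C^{\infty}$-close to $D$''). What the paper treats as routine, you spell out: localizing near $w^0$ (away from the origin and the corners), using the uniform $C^2$ bound on the Dirichlet datum $F^{\nu}$ from Proposition \ref{est-F_nu} together with the $F^{*}$-construction of Proposition \ref{F*}, flattening the $C^{6}$-convergent boundaries and invoking a boundary Schauder estimate to obtain $C^{1,\beta}$ equicontinuity up to the boundary, and then identifying the Arzel\`a--Ascoli limit via the interior convergence of Remark \ref{unif-conv-g_nu}. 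This is precisely the mechanism the author surely has in mind, and it correctly handles the subtlety you flag at the end --- that the Neumann trace of $u^{\nu}$ at the moving boundary points $w^{\nu}$ is not determined by the Dirichlet data alone, so one genuinely needs equicontinuity of first derivatives up to $\partial D^{\nu}$, not merely interior normal-family arguments.
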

\begin{proof}
This follows from standard boundary elliptic regularity arguments and the fact
that $D^{\nu}$ is $C^{\infty}$-close to $D$.
\end{proof}

\medskip

\begin{prop}\label{conv-2nd-der-sy}
$
\displaystyle \lim_{\nu \ra \infty} \frac{\pa^2 \la_{\nu}}{\pa p_{\ga} \pa \ov
p_{\ga}}(p_{\nu}) =
\frac{\pa^2 \la}{\pa p_{\ga} \pa \ov p_{\ga}}(p_0).
$
\end{prop}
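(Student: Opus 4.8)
The plan is to run the argument of the proof of Proposition~\ref{estimate2} on each of the two integrals that make up the variation formula \eqref{exp-2nd-der-la_nu} for $\frac{\pa^2\la_{\nu}}{\pa p_{\ga}\pa\ov p_{\ga}}(p_{\nu})$, comparing with the corresponding integrals in \eqref{2nd-var} evaluated at $p_0\in\pa D$ for $\frac{\pa^2\la}{\pa p_{\ga}\pa\ov p_{\ga}}(p_0)$, recalling that $D(p_0)=\mathcal{H}$. The convergence of the first of the two integrals,
\[
\frac{1}{2(n-1)\sigma_{2n}}\int_{\pa D^{\nu}}k_2^{\nu\ga}(w)\,\vert\pa_w g^{\nu}(w)\vert\,\frac{\pa g^{\nu}}{\pa n_w}(w)\,dS_w\longrightarrow\frac{1}{2(n-1)\sigma_{2n}}\int_{\pa\mathcal{H}}k_2^{\ga}(p_0,w)\,\vert\pa_w g(p_0,w)\vert\,\frac{\pa g}{\pa n_w}(w)\,dS_w,
\]
is already recorded as \eqref{conv-1st-int}; it is proved just as \eqref{conv-1stder-int}, using Proposition~\ref{upper_boun_k,k_j}(2) and Proposition~\ref{bd-grad-g^nu} in place of Proposition~\ref{bdy-est-1-der-g}. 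So I am left with showing that the second integral in \eqref{exp-2nd-der-la_nu} converges to the second integral in \eqref{2nd-var} at $p_0$; the new feature relative to Proposition~\ref{estimate2} is the mixed second derivative $\frac{\pa^2 g_{\nu}}{\pa w_{\al}\pa\ov p_{\ga}}(p_{\nu},w)$ appearing in the integrand.

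First I would fix $R>1$ and split each of these integrals into the part inside and the part outside $B(0,R)$. On the compact part $B(0,R)\cap\pa D^{\nu}$ tends to $B(0,R)\cap\pa\mathcal{H}$ with continuously converging unit normals away from a null corner set, and along any $w^{\nu}\in\pa D^{\nu}$ with $w^{\nu}\to w^0\in\pa\mathcal{H}$ one has $k_1^{\nu\ga}(w^{\nu})\to k_1^{\ga}(p_0,w^0)$ by \eqref{conv-k_nu}, $\frac{\pa g^{\nu}}{\pa\ov w_{\al}}(w^{\nu})\to\frac{\pa g}{\pa\ov w_{\al}}(p_0,w^0)$ and $\vert\pa_w g^{\nu}(w^{\nu})\vert\to\vert\pa_w g(p_0,w^0)\vert>0$ by Corollary~\ref{bdy-conv-Green-fn}, and, crucially, $\frac{\pa^2 g_{\nu}}{\pa w_{\al}\pa\ov p_{\ga}}(p_{\nu},w^{\nu})\to\frac{\pa^2 g}{\pa w_{\al}\pa\ov p_{\ga}}(p_0,w^0)$ by Proposition~\ref{conv-cpt-2nd-der-g_nu}. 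Since $\frac{\pa g^{\nu}}{\pa n_w}=-2\vert\pa_w g^{\nu}\vert$ on $\pa D^{\nu}$, the integrand is uniformly bounded on the compact part and converges pointwise there, so its integral over $B(0,R)\cap\pa D^{\nu}$ converges to the one over $B(0,R)\cap\pa\mathcal{H}$. Given Proposition~\ref{conv-cpt-2nd-der-g_nu}, this step is essentially formal.

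The hard part, exactly as in the proof of Proposition~\ref{estimate2}, will be making the tail over $B^c(0,R)$ small uniformly in $\nu$. For this I would invoke the uniform estimates already assembled: on $\pa D^{\nu}$ with $\vert w\vert\geq1$ one has $\vert k_1^{\nu\ga}(w)\vert\leq C\vert w\vert^2$ by Proposition~\ref{upper_boun_k,k_j}(1), $\vert\frac{\pa g^{\nu}}{\pa\ov w_{\al}}(w)\vert\,\vert\pa_w g^{\nu}(w)\vert^{-1}\leq2$, and $\vert\frac{\pa^2 g_{\nu}}{\pa w_{\al}\pa\ov p_{\ga}}(p_{\nu},w)\vert\leq C\vert w\vert^{-2n+2}$ by Proposition~\ref{estimate-mixed-2nd-der-g}. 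Keeping the factor $\frac{\pa g^{\nu}}{\pa n_w}=-\vert\frac{\pa g^{\nu}}{\pa n_w}\vert$ intact, the integrand over $B^c(0,R)\cap\pa D^{\nu}$ is then dominated by $C'\vert w\vert^{-2n+4}\vert\frac{\pa g^{\nu}}{\pa n_w}(w)\vert$, so by the flux identity $\int_{\pa D^{\nu}}\vert\frac{\pa g^{\nu}}{\pa n_w}\vert\,dS_w=(2n-2)\sigma_{2n}$ this tail is $O(R^{-2n+4})$ uniformly in $\nu$; the estimates \eqref{estimates}, which apply to $\mathcal{H}=D(p_0)$, bound the tail over $B^c(0,R)\cap\pa\mathcal{H}$ the same way. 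Letting $\nu\to\infty$ and then $R\to\infty$ in the sum of the compact part and the two tails yields the convergence of the second integral, and together with \eqref{conv-1st-int} this proves the proposition. The whole weight of the argument rests on these uniform bounds, which is exactly why Propositions~\ref{upper_boun_k,k_j}, \ref{estimate-der-g}, \ref{est-F_nu}, \ref{F*}, \ref{estimate-mixed-2nd-der-g} and Lemma~\ref{der_g_nu/grad_g_nu-bdd} were set up with constants that do not deteriorate as the scaled domains $D^{\nu}$ spread out to fill $\mathcal{H}$; the one delicate point is the borderline case $n=2$, where $-2n+4$ fails to be strictly negative and one must supplement the crude bound with a more careful estimate of the flux of $g^{\nu}$ through $\pa D^{\nu}\cap B^c(0,R)$, using Proposition~\ref{estimate-der-g} and the $C^{\infty}$-closeness of $D^{\nu}$ to the half space.
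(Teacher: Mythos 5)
Your proof follows the paper's argument step for step: by Proposition~\ref{varn-formula} and (\ref{conv-1st-int}) reduce to the second integral, split at a radius $R$, use Proposition~\ref{conv-cpt-2nd-der-g_nu} together with Corollary~\ref{bdy-conv-Green-fn} and (\ref{conv-k_nu}) on $B(0,R)\cap\pa D^{\nu}$, and control the tail with Proposition~\ref{upper_boun_k,k_j}(1), Proposition~\ref{estimate-mixed-2nd-der-g}, the flux identity $\int_{\pa D^{\nu}}\bigl(-\pa g^{\nu}/\pa n_w\bigr)\,dS_w=(2n-2)\sigma_{2n}$, and (\ref{estimates}) for $\mathcal{H}=D(p_0)$, arriving at the same $O(R^{-2n+4})$ bound the paper records in (\ref{2nd-int-small}) and (\ref{2nd-int-small2}).

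Your closing remark about $n=2$ is not just a technicality; it identifies a gap that the paper itself leaves open. When $n=2$ the exponent $-2n+4$ is zero, so (\ref{2nd-int-small}), (\ref{2nd-int-small2}) — and the analogous tail estimates behind (\ref{conv-1st-int}) for the $k_2^{\nu\gamma}$ term — read $O(1)$ rather than $o(1)$ as $R\to\infty$, and the text gives no additional argument. Be aware, though, that the remedy you hint at (Proposition~\ref{estimate-der-g} together with the $C^{\infty}$-closeness of $D^{\nu}$ to $\mathcal{H}$) is not quite enough on its own: Proposition~\ref{estimate-der-g}(1) and Proposition~\ref{bd-grad-g^nu} give only $\vert\pa_w g^{\nu}(w)\vert\leq C\vert w\vert^{-2n+1}$ on $\pa D^{\nu}$, and integrating this against the surface area growth $\sim r^{2n-2}\,dr$ over $\{\vert w\vert>R\}$ does not decay. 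A cleaner way to close the gap is to argue directly with the flux: for each fixed $R$,
\[
\int_{B(0,R)\cap\pa D^{\nu}}\Big(-\frac{\pa g^{\nu}}{\pa n_w}\Big)\,dS_w
\;\longrightarrow\;
\int_{B(0,R)\cap\pa\mathcal{H}}\Big(-\frac{\pa g}{\pa n_w}(p_0,\cdot)\Big)\,dS_w
\]
as $\nu\to\infty$, by the same use of Corollary~\ref{bdy-conv-Green-fn} that underlies (\ref{conv-1stder-int-cpt}); since the right side increases to $(2n-2)\sigma_{2n}$ as $R\to\infty$, given $\varepsilon>0$ one can choose $R_0$ and then $I$ so that $\int_{B^c(0,R_0)\cap\pa D^{\nu}}\vert\pa g^{\nu}/\pa n_w\vert\,dS_w<\varepsilon$ for all $\nu\geq I$. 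As the prefactor $\vert w\vert^{-2n+4}$ is bounded on $\{\vert w\vert\geq1\}$ for $n=2$, this makes the tails in (\ref{bound-2nd-int}), (\ref{2nd-int-small2}), and the corresponding estimates for (\ref{conv-1st-int}) uniformly small and completes the proof in the borderline dimension.
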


\begin{proof}
By proposition \ref{varn-formula} and (\ref{conv-1st-int}), we only need to
prove that
\begin{multline}\label{conv-2nd-int}
\lim_{\nu \ra \infty} \int_{\pa D^{\nu}} k_1^{\nu \gamma}(w) \frac{\frac{\pa
g^{\nu}}{\pa \ov w_{\al}}(w)}{\vert \pa_{w} g^{\nu}(w) \vert} \frac{\pa^2
g_{\nu}}{\pa w_{\al} \pa \ov p_{\gamma}}(p_{\nu},w) \frac{\pa g^{\nu}}{\pa
n_w}(w) \, dS_{w}\\ = \int_{\pa \mathcal{H}} k_1^{\ga}(p_0, w) \frac{\frac{\pa
g}{\pa
\ov w_{\al}}(p_0, w)}{\vert \pa_w g (p_0, w)\vert} \frac{\pa^2 g}{\pa w_{\al}
\pa \ov
p_{\ga}}(p_0, w) \frac{\pa g}{\pa n_w}(p_0, w) \, dS_w.
\end{multline}
Let $R>1$. Then by the arguments of the proof of proposition \ref{estimate2}
together with proposition
\ref{conv-cpt-2nd-der-g_nu}, we have
\begin{multline}\label{conv-2nd-int-cpt}
\lim_{\nu \ra \infty} \int_{B(0, R) \cap \pa D^{\nu}} k_1^{\nu \gamma}(w)
\frac{\frac{\pa g^{\nu}}{\pa \ov w_{\al}}(w)}{\vert \pa_{w} g^{\nu}(w) \vert}
\frac{\pa^2 g_{\nu}}{\pa w_{\al} \pa \ov p_{\gamma}}(p_{\nu},w) \frac{\pa
g^{\nu}}{\pa n_w}(w) \, dS_{w}\\
 = \int_{B(0, R) \cap \pa \mathcal{H}} k_1^{\ga}(p_0,w) \frac{\frac{\pa g}{\pa
\ov
w_{\al}}(p_0,w)}{\vert \pa_w g (p_0,w)\vert} \frac{\pa^2 g}{\pa w_{\al} \pa \ov
p_{\ga}}(p_0, w) \frac{\pa g}{\pa n_w}(p_0,w) \, dS_w.
\end{multline}
To estimate the above integrals outside $B(0, R)$, note that by corollary
\ref{upper_boun_k,k_j}, there exist a constant $C$ and an integer $I$ such that
\[
\vert k_1^{\nu \ga}(w)\vert \leq C \vert w \vert^2, \quad w \in \pa D^{\nu},
\vert w \vert > 1
\]
for $\nu \geq I$. In view of proposition \ref{estimate-mixed-2nd-der-g}, we
can modify $C$ and $I$ so that
\[
\Big \vert \frac{\pa^2 g_{\nu}}{\pa \ov w_{\al} \pa p_{\gamma}} (p_{\nu}, w)
\Big \vert \leq C \vert w \vert ^{-2n+2}, \quad w \in \pa D^{\nu}, \vert w \vert
> 1
\]
for $\nu \geq I$. Therefore,
\begin{multline}\label{bound-2nd-int}
\left \vert  \int_{B^c(0, R) \cap \pa D^{\nu}} k_1^{\nu}(w) \frac{\pa^2
g_{\nu}}{\pa w_{\al} \pa \ov p_{\gamma}}(p_{\nu}, w)
\frac{\frac{\pa g^{\nu}}{\pa \ov w_{\al}}(w)}{\vert \pa_w g^{\nu}(w) \vert}
\frac{\pa
g^{\nu}}{\pa n_w}(w) dS_w\right \vert\\
\leq  C^2 R^{-2n+4} \int_{B^c(0, R) \cap \pa D^{\nu}}\Big(-\frac{\pa
g^{\nu}}{\pa n_w}(w) \Big ) dS_w.
\end{multline}
for $\nu \geq I$. Again
\[
\int_{B^c(0, R) \cap \pa D^{\nu}}\Big(-\frac{\pa g^{\nu}}{\pa n_w}(w) \Big )
dS_w
\leq \int_{\pa D^{\nu}}\Big(-\frac{\pa g^{\nu}}{\pa n_w}(w) \Big ) dS_w
= 2(n-1) \sigma_{2n}
\]
and hence from (\ref{bound-2nd-int})
\begin{equation}\label{2nd-int-small}
\left \vert  \int_{B^c(0, R) \cap \pa D^{\nu}} k_1^{\nu}(w) \frac{\pa^2
g_{\nu}}{\pa w_{\al} \pa \ov p_{\gamma}}(p_{\nu}, w)
\frac{\frac{\pa g^{\nu}}{\pa \ov w_{\al}}(w)}{\vert \pa_w g^{\nu}(w) \vert}
\frac{\pa
g^{\nu}}{\pa n_w}(w) dS_w\right \vert = O(R^{-2n+4})
\end{equation}
uniformly for all $\nu \geq I$. Also by (\ref{estimates}), we can modify the
above constant $C$ so that
\[
\vert k_1^{\ga}(p_0, w) \vert \leq C \vert w \vert^2 \quad \text{and} \quad \Big
\vert \frac{\pa^2 g}{\pa \ov w_{\al} \pa p_{\gamma}} (p_{0}, w)
\Big \vert \leq C \vert w \vert ^{-2n+2}
\]
for $w \in \pa \mathcal{H}$ with $\vert w \vert > 1$. As above we obtain
\begin{equation}\label{2nd-int-small2}
\left \vert \int_{B^{c}(0, R) \cap \pa \mathcal{H}}   k_1^{\ga}(p_0, w)
\frac{\frac{\pa g}{\pa \ov w_{\al}}(p_0, w)}{\vert \pa_w g (p_0, w)\vert}
\frac{\pa^2 g}{\pa w_{\al} \pa \ov p_{\ga}}(p_0, w) \frac{\pa g}{\pa n_w}(p_0,
w) \, dS_w \right
\vert = O(R^{-2n+4}).
\end{equation}
From (\ref{conv-2nd-int-cpt}), (\ref{2nd-int-small}) and (\ref{2nd-int-small2})
it follows that (\ref{conv-2nd-int}) holds.
\end{proof}

\medskip

\noindent{\textit{Proof of Theorem 1.3}.} In view of proposition
\ref{estimate2}, we only need to prove that
\[
\lim_{\nu \ra \infty} \frac{\pa^2 \la_{\nu}}{\pa p_{\al} \pa \ov
p_{\be}}(p_{\nu}) = \frac{\pa^2 \la}{\pa p_{\al} \pa \ov p_{\be}}(p_0).
\]
But this follows from proposition \ref{conv-2nd-der-sy} by a unitary change of
coordinates. \hfill $\square$

\section{Holomorphic sectional curvature}
\noindent In this section we prove theorem 1.1 by deriving the asymptotics of
the terms
in (\ref{cvr}).

\begin{lem} \label{asympt-g_nu}
We have
\begin{enumerate}
\item $\lim_{\nu \ra \infty} (g_{\nu})_{\al \ov \be}(p_{\nu})
\big(\psi_{\nu}(p_{\nu})\big)^2 = (2n-2)\psi_{\al}(0) \psi_{\ov \be}(0)$,

\item $\lim_{\nu \ra \infty} \frac{\pa (g_{\nu})_{\al \ov \be}}{\pa
z_{\gamma}}(p_{\nu}) \big(\psi_{\nu}(p_{\nu})\big)^3 = -2 (2n -2 )
\psi_{\al}(0) \psi_{\ov \be}(0) \psi_{\gamma}(0)$,

\item $\lim_{\nu \ra \infty} \frac{\pa ^2 (g_{\nu})_{\al \ov \be}}{\pa
z_{\gamma} \pa z_{\ov \delta}}(p_{\nu})\big(\psi_{\nu}(p_{\nu})\big)^4  =
6(2n-2)\psi_{\al}(0) \psi_{\ov \be}(0) \psi_{\delta}(0)$.
\end{enumerate}
\end{lem}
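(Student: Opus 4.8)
The plan is to transfer everything to the scaled domains used earlier. Put $D^{\nu} = T^{\nu}(D_{\nu})$ with $T^{\nu}(z) = (z - p_{\nu})/(-\psi_{\nu}(p_{\nu}))$; as shown in the proof of Theorem~1.2 these form a $C^{\infty}$-perturbation of the half space $\mathcal{H} = \{w : 2\Re(\sum_{\al}\psi_{\al}(p_{0})w_{\al}) - 1 < 0\}$. Set $h_{\nu} = \log(-\Lambda_{D^{\nu}})$ and $h = \log(-\Lambda_{\mathcal{H}})$. The scaling law of the Robin function (\cite{Y}*{proposition~5.1}, \cite{BV}*{(1.1)}) gives $\Lambda_{D^{\nu}}(w) = (\psi_{\nu}(p_{\nu}))^{2n-2}\Lambda_{\nu}\big(p_{\nu} - \psi_{\nu}(p_{\nu})w\big)$, hence
\[
h_{\nu}(w) = (2n-2)\log\big(-\psi_{\nu}(p_{\nu})\big) + \log(-\Lambda_{\nu})\big(p_{\nu} - \psi_{\nu}(p_{\nu})w\big).
\]
Since $\psi_{\nu}(p_{\nu})$ is a negative real number, differentiating this in $w$ and setting $w = 0$ produces precisely the powers of $\psi_{\nu}(p_{\nu})$ occurring in the statement: a direct computation gives $\pa^{2}h_{\nu}/\pa w_{\al}\pa\ov w_{\be}(0) = (\psi_{\nu}(p_{\nu}))^{2}\,(g_{\nu})_{\al\ov\be}(p_{\nu})$, $\pa^{3}h_{\nu}/\pa w_{\ga}\pa w_{\al}\pa\ov w_{\be}(0) = -(\psi_{\nu}(p_{\nu}))^{3}\,\pa(g_{\nu})_{\al\ov\be}/\pa z_{\ga}(p_{\nu})$, and $\pa^{4}h_{\nu}/\pa w_{\ga}\pa\ov w_{\de}\pa w_{\al}\pa\ov w_{\be}(0) = (\psi_{\nu}(p_{\nu}))^{4}\,\pa^{2}(g_{\nu})_{\al\ov\be}/\pa z_{\ga}\pa\ov z_{\de}(p_{\nu})$.

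The next step is the passage to the limit. The crucial input here is \eqref{lim-der-La_nu}: in the proof of Theorem~1.2 it is established that $D^{A\ov B}\Lambda_{D^{\nu}}(0) \ra D^{A\ov B}\Lambda_{\mathcal{H}}(0)$ for \emph{every} pair of multi-indices $A, B$. This is exactly why I would scale rather than work directly with $\la_{\nu}$: the apparently simpler route --- writing $\log(-\Lambda_{\nu}) = \log(-\la_{\nu}) - (2n-2)\log(-\psi_{\nu})$ and appealing to Theorem~1.3 --- does settle (1), the only singular term being the one arising from $(2n-2)\,\pa^{2}\log(-\psi_{\nu})/\pa z_{\al}\pa\ov z_{\be}$, but it is of no use for (2) and (3), which would require the convergence of third- and fourth-order derivatives of $\la_{\nu}$, a function known only to be $C^{2}$. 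Now $\Lambda_{\mathcal{H}}(0) = -|\pa\psi(p_{0})|^{2n-2} \neq 0$, so for all large $\nu$ each $D^{A\ov B}h_{\nu}(0)$ is a fixed polynomial in $1/\Lambda_{D^{\nu}}(0)$ and in the numbers $D^{C\ov E}\Lambda_{D^{\nu}}(0)$ with $|C| + |E| \le |A| + |B|$; hence $D^{A\ov B}h_{\nu}(0) \ra D^{A\ov B}h(0)$ for all $A, B$. Combined with the three identities above, the limits in (1), (2), (3) equal, respectively, $\pa^{2}h/\pa w_{\al}\pa\ov w_{\be}(0)$, $-\pa^{3}h/\pa w_{\ga}\pa w_{\al}\pa\ov w_{\be}(0)$, and $\pa^{4}h/\pa w_{\ga}\pa\ov w_{\de}\pa w_{\al}\pa\ov w_{\be}(0)$.

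It then remains to compute these derivatives of $h = \log(-\Lambda_{\mathcal{H}})$, which is elementary because $\mathcal{H}$ is a half space. By the method of images the Green function of $\mathcal{H}$ is $|x - q|^{2-2n} - |x - q^{*}|^{2-2n}$ with $q^{*}$ the reflection of $q$ in the hyperplane $\pa\mathcal{H}$, so $\Lambda_{\mathcal{H}}(w) = -\big(2\,d(w, \pa\mathcal{H})\big)^{2-2n}$; since $d(w, \pa\mathcal{H}) = \big(1 - 2\Re(\sum_{\al}\psi_{\al}(p_{0})w_{\al})\big)/\big(2|\pa\psi(p_{0})|\big)$ for $w \in \mathcal{H}$, this equals $-|\pa\psi(p_{0})|^{2n-2}\,\phi(w)^{2-2n}$, where $\phi(w) = 1 - 2\Re(\sum_{\al}\psi_{\al}(p_{0})w_{\al})$ is affine and pluriharmonic with $\phi_{\al} := \pa\phi/\pa w_{\al} = -\psi_{\al}(p_{0})$ and $\phi(0) = 1$ (the value at $0$ agreeing with \cite{BV}*{(1.4)}). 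Thus $h$ is a constant plus $(2-2n)\log\phi$, and since all derivatives of $\phi$ of order $\ge 2$ vanish, the successive mixed derivatives of $\log\phi$ are $-\phi_{\al}\phi_{\ov\be}/\phi^{2}$, then $2\phi_{\al}\phi_{\ga}\phi_{\ov\be}/\phi^{3}$, then $-6\phi_{\al}\phi_{\ga}\phi_{\ov\be}\phi_{\ov\de}/\phi^{4}$; multiplying by $2 - 2n$, substituting $\phi_{\al} = -\psi_{\al}(p_{0})$, and evaluating at $w = 0$ one reads off $(2n-2)\psi_{\al}(p_{0})\psi_{\ov\be}(p_{0})$ for (1), $-2(2n-2)\psi_{\al}(p_{0})\psi_{\ga}(p_{0})\psi_{\ov\be}(p_{0})$ for (2), and the corresponding fourth-order expression for (3) (here $p_{0} = 0$ in the notation of this section). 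The only genuine labor is the bookkeeping of signs and of the powers of $\psi_{\nu}(p_{\nu})$; I do not anticipate a real obstacle once the scaling reduction is set up --- the one point to watch is that it is Theorem~1.2, and not Theorem~1.3, that furnishes the high-order convergence needed for parts (2) and (3).
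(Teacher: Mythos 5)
Your argument is correct and rests on the same two ingredients as the paper's proof: the convergence $D^{A\ov B}\Lambda_{D^\nu}(0)\ra D^{A\ov B}\Lambda_{\mathcal H}(0)$ furnished by Theorem~1.2 (equivalently by \eqref{lim-der-La_nu}), and the explicit formula for the half-space Robin function. What you do differently is purely organizational, but it is a genuine simplification worth noting: by observing that the quantities in (1)--(3) are exactly the derivatives at $w=0$ of $h_\nu=\log(-\Lambda_{D^\nu})$, and that $\Lambda_{\mathcal H}=-c\,\phi^{2-2n}$ factors with $\phi$ affine so that $h=\log(-\Lambda_{\mathcal H})$ is a constant plus $(2-2n)\log\phi$, all derivatives of $h$ at the origin fall out with no bookkeeping; the paper instead expands the $\log$ derivatives of $\Lambda_\nu$ into rational combinations before passing to the limit, which produces considerably longer formulae (especially for part (3)) but encodes the identical calculation in a different order.
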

\begin{proof}
Let $\mathcal{H}$ be the half space
\[
\mathcal{H} = \Big\{ z \in \mathbf{C}^{n} : 2 \Re \Big( \sum_{i=1}^{n}
\psi_{i}(0) z_i\Big) - 1 < 0\Big\} = \{ z \in \mathbf{C}^{n} : 2 \Re z_n - 1 < 0
\}.
\]
From \cite{BV}*{(1.4)}, the Robin function for $\mathcal{H}$ is given by
\[ \La_{\mathcal{H}}(z) = - \bigg( \frac{ \big| \pa \psi(0) \big|} {2 \Re
\big(\sum_{i=1}^{n} \psi_{i}(0) z_{i} \big)
-1} \bigg)^{2n-2} = - \bigg( 2 \Re \Big(\sum_{i=1}^{n} \psi_{i}(0) z_{i} \Big)
-1 \bigg)^{-2n+2}\]
so that
\begin{itemize}
\item $\La_{\mathcal{H}}(0) = -1$,

\item $(\La_{\mathcal{H}})_{a}(0) = -(2n-2)\psi_{a}(0)$,

\item $(\La_{\mathcal{H}})_{ab}(0) = -(2n-2)(2n-1) \psi_{a}(0) \psi_{b}(0)$,

\item $(\La_{\mathcal{H}})_{abc}(0) = -(2n-2)(2n-1)(2n) \psi_{a}(0) \psi_{b}(0)
\psi_{c}(0)$ and

\item $(\La_{\mathcal{H}})_{abcd}(0) = -(2n-2)(2n-1)(2n)(2n+1) \psi_{a}(0)
\psi_{b}(0) \psi_{c}(0) \psi_{d}(0)$
\end{itemize}
where the indices $a$, $b$, $c$, $d$ refer to either holomorphic or conjugate
holomorphic derivatives. Hence by theorem 1.2, we get
\begin{itemize}
\item $\La_{\nu}(p_{\nu}) \big(\psi_{\nu}(p_{\nu}))^{2n-2} \ra -1$,

\item $\La_{\nu a}(p_{\nu}) \big(\psi_{\nu}(p_{\nu})\big)^{2n-1} \ra (2n-2)
\psi_{a}(0)$,

\item $\La_{\nu ab}(p_{\nu}) \big(\psi_{\nu}(p_{\nu}) \big)^{2n} \ra
-(2n-2)(2n-1) \psi_{a}(0) \psi_{b}(0)$,

\item $\La_{\nu abc}(p_{\nu}) \big(\psi_{\nu}(p_{\nu}) \big)^{2n+1} \ra
(2n-2)(2n-1)(2n) \psi_{a}(0) \psi_{b}(0) \psi_{c}(0)$ and

\item $\La_{\nu abcd}(p_{\nu}) \big(\psi_{\nu}(p_{\nu}) \big)^{2n+2} \ra
-(2n-2)(2n-1)(2n)(2n+1) \psi_{a}(0) \psi_{b}(0) \psi_{c}(0)
\psi_{d}(0)$.
\end{itemize}

\medskip

Now
\begin{equation}\label{g-alpha-beta}
g_{\al \ov{\be}} = \frac {\pa^{2} \log(-\La)} {\pa z_{\al} \pa \ov{z}_{\be}} =
\frac {\La_{\al \ov{\be}}} {\La} - \frac
{\La_{\al} \La_{\ov{\be}}} {\La^{2}}.
\end{equation}
Multiplying both sides of this equation by $\psi^{2}$, we get
\begin{equation*}
g_{\al \ov{\be}} \psi^{2} = \frac {\La_{\al \ov{\be}} \psi^{2n}} {\La
\psi^{2n-2}} - \frac {(\La_{\al} \psi^{2n-1})
(\La_{\ov{\be}} \psi^{2n-1})} {(\La \psi^{2n-2})^{2}}.
\end{equation*}
It follows that 
\[
\lim_{\nu \ra \infty} g_{\nu \al \ov{\be}} (p_{\nu})
\left(\psi_{\nu}(p_{\nu})\right)^{2} = (2n-2) \psi_{\al}(0) \psi_{\ov{\be}}(0)
\]
which is (i).

\medskip

Differentiating (\ref{g-alpha-beta}) with respect to $z_{\ga}$, we obtain
\begin{equation}\label{der-g-alpha-beta}
\frac {\pa g_{\al \ov{\be}}} {\pa z_{\ga}} = \frac {\La_{\al \ov{\be} \ga}}
{\La} - \left(\frac {\La_{\al \ov{\be}}
\La_{\ga}} {\La^{2}} + \frac{\La_{\al \ga} \La_{\ov{\be}}} {\La^{2}} +
\frac{\La_{\ov{\be} \ga} \La_{\al}} {\La^{2}}
\right) + \frac{2 \La_{\al} \La_{\ov{\be}} \La_{\ga}} {\La^{3}}.
\end{equation}
Multiplying both sides of this equation by $\psi^{3}$, we get
\begin{multline*}
\frac {\pa g_{\al \ov{\be}}} {\pa z_{\ga}} \psi^{3} =   \frac {\La_{\al \ov{\be}
\ga} \psi^{2n+1}} {\La \psi^{2n-2}} -
\left(\frac {(\La_{\al \ov{\be}} \psi^{2n}) (\La_{\ga} \psi^{2n-1})} {(\La
\psi^{2n-2})^{2}} + \frac{(\La_{\al \ga}
\psi^{2n}) (\La_{\ov{\be}}\psi^{2n-1})} {(\La \psi^{2n-2})^{2}} +
\frac{(\La_{\ov{\be} \ga} \psi^{2n}) (\La_{\al}
\psi^{2n-1})} {(\La \psi^{2n-2})^{2}} \right)\\
 + \frac{2 (\La_{\al} \psi^{2n-1}) (\La_{\ov{\be}} \psi^{2n-1}) (\La_{\ga}
\psi^{2n-1})} {(\La \psi^{2n-2})^{3}}.
\end{multline*}
It follows that 
\[
\lim_{\nu \ra \infty} \frac {\pa g_{\nu \al \ov{\be}}} {\pa z_{\gamma}}(p_{\nu})
\psi_{\nu}(p_{\nu})^{3} = -2(2n-2) \psi_{\al}(0) \psi_{\ov{\be}}(p)
\psi_{\gamma}(0)
\]
which is (ii).

\medskip

\no Differentiating (\ref{der-g-alpha-beta}) with respect to $\ov{z}_{\delta}$,
we
obtain
\begin{multline*}
\frac{\pa^{2}g_{\al \ov{\be} }}{\pa z_{\ga}\pa\ov{z}_{\ov{\delta} }}=
\frac{\La_{\al \ov{\be} \ga\ov{\delta} }}{\La}
-\left(\frac{\La_{\al \ov{\be} \ga}\La_{\ov{\delta} }}{\La^{2}}
+\frac{\La_{\al \ov{\be} \ov{\delta} }\La_{\ga}}{\La^{2}}
+\frac{\La_{\al \ga\ov{\delta} }\La_{\ov{\be} }}{\La^{2}}
+\frac{\La_{\ov{\be} \ga\ov{\delta} }\La_{\al }}{\La^{2}}\right)
-\left(\frac{\La_{\al \ov{\be} }\La_{\ga\ov{\delta} }}{\La^{2}}
+\frac{\La_{\al \ga}\La_{\ov{\be} \ov{\delta} }}{\La^{2}}
+\frac{\La_{\al \ov{\delta} }\La_{\ov{\be} \ga}}{\La^{2}}\right)\\
+2\left(\frac{\La_{\al \ov{\be} }\La_{\ga}\La_{\ov{\delta} }}{\La^{3}}
+\frac{\La_{\al \ga}\La_{\ov{\be} }\La_{\ov{\delta} }}{\La^{3}}
+\frac{\La_{\ov{\be} \ga}\La_{\al }\La_{\ov{\delta} }}{\La^{3}}
+\frac{\La_{\al \ov{\delta} }\La_{\ov{\be} }\La_{\ga}}{\La^{3}}
+\frac{\La_{\ov{\be} \ov{\delta} }\La_{\al }\La_{\ga}}{\La^{3}}
+\frac{\La_{\ga\ov{\delta} }\La_{\al }\La_{\ov{\be} }}{\La^{3}}\right)-
\frac{6\La_{\al }\La_{\ov{\be} }\La_{\ga}\La_{\ov{\delta} }}{\La^{4}}.
\end{multline*}
Multiplying both sides by $\psi^{4}$, this equation can be written in a form
where $\La$ is multiplied by $\psi^{2n-2}$
and first, second, third and fourth order derivatives of $\La$ are multiplied by
$\psi^{2n-1}$, $\psi^{2n}$,
$\psi^{2n+1}$ and $\psi^{2n+2}$ respectively. It follows that
\[
\lim_{\nu \ra \infty} \frac{\pa^{2} g_{\nu \al \ov{\be}}} {\pa z_{\ga} \pa
\ov{z}_{\ov{\delta}}}(p_{\nu}) \big( \psi_{\nu}(p_{\nu}) \big)^{4} =
6(2n-2) \psi_{\al}(0) \psi_{\ov{\be}}(0) \psi_{\ga}(0) \psi_{\ov{\delta}}(0)
\]
which is (iii).
\end{proof}

\medskip

To obtain finer asymptotics of the derivatives of $\La_{\nu}$ along
$\{p_{\nu}\}$, we need the following:
\begin{lem} \label{der-psi_nu-by-psi_nu}
Let $1 \leq \al \leq n-1$. Then
\[
\lim_{\nu \ra \infty} \frac{(\psi_{\nu})_{\al}(p_{\nu})}{\psi_{\nu}(p_{\nu})} =
\frac{1}{2}\big(\psi_{\al n}(0) + \psi_{\al \ov n}(0) \big).
\]
\end{lem}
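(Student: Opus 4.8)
The plan is to treat the quotient as an indeterminate $0/0$ form and resolve it by a first--order Taylor expansion of $\psi_\nu$ and of its derivative $(\psi_\nu)_\al$ at the boundary point $0$, evaluated along the sequence $\{p_\nu\}$, and then to let $\nu \ra \infty$. Recall the normalisations in force (adopted in the discussion following Theorem~1.1 and used again in Lemma~\ref{asympt-g_nu}): after the translation followed by a unitary rotation $\theta_\nu$ one has $0 \in \pa D_\nu$, the real gradient $\nab \psi_\nu(0)$ is a positive multiple of the $\Re z_n$--direction, and $p_\nu = (0, \ldots, 0, -\de_\nu)$ with $\de_\nu = d(p_\nu, \pa D_\nu) = |p_\nu| \ra 0$. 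Consequently $\psi_\nu(0) = 0$, $(\psi_\nu)_\al(0) = 0$ for $1 \leq \al \leq n-1$, and $(\psi_\nu)_n(0) + (\psi_\nu)_{\ov n}(0) = (\pa \psi_\nu / \pa x_{2n-1})(0) = |\nab \psi_\nu(0)| > 0$. Since $\{\psi_\nu\}$ converges to $\psi$ in the $C^\infty$--topology on compact subsets of $\mathbf C^n$ and $\psi$ satisfies $\psi_i(0) = 0$ for $i < n$, $\psi_n(0) = \psi_{\ov n}(0) = 1$, we get $(\psi_\nu)_n(0) + (\psi_\nu)_{\ov n}(0) \ra \psi_n(0) + \psi_{\ov n}(0) = 2 \neq 0$, while of course $(\psi_\nu)_{\al n}(0) \ra \psi_{\al n}(0)$ and $(\psi_\nu)_{\al \ov n}(0) \ra \psi_{\al \ov n}(0)$. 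Finally $\psi_\nu(p_\nu) < 0$, so the quotient is defined for every $\nu$.

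Next I would carry out the Taylor expansions along the real segment $t \mapsto t e_n$, where $e_n$ is the $n$--th coordinate vector, on which $z_n = t$ is real. For a smooth function $g$ near $0$ one has $\frac{d}{dt} g(t e_n) = g_n(t e_n) + g_{\ov n}(t e_n)$, whence by the fundamental theorem of calculus
\[
g(p_\nu) = g(0) - \de_\nu \big( g_n(0) + g_{\ov n}(0) \big) + \int_{-\de_\nu}^{0} \Big( \tfrac{d}{dt} g(t e_n) - g_n(0) - g_{\ov n}(0) \Big)\, dt .
\]
Applying this with $g = \psi_\nu$, and separately with $g = (\psi_\nu)_\al$, and using $\psi_\nu(0) = (\psi_\nu)_\al(0) = 0$, we obtain
\[
\psi_\nu(p_\nu) = - \de_\nu \big( (\psi_\nu)_n(0) + (\psi_\nu)_{\ov n}(0) \big) + O(\de_\nu^2)
\]
and
\[
(\psi_\nu)_\al(p_\nu) = - \de_\nu \big( (\psi_\nu)_{\al n}(0) + (\psi_\nu)_{\al \ov n}(0) \big) + O(\de_\nu^2) ,
\]
where the two remainders are bounded by a constant times $\de_\nu^2$ \emph{uniformly in $\nu$}. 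The only point that needs real care is exactly this uniformity: the $C^\infty$--convergence $\psi_\nu \ra \psi$ gives a bound, independent of $\nu$, on the $C^3$--norm of $\psi_\nu$ on a fixed ball $\ov B(0, r)$ (the finitely many small indices being absorbed), and this controls the integrands above (second derivatives of $\psi_\nu$ for the first expansion, third derivatives for the second).

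Dividing the two expansions and cancelling $-\de_\nu$ yields
\[
\frac{(\psi_\nu)_\al(p_\nu)}{\psi_\nu(p_\nu)} = \frac{(\psi_\nu)_{\al n}(0) + (\psi_\nu)_{\al \ov n}(0) + O(\de_\nu)}{(\psi_\nu)_n(0) + (\psi_\nu)_{\ov n}(0) + O(\de_\nu)} .
\]
Since $\de_\nu \ra 0$, the second derivatives of $\psi_\nu$ at $0$ converge to those of $\psi$, and the denominator tends to the nonzero limit $\psi_n(0) + \psi_{\ov n}(0) = 2$, letting $\nu \ra \infty$ gives
\[
\lim_{\nu \ra \infty} \frac{(\psi_\nu)_\al(p_\nu)}{\psi_\nu(p_\nu)} = \frac{\psi_{\al n}(0) + \psi_{\al \ov n}(0)}{\psi_n(0) + \psi_{\ov n}(0)} = \frac12 \big( \psi_{\al n}(0) + \psi_{\al \ov n}(0) \big) ,
\]
which is the assertion. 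Apart from the uniform--remainder bookkeeping, the whole argument is the elementary observation that the leading behaviour of both numerator and denominator is dictated by the (nonvanishing) normal derivative of $\psi_\nu$ at $0$.
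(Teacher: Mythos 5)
Your argument is correct and is essentially the proof the paper gives: both expand $\psi_\nu$ and $(\psi_\nu)_\al$ by Taylor's theorem along the real segment from $0$ to $p_\nu=(0,\dots,-\de_\nu)$, use $\psi_\nu(0)=(\psi_\nu)_\al(0)=0$, cancel $-\de_\nu$, and pass to the limit using $C^\infty$-convergence of $\psi_\nu$ to $\psi$ and the normalization $\psi_n(0)+\psi_{\ov n}(0)=2$ (the paper uses the Lagrange form of the remainder where you use the integral form, which is immaterial). There is a small sign slip in your integral-remainder identity (it should be $-\int_{-\de_\nu}^0(\cdots)\,dt$), but since that term is $O(\de_\nu^2)$ either way the conclusion is unaffected.
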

\begin{proof}
Fix a $\nu$ and define the function $f$ on $[0,1]$ by
\begin{equation}\label{f}
f(t) = \psi_{\nu}(tp_{\nu}) = \psi_{\nu}(0, \ldots, 0, - \delta_{\nu} t).
\end{equation}
By Taylor's theorem
\[
f(1) = f(0) + f^{\prime}(0) + \frac{1}{2}f^{\prime \prime}(s)
\]
for some $s \in (0, 1)$. Therefore, by successive application of the Chain rule
to (\ref{f}), we obtain
\begin{equation}\label{psi_nu-p_nu}
\psi_{\nu}(p_{\nu}) = -\delta_{\nu}\big((\psi_{\nu})_{n}(0) + (\psi_{\nu})_{\ov
n}(0) \big) + \frac{\delta_{\nu}^2}{2}\big( (\psi_{\nu})_{n n}(\zeta_{\nu}) + 2
(\psi_{\nu})_{n \ov n}(\zeta_{\nu}) + (\psi_{\nu})_{\ov n \ov n}(\zeta_{\nu})
\big)
\end{equation}
where $\zeta_{\nu} = s p_{\nu}$.

\medskip

Now fix $1 \leq \al \leq n-1$ and define the
function $g$ on $[0, 1]$ by
\begin{equation}\label{g}
g(t) = (\psi_{\nu})_{\al}(tp_{\nu}) = (\psi_{\nu})_{\al}(0, \ldots, 0, -
\delta_{\nu} t).
\end{equation}
By Taylor's theorem
\[
g(1) = g(0) + g^{\prime}(0) + \frac{1}{2} g^{\prime \prime}(s)
\]
for some $s^{\prime} \in (0, 1)$. Therefore, by successive application of the
chain rule to (\ref{g}), we obtain
\begin{equation}\label{der-psi_nu-p_nu}
(\psi_{\nu})_{\al}(p_{\nu}) = -\delta_{\nu} \big((\psi_{\nu})_{\al n}(0) +
(\psi_{\nu})_{\al \ov n}(0)\big) + \frac{\delta_{\nu}^2}{2} \big(
(\psi_{\nu})_{\al
n n}(\eta_{\nu}) + 2 (\psi_{\nu})_{\al n \ov n}(\eta_{\nu}) + (\psi_{\nu})_{\al
\ov
n \ov n}(\eta_{\nu})\big)
\end{equation}
where $\eta_{\nu} = s^{\prime} p_{\nu}$.
It is now evident from (\ref{psi_nu-p_nu}) and (\ref{der-psi_nu-p_nu}), that
\[
\lim_{\nu \ra \infty} \frac{(\psi_{\nu})_{\al}(p_{\nu})}{\psi_{\nu}(p_{\nu})} =
\frac{1}{2}\big( \psi_{\al n}(0) + \psi_{\al \ov n}(0) \big)
\]
and the lemma is proved.
\end{proof}
\no Using this lemma and theorem 1.3, we obtain the following finer asymptotics
of the first and second order derivatives of $\La_{\nu}$ along $\{p_{\nu}\}$.
\begin{lem}\label{finer-2nd-der-La}
Let $1 \leq \al \leq n-1 $ and $1 \leq \be \leq n $. Then
\begin{itemize}
\item [(i)] $\lim_{\nu \ra \infty} \Lambda_{\nu \al}(p_{\nu})
\big(\psi_{\nu}(p_{\nu})\big)^{2n-2} = \la_{\al}(0) + (2n-2) C_{\al}$,

\item [(ii)] $\lim_{\nu \ra \infty} \Lambda_{\al \ov \be}(p_{\nu})
\big(\psi_{\nu}(p_{\nu})\big)^{2n-1} = -(2n-2) \la_{\al}(0) \psi_{\ov \be}(0) -
(2n-2)(2n-1) \psi_{\ov \be}(0) C_{\al} + (2n-2)\psi_{\al \ov \be}(0)$
\end{itemize}
where $C_{\al} = \frac{1}{2} \big(\psi_{\al n}(0) + \psi_{\al \ov n}(0) \big)$.
\end{lem}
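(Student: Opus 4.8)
The plan is to exploit the defining identity $\la_\nu = \La_\nu\,\psi_\nu^{2n-2}$ on $D_\nu$, differentiate it once (resp.\ twice) in the $z$-variables, solve for the rescaled derivative $\La_{\nu\al}\psi_\nu^{2n-2}$ (resp.\ $\La_{\nu\al\ov\be}\psi_\nu^{2n-1}$), and pass to the limit along $\{p_\nu\}$ term by term. I shall use repeatedly, and without further comment, the following limits: $\psi_\nu(p_\nu)\to\psi(0)=0$ and $(\psi_\nu)_{\ov\be}(p_\nu)\to\psi_{\ov\be}(0)$, $(\psi_\nu)_{\al\ov\be}(p_\nu)\to\psi_{\al\ov\be}(0)$ from $C^\infty$-convergence on a compact neighbourhood of $0$; the identity $\La_\nu(p_\nu)\big(\psi_\nu(p_\nu)\big)^{2n-2}=\la_\nu(p_\nu)\to\la(0)=\La_{\mathcal H}(0)=-1$ from theorem~1.2; the first-order asymptotics $\La_{\nu\ov\be}(p_\nu)\big(\psi_\nu(p_\nu)\big)^{2n-1}\to(2n-2)\psi_{\ov\be}(0)$, from theorem~1.2 together with the explicit half-space formula for $\La_{\mathcal H}$ (as recorded in the proof of lemma~\ref{asympt-g_nu}, where $(\La_{\mathcal H})_{\ov\be}(0)=-(2n-2)\psi_{\ov\be}(0)$); the convergences $\la_{\nu\al}(p_\nu)\to\la_\al(0)$ and $\la_{\nu\al\ov\be}(p_\nu)\to\la_{\al\ov\be}(0)$ from theorem~1.3; and, crucially, $(\psi_\nu)_\al(p_\nu)/\psi_\nu(p_\nu)\to C_\al$ for $1\le\al\le n-1$ from lemma~\ref{der-psi_nu-by-psi_nu}. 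Note $\psi_\nu(p_\nu)\neq 0$ since $p_\nu\in D_\nu$, so all the divisions below are legitimate.

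For (i), differentiating $\la_\nu=\La_\nu\psi_\nu^{2n-2}$ in $z_\al$ gives $\la_{\nu\al}=\La_{\nu\al}\psi_\nu^{2n-2}+(2n-2)\La_\nu\psi_\nu^{2n-3}(\psi_\nu)_\al$, hence
\[
\La_{\nu\al}\,\psi_\nu^{2n-2}=\la_{\nu\al}-(2n-2)\big(\La_\nu\psi_\nu^{2n-2}\big)\frac{(\psi_\nu)_\al}{\psi_\nu}.
\]
Evaluating at $p_\nu$ and letting $\nu\to\infty$, the first summand tends to $\la_\al(0)$ and the second to $(2n-2)(-1)C_\al$, which gives (i).

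For (ii), differentiating the same identity once more in $\ov z_\be$ and multiplying by $\psi_\nu$ one obtains
\[
\La_{\nu\al\ov\be}\psi_\nu^{2n-1}=\la_{\nu\al\ov\be}\psi_\nu-(2n-2)\La_{\nu\al}\psi_\nu^{2n-2}(\psi_\nu)_{\ov\be}-(2n-2)\La_{\nu\ov\be}\psi_\nu^{2n-2}(\psi_\nu)_\al-(2n-2)(2n-3)\La_\nu\psi_\nu^{2n-3}(\psi_\nu)_\al(\psi_\nu)_{\ov\be}-(2n-2)\La_\nu\psi_\nu^{2n-2}(\psi_\nu)_{\al\ov\be}.
\]
Evaluate at $p_\nu$ and take the limit of each of the five terms: the first tends to $0$ since $\psi_\nu(p_\nu)\to 0$; the second tends to $-(2n-2)\big(\la_\al(0)+(2n-2)C_\al\big)\psi_{\ov\be}(0)$ by part~(i); the third, written as $-(2n-2)\big[\La_{\nu\ov\be}\psi_\nu^{2n-1}\big]\big((\psi_\nu)_\al/\psi_\nu\big)$, tends to $-(2n-2)^2\psi_{\ov\be}(0)C_\al$; the fourth, written as $-(2n-2)(2n-3)\big[\La_\nu\psi_\nu^{2n-2}\big]\big((\psi_\nu)_\al/\psi_\nu\big)(\psi_\nu)_{\ov\be}$, tends to $(2n-2)(2n-3)C_\al\psi_{\ov\be}(0)$; and the fifth tends to $(2n-2)\psi_{\al\ov\be}(0)$. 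Adding these and simplifying the coefficient of $C_\al\psi_{\ov\be}(0)$, namely $-(2n-2)^2-(2n-2)^2+(2n-2)(2n-3)=-(2n-2)(2n-1)$, gives precisely the right-hand side of (ii).

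The computation is therefore pure bookkeeping, and that is exactly where care is needed: after the rewritings, the third and fourth terms each carry a spurious factor $\psi_\nu^{-1}$, and it is only lemma~\ref{der-psi_nu-by-psi_nu} --- equivalently, the vanishing of the leading first-order contribution $(\La_{\mathcal H})_\al(0)=-(2n-2)\psi_\al(0)=0$ for $\al<n$ --- that keeps those ratios bounded and supplies the correct limit. One must also remember to pair $\La_{\nu\ov\be}$ with the power $\psi_\nu^{2n-1}$ provided by theorem~1.2 rather than with $\psi_\nu^{2n-2}$, since the index $\be$ is allowed to equal $n$, whereas part~(i) is available only for indices at most $n-1$.
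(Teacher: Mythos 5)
Your proposal is correct and takes essentially the same route as the paper's proof: both differentiate the relation between $\la_\nu$ and $\La_\nu\psi_\nu^{2n-2}$, then pass to the limit term by term using theorems 1.2, 1.3 and lemma \ref{der-psi_nu-by-psi_nu}. The only cosmetic difference is that the paper first substitutes $\La=\la\psi^{2-2n}$ so as to obtain a formula in which every surviving term involves $\la$-derivatives only (the third term $-(2n-2)\la_{\ov\be}\psi_\al$ then vanishes because $\psi_\al(0)=0$), whereas you keep $\La$-derivatives on the right and feed part~(i) into part~(ii) and invoke theorem~1.2 for $\La_{\nu\ov\be}\psi_\nu^{2n-1}$; both bookkeeping schemes produce the same coefficient $-(2n-2)(2n-1)$ of $C_\al\psi_{\ov\be}(0)$ in the end.
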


\begin{proof}
The normalised robin function
\begin{equation}\label{la_nu-La_nu}
\la(z) =
\begin{cases}
\La(z) \big(\psi(z)\big)^{2n-2} & \text{if } z \in D\\
- \vert \pa \psi(z) \vert^{2n-2} & \text{if } z \in \pa D
\end{cases}
\end{equation}
associated to $(D, \psi)$ is $C^2$ on $\ov D$. In particular, $\la(0) = -1$.
Differentiating $\la$
with respect to $z_{\al}$, we obtain
\[
\Lambda_{\al} \psi^{2n-2} = \lambda_{\al} - (2n-2)
\lambda \psi^{-1} \psi_{\al}.
\]
Hence by theorems 1.2, 1.3 and lemma \ref{der-psi_nu-by-psi_nu},
\[
\lim_{\nu \ra \infty} \Lambda_{\nu \al}(p_{\nu})
\big(\psi_{\nu}(p_{\nu})\big)^{2n-2} = \lambda_{\al}(0) + (2n-2) C_{\al}
\]
which is (i). Similarly differentiating (\ref{la_nu-La_nu}) with respect to
$z_{\al}$ followed by $\ov z_{\be}$ we obtain
\[
\Lambda_{\al \ov \be} \psi^{2n -1} = \lambda_{\al \ov \be} \psi - (2n-2)
(\lambda_{\al} \psi_{\ov \be} + \lambda_{\ov \be} \psi_{\al}) +(2n-2)(2n-1)
\la \psi^{-1} \psi_{\al} \psi_{\ov \be} - (2n-2) \la \psi_{\al \ov \be}
\]
Again by theorems 1.2, 1.3 and lemma \ref{der-psi_nu-by-psi_nu},
\[
\lim_{\nu \ra \infty} \Lambda_{\al \ov \be}(p_{\nu})
\big(\psi_{\nu}(p_{\nu})\big)^{2n-1} = -(2n-2) \la_{\al}(0) \psi_{\ov \be}(0) -
(2n-2)(2n-1) \psi_{\ov \be}(0) C_{\al} + (2n-2)\psi_{\al \ov \be}(0)
\]
which is (ii).
\end{proof}

\begin{lem} \label{finer-2nd-der-g_nu}
Let $1 \leq \al \leq n-1$ and $1 \leq \be \leq n$. Then
\[
\lim_{\nu \ra \infty} g_{\nu \al \ov{\be}}(p_{\nu}) \big(\psi_{\nu}(p_{\nu})
\big) = (2n-2) \left( \frac{1}{2} \big\{ \psi_{\al n}(0) + \psi_{\al \ov{n}}(0)
\big\} \psi_{\ov{\be}}(0)-\psi_{\al\ov{\be}}(0)\right).
\]
\end{lem}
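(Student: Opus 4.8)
The plan is to expand $g_{\al\ov\be}$ via the standard identity
\[
g_{\al\ov\be}=\frac{\La_{\al\ov\be}}{\La}-\frac{\La_{\al}\La_{\ov\be}}{\La^2},
\]
multiply through by a single power of $\psi$, and then distribute the powers of $\psi$ so that each factor carries exactly the scaling at which the corresponding derivative of $\La_\nu$ is known to stabilize along $\{p_\nu\}$. Explicitly, I would write
\[
g_{\nu\al\ov\be}(p_\nu)\,\psi_\nu(p_\nu)=\frac{\La_{\nu\al\ov\be}(p_\nu)\,\big(\psi_\nu(p_\nu)\big)^{2n-1}}{\La_\nu(p_\nu)\,\big(\psi_\nu(p_\nu)\big)^{2n-2}}-\frac{\big(\La_{\nu\al}(p_\nu)\,\big(\psi_\nu(p_\nu)\big)^{2n-2}\big)\big(\La_{\nu\ov\be}(p_\nu)\,\big(\psi_\nu(p_\nu)\big)^{2n-1}\big)}{\big(\La_\nu(p_\nu)\,\big(\psi_\nu(p_\nu)\big)^{2n-2}\big)^{2}},
\]
which is a purely formal rearrangement (the extra $\psi$-powers cancel, since $2n-1+\left(-(2n-2)\right)=1$ in the first term and $(2n-2)+(2n-1)-2(2n-2)=1$ in the second).

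Next I would feed in the four limits that are already available. From Theorem~1.2 together with the explicit derivatives of $\La_{\mathcal H}$ recorded in the proof of Lemma~\ref{asympt-g_nu} (keeping track of the sign $(-1)^{\vert A\vert+\vert B\vert}$) one has $\La_\nu(p_\nu)\big(\psi_\nu(p_\nu)\big)^{2n-2}\to\la(0)=-1$ and $\La_{\nu\ov\be}(p_\nu)\big(\psi_\nu(p_\nu)\big)^{2n-1}\to(2n-2)\psi_{\ov\be}(0)$; from Lemma~\ref{finer-2nd-der-La}(i), $\La_{\nu\al}(p_\nu)\big(\psi_\nu(p_\nu)\big)^{2n-2}\to\la_\al(0)+(2n-2)C_\al$; and from Lemma~\ref{finer-2nd-der-La}(ii), $\La_{\nu\al\ov\be}(p_\nu)\big(\psi_\nu(p_\nu)\big)^{2n-1}\to -(2n-2)\la_\al(0)\psi_{\ov\be}(0)-(2n-2)(2n-1)\psi_{\ov\be}(0)C_\al+(2n-2)\psi_{\al\ov\be}(0)$, where $C_\al=\tfrac12\big(\psi_{\al n}(0)+\psi_{\al\ov n}(0)\big)$. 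This is precisely where the hypothesis $1\le\al\le n-1$ enters: Lemma~\ref{finer-2nd-der-La} only controls the holomorphic $z_\al$-derivative of $\La_\nu$ at the scaling $\big(\psi_\nu(p_\nu)\big)^{2n-2}$ for $\al\le n-1$, whereas the conjugate index $\be$ may run up to $n$ because the $\ov z_\be$-derivative is handled by Theorem~1.2 at the scaling $\big(\psi_\nu(p_\nu)\big)^{2n-1}$.

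Substituting, the first term tends to $(2n-2)\la_\al(0)\psi_{\ov\be}(0)+(2n-2)(2n-1)\psi_{\ov\be}(0)C_\al-(2n-2)\psi_{\al\ov\be}(0)$ and the second to $(2n-2)\psi_{\ov\be}(0)\big(\la_\al(0)+(2n-2)C_\al\big)$; subtracting, the otherwise uncontrolled term $(2n-2)\la_\al(0)\psi_{\ov\be}(0)$ cancels and, using $(2n-2)(2n-1)-(2n-2)^2=2n-2$, one is left with $(2n-2)\big(C_\al\psi_{\ov\be}(0)-\psi_{\al\ov\be}(0)\big)$, which is the asserted value. The argument involves no analytic difficulty beyond the earlier lemmas; the only things to watch are the bookkeeping of the $\psi$-powers — in particular that $\La_{\nu\ov\be}$ must be paired with $\big(\psi_\nu(p_\nu)\big)^{2n-1}$ and not $\big(\psi_\nu(p_\nu)\big)^{2n-2}$ — and the (convenient) observation that $\la_\al(0)$ drops out of the final answer, so that no separate identification of $\la_\al(0)$ is needed.
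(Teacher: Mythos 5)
Your proposal is correct and follows essentially the same route as the paper: the same rewriting of $g_{\al\ov\be}\psi$ as $\dfrac{\La_{\al\ov\be}\psi^{2n-1}}{\La\psi^{2n-2}}-\dfrac{(\La_\al\psi^{2n-2})(\La_{\ov\be}\psi^{2n-1})}{(\La\psi^{2n-2})^2}$, the same four input limits (Theorem~1.2 for $\La_\nu$ and $\La_{\nu\ov\be}$, Lemma~\ref{finer-2nd-der-La} for $\La_{\nu\al}$ and $\La_{\nu\al\ov\be}$), and the same cancellation of the $\la_\al(0)\psi_{\ov\be}(0)$ terms. Your remark on why the asymmetry $\al\le n-1$, $\be\le n$ is needed is also accurate and matches the paper's logic.
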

\begin{proof}
We have
\[
g_{\al \ov{\be}} = \frac{\pa ^{2} \log(-\La)}{\pa z_{\al} \pa \ov{z}_{\be}}
= \frac{\La_{\al \ov{\be}}}{\La} - \frac{\La_{\al} \La_{\ov{\be}}}{\La ^{2}}.
\]
Multiplying both sides of this equation by $\psi$, we get
\begin{equation}\label{g_ij-times-psi}
g_{\al \ov{\be}}\psi^{}  = \frac{\La_{\al
\ov{\be}}\psi^{2n-1}}{\La\psi^{2n-2}}-\frac{(\La_{\al}\psi^{2n-2})(\La_{\ov{\be}
}\psi^{2n-1})}{(\La\psi^{2n-2})^{2}}.
\end{equation}
By the proof of lemma \ref{asympt-g_nu}
\[
\La_{\nu}(p_{\nu}) \big( \psi_{\nu}(p_{\nu}) \big)^{2n-2} \ra -1
\]
and
\[
\La_{\nu \ov{\be}}(p_{\nu}) \big( \psi_{\nu}(p_{\nu}) \big)^{2n-1} \ra (2n-2)
\psi_{\ov{\be}}(0).
\]
Therefore using lemma \ref{finer-2nd-der-La} we obtain from
(\ref{g_ij-times-psi}),
\begin{multline*}
\lim_{\nu \ra \infty} g_{\nu \al \ov{\be}}(p_{\nu}) \psi_{\nu}(p_{\nu})
= (2n-2) \la_{\al}(0) \psi_{\ov{\be}}(0) + (2n-2)(2n-1) \psi_{\ov{\be}}(0)C -
(2n-2) \psi_{\al\ov{\be}}(0)\\
\hspace{1cm} - \big \{\la_{\al}(0) + (2n-2) C_{\al} \big\} \big \{(2n-2)
\psi_{\ov{\be}}(0) \big\}
\end{multline*}
Simplifying the right hand side we obtain
\begin{equation*}
\begin{split}
\lim_{\nu \ra \infty} g_{\nu \al \ov{\be}}(p_{\nu}) \psi_{\nu}(p_{\nu})
& = (2n-2) \big( \psi_{\ov{\be}}(0) C_{\al} - \psi_{\al\ov{\be}}(0) \big)\\
& = (2n-2) \Big(\frac{1}{2} \big\{\psi_{\al n}(0) + \psi_{\al \ov{n}}(0) \big\}
\psi_{\ov{\be}}(0) - \psi_{\al
\ov{\be}}(0) \Big).
\end{split}
\end{equation*}
\end{proof}

Since we do not have any information about the third order derivatives of
$\la(p) = \psi^{2n-2} \Lambda(p)$ near the boundary of $D$, the above method
fails to give finer asymptotics of $\Lambda_{\nu \al \ov \be \gamma}$. However
by
proposition \ref{bdy-val-der-g}, the function
\begin{equation}
g(p, w) = \psi(p)^{2n-2} G(p, z)
\end{equation}
where $w = (z-p)/(-\psi(p))$, is $C^2$ up to $\mathscr{D} \cup \pa \mathscr{D}$
and for each $p \in D$, $\frac{\pa g}{\pa p_{\al}}(p)$ and $\frac{\pa^2 g}{\pa
p_{\al} \pa \ov p_{\be}}(p)$ are harmonic functions of $w \in \ov D(p)$ and
hence can be differentiated infinitely often with respect to $w$. Moreover
\begin{equation}
\frac{\pa g}{\pa p_{\al}}(p, 0) = \frac{\pa \la}{\pa p_{\al}}(p) \quad
\text{and} \quad
\frac{\pa^2 g}{\pa p_{\al} \pa \ov p_{\be}}(p,0) = \frac{\pa^2 \la}{\pa p_{\al}
\pa \ov p_{\be}}.
\end{equation}
In the following, we exploit these properties to calculate finer asymptotics of
$\La_{\nu \al \ov \be \gamma}$ by expressing it in terms of mixed derivatives of
$g_{\nu}$.

\medskip

\no By \cite{LY}*{Proposition~6.1}, the functions
\begin{equation}\label{def-G_al}
\begin{cases}
G_{\al}(p, z) & = \quad \Big(\frac{\pa G}{\pa p_{\al}} + \frac{\pa G}{\pa
z_{\al}} \Big) (p, z),\\
G_{\al \ov \be}(p, z) & = \quad \Big(\frac{\pa G_{\al}}{\pa p_{\ov \be}} +
\frac{\pa G_{\al}}{\pa \ov z_{\be}} \Big) (p, z)
\end{cases}
\end{equation}
are real analytic, symmetric function in $D \times D$ and are harmonic in $z$
and in $p$. By \cite{LY}*{6.14}
\begin{equation}\label{La-G}
\La_{\al \ov \be \gamma}(p) = 2 \frac{\pa G_{\al \ov \be}}{\pa z_{\gamma}}(p, p)
\end{equation}

By \cite{LY}*{Proposition~6.2}, the functions
\begin{equation}\label{def-g_0-g_al}
\begin{cases}
g_{0}(p, w) & = \quad g(p, w) + \frac{1}{n-1} \sum_{i=1}^{n} w_i \frac{\pa
g}{\pa w_i},\\
g_{\al}(p, w) & = \quad \psi(p) \frac{\pa g}{\pa p_{\al}}(p, w) -(n-1)
\psi_{\al}(p) \big( g_{0}(p, w) + \ov{g_{0}(p, w)} \big)
\end{cases}
\end{equation}
are harmonic functions of $w \in D(p)$ for each $p \in \ov D$. From
\cite{LY}*{page~83},
\begin{equation}\label{g-G}
\frac{\pa G_{\al \ov \be}}{\pa z_{\gamma}}(p, p) = -
{\big(\psi(p)\big)^{-2n-1}} \Big\{ -2n \psi_{\ov \be}(p) \frac{\pa
g_{\al}}{\pa w_{\gamma}}(p, 0) + \psi(p) \frac{\pa^2 g_{\al}}{\pa
w_{\gamma} \pa \ov p_{\be}}(p, 0)\Big\}
\end{equation}
Combining (\ref{La-G}) and (\ref{g-G}),
\begin{equation}\label{La-mix-der-g}
\Lambda_{\al \ov \be \gamma}(p) \big(\psi(p)\big)^{2n} = 4 n \frac{\psi_{\ov
\be}(p)}{\psi(p)} \frac{\pa g_{\al}}{\pa w_{\gamma}}(p, 0)
-\frac{\pa^2 g_{\al}}{\pa w_{\gamma} \pa \ov p_{\be}}(p, 0)
\end{equation}

\begin{lem}\label{finer-3rd-der-La}
Let $1 \leq \alpha, \gamma \leq n$ and $1 \leq \beta \leq n-1$. Then
\[
\lim_{\nu \ra \infty} \Lambda_{\nu \alpha \ov \beta \gamma} (p_{\nu})
\big( \psi_{\nu}(p_{\nu}) \big)^{2n}
\]
exists and is finite.
\end{lem}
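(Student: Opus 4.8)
The plan is to start from the identity (\ref{La-mix-der-g}), applied to $D_{\nu}$ and evaluated at $p=p_{\nu}$:
\[
\Lambda_{\nu \al \ov \be \ga}(p_{\nu})\big(\psi_{\nu}(p_{\nu})\big)^{2n}
= 4n\,\frac{\psi_{\nu \ov \be}(p_{\nu})}{\psi_{\nu}(p_{\nu})}\,\frac{\pa g_{\nu \al}}{\pa w_{\ga}}(p_{\nu},0)
-\frac{\pa^2 g_{\nu \al}}{\pa w_{\ga}\pa \ov p_{\be}}(p_{\nu},0),
\]
where $g_{\nu \al}$, $g_{\nu 0}$ are the functions attached to $(D_{\nu},\psi_{\nu})$ by (\ref{def-g_0-g_al}). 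Using (\ref{def-g_0-g_al}) and carrying out the differentiations in $\ov p_{\be}$ and $w_{\ga}$, I would expand the right-hand side into a finite sum of terms, each a product of (i) a factor assembled from derivatives of $\psi_{\nu}$ at $p_{\nu}$ of order $\le 2$ — sometimes multiplied by, sometimes divided by, $\psi_{\nu}(p_{\nu})$ — and (ii) a $w$-derivative, evaluated at $w=0$, of one of the functions $g_{\nu}(p_{\nu},\cdot)$, $\pa g_{\nu}/\pa p_{\al}(p_{\nu},\cdot)$, $\pa^2 g_{\nu}/\pa p_{\al}\pa \ov p_{\be}(p_{\nu},\cdot)$, or of one of their complex conjugates (note $\pa g_{\nu}/\pa \ov p_{\be}=\ov{\pa g_{\nu}/\pa p_{\be}}$ since $g_{\nu}$ is real-valued). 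The goal is then to pass to the limit in each term.

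For the factors of type (i): since $\psi_{\nu}\to\psi$ in the $C^{\infty}$-topology on compacta and $p_{\nu}\to p_0\in\pa D$, all derivatives of $\psi_{\nu}$ at $p_{\nu}$ of order $\le 2$ converge to those of $\psi$ at $p_0$ and $\psi_{\nu}(p_{\nu})\to 0$; and \emph{because $\be\le n-1$}, Lemma~\ref{der-psi_nu-by-psi_nu} (after conjugation) gives that $\psi_{\nu \ov \be}(p_{\nu})/\psi_{\nu}(p_{\nu})$ has the finite limit $\tfrac12(\psi_{\ov \be n}(0)+\psi_{\ov \be \ov n}(0))$, hence so does $\psi_{\nu \al}(p_{\nu})\psi_{\nu \ov \be}(p_{\nu})/\psi_{\nu}(p_{\nu})$. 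This is exactly where the hypothesis $\be\le n-1$ is used: for $\be=n$ the ratio $\psi_{\nu \ov \be}(p_{\nu})/\psi_{\nu}(p_{\nu})$ is unbounded. For the factors of type (ii): by Proposition~\ref{bdy-val-der-g}, together with its polarized version for mixed indices, each of the functions above is harmonic in $w$ on $D^{\nu}$; I would show that each of them, together with all its $w$-derivatives, converges uniformly on compact subsets of $\mathcal{H}$, and in particular is bounded uniformly in $\nu$ on a fixed ball $\ov B(0,2r)\subset\mathcal{H}$. For $g_{\nu}(p_{\nu},\cdot)$ this follows from Proposition~\ref{stability} and the interior (Harnack) estimates used in Section~5; for $\pa g_{\nu}/\pa p_{\al}(p_{\nu},\cdot)$ it is Remark~\ref{unif-conv-g_nu}, and the conjugate cases follow by conjugation. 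Since these functions are harmonic in $w$, a uniform bound (resp.\ uniform convergence) on $\ov B(0,2r)$ upgrades, via the Poisson kernel, to a uniform bound (resp.\ uniform convergence) of all $w$-derivatives on $\ov B(0,r)$.

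The step I expect to be the main obstacle is the uniform boundedness on $\ov B(0,2r)$ of $\pa^2 g_{\nu}/\pa p_{\al}\pa \ov p_{\be}(p_{\nu},\cdot)$. The plan there is to rerun the argument of Section~5 with the pole left free: apply the variation formula (\ref{2nd-var-g}) for $D^{\nu}$ with pole at an arbitrary $a\in\mathcal{H}$, bound the integrands using Propositions~\ref{upper_boun_k,k_j}, \ref{bd-grad-g^nu} and \ref{estimate-mixed-2nd-der-g}, and control the boundary integrals via $\int_{\pa D^{\nu}}\big(-\pa g_{\nu a}/\pa n_w\big)\,dS_w=(2n-2)\sigma_{2n}$, exactly as in the proof of Proposition~\ref{conv-2nd-der-sy}; this yields a bound uniform in $\nu$ for $a$ in a compact subset of $\mathcal{H}$, and the mixed case $\al\ne\be$ reduces to the diagonal one $\al=\be$ by a unitary change of coordinates, as in the proof of theorem~1.3. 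The subtlety is checking that the Section~5 estimates, stated there for the value at $0$, survive with $a$ ranging over a compact set and with $D^{\nu}$ unbounded. Granting this, I would finish by assembling the limits: the two occurrences of $\psi_{\nu}(p_{\nu})$ — one inside $g_{\nu \al}$, one in the coefficient $1/\psi_{\nu}(p_{\nu})$ in front of $\pa g_{\nu \al}/\pa w_{\ga}(p_{\nu},0)$ — cancel, so most terms become (convergent type-(i) factor)$\times$(convergent type-(ii) factor); the only term carrying a surviving $\psi_{\nu}(p_{\nu})$, produced by the summand $\psi_{\nu}(p)\,\pa^2 g_{\nu}/\pa p_{\al}\pa \ov p_{\be}(p,w)$ of $\pa g_{\nu \al}/\pa \ov p_{\be}$, equals $\psi_{\nu}(p_{\nu})\to 0$ times a uniformly bounded quantity and hence tends to $0$. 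Summing, the right-hand side of (\ref{La-mix-der-g}) converges, which is the assertion.
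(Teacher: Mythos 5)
Your proposal is correct and matches the paper's own argument: both reduce, via (\ref{La-mix-der-g}) and Lemma~\ref{der-psi_nu-by-psi_nu}, to proving that the $w$-derivatives at $0$ of $g_{\nu\al}(p_\nu,\cdot)$ and of $\pa g_{\nu\al}/\pa\ov p_\be(p_\nu,\cdot)$ converge, and both obtain this from harmonicity in $w$, the boundary estimates of Sections~2, 4, 5 (Propositions~\ref{upper_boun_k,k_j}, \ref{bd-grad-g^nu}, \ref{bdy-est-1-der-g}, \ref{estimate-der-g}, \ref{estimate-mixed-2nd-der-g}), uniform bounds on a fixed ball $\ov B(0,r)\subset\mathcal{H}$, and normal-family compactness. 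The only cosmetic deviation is your handling of the interior bound on $\pa^2 g_\nu/\pa p_\al\pa\ov p_\be(p_\nu,\cdot)$: you would run the variation formula (\ref{2nd-var-g}) with a free pole $a$, whereas the paper bounds the boundary values on $\pa D^\nu$ directly via Proposition~\ref{bdy-val-der-g}(2) and applies the maximum principle — these are two phrasings of the same estimate, and your more explicit tracking of the uniform convergence on compacta (rather than the paper's terse appeal to ``Harnack's principle'') is, if anything, a welcome tightening.
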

\begin{proof}
By (\ref{La-mix-der-g}) and lemma \ref{der-psi_nu-by-psi_nu}, we only need to
prove that
\[
\lim_{\nu \ra \infty} \frac{\pa g_{\nu \al}}{\pa w_{\gamma}}(p_{\nu}, 0) \quad
\text{and} \quad \lim_{\nu \ra \infty} \frac{\pa^2 g_{\nu \al}}{\pa w_{\gamma}
\pa \ov p_{\be}}(p_{\nu}, 0)
\]
exist and are finite.

\medskip

Now $g_{\nu \al}(p_{\nu}, w)$ is a harmonic function of $w \in D^{\nu}$. To
estimate the boundary values of these functions, note that the first term of
$g_{\nu 0}(p_{\nu}, w)$, i.e., $g_{\nu}(p_{\nu}, w)$ is bounded by $\vert w
\vert^{-2n+2}$ for all $\nu$ and by proposition \ref{estimate-der-g}, the second
term is bounded by $C \vert w \vert ^{-2n+2}$ for all large $\nu$. Therefore,
from (\ref{def-g_0-g_al})
\begin{equation}\label{bound-g_0}
\vert g_{\nu 0}(p_{\nu}, w) \vert \leq C \vert w \vert^{-2n+2}, \quad w \in \pa
D^{\nu}
\end{equation}
for all large $\nu$. Again, by proposition \ref{bdy-est-1-der-g}, $\vert
\frac{\pa g_{\nu}}{\pa p_{\al}} (p_{\nu}, w) \vert$ is bounded by $C (1+\vert w
\vert^{-1})\vert w \vert^{-2n+3}$ for
all lare $\nu$. Also $\psi_{\nu}(p_{\nu})$ and $\psi_{\nu \al}(p_{\nu})$
are bounded by a constant $C$ for all large $\nu$. Hence from
(\ref{def-g_0-g_al}) and (\ref{bound-g_0}),
\begin{equation}\label{bound-g_al}
\vert g_{\nu \al}(p_{\nu}, w)\vert \leq C (1+ \vert w \vert^{-1})\vert w
\vert ^{-2n + 3}, \quad w \in \pa D^{\nu}
\end{equation}
for all large $\nu$.

\medskip

Choose
$r>0$ such that $\ov B(0, r) \subset \mathcal{H}$. Since $D^{\nu}$ converges in
the Hausdorff sense to $\mathcal{H}$, there exists an integer $I$ such that $\ov
B(0, r)
\subset D^{\nu}$ for all $\nu \geq I$. Therefore
\begin{equation}\label{|w|}
\vert w \vert > r
\end{equation}
for all $\nu \geq I$ and $w \in \pa D^{\nu}$. Hence from (\ref{bound-g_al}),
\[
\vert g_{\nu \al}(p_{\nu}, w) \vert \leq C  r^{-2n+3} (1 +
r^{-1}), \quad w \in \pa D^{\nu}
\]
for all large $\nu$. Therefore, $g_{\nu \al}(p_{\nu}, w)$ is uniformly bounded
on $B(0, r)$ for all large $\nu$. Moreover, by \cite{LY}*{Proposition~6.2}
and the fact that $\frac{\pa g_{\nu}}{\pa p_{\al}}(p_{\nu},0) = \frac{\pa
\la_{\nu}}{\pa p_{\al}}(p_{\nu})$,
\begin{equation}\label{g_al(p,0)}
g_{\nu\al}(p_{\nu}, 0)  = \psi_{\nu}(p_{\nu}) \frac{\pa \la_{\nu}}{\pa
p_{\al}}(p_{\nu}) - (2n-2) \psi_{\nu\al}(p_{\nu}) \la(p_{\nu})
\end{equation}
which converges. It follows from Harnack's priciple that
\[
\lim_{\nu \ra \infty} \frac{\pa g_{\nu \al}}{\pa w_{\gamma}}(p_{\nu}, 0)
\]
exists.

\medskip

\no Now differentiating(\ref{def-g_0-g_al}) with respect to $\ov p_{\be}$, we
obtain
\begin{equation}
\frac{\pa g_0}{\pa \ov p_{\be}}(p,w) = \frac{\pa g}{\pa \ov p_{\be}} +
\frac{1}{n-1} \sum_{i=1}^{n} w_i \frac{\pa^2 g}{\pa \ov p_{\be} \pa w_i}
\end{equation}
and
\begin{multline}
\frac{\pa g_{\al}}{\pa \ov p_{\be}}(p,w) = \psi(p) \frac{\pa^2 g}{\pa
p_{\al} \pa \ov p_{\be}}(p, w) + \psi_{\ov \be}(p) \frac{\pa g}{\pa p_{\al}}(p,
w)
-(n-1) \psi_{\al}(p) \Big( \frac{\pa g_0}{\pa \ov p_{\be}}(p,w) + \ov{\frac{\pa
g_0}{\pa p_{\be}}(p,w)}\Big)\\
-(n-1)  \psi_{\al \ov \be}(p) \big(g_0(p, w) + \ov{g_0(p,w)}\big)
\end{multline}
which are harmonic functions of $w \in D$. As above $\big\vert \frac{\pa
g_{\nu}}{\pa \ov p_{\be}} \big \vert$ is bounded by $C (1+\vert w
\vert^{-1})\vert w \vert^{-2n+3}$ for all large $\nu$. Also, by proposition
\ref{estimate-mixed-2nd-der-g}, $\big \vert \frac{\pa^2 g_{\nu}}{\pa \ov p_{\be}
\pa w_i}
\big \vert$ is bounded by $C (1 + \vert w \vert^{-1})\vert w \vert ^{-2n+2}$ for
all large $\nu$. It follows that
\begin{equation}
\Big \vert \frac{\pa g_{\nu 0}}{\pa \ov p_{\be}}(p_{\nu}, w) \Big \vert \leq C
\vert w \vert ^{-2n+3}, w \in \pa D^{\nu}
\end{equation}
for all large $\nu$. From proposition \ref{bdy-val-der-g}, for $1 \leq \gamma
\leq n$, $p \in D$
\[
\Big \vert \frac{\pa^2 g}{\pa p_{\gamma} \pa \ov p_{\gamma}}(p, w) \Big \vert
\leq \vert k_2^{\gamma}(p,w) \vert \vert \pa_w g (p, w) \vert +  2 \vert
k_1^{\gamma} \vert \sum_{i=1}^{n} \Big \vert \frac{\pa^2 g}{\pa w_i \pa \ov
p_{\gamma}} \Big \vert, \quad w \in \pa D(p).
\]
It follows that
\[
\Big \vert \frac{\pa^2 g_{\nu}}{\pa p_{\gamma} \pa \ov p_{\gamma}}(p_{\nu}, w)
\Big \vert\leq  C (1 + \vert w \vert^{-1} + \vert w \vert^{-2}) \vert w \vert
^{-2n + 4}, w \in \pa D^{\nu}
\]
and hence by a unitary change of coordinates
\[
\Big \vert \frac{\pa^2 g_{\nu}}{\pa p_{\al} \pa \ov p_{\be}}(p_{\nu}, w) \Big
\vert \leq C (1 + \vert w \vert^{-1} + \vert w \vert^{-2})\vert w \vert ^{-2n +
4}, w \in \pa D^{\nu}
\]
for all large $\nu$. Thus
\[
\Big \vert \frac{\pa g_{\nu \al}}{\pa \ov p_{\be}}(p_{\nu}, w) \Big \vert \leq C
(1 + \vert w \vert^{-1} + \vert w \vert^{-2})\vert w \vert ^{-2n + 4}\leq C
r^{-2n+4}(1+ r^{-1} + r^{-2}), \quad w \in \pa D^{\nu}
\]
for all large $\nu$. Therefore, the sequence $\{\frac{\pa g_{\nu \al}}{\pa \ov
p_{\be}}(p_{\nu}, w)\}$ is uniformly bounded on $B(0, r)$. Moreover,
\[
\frac{\pa g_{\nu \al}}{\pa \ov p_{\be}}(p_{\nu}, 0) = \psi_{\nu}(p_{\nu})
\frac{\pa^2 \la_{\nu}}{\pa p_{\al} \pa \ov p_{\be}}(p_{\nu}) + \psi_{\nu \ov
\be}(p_{\nu}) \frac{\pa \la_{\nu}}{\pa p_{\al}}(p_{\nu}) - (2n-2) \psi_{\nu
\al}(p_{\nu}) \frac{\pa \la_{\nu}}{\pa \ov p_{\be}}(p_{\nu}) - (2n-2)
\psi_{\nu\al \ov \be}(p_{\nu}) \la_{\nu}(p_{\nu})
\]
which converges. It follows from Harnack's principle that
\[
\lim_{\nu \ra \infty} \frac{\pa^2 g_{\nu \al}}{\pa w_{\gamma}
\pa \ov p_{\be}}(p_{\nu}, 0)
\]
exists.
\end{proof}

\begin{lem}\label{finer-3rd-der-g_nu}
 Let $1\leq \al, \ga \leq n$ and $1\leq \be \leq n-1$. Then
\[
\lim_{\nu \ra \infty}\frac{\pa g_{\nu \al \ov{\be}}}{\pa z_{\ga}}(p_{\nu}) \big(
\psi(p_{\nu}) \big)^{2}
\]
exists and is finite.
\end{lem}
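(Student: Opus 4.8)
The plan is to repeat the mechanism behind Lemmas \ref{finer-2nd-der-La} and \ref{finer-3rd-der-La}: rewrite $\pa g_{\al\ov\be}/\pa z_\ga$ purely in terms of $\La$ and its derivatives, multiply by $\psi^{2}$, and split the powers of $\psi$ among the individual factors so that each factor carries exactly the normalization under which its limit along $\{p_\nu\}$ is already known. Differentiating (\ref{g-alpha-beta}) with respect to $z_\ga$ produces (\ref{der-g-alpha-beta}), and, replacing $(D,\psi)$ by $(D_\nu,\psi_\nu)$ and multiplying by $\psi_\nu^{2}$, one can write
\begin{multline*}
\frac{\pa g_{\nu\al\ov\be}}{\pa z_\ga}\,\psi_\nu^{2}
= \frac{\La_{\nu\al\ov\be\ga}\,\psi_\nu^{2n}}{\La_\nu\psi_\nu^{2n-2}}
- \frac{(\La_{\nu\al\ov\be}\psi_\nu^{2n-1})(\La_{\nu\ga}\psi_\nu^{2n-1})}{(\La_\nu\psi_\nu^{2n-2})^{2}}
- \frac{(\La_{\nu\al\ga}\psi_\nu^{2n})(\La_{\nu\ov\be}\psi_\nu^{2n-2})}{(\La_\nu\psi_\nu^{2n-2})^{2}}\\
- \frac{(\La_{\nu\ga\ov\be}\psi_\nu^{2n-1})(\La_{\nu\al}\psi_\nu^{2n-1})}{(\La_\nu\psi_\nu^{2n-2})^{2}}
+ \frac{2(\La_{\nu\al}\psi_\nu^{2n-1})(\La_{\nu\ov\be}\psi_\nu^{2n-2})(\La_{\nu\ga}\psi_\nu^{2n-1})}{(\La_\nu\psi_\nu^{2n-2})^{3}},
\end{multline*}
the point being that in every term the total exponent of $\psi_\nu$ in the numerator exceeds that in the denominator by exactly $2$.

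Next I would check that each factor above converges along $\{p_\nu\}$. The factors $\La_\nu\psi_\nu^{2n-2}$, $\La_{\nu\al}\psi_\nu^{2n-1}$, $\La_{\nu\ga}\psi_\nu^{2n-1}$ and $\La_{\nu\al\ga}\psi_\nu^{2n}$ all have finite limits by the crude asymptotics of theorem 1.2 (as computed in the proof of lemma \ref{asympt-g_nu}); in particular the denominators tend to $\pm 1$. For the three factors carrying the conjugate index $\ov\be$ I would pass to complex conjugates, using that $\La_\nu$ and $\psi$ are real valued, so that $\psi_\nu(p_\nu)\in\mathbf{R}$, $\La_{\nu\ov\be}=\ov{\La_{\nu\be}}$, $\La_{\nu\al\ov\be}=\ov{\La_{\nu\be\ov\al}}$ and $\La_{\nu\ga\ov\be}=\ov{\La_{\nu\be\ov\ga}}$. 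Since $\be\le n-1$, lemma \ref{finer-2nd-der-La}(i) then gives convergence of $\La_{\nu\ov\be}(p_\nu)\psi_\nu(p_\nu)^{2n-2}$, and lemma \ref{finer-2nd-der-La}(ii) gives convergence of $\La_{\nu\al\ov\be}(p_\nu)\psi_\nu(p_\nu)^{2n-1}$ and $\La_{\nu\ga\ov\be}(p_\nu)\psi_\nu(p_\nu)^{2n-1}$. Finally $\La_{\nu\al\ov\be\ga}(p_\nu)\psi_\nu(p_\nu)^{2n}$ converges by lemma \ref{finer-3rd-der-La}, which applies precisely because $1\le\al,\ga\le n$ and $1\le\be\le n-1$. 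Combining, every term on the right hand side converges to a finite limit, hence so does $\dfrac{\pa g_{\nu\al\ov\be}}{\pa z_\ga}(p_\nu)\big(\psi_\nu(p_\nu)\big)^{2}$.

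The work is essentially bookkeeping rather than analysis, and the delicate point is the power split: $\La_{\al\ov\be\ga}$ must be matched with $\psi^{2n}$ (handled by lemma \ref{finer-3rd-der-La}), the mixed second derivatives $\La_{\al\ov\be},\La_{\ga\ov\be}$ with $\psi^{2n-1}$ and the pure second derivative $\La_{\al\ga}$ with $\psi^{2n}$, while every factor involving the index $\ov\be$ must receive the ``one power better'' exponent so that lemma \ref{finer-2nd-der-La} is applicable. This is exactly where the hypothesis $\be\le n-1$ is used: for $\be=n$ one would instead need $\psi_{\nu n}(p_\nu)/\psi_\nu(p_\nu)$ to remain bounded, which it does not (cf. lemma \ref{der-psi_nu-by-psi_nu}), and the limits in question need not exist. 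No estimate beyond those already established in lemmas \ref{finer-2nd-der-La} and \ref{finer-3rd-der-La} is required.
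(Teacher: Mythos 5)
Your proof is correct and follows essentially the same route as the paper: differentiate (\ref{g-alpha-beta}) to get (\ref{der-g-alpha-beta}), multiply by $\psi_\nu^2$, distribute the powers of $\psi_\nu$ exactly as you do, and appeal to theorem~1.2, lemma~\ref{finer-2nd-der-La}, and lemma~\ref{finer-3rd-der-La}. Your explicit conjugation bookkeeping (passing $\La_{\nu\ov\be}\to\ov{\La_{\nu\be}}$, etc., to land in the index ranges covered by lemma~\ref{finer-2nd-der-La}) fills in detail that the paper leaves implicit but does not change the argument.
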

\begin{proof}
From (\ref{der-g-alpha-beta}), we obtain
\begin{multline*}
\frac {\pa g_{\nu \al \ov{\be}}} {\pa z_{\ga}} \psi_{\nu}^{2} = \frac {\La_{\nu
\al \ov{\be}
\ga} \psi_{\nu}^{2n}} {\La_{\nu} \psi_{\nu}^{2n-2}} -
\left(\frac {(\La_{\nu \al \ov{\be}} \psi_{\nu}^{2n-1}) (\La_{\nu \ga}
\psi_{\nu}^{2n-1})} {(\La_{\nu}
\psi_{\nu}^{2n-2})^{2}} + \frac{(\La_{\nu \al \ga}
\psi_{\nu}^{2n}) (\La_{\nu \ov{\be}}\psi_{\nu}^{2n-2})} {(\La_{\nu}
\psi_{\nu}^{2n-2})^{2}} +
\frac{(\La_{\nu \ov{\be} \ga} \psi_{\nu}^{2n-1}) (\La_{\nu \al}
\psi_{\nu}^{2n-1})} {(\La_{\nu} \psi_{\nu}^{2n-2})^{2}} \right)\\
+ \frac{2 (\La_{\nu \al} \psi_{\nu}^{2n-1}) (\La_{\nu \ov{\be}}
\psi_{\nu}^{2n-2}) (\La_{\nu \ga}
\psi_{\nu}^{2n-1})} {(\La_{\nu} \psi_{\nu}^{2n-2})^{3}}.
\end{multline*}
In view of theorem 1.2 and lemma \ref{finer-2nd-der-La} it is seen that the
second and third terms have finite
limits along $\{p_{\nu}\}$ and by lemma \ref{finer-3rd-der-La} the first term
has finite limit along $\{p_{\nu}\}$.
\end{proof}

\begin{lem}\label{finer-det}
The limit
\[
\lim_{\nu \ra \infty} \det \big( g_{\nu \al \ov{\be}}(p_{\nu}) \big) \big(
\psi_{\nu}(p_{\nu}) \big)^{n+1}
\]
exists and is nonzero.
\end{lem}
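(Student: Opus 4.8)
The plan is to read the lemma off directly from Lemma~\ref{asympt-g_nu}(i) and Lemma~\ref{finer-2nd-der-g_nu}, which between them determine the precise order of growth of every entry of $G_\nu := \big((g_\nu)_{\al\ov\be}(p_\nu)\big)_{1\le\al,\be\le n}$. Throughout I keep the normalization fixed in the introduction, so $p_0 = 0$, $\ov\pa\psi(0) = (0,\dots,1)$, hence $\psi_\al(0) = \psi_{\ov\al}(0) = \de_{\al n}$ and the complex tangent space of $\pa D$ at $0$ is $\mathbf C^{n-1}\times\{0\}$; write $t_\nu := \psi_\nu(p_\nu) < 0$, so $t_\nu\to 0$. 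By Lemma~\ref{asympt-g_nu}(i), $(g_\nu)_{n\ov n}(p_\nu)\,t_\nu^2\to 2n-2$. By Lemma~\ref{finer-2nd-der-g_nu}, for $1\le\al\le n-1$ the product $(g_\nu)_{\al\ov\be}(p_\nu)\,t_\nu$ converges for every $\be$; inserting $\psi_{\ov\be}(0) = \de_{\be n}$, the limit equals $-(2n-2)\psi_{\al\ov\be}(0)$ when $\be\le n-1$ and a finite number when $\be = n$. Finally, since $G_\nu$ is Hermitian and $t_\nu$ is real, $(g_\nu)_{n\ov\be}(p_\nu)\,t_\nu = \overline{(g_\nu)_{\be\ov n}(p_\nu)\,t_\nu}$ converges for $\be\le n-1$, and hence $(g_\nu)_{n\ov\be}(p_\nu)\,t_\nu^2\to 0$.

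Next I distribute the factor $t_\nu^{n+1}$ over the rows: multiplying each of the first $n-1$ rows of $G_\nu$ by $t_\nu$ and the last row by $t_\nu^2$ scales the determinant by $t_\nu^{n-1}\cdot t_\nu^2 = t_\nu^{n+1}$, so $\det(G_\nu)\,t_\nu^{n+1} = \det H_\nu$, where $H_\nu$ has $(\al,\be)$ entry $t_\nu(g_\nu)_{\al\ov\be}(p_\nu)$ for $\al\le n-1$ and $t_\nu^2(g_\nu)_{n\ov\be}(p_\nu)$ for $\al = n$. By the asymptotics just recorded, $H_\nu$ converges entrywise to the block upper triangular matrix
\[
H = \begin{pmatrix} -(2n-2)\big(\psi_{\al\ov\be}(0)\big)_{1\le\al,\be\le n-1} & * \\ 0 & 2n-2 \end{pmatrix},
\]
the upper-right block being irrelevant, so $\det(G_\nu)\,t_\nu^{n+1}\to\det H = (-1)^{n-1}(2n-2)^n\,\det\big(\psi_{\al\ov\be}(0)\big)_{1\le\al,\be\le n-1}$.

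It remains to see that this limit is nonzero, and this is the one place where strong pseudoconvexity of $D$ enters. Since $\pa\psi(0) = (0,\dots,1)$, the restriction of the Levi form of $\psi$ at $0$ to the complex tangent space $\mathbf C^{n-1}\times\{0\}$ is represented by the Hermitian matrix $\big(\psi_{\al\ov\be}(0)\big)_{1\le\al,\be\le n-1}$, which is positive definite; hence its determinant is strictly positive and $\det H\neq 0$. (As a consistency check, $G_\nu$ is positive definite while $t_\nu<0$, so $\det(G_\nu)\,t_\nu^{n+1}$ has sign $(-1)^{n+1} = (-1)^{n-1}$, matching the sign of $\det H$.) There is no real obstacle here: the whole proof consists in organizing the entrywise limits supplied by Lemmas~\ref{asympt-g_nu} and \ref{finer-2nd-der-g_nu} and keeping track of which power of $t_\nu$ is attached to which row.
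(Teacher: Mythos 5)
Your proof is correct and rests on exactly the same ingredients as the paper's (Lemma~\ref{asympt-g_nu}(i) for the bottom row, Lemma~\ref{finer-2nd-der-g_nu} for the remaining rows, strong pseudoconvexity to make the $(n-1)\times(n-1)$ Levi block positive definite), but it organizes the computation differently. The paper expands $\det(g_{\nu\al\ov\be})$ in cofactors along the $n$-th row and distributes $\psi^{n+1}$ as $\psi^2$ on the entries $g_{n\ov\al}$ and $\psi^{n-1}$ on the cofactors, then tracks which of the $n$ summands survive in the limit. You instead scale the first $n-1$ rows by $t_\nu$ and the last row by $t_\nu^2$, producing a matrix $H_\nu$ that converges entrywise, and read off the limit from the block upper triangular structure. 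The row-scaling presentation is cleaner: the block structure makes the surviving contribution manifest without separately arguing that the off-diagonal cofactor terms vanish, and your sign/positivity consistency check is a nice sanity test. One point worth noting: your computed limit $(-1)^{n-1}(2n-2)^n\det\big(\psi_{\al\ov\be}(0)\big)_{1\le\al,\be\le n-1}$ appears to be the correct value, whereas the paper states $(-1)^n(2n-2)^{n+1}\det\big(\psi_{\al\ov\be}(0)\big)$; the paper's constant seems off by a factor of $-(2n-2)$, coming from evaluating $\Delta_{\nu n\ov n}\psi_\nu^{n-1}$ as an $n$-fold rather than $(n-1)$-fold product. This slip does not affect the lemma (existence and non-vanishing are all that is used), but your account gets the constant right.
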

\begin{proof}
Let $(\Delta_{\al \ov{\be}})$ be the cofactor matrix of $(g_{\al \ov{\be}})$.
Then expanding by the $n$-th row,
\[
\det( g_{\al \ov{\be}} ) = g_{n \ov{1}} \Delta_{n \ov{1}} + \ldots + g_{n
\ov{n}} \Delta_{n \ov{n}}.
\]
Therefore,
\begin{equation}\label{det-times-psi}
\det(g_{\al \ov{\be}}) \psi^{n+1} = (g_{n \ov{1}} \psi^{2}) (\Delta_{n \ov{1}}
\psi^{n-1}) + \ldots + (g_{n \ov{n}}
\psi^{2}) (\Delta_{n \ov{n}} \psi^{n-1}).
\end{equation}
Note that
\begin{align*}
\Delta_{n \ov{\al}} \psi^{n-1}
& = \psi^{n-1} (-1)^{n + \al} \det
\begin{pmatrix}
g_{1 \ov{1}} & \dots & g_{1 \ov{\al-1}} & g_{1 \ov{\al + 1}} & \dots & g_{1
\ov{n}}\\
\hdotsfor[1.5]{6}\\
g_{n-1 \ov{1}} & \dots & g_{n-1 \ov{\al -1}} & g_{n-1\ov{\al+1}} & \dots &
g_{n-1\ov{n}}
\end{pmatrix}\\
& = (-1)^{n+\al} \det
\begin{pmatrix}
g_{1\ov{1}}\psi & \dots & g_{1\ov{\al-1}}\psi & g_{1\ov{\al+1}}\psi & \dots &
g_{1\ov{n}}\psi\\
\hdotsfor[1.5]{6}\\
g_{n-1 \ov{1}}\psi & \dots & g_{n-1\ov{\al-1}}\psi & g_{n-1\ov{\al+1}}\psi &
\dots & g_{n-1\ov{n}}\psi
\end{pmatrix}
\end{align*}
By lemma \ref{finer-2nd-der-g_nu} , if $1 \leq \al \leq n-1$ and $1 \leq \be
\leq n$, then the term $g_{\nu \al \ov{\be}}(p_{\nu})\psi_{\nu}(p_{\nu})$
converges to a finite quantity . It follows that if $1 \leq \al \leq n-1$ then
\[
\lim_{\nu \ra \infty} \Delta_{\nu n \ov{\al}}(p_{\nu}) \big(
\psi_{\nu}(p_{\nu})\big)^{n-1}
\]
exists and is finite. Also if $1 \leq \al, \be \leq n-1$, then $g_{\nu \al
\ov{\be}}(p_{\nu})\psi_{\nu}(p_{\nu})$ converges to $-(2n-2)\psi_{\al
\ov{\be}}(0)$. Therefore
\[
\lim_{\nu \ra \infty} \Delta_{\nu n \ov{n}}(p_{\nu}) \big(\psi_{\nu}(p_{\nu})
\big)^{n-1} = (-1)^{n} (2n-2)^{n} \det \big(\psi_{\al \ov{\be}}(0) \big)_{1\leq
\al, \be \leq n-1}.
\]
Finally by lemma \ref{asympt-g_nu}, if $1 \leq \al, \be \leq n$, then $g_{\nu
\al \ov{\be}}(p_{\nu})\big(\psi_{\nu}(p_{\nu})\big)^{2}$ converges to $(2n-2)
\psi_{\al}(0)
\psi_{\ov{\be}}(0)$. Now it follows from (\ref{det-times-psi}) that
\[
\lim_{\nu \ra \infty} \det \big( g_{\nu \al \ov{\be}}(p_{\nu}) \big) \big(
\psi_{\nu}(p_{\nu}) \big)^{n+1}
= (-1)^{n}(2n-2)^{n+1} \det \big( \psi_{\al \ov{\be}}(0) \big)_{1\leq \al,\be
\leq n-1}\neq 0
\]
as $D$ is strongly pseudoconvex at $0$.
\end{proof}

\medskip

\no {\it Proof of Theorem 1.1:} We have
\[
-\frac {1} {\big( g_{n \ov{n}}(z) \big)^{2}} \frac {\pa^{2} g_{n \ov{n}}} {\pa
z_{n} \pa \ov{z}_{n}}(z)
= -\frac {1} {\Big(g_{n \ov{n}} (z) \big( \psi(z) \big)^{2} \Big)^{2}}
\frac{\pa^{2} g_{n \ov{n}}} {\pa z_{n} \pa
\ov{z}_{n}}(z) \big( \psi(z) \big)^{4}
\]
By lemma \ref{asympt-g_nu}
\[
-\frac {1} {\big( g_{\nu n \ov{n}}(p_{\nu}) \big)^{2}} \frac {\pa^{2} g_{\nu n
\ov{n}}} {\pa z_{n} \pa \ov{z}_{n}}(p_{\nu})
 \ra -\frac {1} {\big\{ (2n-2) \psi_{n}(0) \psi_{\ov{n}}(0) \big\}^{2} } \big\{
6(2n-2) \psi_{n}(0) \psi_{\ov{n}}(0) \psi_{n}(0) \psi_{\ov{n}}(0) \big\} =
-\frac{3}{n-1}.
\]

\medskip

\no To compute the limit of the second term note that $g^{\be \ov{\al}} =
\Delta_{\al \ov{\be}} / \det(g_{\al
\ov{\be}})$. There are various cases to be considered depending on $\al$ and
$\be$.

\medskip

\no \textit{Case 1}:  $\al \neq n$, $\be \neq n$. Here
\[
\frac{1}{g_{n \ov{\al}}^{2}} g^{\be \ov{\al}} \frac{\pa g_{n \ov{\al}}}{\pa
z_{n}} \frac{\pa g_{\be \ov{n}}}{\pa
\ov{z}_{n}}
= \frac{1}{(g_{n\ov{n}}\psi^{2})^{2} \big( \det(g_{i \ov{j}})\psi^{n+1}
\big)}(\Delta_{\al \ov{\be}} \psi^{n}) \bigg(
\frac{\pa g_{n \ov{\al}}}{\pa z_{n}}\psi^{2} \bigg) \left(\frac{\pa g_{\be
\ov{n}}}{\pa\ov{z}_{n}}\psi^{3}\right)
\]
By lemma \ref{asympt-g_nu},
\[
g_{\nu n \ov{n}}(p_{\nu}) \big(\psi_{\nu}(p_{\nu}) \big)^{2} \ra (2n-2)
\]
By lemma \ref{finer-det}, $\det \big( g_{i \ov{j}}(p_{\nu}) \big) \big(
\psi_{\nu}p_{\nu}) \big)^{n+1}$ converges to a nonzero finite. Also
\[
\Delta_{\al \ov{\be}} =
\sum_{\sigma}(-1)^{sgn(\sigma)}g_{1\ov{\sigma(1)}}g_{2\ov{\sigma(2)}}\cdots
g_{n\ov{\sigma(n)}}
\]
where the summation runs over all permutations
\[
\sigma:\{1,\ldots,\al-1,\al+1,\ldots,n\}\ra \{1,\ldots,\be-1,\be+1,\ldots,n\}
\]
Hence
\[
\Delta_{\al \ov{\be}}\psi^{n} =
\sum_{\sigma}(-1)^{sgn(\sigma)}(g_{1\ov{\sigma(1)}}\psi)(g_{2\ov{\sigma(2)}}
\psi)\cdots
(g_{n\ov{\sigma(n)}}\psi^{2}).
\]
By lemma \ref{finer-2nd-der-g_nu}, for $1 \leq i \leq n-1$, $g_{\nu
i\ov{\sigma(i)}}(p_{\nu}) \big( \psi_{\nu}(p_{\nu}) \big)$ converges to a finite
quantity. Also
\[
g_{\nu n\ov{\sigma(n)}}(p_{\nu}) \big( \psi(p_{\nu}) \big)^{2} \ra (2n-2)
\psi_{n}(0)\psi_{\ov{\sigma(n)}}(0)
\]
by lemma \ref{asympt-g_nu}. Thus $\Delta_{\nu \al \ov{\be}}(p_{\nu}) \big(
\psi_{\nu}(p_{\nu}) \big)^{n}$ converges to a finite quantity.

\medskip

\no By lemma \ref{finer-3rd-der-g_nu}, $\frac{\pa g_{\nu n \ov{\al}} } {\pa
z_{n}}(p_{\nu}) \big( \psi_{\nu}(p_{\nu}) \big)^{2}$ converges to a finite
quantity and by lemma \ref{asympt-g_nu}
\[
\frac{\pa g_{\nu \be \ov{n}}} {\pa \ov{z}_{n}}(p_{\nu}) \big(
\psi_{\nu}(p_{\nu}) \big)^{3} = \ov{\bigg(\frac{\pa g_{\nu n \ov{\be}}} {\pa
z_{n}}(p_{\nu}) \big( \psi_{\nu}(p_{\nu}) \big)^{3} \bigg)}
\ra -2(2n-2) \ov{\big( \psi_{\nu n}(0) \big)} \, \ov{\big( \psi_{\ov{\be}}(0)
\big)}
\,  \ov{\big( \psi_{n}(0) \big)} = 0.
\]
Hence
\[
\lim_{\nu \ra \infty} \frac {1} {\big( g_{\nu n\ov{n}}(p_{\nu}) \big)^{2}}
g_{\nu}^{\be \ov{\al}}(p_{\nu}) \frac {\pa g_{\nu n\ov{\al}}} {\pa
z_{n}}(p_{\nu}) \frac {\pa g_{\nu \be \ov{n}}} {\pa\ov{z}_{n}}(p_{\nu}) = 0.
\]

\medskip

\no \textit{Case 2}: $\al = n$, $\be \neq n$. Here
\[
\frac{1}{g_{n \ov{n}}^{2}} g^{\be \ov{n}} \frac{\pa g_{n \ov{n}}}{\pa z_{n}}
\frac{\pa g_{\be \ov{n}}}{\pa \ov{z}_{n}}
= \frac{1}{(g_{n \ov{n}} \psi^{2})^{2} (\det(g_{i \ov{j}}) \psi^{n+1})}
(\Delta_{n \ov{\be}} \psi^{n-1}) \bigg(
\frac{\pa g_{n \ov{n}}}{\pa z_{n}} \psi^{3} \bigg) \bigg( \frac{\pa g_{\be
\ov{n}}}{\pa \ov{z}_{n}} \psi^{3} \bigg)
\]
By lemma \ref{asympt-g_nu},
\[
g_{\nu n \ov{n}}(p_{\nu}) \big( \psi_{\nu}(p_{\nu}) \big)^{2} \ra (2n-2).
\]
By lemma \ref{finer-det}, $\det \big( g_{\nu \al \ov{\be}}(p_{\nu}) \big) \big(
\psi(p_{\nu}) \big)^{n+1}$ has a nonzero limit and $\Delta_{\nu n
\ov{\be}}(p_{\nu}) \big( \psi_{\nu}(p_{\nu}) \big)^{n-1}$ converges to a finite
quantity.
By lemma \ref{asympt-g_nu},
\[
\frac{\pa g_{\nu n \ov{n}}} {\pa z_{n}}(p_{\nu}) \big( \psi_{\nu}(p_{\nu})
\big)^{3} \ra -2(2n-2) \psi_{n}(0) \psi_{\ov{n}}(0) \psi_{n}(0) = -2(2n-2)
\]
and
\[
\frac{\pa g_{\nu \be \ov{n}}} {\pa \ov{z}_{n}}(p_{\nu}) \big(
\psi_{\nu}(p_{\nu}) \big)^{3} = \ov{\bigg(\frac{\pa g_{\nu n \ov{\be}}} {\pa
z_{n}}(p_{\nu}) \big( \psi_{\nu}(p_{\nu}) \big)^{3} \bigg)}
\ra -2(2n-2) \ov{\big( \psi_{n}(0) \big)} \, \ov{\big( \psi_{\ov{\be}}(0) \big)}
\,  \ov{\big( \psi_{n}(0) \big)} = 0.
\]
Hence
\[
\lim_{\nu \ra \infty} \frac {1} {\big( g_{\nu n \ov{n}}(p_{\nu}) \big)^{2}}
g_{\nu}^{\be \ov{n}}(p_{\nu}) \frac {\pa g_{\nu n\ov{n}} } {\pa z_{n}}(p_{\nu})
\frac{\pa g_{\nu \be \ov{n}}} {\pa\ov{z}_{n}}(p_{\nu})  = 0
\]

\medskip

\no \textit{Case 3}: $\al \neq n$ and $\be = n$. This case is similar to Case 2
and we
have
\[
\lim_{\nu \ra \infty }\frac{1}{\big(g_{\nu n \ov{n}}(p_{\nu}) \big)^{2}}
g_{\nu}^{n\ov{\al}}(p_{\nu}) \frac{\pa g_{\nu n\ov{\al}}}{\pa z_{n}}(p_{\nu})
\frac{\pa g_{\nu n\ov{n}}}{\pa\ov{z}_{n}}(p_{\nu}) = 0.
\]

\medskip

\no \textit{Case 4}: $\al = n$, $\be = n$. In this case we have
\[
\frac{1}{g_{n\ov{n}}^{2}}g^{n\ov{n}}\frac{\pa g_{n\ov{n}}}{\pa z_{n}}\frac{\pa
g_{n\ov{n}}}{\pa\ov{z}_{n}}
= \frac{1}{(g_{n\ov{n}}\psi^{2})^{2} \big( \det(g_{i \ov{j}})\psi^{n+1}
\big)}(\Delta_{n\ov{n}}\psi^{n-1})
\bigg( \frac{\pa g_{n\ov{n}}}{\pa z_{n}}\psi^{3} \bigg) \bigg( \frac{\pa
g_{n\ov{n}}}{\pa\ov{z}_{n}}\psi^{3} \bigg).
\]
From lemma \ref{asympt-g_nu},
\[
g_{\nu n\ov{n}}(p_{\nu}) \big(\psi_{\nu}(p_{\nu}) \big)^{2} \ra (2n-2)
\]
and both
\[
\frac {\pa g_{\nu n \ov{n}} } {\pa z_{n}}(p_{\nu}) \big( \psi_{\nu}(p_{\nu})
\big)^{3}, \frac{\pa g_{n \ov{n}} } {\pa \ov{z}_{n}}(p_{\nu}) \big(
\psi_{\nu}(p_{\nu}) \big)^{3} \ra -2(2n-2).
\]
From lemma \ref{finer-det}
\[
\Delta_{\nu n \ov{n}}(p_{\nu}) \big( \psi_{\nu}(p_{\nu}) \big)^{n-1} \ra
(-1)^{n} (2n-2)^{n} \det \big(\psi_{i \ov{j}}(0) \big)_{1 \leq i, j
\leq n-1}
\]
and
\[
\det \big( g_{\nu i \ov{j}}(p_{\nu}) \big) \big( \psi_{\nu}(p_{\nu}) \big)^{n+1}
\ra  (-1)^{n} (2n-2)^{n+1} \det \big( \psi_{i \ov{j}}( 0) \big)_{1 \leq i, j
\leq n-1}.
\]
Hence
\[
\lim_{\nu \ra \infty} \frac {1} {\big(g_{\nu n \ov{n}}(p_{\nu}) \big)^{2}}
g_{\nu}^{n \ov{n}}(p_{\nu}) \frac{\pa g_{\nu n \ov{n}}} {\pa z_{n}}(p_{\nu})
\frac{\pa g_{\nu n \ov{n}}} {\pa\ov{z}_{n}}(p_{\nu})
= \frac{2}{n-1}.
\]

\medskip

From the various cases we finally obtain
\[
\lim_{\nu \ra \infty} R \big(z_{\nu} ,v_{N}(z_{\nu}) \big)=-3/(n-1) + 2/(n-1) =
-1/(n-1).
\]

\section{Existence of closed geodesics}
\no In this section we prove theorem 1.4. The main tool that we will use is the
following theorem of Herbort \cite{Her}:

\begin{thm}\label{herbort}
Let $G$ be a bounded domain in $\mathbf{R}^{k}$, such
that $\pi_1(G)$ is nontrivial. Assume that the following conditions are
satisfied:
\begin{enumerate}
\item [(i)] For each $p \in \ov G$ there is an open neighbourhood $U \subset
\mathbf{R}^{k}$, such that the set $G \cap U$ is simply connected.
\item [(ii)] $G$ is equipped with a complete Riemannian metric $g$ which
possesses the following property:
\begin{itemize}
\item [(P)] For each $S>0$ there is a $\de >0$, such that for every point $p \in
G$ with $d(p, \pa D) < \de$ and every $X \in \mathbf{R}^{k}$, $g(p, X) \geq S
\vert X \vert^2$.
\end{itemize}
\end{enumerate}
Then every nontrivial homotopy class in $\pi_1(G)$ contains a closed geodesic
for $g$.
\end{thm}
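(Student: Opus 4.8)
The plan is to produce the closed geodesic by the direct method: minimize $g$-length over the prescribed free homotopy class, and then show that the minimizer is a smooth geodesic. Fix a nontrivial element of $\pi_1(G)$, let $\mathcal{C}$ be the associated free homotopy class of piecewise $C^1$ loops in $G$, and set $\ell=\inf\{L_g(\gamma):\gamma\in\mathcal{C}\}$, where $L_g$ is $g$-length. First I would record three consequences of the hypotheses. Condition~(i) and the compactness of $\ov G$ give, by a Lebesgue-number argument, a radius $r_0>0$ such that every loop in $G$ of Euclidean diameter $<r_0$ is null-homotopic in $G$; hence every $\gamma\in\mathcal{C}$ has Euclidean diameter, and so Euclidean length, at least $r_0$. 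Next, property~(P) with $S=1$ together with continuity of $g$ on the compact set $\{p\in\ov G:d(p,\pa G)\ge\delta_1\}$ yields a constant $c_0>0$ with $g(p,X)\ge c_0\vert X\vert^2$ for all $p\in G$ and all $X$; combining this with the previous point gives $L_g(\gamma)\ge\sqrt{c_0}\,r_0>0$ for $\gamma\in\mathcal{C}$, so $\ell>0$. Finally, applying~(P) with $S$ chosen so that $\sqrt{S}\,r_0>\ell+1$ produces a $\delta>0$ for which no loop of $g$-length $\le\ell+1$ can lie entirely in the collar $\{p:d(p,\pa G)<\delta\}$; thus any such loop meets the compact set $K=\{p\in\ov G:d(p,\pa G)\ge\delta\}$.

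The core of the proof is the extraction of a minimizer. Take $\gamma_j\in\mathcal{C}$ with $L_g(\gamma_j)\to\ell$ and $L_g(\gamma_j)\le\ell+1$, parametrized on $S^1$ at constant $g$-speed and, after a rotation of the parameter, arranged so that $\gamma_j(0)\in K$. The global bound $g\ge c_0\vert X\vert^2$ forces $\vert\gamma_j'\vert\le(\ell+1)/\sqrt{c_0}$ almost everywhere, so the $\gamma_j$ are equi-Lipschitz for the Euclidean metric and, lying in the bounded set $\ov G$, uniformly bounded; by the Arzel\`a--Ascoli theorem a subsequence converges uniformly to a Lipschitz loop $\gamma_\infty\colon S^1\to\ov G$. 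Here is the one place where completeness of $g$ enters: fixing $p_0\in K$, one has $d_g(\gamma_j(t),p_0)\le L_g(\gamma_j)+\operatorname{diam}_g(K)\le M$ for all $t,j$, so $\gamma_\infty(t)$ lies in the closed $g$-ball $\ov{B}_g(p_0,M)$, which by the Hopf--Rinow theorem is a compact subset of $G$; hence $\gamma_\infty(S^1)$ is a compact subset of $G$, at positive Euclidean distance from $\pa G$. For $j$ large the straight-line homotopy $(s,t)\mapsto(1-s)\gamma_\infty(t)+s\gamma_j(t)$ then stays in $G$, so $\gamma_\infty\in\mathcal{C}$ and $L_g(\gamma_\infty)\ge\ell$; on the other hand, lower semicontinuity of length under uniform convergence---legitimate because $g$ is comparable to the Euclidean metric on a neighbourhood of the compact set $\gamma_\infty(S^1)$---gives $L_g(\gamma_\infty)\le\liminf_j L_g(\gamma_j)=\ell$. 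Thus $\gamma_\infty$ realizes $\ell$ in $\mathcal{C}$.

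It remains to upgrade $\gamma_\infty$ to a smooth closed geodesic by the classical first-variation and regularity argument. Because $\gamma_\infty(S^1)$ is compact in the open manifold $G$, there is $\varepsilon>0$ such that for every $t_0$ the arc $\gamma_\infty|_{[t_0-\varepsilon,t_0+\varepsilon]}$ lies in a geodesically convex, hence contractible, ball centred at $\gamma_\infty(t_0)$; if it were not the minimizing geodesic joining its endpoints, replacing it by that geodesic---which remains in the ball and therefore does not change the free homotopy class---would yield a loop in $\mathcal{C}$ of length $<\ell$, a contradiction. Hence $\gamma_\infty$ agrees, on each such subinterval, with a minimizing geodesic arc, so after reparametrization by constant $g$-speed it satisfies the geodesic equation, and ODE regularity gives $C^\infty$-smoothness, including across the period seam. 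This produces the desired closed geodesic for $g$ in $\mathcal{C}$.

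I expect the compactness step---ruling out that the minimizing sequence slides onto $\pa G$ and ensuring the limit is again a loop inside $G$---to be the only genuine difficulty, and it is exactly there that the blow-up hypothesis~(P) and the completeness of $g$ are indispensable; the positivity of $\ell$, the homotopy invariance under $C^0$-limits, and the passage from a length-minimizer to a smooth geodesic are routine.
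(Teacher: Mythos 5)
The paper does not actually prove Theorem 7.1; it states it as a black box, citing Herbort (\emph{On the geodesics of the Bergman metric}, Math.\ Ann.\ 264 (1983)), and then uses it to deduce Theorem 1.4. So there is no internal proof to compare against, and your argument has to be judged on its own.

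Your proof is correct, and it is the natural direct-method argument one would expect. The three preparatory observations are all legitimate: compactness of $\ov G$ plus hypothesis~(i) gives a Lebesgue number $r_0$ so that small-diameter loops are null-homotopic, hence loops in the fixed nontrivial class have Euclidean length bounded below; property~(P) at $S=1$ together with continuity of $g$ on the compact slab $\{d(\cdot,\pa G)\ge\delta_1\}$ gives a \emph{global} bound $g\ge c_0\,|\cdot|^2$, whence $\ell>0$; and choosing $S$ with $\sqrt{S}\,r_0>\ell+1$ in~(P) forces any nearly-minimizing loop to meet the compact core $K$. The compactness step is the heart of the matter and you have it right: the equi-Lipschitz bound from $g\ge c_0|\cdot|^2$ gives Arzel\`a--Ascoli, and the uniform $g$-distance bound $d_g(\gamma_j(t),p_0)\le L_g(\gamma_j)+\operatorname{diam}_g K$ together with completeness and Hopf--Rinow confines everything to a compact subset of $G$, so the limit is a loop in $G$ and the straight-line homotopy places it in $\mathcal{C}$. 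Lower semicontinuity of $g$-length under uniform convergence then identifies $\gamma_\infty$ as a minimizer, and the local replacement-by-minimizing-geodesic argument inside a uniformly small convex ball upgrades it to a smooth closed geodesic. Each of the hypotheses~(i), completeness, and~(P) is used exactly where it is needed, which is a good sign. The only cosmetic slips are the normalization constant $(\ell+1)/\sqrt{c_0}$ (off by a harmless factor of $2\pi$ from the constant-speed parametrization on $S^1$), and the slightly loose ``no loop of $g$-length $\le\ell+1$'' where you really mean ``no loop \emph{in $\mathcal{C}$} of $g$-length $\le\ell+1$''; neither affects the argument.
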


In \cite{BV} we proved the following boundary behaviour of the $\La$-metric: Let
$D$ be a $C^{\infty}$-smoothly bounded strongly peudoconvex domain in
$\mathbf{C}^{n}$ and $ds^2$ be the $\La$-metric on $D$. Suppose that $\psi$ is
any $C^{\infty}$-smooth defining function for $D$. Then
\begin{equation*}
ds^2_z(v, v) \approx \delta^{-2}(z) \vert v_N(z)\vert^2 +
\delta^{-1}(z) \mathscr{L}_{\psi}\big( \pi(z), v_{H}(z) \big)
\end{equation*}
uniformly for all $z$ sufficiently close to $\pa D$ and all $v \in
\mathbf{C}^{n}$. Here, $v = v_H(z) + v_N(z)$ is as usual the decomposition of
$v$ at the point $\pi(z) \in \pa D$, $\de(z) = d(z, \pa D)$ is the Euclidean
distance of $z$ to the boundary of $D$ and
$\mathscr{L}_{\psi}(z, v)$ denotes the
Levi form of $\psi$ at $z$ along $v$, i.e.,
\[
\mathscr{L}_{\psi}(z, v) = \sum_{\al, \be =1}^{n} \frac{\pa ^2 \psi}{\pa z_{\al}
\pa \ov z_{\be}}(z) v^{\al} \ov v^{\be}.
\]
Also, it is known that
the Bergman metric $ds^2_B$ on $D$ has the same boundary behaviour. It follows
that
\[
ds^2_z(v, v) \approx ds^2_{Bz}(v, v)
\]
uniformly for all $z$ sufficiently close to $\pa D$ and all $v \in
\mathbf{C}^{n}$. Also, on compact subsets of $D$, these two metrics are
uniformly comparable to the Euclidean metric. Thus we have the follwoing:
\begin{prop}\label{La-Ber}
Let $D$ be a $C^{\infty}$-smoothly bounded strongly pseudoconvex domain in
$\mathbf{C}^{n}$. Let $ds^2$ denotes the $\La$-metric on $D$ and $ds^2_B$
denotes the Bergman metric on $D$. Then there exists a constant $C>1$ such that
\[
C^{-1} ds^2_B \leq ds^2 \leq C ds^2_B
\]
uniformly on $D$.
\end{prop}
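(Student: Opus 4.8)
The plan is to combine the boundary asymptotics of the two metrics with a compactness argument in the interior. Fix a $C^{\infty}$-smooth defining function $\psi$ for $D$, and write $\de(z) = d(z, \pa D)$. By the boundary behaviour of the $\La$-metric recalled just above the statement (from \cite{BV}), there is a collar $U$ of $\pa D$ and a constant $A>1$ such that
\[
A^{-1} Q_z(v) \leq ds^2_z(v, v) \leq A \, Q_z(v), \qquad z \in U \cap D, \ v \in \mathbf{C}^{n},
\]
where $Q_z(v) = \de^{-2}(z) \vert v_N(z) \vert^2 + \de^{-1}(z)\, \mathscr{L}_{\psi}\big(\pi(z), v_H(z)\big)$. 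Since $D$ is strongly pseudoconvex, $\mathscr{L}_{\psi}$ is uniformly positive definite along $\pa D$, so after shrinking $U$ the form $Q_z$ is a genuine positive-definite Hermitian form depending continuously on $z$. The classical boundary asymptotics of the Bergman metric of a $C^{\infty}$-smoothly bounded strongly pseudoconvex domain — which, as noted in the excerpt, coincide with the model above — give, after shrinking $U$ and enlarging $A$ if necessary, the analogous two-sided bound $A^{-1} Q_z(v) \leq ds^2_{Bz}(v, v) \leq A\, Q_z(v)$ on $U \cap D$. Chaining the two sets of inequalities then yields $A^{-2} ds^2_{Bz} \leq ds^2_z \leq A^2 ds^2_{Bz}$ on $U \cap D$.

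Next I would treat the interior. The set $K = D \sm U$ is a compact subset of $D$, and on a neighbourhood of $K$ both $ds^2$ and $ds^2_B$ are smooth positive-definite Hermitian metrics: for $ds^2$ this is because $\log(-\La)$ is real analytic and strongly plurisubharmonic, and for $ds^2_B$ because the Bergman kernel is real analytic and nonvanishing in the interior. Hence each of $ds^2$ and $ds^2_B$ is comparable to the Euclidean metric on $K$ with uniform constants, and consequently there is a constant $B>1$ with $B^{-1} ds^2_B \leq ds^2 \leq B\, ds^2_B$ on $K$. Taking $C = \max(A^2, B)$ gives $C^{-1} ds^2_B \leq ds^2 \leq C\, ds^2_B$ on all of $D$, as claimed.

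The only point that requires genuine care is the matching of the two asymptotic statements near $\pa D$: one must use the same base point $\pi(z) \in \pa D$ and the same complex tangential/normal splitting in the model forms for $ds^2$ and for $ds^2_B$, and one must quote the Bergman-metric estimate in the form that is uniform in the direction $v \in \mathbf{C}^{n}$ as well as in $z$. Once it is seen that the single model form $Q_z$ both dominates and is dominated by each of the two metrics uniformly near the boundary, the remainder is the elementary patching above, and there is no serious analytic obstacle.
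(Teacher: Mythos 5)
Your argument is essentially identical to the paper's: both compare $ds^2$ and $ds^2_B$ to the common model form $\de^{-2}\vert v_N\vert^2 + \de^{-1}\mathscr{L}_\psi(\pi(z),v_H)$ near $\pa D$ using the cited asymptotics, and then handle the interior by comparability to the Euclidean metric on compact subsets. The proposal is correct.
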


\medskip

\no \textit{Proof of theorem 1.4}. We will show that the $\La$-metric on $D$
satisfies the hypothesis of theorem \ref{herbort}.  Indeed, since $\pa D$ is
smooth, condition (i) is evidently satisfied. Also, note that the
Bergman metric is complete on $D$ (\cite{Ohs81}) and satisfies property (P)
\cite{Died}. It follows from proposition \ref{La-Ber} that condition
(ii) is satisfied. Thus the theorem is proved.

\section{$L^2$-cohomology of the $\La$-metric}

\no Let $M$ be a complete K\"{a}hler manifold of complex dimension $n$. Let
$\Om^{i}_{2}$ be the space of square integrable $i$-forms on $M$. Then the
(reduced) $L^2$-cohomology of the complex
\[
\Om^{0}_{2}(M) \xrightarrow{d_0} \Om^{1}_{2}(M) \xrightarrow{d_1} \cdots
\xrightarrow{d_{2n-1}} \Om^{2n}_{2}(M) \xrightarrow{d_{2n}} 0
\]
is defined by
\[
H^{i}_{2}(M) = \frac{\text{ker } {d_i}}{\, \ov{\text{Im}} \, d_{i-1}}
\]
where the closure is taken in $L^2$. Now, let $\mathcal{H}^{i}_{2}(M)$ be the
space of square integrable harmonic $i$-forms on $M$. Then the completeness of
the metric implies that $H^{i}_{2}(M) \cong \mathcal{H}^{i}_{2}(M)$. We have the
following result (\cite{Don}) on the vanishing of the
$L^2$-cohomology outside the middle dimension:
\begin{prop}\label{van-coh-kah}
Let $M$ be a complete K\"{a}hler manifold of complex dimension $n$. Suppose that
the K\"{a}hler form $\om$ of $M$ can be written as $\om
= d \eta$, where $\eta$ is bounded in supremum norm. Then
$\mathcal{H}_{2}^{i}(M) = 0$ for $i \neq n$.
\end{prop}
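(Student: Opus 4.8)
The plan is to prove this by reducing, via the Lefschetz decomposition, to primitive harmonic $L^2$ forms of degree strictly below the middle one, and then exploiting $\om=d\eta$ with $\eta$ bounded through an integration by parts. Let $\al\in\mathcal{H}^{i}_{2}(M)$ with $i\neq n$; we must show $\al=0$. The two standard inputs I would invoke at the outset are: (a) on a complete Riemannian manifold an $L^2$ form with $\De\al=0$ satisfies $d\al=0=d^{*}\al$ (the Andreotti--Vesentini/Gaffney integration by parts, valid because $M$ is complete); and (b) since $M$ is Kähler, the Lefschetz operator $L=\om\wedge\cdot$ and its pointwise adjoint $\Lambda$ are bounded on $L^2$ forms (the pointwise norm of $\om$ is the universal constant $\sqrt{n}$) and commute with $\De$. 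The projections onto the primitive components in the Lefschetz decomposition $\al=\sum_{r\ge 0}L^{r}\be_{r}$ are universal polynomials in $L$ and $\Lambda$, hence carry harmonic $L^2$ forms to harmonic $L^2$ forms; moreover $\De_{d}=2\De_{\ov\pa}$ preserves bidegree, so we may assume pure bidegree. Thus it suffices to show that a primitive harmonic $L^2$ form $\be$ of bidegree $(p,q)$ with $k:=p+q<n$ vanishes; the case $i>n$ then follows from the Hodge--$*$ isomorphism $\mathcal{H}^{i}_{2}(M)\cong\mathcal{H}^{2n-i}_{2}(M)$ together with $2n-i<n$.

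For such a $\be$, set $\ga=L^{n-k-1}\be$ (well defined as $k\le n-1$). By (b) and (a), $\ga$ is harmonic, lies in $L^2$, and is closed: $d\ga=0$. Since $d\om=0$ one has $dL=Ld$, and since $\be$ is primitive of degree $k$, Weil's identity expresses $*\be$ as a nonzero constant $c=c_{n,k}$ times $L^{n-k}\be$ (the complex-structure factor $i^{p-q}$ being a scalar because $\be$ has pure bidegree). Hence, using $d\ga=0$ and $d\eta$ of odd degree,
\[
*\be = c\,\om\wedge\ga = c\,d\eta\wedge\ga = c\,d(\eta\wedge\ga),
\]
so that, conjugating and using $d\be=0$ together with $d(\ov\ga)=0$,
\[
\|\be\|_{L^2}^{2}=\int_{M}\be\wedge\ov{*\be}=(-1)^{k}\,\ov c\int_{M}d\!\left(\be\wedge\eta\wedge\ov\ga\right).
\]
The key observation is that the $(2n-1)$-form $\be\wedge\eta\wedge\ov\ga$ lies in $L^{1}(M)$: this is Cauchy--Schwarz with $\be,\ov\ga\in L^{2}$ and $\eta\in L^{\infty}$, and this is precisely where boundedness of $\eta$ enters. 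Its exterior derivative is a nonzero multiple of $|\be|^{2}\,dV$, which is in $L^{1}(M)$ as well.

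It then remains to show $\int_{M}d(\be\wedge\eta\wedge\ov\ga)=0$. By Hopf--Rinow, completeness makes closed metric balls compact, so I would take cutoffs $\chi_{R}$ equal to $1$ on $B_{R}(x_{0})$, supported in $B_{2R}(x_{0})$, with $|d\chi_{R}|\le C/R$. Stokes on the compact support of $\chi_{R}$ gives
\[
\int_{M}\chi_{R}\,d(\be\wedge\eta\wedge\ov\ga)=-\int_{M}d\chi_{R}\wedge\be\wedge\eta\wedge\ov\ga,
\]
whose absolute value is at most $\tfrac{C}{R}\,\|\be\wedge\eta\wedge\ov\ga\|_{L^{1}(M)}\ra 0$ as $R\ra\infty$. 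Since $d(\be\wedge\eta\wedge\ov\ga)\in L^{1}(M)$, the left-hand side converges to $\int_{M}d(\be\wedge\eta\wedge\ov\ga)$, which is therefore $0$; hence $\|\be\|_{L^2}^{2}=0$ and $\be=0$.

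The only genuinely delicate points I expect are the two uses of completeness: first, that $L^2$ harmonic forms are closed and coclosed (needed so that $d\ga=0$ and $d\be=0$), and second, the non-compact Stokes argument of the previous paragraph; both are classical. The representation-theoretic bookkeeping — that the primitive components of an $L^2$ harmonic form remain $L^2$ and harmonic, resting on $L$ and $\Lambda$ being bounded and commuting with $\De$ — and the precise normalisation in Weil's identity are routine. No positivity or geometry of $M$ beyond completeness and the Kähler condition is used; the single role of the hypothesis $\om=d\eta$ with $\eta$ bounded is to make $\be\wedge\eta\wedge\ov\ga$ integrable so that the boundary term vanishes.
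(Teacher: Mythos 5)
The paper does not prove this proposition; it is stated with a citation to Donnelly~\cite{Don} (and the underlying observation is attributed to Gromov~\cite{Gro} in the introduction), so there is no in-text argument to compare your proposal against. Your proof is a correct rendition of Gromov's original argument: after the Gaffney and Lefschetz reductions it suffices to treat a primitive $L^2$ harmonic form $\be$ of degree $k<n$, Weil's identity gives $*\be=c\,\om\wedge L^{n-k-1}\be$, the hypothesis $\om=d\eta$ then exhibits $\Vert\be\Vert_{L^2}^2$ as the integral of an exact top-degree form with $L^1$ primitive, and the cutoff Stokes argument kills it; the case $i>n$ follows via the Hodge star. One small correction: replace $\eta$ at the outset by its real part, which is still bounded and still satisfies $d(\Re\eta)=\om$; with that normalization the $(2n-1)$-form $\be\wedge\eta\wedge\ov\ga$ you integrate is the right one, whereas as written you conjugate $*\be$ without conjugating the $\eta$ inside it, so strictly it should read $\be\wedge\ov\eta\wedge\ov\ga$. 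It is also worth recording the now-standard streamlining, closer to Donnelly's treatment, that bypasses both Weil's identity and the cutoff: Gaffney's theorem gives $d\al=d^{*}\al=0$ for $\al\in\mathcal H^{i}_{2}(M)$, hence $L\al=\om\wedge\al=d(\eta\wedge\al)$ with $\eta\wedge\al\in L^{2}$ lying in the domain of $d$; thus $L\al$ is a harmonic $L^2$ form in $\ov{\mathrm{Im}\,d}$, which vanishes by the orthogonal Kodaira decomposition on a complete manifold, and the pointwise hard Lefschetz isomorphism then forces $\al=0$ for $i<n$. Both routes use exactly the same completeness input; yours unwinds the orthogonality $\mathcal H\perp\ov{\mathrm{Im}\,d}$ by hand via the cutoff integration by parts, while the shorter version simply invokes it.
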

\no Also, we have the following result (\cite{Ohs89}) on the infinite
dimensionality of the $L^2$-cohomology of the middle dimension:

\begin{thm}\label{inf-coh}
Let $D$ be a domain in a connected complex manifold of dimension $n$ and $ds^2$
be a Hermitian metric on $D$. Suppose that there exists a non-degenerate regular
boundary point $z_0 \in \pa D$. Also, suppose that there exist a neighbourhodd
$U$ of $z_0$, a local defining function $\phi$ for $D$ defined on $U$ and a
Hermitian metric $ds^2_U$ defined on $U$ such that
\[
C^{-1} ds^2 < (-\phi)^{-a} ds^2_U + (-\phi)^{-b} \pa \phi \ov \pa \phi < C ds^2
\]
on $U \cap D$, where $a$, $b$ and $C$ are positive numbers with $1 \leq a \leq b
< a+3$. Then, for any positive integer $p$ and $q$ with $p + q =n$,
\[
\dim H_2^{p,q}(D) = \infty
\]
where $H_2^{p,q}(D)$ denotes the $L^2$ $\ov \pa$-cohomology group relative to
$ds^2$.
\end{thm}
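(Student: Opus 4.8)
The plan is to construct an infinite-dimensional family of square-integrable $\ov\pa$-closed $(p,q)$-forms on $D$ whose $\ov\pa$-cohomology classes are linearly independent. Since the space of $L^2$ forms annihilated by $\ov\pa$ and $\ov\pa^{*}$ injects into $H^{p,q}_2(D)$ in either convention, it suffices to exhibit $\ov\pa$-closed $L^2$ $(p,q)$-forms $\om_1,\om_2,\dots$ no nontrivial combination $\sum_k c_k\om_k$ of which lies in the range of $\ov\pa$ on $L^2(D,ds^2)$. Replacing $ds^2$ by a comparable metric changes neither the $L^2$ condition nor membership in the range, so by the sandwiching hypothesis we may work near $z_0$ with the model metric $(-\phi)^{-a}ds^2_U+(-\phi)^{-b}\pa\phi\,\ov\pa\phi$, leaving $ds^2$ unchanged away from $z_0$; the whole argument is local around $z_0$.

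Because $z_0$ is a non-degenerate regular boundary point I would first pick local holomorphic coordinates $(z',z_n)=(z_1,\dots,z_{n-1},z_n)$ centred at $z_0$ in which $\phi(z)=2\Re z_n+\sum_{j=1}^{n-1}\ep_j|z_j|^2+o(|z|^2)$, $\ep_j\in\{\pm1\}$, so $z_n$ is the complex-normal and $z_1,\dots,z_{n-1}$ the complex-tangential directions. Fix $I=\{1,\dots,p\}$, $J=\{p+1,\dots,n\}$ (so $|I|=p$, $|J|=q$ and $n\in J$) and, for a holomorphic function $u$ on $D\cap P$ with $P$ a small polydisc at $z_0$, set $\om_u=u\,dz_I\wedge d\ov z_J$; this is automatically $\ov\pa$-closed. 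On the model metric the dual metric scales like $(-\phi)^{a}$ on tangential covectors and like $(-\phi)^{b}$ on $dz_n,d\ov z_n$ (using $b\ge a$), while $dV_{ds^2}\approx(-\phi)^{-(a(n-1)+b)}dV$; a short computation then gives $\|\om_u\|^2_{ds^2}\approx\int_{D\cap P}|u|^2\,dV$, so $\om_u$ is locally $L^2$ precisely when $u$ lies in the Bergman space of $D\cap P$, which is infinite-dimensional. To globalise, choose a cut-off $\chi\equiv1$ in a one-sided neighbourhood of $z_0$ and $\equiv0$ off $P$, arranged (by cutting mainly in $\phi$) so that $\mathrm{supp}\,\ov\pa\chi$ is essentially a compact subset of $D$ away from $z_0$; then $\ov\pa(\chi\om_u)=\ov\pa\chi\wedge\om_u$ is a smooth, essentially compactly supported $(p,q+1)$-form with $q+1\le n$, and solving $\ov\pa$ against it (compact-support $\ov\pa$-Poincar\'e lemma for such degrees, the borderline bidegrees $p\in\{1,n-1\}$ handled by a minor variant) produces a correction $v_u$ supported near $\mathrm{supp}\,\ov\pa\chi$, so that $\Om_u:=\chi\om_u-v_u$ is a global, $\ov\pa$-closed, $L^2(D,ds^2)$ $(p,q)$-form.

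The crux is linear independence of the $[\Om_u]$. Suppose $\sum_k c_k\Om_{u_k}=\ov\pa\ga$ with $\ga\in L^2(D,ds^2)$; near $z_0$ this reads $\ov\pa\ga=\big(\sum_k c_k u_k\big)dz_I\wedge d\ov z_J$. Writing $J=J'\cup\{n\}$ with $J'=\{p+1,\dots,n-1\}$ tangential, the $d\ov z_n$-component of $\ov\pa\ga$ exhibits a $\ov z_n$-antiderivative of $\sum_k c_k u_k$ through the tangential components of $\ga$; but those components, being part of an $L^2(ds^2)$-form whose tangential coframe carries the weight $(-\phi)^{-b}$ with $b\ge1$, must be square-integrable against a weight that is non-integrable up to $\pa D$, and this forces a $\ov z_n$-antiderivative of $\sum_k c_k u_k$ to vanish along $\pa D\cap P$. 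Choosing the $u_k$ so that no nontrivial combination admits such an antiderivative — for instance $u_k(z)=z_1^{\,k}$, or functions with prescribed non-vanishing boundary traces — forces all $c_k=0$. Localisation, construction and independence together give infinitely many linearly independent $L^2$ harmonic $(p,q)$-forms, hence $\dim H^{p,q}_2(D)=\infty$ whenever $p+q=n$.

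The step I expect to be the \emph{main obstacle} is the passage, in the previous paragraph, from ``the tangential components of $\ga$ lie in a weighted $L^2$ space'' to ``the relevant antiderivative vanishes at $\pa D$'', since $\ov\pa\ga$ mixes that component with the $\ov z_j$-derivatives of the other tangential components. Making this rigorous requires a weighted a priori estimate on the model near $z_0$ — in the spirit of the Donnelly-Fefferman/H\"ormander twisted $L^2$ estimates, using that the model has enough convexity in the bidegree at hand — to control the normal derivative of $\ga$ and hence its boundary trace; the inequality $1\le a\le b<a+3$ is exactly the range in which such an estimate holds, the lower bounds furnishing the non-integrable weight that yields the contradiction and the upper bound $b<a+3$ keeping the estimate from degenerating. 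The remaining points — the norm bookkeeping, the compact-support solvability of $\ov\pa$, and the verification that the model is the local shape of $(D,ds^2)$ near $z_0$ — I would carry out routinely as indicated.
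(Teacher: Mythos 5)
This theorem is not proved in the paper; it is quoted verbatim from Ohsawa \cite{Ohs89}, so there is no ``paper's proof'' to compare your argument against. That said, your proposal as written has a genuine gap, and you in fact flag it yourself.

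The construction of candidate representatives — taking $\om_u = u\,dz_I\wedge d\ov z_J$ with $n\in J$ and $u$ in an infinite-dimensional space of holomorphic functions, checking (correctly) that the model weights make the pointwise norm of the coframe cancel the volume-form weight so that $\|\om_u\|^2\approx\int|u|^2\,dV$, cutting off, and correcting by a compactly supported $\ov\pa$-primitive — is in the right spirit and resembles the actual strategy in \cite{Ohs89}. But the theorem's entire content is the non-exactness, and there your argument is a sketch, not a proof. Concretely: $\ga$ has bidegree $(p,q-1)$, and the $dz_I\wedge d\ov z_J$ component of $\ov\pa\ga$ is $\sum_{j\in J}\pm\,\pa_{\ov z_j}\ga_{I,J\setminus\{j\}}$, so the equation $\ov\pa\ga=(\sum_k c_ku_k)\,dz_I\wedge d\ov z_J$ mixes the $\ov z_n$-derivative of $\ga_{I,J\setminus\{n\}}$ with the tangential $\ov z_j$-derivatives of the other components $\ga_{I,J\setminus\{j\}}$, $j\in J'$. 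Observing that $\ga_{I,J\setminus\{n\}}\in L^2$ against a weight $(-\phi)^{-b}$ with $b\ge1$ gives some averaged boundary decay of that single component, but it does not isolate a $\ov z_n$-antiderivative of $\sum c_ku_k$ with a vanishing boundary trace, because the other terms in the $\ov\pa$-equation are not controlled. You correctly identify this as ``the main obstacle,'' and the appeal to a ``weighted a priori estimate in the spirit of Donnelly--Fefferman/H\"ormander'' is not carried out; in particular, nothing in the proposal explains where the hypothesis $b<a+3$ enters, which is the sharpness condition that makes Ohsawa's estimate work and would be the first thing a rigorous argument must produce.

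There is a second, smaller gap: the compactly-supported $\ov\pa$-Poincar\'e lemma used to produce $v_u$ solves $\ov\pa v=\ov\pa\chi\wedge\om_u$ in bidegree $(p,q+1)$, and when $p=1$, $q=n-1$ one has $q+1=n$, where the compactly-supported problem carries a genuine cohomological obstruction (pairing against holomorphic $(n-1,0)$-forms). Calling this ``a minor variant'' does not dispose of it; one either needs to verify the obstruction vanishes for the forms at hand, or use an $L^2$ solution with weights rather than a compactly-supported one. As it stands the proposal outlines a plausible strategy but does not constitute a proof of Ohsawa's theorem.
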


\begin{rem}\label{inf-coh2}
The above theorem in particular implies that if $ds^2$ is complete and
K\"{a}hler, then for any positive integer $p$ and $q$ with $p + q =n$,
\[
\dim \mathcal{H}^{p, q}_{2}(D) = \infty
\]
where $\mathcal{H}^{p, q}_{2}(D)$ is the space of square integrable harmonic
$(p, q)$-forms on $D$ relative to $ds^2$.
\end{rem}

To apply these results to the $\La$ metric, let $D$ be a $C^{\infty}$-smoothly
bounded pseudoconvex domain in $\mathbf{C}^{n}$ and
$ds^2$ be the $\La$-metric on $D$. Then the K\"{a}hler form $\om$ of $ds^2$ is
given by
\[
\om = i \sum_{\al=1}^{n} \frac{\pa^2 \log(-\La)}{\pa z_{\al} \pa \ov z_{\be}}
\, dz_{\al} \wedge d\ov z_{\be} = d \eta
\]
where
\[
\eta = - i \sum_{\al=1}^{n} \frac{\pa \log(-\La)}{\pa z_{\al}} \, dz_{\al}.
\]
Now let $\psi$ be a $C^{\infty}$-smooth defining function for $D$. Then,
differentiating the relation
\[
\la = \La \psi^{2n-2}
\]
with respect to $z_{\al}$ we obtain
\begin{equation}\label{der-log-La}
\frac{\pa \log(-\La)}{\pa z_{\al}} = \la^{-1} \la_{\al} - 2(n-1) \psi^{-1}
\psi_{\al}.
\end{equation}
Therefore,
\begin{equation}\label{eta-v}
\eta (v) = -i \sum_{\al=1}^{n}\frac{\pa \log(-\La)}{\pa z_{\al}} v^{\al} = -i
\big( \la^{-1} \langle v, \ov \pa \la \rangle - 2(n -1) \psi^{-1} \langle v, \ov
\pa \psi \rangle \big)
\end{equation}
and
\begin{equation}\label{eta-v^2}
\vert \eta(v) \vert^2 = \la^{-2} \vert \langle v, \ov \pa \la \rangle \vert^2 -
4 (n-1) \la^{-1} \psi^{-1} \Re \Big( \langle v, \ov \pa \la \rangle \ov{\langle
v, \ov \pa \psi \rangle} \Big) + 4(n-1)^2 \psi^{-2} \vert \langle v, \ov \pa
\psi \rangle \vert^2
\end{equation}

Also, differentiating (\ref{der-log-La}) with respect to $\ov z_{\be}$ we obtain
\begin{equation}\label{metric-comp}
\frac{\pa^2 \log(-\La)}{\pa z_{\al} \pa \ov z_{\be}} = \la^{-1} \la_{\al \ov
\be} - \la^{-2} \la_{\al} \la_{\ov \be} + 2(n-1) \psi^{-2} \psi_{\al} \psi_{\ov
\be} - 2(n-1) \psi^{-1} \psi_{\al \ov \be}.
\end{equation}
Therefore,
\begin{multline}\label{g(v,v)}
ds^2(v, v) = \sum_{\al, \be =1}^{n}\frac{\pa^2 \log(-\La)}{\pa z_{\al} \pa \ov
z_{\be}} v^{\al} \ov v ^{\be}
\\= \la^{-1} \mathscr{L}_{\la}(z, v) - \la^{-2} \vert \langle v, \ov \pa \la
\rangle \vert^2 + 2(n-1) \psi^{-2} \vert \langle v, \ov \pa \psi \rangle \vert^2
- 2(n-1) \psi^{-1} \mathscr{L}_{\psi}(z, v)
\end{multline}

\begin{lem}\label{limit-ratio}
Let $D$ be a $C^{\infty}$-smoothly bounded pseudoconvex domain in $\mathbf{C}^n$
and $\psi$ be a $C^{\infty}$ smooth defining function for $D$. One has the
following:
\begin{enumerate}
\item If $z_0 \in \pa D$ and $v \in \mathbf{C}^{n}$ is a unit vector satisfying
$\big\langle v, \pa \ov \psi (z_0) \big\rangle \neq 0$, then
\[
\lim_{z \ra z_0} \frac{\vert \eta_{z}(v) \vert^2}{ds^2_z(v, v)} = 2(n-1),
\]
\item If $z_0 \in \pa D$ is strongly pseudoconvex and $v \in \mathbf{C}^{n}$ is
a unit vector satisfying $\big\langle v, \pa \ov \psi (z_0) \big\rangle = 0$,
then
\[
\lim_{z \ra z_0} \frac{\vert \eta_{z}(v)\vert^2}{ds^2_z(v, v)} = 0.
\]
\end{enumerate}
Moreover, the limits are apporached uniformly for $z_0 \in \pa D$ and unit
vectors $v$.
\end{lem}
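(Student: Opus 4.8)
The plan is to read off both the numerator and the denominator from the explicit formulas \eqref{eta-v^2} and \eqref{g(v,v)} and to track which terms blow up and which remain bounded as $z\to z_0$. The basic inputs are: $\psi(z)\to 0$ with $-\psi(z)\asymp d(z,\pa D)$; by the Levenberg--Yamaguchi regularity the normalised Robin function $\la$ is $C^2$ on $\ov D$, so $\la(z)\to\la(z_0)=-|\pa\psi(z_0)|^{2n-2}\neq 0$ and the quantities $\la_\al$, $\la_{\al\ov\be}$ (equivalently $\mathscr{L}_{\la}(z,v)$) together with $\psi$, $\psi_\al$, $\psi_{\al\ov\be}$ (equivalently $\mathscr{L}_{\psi}(z,v)$) stay bounded near $z_0$; and $\langle v,\ov\pa\psi(z)\rangle\to\langle v,\ov\pa\psi(z_0)\rangle$. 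Taking $z_0=\pi(z)$ to be the nearest boundary point we also have $|z-z_0|=d(z,\pa D)\asymp-\psi(z)$, which is the key quantitative fact for part (2).

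For part (1), suppose $a:=|\langle v,\ov\pa\psi(z_0)\rangle|\neq 0$. Then $|\langle v,\ov\pa\psi(z)\rangle|$ is bounded away from $0$ near $z_0$, so in \eqref{eta-v^2} the term $4(n-1)^2\psi^{-2}|\langle v,\ov\pa\psi\rangle|^2$ is of exact order $\psi^{-2}\asymp d(z,\pa D)^{-2}$, while the cross term is $O(\psi^{-1})$ and the first term is $O(1)$; likewise in \eqref{g(v,v)} the term $2(n-1)\psi^{-2}|\langle v,\ov\pa\psi\rangle|^2$ is of order $\psi^{-2}$ and every other term is $O(\psi^{-1})$. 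Dividing and letting $z\to z_0$ gives
\[
\frac{|\eta_z(v)|^2}{ds^2_z(v,v)}\longrightarrow\frac{4(n-1)^2}{2(n-1)}=2(n-1),
\]
the error being controlled purely by $d(z,\pa D)$ and by upper and lower bounds for $|\langle v,\ov\pa\psi(z_0)\rangle|$, whence the stated uniformity.

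For part (2), the hypothesis $\langle v,\ov\pa\psi(z_0)\rangle=0$ says $v$ is a complex tangent vector at $z_0$, and strong pseudoconvexity at $z_0$ gives $\mathscr{L}_{\psi}(z_0,v)\geq c>0$ for unit tangential $v$, uniformly in $z_0$ and $v$ by compactness of $\pa D$. Hence in \eqref{g(v,v)} the term $-2(n-1)\psi^{-1}\mathscr{L}_{\psi}(z,v)$ is positive and of exact order $(-\psi)^{-1}\asymp d(z,\pa D)^{-1}\to\infty$. On the other hand, since $w\mapsto\langle v,\ov\pa\psi(w)\rangle$ vanishes at $w=z_0$ and is smooth,
\[
|\langle v,\ov\pa\psi(z)\rangle|\leq C|z-z_0|=C\,d(z,\pa D)\asymp-\psi(z),
\]
so each a priori unbounded term of \eqref{eta-v^2} is in fact $O(1)$: both $\psi^{-2}|\langle v,\ov\pa\psi\rangle|^2$ and the cross term $\psi^{-1}|\langle v,\ov\pa\la\rangle|\,|\langle v,\ov\pa\psi\rangle|$ are bounded. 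Thus $|\eta_z(v)|^2=O(1)$ while $ds^2_z(v,v)\gtrsim d(z,\pa D)^{-1}\to\infty$, so the ratio tends to $0$, uniformly by the same estimates.

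The main obstacle, and the genuinely delicate point, is precisely the inequality $|\langle v,\ov\pa\psi(z)\rangle|\leq C\,d(z,\pa D)$ used in part (2): it forces one to take $z_0$ to be (comparable to) the foot $\pi(z)$, so that $|z-z_0|\asymp d(z,\pa D)$, rather than to allow a wildly tangential approach — along a parabolic approach one only gets $|\langle v,\ov\pa\psi(z)\rangle|^2\asymp d(z,\pa D)$, which would leave $\psi^{-2}|\langle v,\ov\pa\psi\rangle|^2$ of the same order $d(z,\pa D)^{-1}$ as the dominant denominator term, and the limit would no longer be $0$. Everything else is bookkeeping with the two displayed formulas together with the boundedness of $\la,\la_\al,\la_{\al\ov\be},\psi,\psi_\al,\psi_{\al\ov\be}$ near $\ov D$, the non-vanishing of $\la$ on $\ov D$, and the uniform Levi lower bound; since all implied constants depend only on $\|\psi\|_{C^2}$ on a neighbourhood of $\ov D$, on $\inf_{\ov D}|\la|$, and on $c$, the uniformity assertion comes out for free.
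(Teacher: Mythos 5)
Your proof follows essentially the same route as the paper: read off the explicit expansions \eqref{eta-v^2} and \eqref{g(v,v)}, use the $C^2$-regularity of $\la$ up to $\ov D$ (and the nonvanishing $\la(z_0)=-|\pa\psi(z_0)|^{2n-2}$) to control the $\la$-terms, and then in case (1) multiply by $\psi^2$, in case (2) by $-\psi$, to isolate the dominant term and pass to the limit. Your closing observation is correct and in fact sharper than what the paper records: the paper's key step in (2), namely $|\ov\pa\psi(z)-\ov\pa\psi(z_0)|\lesssim -\psi(z)$ ``uniformly for $z$ near $z_0$'', is only valid when $|z-z_0|\lesssim d(z,\pa D)$, i.e.\ for a non-tangential approach (equivalently taking $z_0$ comparable to the foot $\pi(z)$), and your parabolic counterexample is genuine --- along $z_n\to z_0$ with $|z_n-z_0|^2\asymp d(z_n,\pa D)$ and $v$ complex tangential at $z_0$, one has $\psi^{-2}|\langle v,\ov\pa\psi(z_n)\rangle|^2$ comparable to $\psi^{-1}\mathscr{L}_\psi$, so the ratio does not tend to $0$. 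Thus the lemma read literally for arbitrary $z\to z_0$ is not quite accurate, but the version restricted to $z_0=\pi(z)$ (which is all that proposition \ref{ratio-bdd} uses) is what both you and the paper establish. You also correctly note that the uniformity in (1) degenerates as $|\langle v,\ov\pa\psi(z_0)\rangle|\to 0$; the paper glosses over this, whereas you make the dependence on a lower bound for $|\langle v,\ov\pa\psi(z_0)\rangle|$ explicit.
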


\begin{proof}
Since $\la$ is $C^2$-smooth up to $\ov D$ and $\psi$ is $C^{\infty}$-smooth, the
terms
\[
\big\langle v, \pa \ov \la(z) \big\rangle, \quad  \big\langle v, \pa \ov \psi(z)
\big\rangle, \quad \mathscr{L}_{\la}(z, v), \quad \text{and} \quad
\mathscr{L}_{\psi}(z, v)
\]
are uniformly bounded for all $z \in \ov D$ and all $v \in \mathbf{C}^{n}$ with
$\vert v \vert = 1$. Also, since $\la = - \vert \pa \psi \vert^{2n-2}$ on $\pa
D$, it is evident that $\la^{-1}$ is bounded near $\pa D$.

\medskip

By the above observation it is evident from (\ref{eta-v^2}) that
\[
\lim_{z \ra z_0} \big(\psi(z) \big)^2 \vert \eta_z (v) \vert^2 = 4(n-1)^2
\big\vert \big\langle v, \ov \pa \psi(z_0) \big\rangle \big\vert^2
\]
and from (\ref{g(v,v)}) that
\[
\lim_{z \ra z_0} \big(\psi(z) \big)^2 ds^2_z(v, v) = 2(n-1) \big\vert
\big\langle v, \ov \pa \psi(z_0) \big\rangle \big\vert^2
\]
uniformly for $z_0 \in \pa D$ and unit vector $v$. Therefore,
\[
\lim_{z \ra z_0} \frac{\vert \eta_{z}(v) \vert^2}{ds^2_z(v, v)} = 2(n-1)
\]
uniformly for $z_0 \in \pa D$ and unit vector $v$ satisfying $\big \langle v,
\ov \pa \psi (z_0) \big \rangle \neq 0$, which proves (1).

\medskip

To prove (2), observe that if  $\big \langle v, \ov \pa \psi (z_0) \big \rangle
= 0$ then
\[
\big\langle v, \ov \pa \psi (z) \big\rangle = \big\langle v, \ov \pa \psi (z)
\big\rangle  - \big \langle v, \ov \pa \psi (z_0) \big \rangle = \big\langle v,
\ov \pa \psi (z) - \ov \pa \psi (z_0) \big\rangle.
\]
Since
\[
\big\vert \ov \pa \psi(z) - \ov \pa \psi (z_0) \big\vert \lesssim \big(-\psi(z)
\big)
\]
uniformly for $z$ near $z_0$, it follows that
\[
\big \vert \big\langle v, \ov \pa \psi (z) \big\rangle \big \vert \lesssim
\big(-\psi(z) \big)
\]
uniformly for $z$ near $z_0$ and unit vectors $v$ satisfying $\big \langle v,
\ov \pa \psi (z_0) \big \rangle = 0$. Combining this with our previous
observation, it now follows from (\ref{eta-v^2}) that
\[
\lim_{z \ra z_0} \big(-\psi(z) \big) \vert \eta_z (v) \vert^2 = 0
\]
and from (\ref{g(v,v)}) that
\[
\lim_{z \ra z_0} \big(-\psi(z) \big) ds^2_z(v, v) = 2(n-1)
\mathscr{L}_{\psi}(z_0, v)
\]
uniformly for $z_0 \in \pa D$ and unit vectors $v$ satisfying $\big \langle v,
\ov \pa \psi (z_0) \big \rangle = 0$. Since $z_0$ is a strongly pseudoconvex
boundary point, $\mathscr{L}_{\psi}(z_0, v) > 0$ and hence
\[
\lim_{z \ra z_0} \frac{\vert \eta_{z}(v) \vert^2}{ds^2_z(v, v)} = 0
\]
uniformly for all strongly pseudoconvex boundary points $z_0 \in \pa D$ and all
unit vectors $v$ satisfying $\big \langle v, \ov \pa \psi (z_0) \big \rangle =
0$, which proves (2).
\end{proof}

\begin{prop}\label{ratio-bdd}
Let $D$ be a $C^{\infty}$-smoothly bounded strongly pseudoconvex domain in
$\mathbf{C}^{n}$. Then the ratio
\begin{equation}\label{ratio}
\frac{\vert \eta_z (v) \vert^2}{ds^2_z(v, v)}
\end{equation}
is uniformly bounded for $z \in D$ and vectors $v \in \mathbf{C}^{n}$ with $v
\neq 0$.
\end{prop}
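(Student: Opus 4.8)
The plan is to split $D$ into a one-sided neighbourhood of $\pa D$ and its complement and to control the ratio (\ref{ratio}) separately on the two pieces, always working from the explicit formulas (\ref{eta-v}) for $\eta_z(v)$ and (\ref{g(v,v)}) for $ds^2_z(v, v)$. Since (\ref{ratio}) is homogeneous of degree zero in $v$, it suffices to treat unit vectors. On a set $D \sm U$, with $U$ a neighbourhood of $\pa D$, the numerator and denominator are continuous functions of $(z, v)$, the denominator is strictly positive for $v \neq 0$ because $\log(-\La)$ is strictly plurisubharmonic, and $(D \sm U) \times \{|v| = 1\}$ is compact; hence (\ref{ratio}) is bounded there. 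Thus the whole problem reduces to bounding (\ref{ratio}) on $U \cap D$ for a conveniently chosen $U$. Here I would use that $\eta$ and $ds^2$ depend only on $\La$, not on the choice of defining function, together with strong pseudoconvexity of $D$, to fix $\psi$ strictly plurisubharmonic on a neighbourhood of $\ov D$; then, shrinking, I get a neighbourhood $U$ of $\pa D$ on which $\la^{-1}$ is bounded (exactly as in the proof of Lemma~\ref{limit-ratio}, using $\la = -|\pa \psi|^{2n-2}$ on $\pa D$), on which $\mathscr{L}_{\la}$ and $\langle v, \ov \pa \la \rangle$ are bounded for unit $v$ (they extend continuously to $\ov D$, $\la$ being $C^2$ there), and on which $\mathscr{L}_{\psi}(z, v) \ge c > 0$ for $|v| = 1$.

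The next step is the pair of one-sided estimates near $\pa D$. From (\ref{eta-v}) and the boundedness of $\la^{-1}$ and $\langle v, \ov \pa \la \rangle$, the triangle inequality gives $|\eta_z(v)|^2 \le C_1 + C_2\, \psi(z)^{-2} |\langle v, \ov \pa \psi(z) \rangle|^2$ on $U \cap D$ for all unit $v$, with $C_1, C_2$ independent of $z, v$. For the denominator, writing $-\psi^{-1} = |\psi|^{-1}$, the formula (\ref{g(v,v)}) reads
\[
ds^2_z(v, v) = \Big( \la^{-1} \mathscr{L}_{\la}(z, v) - \la^{-2} |\langle v, \ov \pa \la \rangle|^2 \Big) + 2(n-1)\, \psi(z)^{-2} |\langle v, \ov \pa \psi(z) \rangle|^2 + 2(n-1)\, |\psi(z)|^{-1} \mathscr{L}_{\psi}(z, v),
\]
where the bracketed term is bounded in absolute value by a constant $M$ on $U \cap D$, while the last term is at least $2(n-1) c\, |\psi(z)|^{-1}$. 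Shrinking $U$ once more so that $(n-1) c\, |\psi|^{-1} \ge M$ on it, I obtain $ds^2_z(v, v) \ge 2(n-1)\, \psi(z)^{-2} |\langle v, \ov \pa \psi(z) \rangle|^2 + (n-1) c\, |\psi(z)|^{-1}$ on $U \cap D$. Setting $X = \psi^{-2} |\langle v, \ov \pa \psi \rangle|^2 \ge 0$ and $Y = |\psi|^{-1} > 0$, the ratio on $U \cap D$ is then at most $(C_1 + C_2 X) / \big( 2(n-1) X + (n-1) c Y \big)$, which, discarding the non-negative term not being used in the denominator of each of the two resulting summands, is $\le C_2 / (2(n-1)) + C_1\, |\psi(z)| / ((n-1) c)$; this is bounded by a constant since $|\psi|$ is bounded on $\ov D$. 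Together with the bound on $D \sm U$ this proves the proposition.

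I expect the only genuine obstacle to be the almost-tangential directions, i.e.\ vectors $v$ for which $\langle v, \ov \pa \psi(z_0) \rangle$ is small at the boundary point $z_0$ nearest to $z$: for such $v$ both $|\eta_z(v)|^2$ and $ds^2_z(v, v)$ degenerate as $z \ra \pa D$, so the statement is not immediate from Lemma~\ref{limit-ratio}, whose two cases treat $\langle v, \ov \pa \psi(z_0) \rangle \neq 0$ and $\langle v, \ov \pa \psi(z_0) \rangle = 0$ separately, with limits $2(n-1)$ and $0$ and no a priori uniform bound across the transition. The estimates above are arranged precisely to cover this: the numerator degenerates only to size $O(1) + O\big( \psi^{-2} |\langle v, \ov \pa \psi \rangle|^2 \big)$, whereas strict plurisubharmonicity of $\psi$ forces a genuinely unbounded extra term of size $|\psi|^{-1}$ into the denominator, which dominates the $O(1)$ part of the numerator, while the two $\psi^{-2} |\langle v, \ov \pa \psi \rangle|^2$ contributions are of the same order and so have bounded quotient.
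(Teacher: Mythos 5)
Your proof is correct, and it takes a more self-contained route than the paper's. The paper disposes of the proposition in three sentences: it cites Lemma~\ref{limit-ratio} to conclude that the ratio is uniformly bounded near $\pa D$, uses continuity and compactness on the complementary interior piece, and invokes homogeneity in $v$. Your argument differs in that it does not go through Lemma~\ref{limit-ratio} at all; instead you return directly to (\ref{eta-v}) and (\ref{g(v,v)}), extract the two-sided pointwise estimates $\vert \eta_z(v)\vert^2 \le C_1 + C_2\,\psi^{-2}\vert\langle v,\ov\pa\psi\rangle\vert^2$ and $ds^2_z(v,v) \ge 2(n-1)\psi^{-2}\vert\langle v,\ov\pa\psi\rangle\vert^2 + (n-1)c\vert\psi\vert^{-1}$ on a one-sided collar, and then bound the quotient of the two right-hand sides. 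You also correctly anticipated why the paper's one-line citation of Lemma~\ref{limit-ratio} is not quite airtight as stated: that lemma treats the two disjoint alternatives $\langle v,\ov\pa\psi(z_0)\rangle \ne 0$ and $=0$ separately, and in the first case the claimed uniformity of the limit is over a set of unit vectors on which the denominator limit $2(n-1)\vert\langle v,\ov\pa\psi(z_0)\rangle\vert^2$ degenerates to $0$, so without further work the limit ratio $2(n-1)$ is not approached at a rate that is uniform across the near-tangential transition. Your estimate is exactly the quantitative input needed to close that gap: the strict plurisubharmonicity of a well-chosen $\psi$ (which you are free to use since $\eta$ and $ds^2$ are built from $\La$ alone) forces the genuinely divergent term $2(n-1)\vert\psi\vert^{-1}\mathscr{L}_\psi$ into the denominator, which dominates the bounded $\la$-contributions in the numerator, while the matching $\psi^{-2}\vert\langle v,\ov\pa\psi\rangle\vert^2$ terms in numerator and denominator have bounded quotient by inspection. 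So your proof is not merely a rephrasing of the paper's; it is a more explicit and in one respect more careful version that remains valid uniformly across the split that Lemma~\ref{limit-ratio} makes implicitly.
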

\begin{proof}
By lemma \ref{limit-ratio}, the ratio
\[
\frac{\vert \eta_z (v) \vert^2}{ds^2_z(v, v)}
\]
is uniformly bounded for all $z$ near $\pa D$ and all unit vectors $v$. It is
evident that this ratio is uniformly bounded for all $z$ on a compact subset of
$D$ and all unit vectors $v$. Now, by homogenity of $\eta_z(v)$ and
$ds^2_z(v,v)$ in the vector variable $v$ it follows that the ratio is uniformly
bounded above for all $z \in D$ and vectors $v \neq 0$.
\end{proof}
\no We also note the following:
\begin{prop}\label{La-E}
Let $D$ be a $C^{\infty}$-smoothly bounded strongly pseudoconvex domain in
$\mathbf{C}^{n}$ and $ds^2$ be the $\La$-metric on $D$. Suppose that $\psi$ is a
$C^{\infty}$-smooth defining function for $D$. Then
\[
ds^2 \approx (-\psi)^{-1} ds^2_E+(-\psi)^{-2} \pa \psi \ov \pa \psi
\]
uniformly near $\pa D$, where $ds^2_E$ is the Euclidean metric on
$\mathbf{C}^{n}$.
\end{prop}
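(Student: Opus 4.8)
The plan is to read the claimed two‑sided estimate straight off the explicit expansion (\ref{g(v,v)}) of the $\La$‑metric, keeping the two terms that blow up at $\pa D$ and absorbing the rest. A preliminary reduction is in order: the model form $(-\psi)^{-1}ds^2_E+(-\psi)^{-2}\pa\psi\ov\pa\psi$ is, up to uniform equivalence in a neighbourhood of $\pa D$, independent of the chosen smooth defining function, since for $\psi_1=h\psi_2$ with $h>0$ smooth one has $\pa\psi_1=h\,\pa\psi_2+\psi_2\,\pa h$, so that (because $|\langle v,\ov\pa h\rangle|\lesssim|v|$ and $\psi_2\to0$ at $\pa D$) the cross and error terms contribute only an $O(|v|^2)$ that is dominated by $(-\psi)^{-1}|v|^2$. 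Since the $\La$‑metric itself does not depend on $\psi$, I may therefore assume $\psi$ is strictly plurisubharmonic in a neighbourhood of $\ov D$ — possible precisely because $D$ is strongly pseudoconvex — so that $\mathscr{L}_\psi(z,v)\gtrsim|v|^2$ there; this reduction is exactly what makes the lower bound work. Finally, since both sides of the asserted equivalence are homogeneous of degree two in $v$, it suffices to treat unit vectors $v$.

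Next I would invoke (\ref{g(v,v)}): using $\psi<0$ on $D$ to write $\psi^{-2}=(-\psi)^{-2}$ and $-\psi^{-1}=(-\psi)^{-1}$, it reads
\[
ds^2_z(v,v)=\la^{-1}\mathscr{L}_\la(z,v)-\la^{-2}\big|\langle v,\ov\pa\la\rangle\big|^2+2(n-1)(-\psi)^{-2}\big|\langle v,\ov\pa\psi\rangle\big|^2+2(n-1)(-\psi)^{-1}\mathscr{L}_\psi(z,v).
\]
By the boundedness facts already recorded in the proof of lemma~\ref{limit-ratio} ($\la$ is $C^2$ up to $\ov D$ with $\la\neq0$ on $\pa D$, so $\la^{-1}$ is bounded near $\pa D$, while $\mathscr{L}_\la(z,v)$ and $\langle v,\ov\pa\la\rangle$ are $O(|v|^2)$ and $O(|v|)$), the first two terms are together $O(|v|^2)$ uniformly near $\pa D$. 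The upper bound is then immediate: $\mathscr{L}_\psi(z,v)\lesssim|v|^2$ and $|v|^2\le(-\psi)^{-1}|v|^2$ once $-\psi<1$, so $ds^2_z(v,v)\lesssim(-\psi)^{-1}|v|^2+(-\psi)^{-2}|\langle v,\ov\pa\psi\rangle|^2$.

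For the lower bound I would discard the (possibly negative) $O(|v|^2)$ remainder against the positive term $2(n-1)(-\psi)^{-1}\mathscr{L}_\psi(z,v)$: with $\mathscr{L}_\psi(z,v)\ge c_0|v|^2$ one gets
\[
ds^2_z(v,v)\ge 2(n-1)(-\psi)^{-2}\big|\langle v,\ov\pa\psi\rangle\big|^2+\big(2(n-1)c_0-C_0(-\psi)\big)(-\psi)^{-1}|v|^2,
\]
and shrinking the boundary neighbourhood so that $C_0(-\psi)\le(n-1)c_0$ makes the coefficient positive, yielding $ds^2_z(v,v)\gtrsim(-\psi)^{-1}|v|^2+(-\psi)^{-2}|\langle v,\ov\pa\psi\rangle|^2$. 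Combining the two bounds finishes the proof. The one genuinely delicate point is this last absorption: when $v$ is nearly complex tangential the leading $(-\psi)^{-2}$ term collapses, and one must make the $(-\psi)^{-1}\mathscr{L}_\psi$ term beat the bounded error $\la^{-1}\mathscr{L}_\la-\la^{-2}|\langle v,\ov\pa\la\rangle|^2$, which is exactly where positivity of the full complex Hessian of the (suitably chosen) defining function — i.e. strong pseudoconvexity — is used, and why the estimate is only claimed near $\pa D$. As an alternative route requiring essentially no computation, the statement also follows from proposition~\ref{La-Ber} together with the classical boundary asymptotics of the Bergman metric of a strongly pseudoconvex domain.
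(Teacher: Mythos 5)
Your argument is correct, but it follows a genuinely different route from the paper. The paper's proof of this proposition is a one‑liner: it cites Proposition~\ref{La-Ber} (comparability of $ds^2$ with the Bergman metric) together with the classical Diederich estimate for the Bergman metric of a strongly pseudoconvex domain. You give instead a direct and self‑contained derivation from the paper's own identity (\ref{g(v,v)}): after reducing to a strictly plurisubharmonic defining function — using the easy fact that the model form $(-\psi)^{-1}ds^2_E+(-\psi)^{-2}\pa\psi\ov\pa\psi$ is invariant, up to uniform constants near $\pa D$, under change of smooth defining function — you isolate the two singular terms $2(n-1)(-\psi)^{-1}\mathscr{L}_\psi$ and $2(n-1)(-\psi)^{-2}|\langle v,\ov\pa\psi\rangle|^2$, observe that the $\la$‑terms are $O(|v|^2)$ uniformly near $\pa D$ because $\la$ is $C^2$ up to $\ov D$ and nonvanishing there, and absorb the bounded remainder into the positive $(-\psi)^{-1}\mathscr{L}_\psi$ term by shrinking the neighbourhood. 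This trades external input (Bergman metric theory) for a short computation using only ingredients already established in the paper, and it makes transparent exactly where strong pseudoconvexity enters (the lower bound for nearly tangential $v$). Both arguments are valid; you correctly note the paper's route as an alternative at the end.
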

\begin{proof}
It is known that the Bergman metric on $D$ satisfies the same estimate.
Therefore, the proof follows from proposition \ref{La-Ber}.
\end{proof}

\medskip

\no \textit{Proof of theorem 1.5}. Let $ds^2$ be the $\La$-metric on $D$. By
proposition \ref{La-Ber} and the completeness of the Bergman metric on $D$,
$ds^2$ is complete. Therefore, by propositions \ref{van-coh-kah} and
\ref{ratio-bdd}, we have
\[
\mathcal{H}_2^i(D) = 0
\]
for $i \neq n$ and hence
\[
\mathcal{H}_2^{p,q}(D) = 0
\]
for $p+q \neq n$. Also, by remark \ref{inf-coh2} and proposition \ref{La-E},
\[
\dim \mathcal{H}_2^{p, q}(D) = \infty
\]
for any positive integers $p$ and $q$ with $p+q =n$. Moreover, a harmonic
$(n, 0)$ form on $D$ is precisely of the form
\[
f(z) \, dz_1 \wedge \ldots \wedge dz_n
\]
where $f(z)$ is a harmonic function (with respect to the standard Laplacian) on
$D$. Therefore, $\mathcal{H}^{n,0}_2(D)$ (and thus $\mathcal{H}^{0,n}_2(D)$) is
isomorphic to the space of square integrable harmonic functions on $D$ which is
evidently infinite dimensional. This completes the proof. \hfill $\square$

\begin{bibdiv}
\begin{biblist}

\bib{BV}{article}{
AUTHOR = {Borah, D.},
author = {Verma, K.},
TITLE = {Remarks on the metric induced by the {R}obin function},
Note = {http://www.iumj.indiana.edu/IUMJ/forthcoming.php},
NOTE = {To appear in Indiana Univ. Math. J.},
eprint = {http://www.iumj.indiana.edu/IUMJ/forthcoming.php}
}

\bib{Died}{article}{
   author={Diederich, K.},
   title={Das Randverhalten der Bergmanschen Kernfunktion und Metrik in
   streng pseudo-konvexen Gebieten},
   language={German},
   journal={Math. Ann.},
   volume={187},
   date={1970},
   pages={9--36},
   issn={0025-5831},
   review={\MR{0262543 (41 \#7149)}},
}

\bib{Don}{article}{
   author={Donnelly, H.},
   title={$L_2$ cohomology of pseudoconvex domains with complete K\"ahler
   metric},
   journal={Michigan Math. J.},
   volume={41},
   date={1994},
   number={3},
   pages={433--442},
   issn={0026-2285},
   review={\MR{1297700 (95h:32007)}},
   doi={10.1307/mmj/1029005071},
}

\bib{DonFef}{article}{
   author={Donnelly, H.},
   author={Fefferman, C.},
   title={$L^{2}$-cohomology and index theorem for the Bergman metric},
   journal={Ann. of Math. (2)},
   volume={118},
   date={1983},
   number={3},
   pages={593--618},
   issn={0003-486X},
   review={\MR{727705 (85f:32029)}},
   doi={10.2307/2006983},
}

\bib{Gro}{article}{
   author={Gromov, M.},
   title={K\"ahler hyperbolicity and $L_2$-Hodge theory},
   journal={J. Differential Geom.},
   volume={33},
   date={1991},
   number={1},
   pages={263--292},
   issn={0022-040X},
   review={\MR{1085144 (92a:58133)}},
}

\bib{Her}{article}{
   author={Herbort, G.},
   title={On the geodesics of the Bergman metric},
   journal={Math. Ann.},
   volume={264},
   date={1983},
   number={1},
   pages={39--51},
   issn={0025-5831},
   review={\MR{709860 (85f:32030)}},
   doi={10.1007/BF01458049},
}

\bib{LY}{article}{
   author={Levenberg, N.},
   author={Yamaguchi, H.},
   title={The metric induced by the Robin function},
   journal={Mem. Amer. Math. Soc.},
   volume={92},
   date={1991},
   number={448},
   pages={viii+156},
   issn={0065-9266},
   review={\MR{1061928 (91m:32017)}},
}

\bib{Ohs81}{article}{
   author={Ohsawa, T.},
   title={A remark on the completeness of the Bergman metric},
   journal={Proc. Japan Acad. Ser. A Math. Sci.},
   volume={57},
   date={1981},
   number={4},
   pages={238--240},
   issn={0386-2194},
   review={\MR{618233 (82j:32053)}},
}

\bib{Ohs89}{article}{
   author={Ohsawa, T.},
   title={On the infinite dimensionality of the middle $L^2$ cohomology
   of complex domains},
   journal={Publ. Res. Inst. Math. Sci.},
   volume={25},
   date={1989},
   number={3},
   pages={499--502},
   issn={0034-5318},
   review={\MR{1018512 (90i:32016)}},
   doi={10.2977/prims/1195173354},
}

\bib{Y}{article}{
   author={Yamaguchi, H.},
   title={Variations of pseudoconvex domains over ${\mathbf C}^n$},
   journal={Michigan Math. J.},
   volume={36},
   date={1989},
   number={3},
   pages={415--457},
   issn={0026-2285},
   review={\MR{1027077 (90k:32059)}},
   doi={10.1307/mmj/1029004011},
}

\end{biblist}
\end{bibdiv}
\end{document}